\documentclass[pdflatex,sn-mathphys-num]{sn-jnl}% Math and Physical Sciences Numbered Reference Style
\usepackage{graphicx}%
\usepackage{multirow}%
\usepackage{amsmath,amssymb,amsfonts}%
\usepackage{amsthm}%
\usepackage{mathrsfs}%
\usepackage[title]{appendix}%
\usepackage{xcolor}%
\usepackage{textcomp}%
\usepackage{manyfoot}%
\usepackage{booktabs}%
\usepackage{algorithm}%
\usepackage{algorithmicx}%
\usepackage{algpseudocode}%
\usepackage{listings}%
\usepackage{diagbox}
\usepackage{multirow}
\numberwithin{equation}{section}

\theoremstyle{thmstyleone}%
\newtheorem{theorem}{Theorem}%  meant for continuous numbers
\newtheorem{proposition}[theorem]{Proposition}% 
\newtheorem{corollary}[theorem]{Corollary}
\newtheorem{lemma}[theorem]{Lemma}

\theoremstyle{thmstyletwo}%
\newtheorem{remark}{Remark}%
\newtheorem{assumption}{Assumption}

\theoremstyle{thmstylethree}%

\begin{document}

\title[SSAV for stochastic CH equation]{A scalar auxiliary variable-based semi-implicit scheme for stochastic Cahn--Hilliard equation}

%%=============================================================%%
%% GivenName	-> \fnm{Joergen W.}
%% Particle	-> \spfx{van der} -> surname prefix
%% FamilyName	-> \sur{Ploeg}
%% Suffix	-> \sfx{IV}
%% \author*[1,2]{\fnm{Joergen W.} \spfx{van der} \sur{Ploeg} 
%%  \sfx{IV}}\email{iauthor@gmail.com}
%%=============================================================%%

\author[1]{\fnm{Jianbo} \sur{Cui}}\email{jianbo.cui@polyu.edu.hk}

\author[2]{\fnm{Jie} \sur{Shen}}\email{jshen@eitech.edu.cn}
\equalcont{These authors contributed equally to this work.}

\author[1]{\fnm{Derui} \sur{Sheng}}\email{sdr@lsec.cc.ac.cn}

\author[1]{\fnm{Yahong} \sur{Xiang}}\email{xiangyahong2025@outlook.com}
\equalcont{These authors contributed equally to this work.}

\affil[1]{\orgdiv{Department of Applied Mathematics}, \orgname{The Hong Kong Polytechnic University}, \orgaddress{\street{Hung Hom}, \city{Kowloon}, \country{Hong Kong}}}

\affil[2]{\orgdiv{School of Mathematical Sciences}, \orgname{Eastern Institute of Technology}, \orgaddress{ \city{Ningbo}, \postcode{315200}, \state{Zhejiang}, \country{People's Republic of China}}}

%
%
%\affil[3]{\orgdiv{Department}, \orgname{Organization}, \orgaddress{\street{Street}, \city{City}, \postcode{610101}, \state{State}, \country{Country}}}

%%==================================%%
%% Sample for unstructured abstract %%
%%==================================%%

\abstract{
	In this paper, we present a novel semi-implicit numerical scheme for the stochastic Cahn--Hilliard equation driven by multiplicative noise. By reformulating the original equation into an equivalent  stochastic scalar auxiliary variable (SSAV) system, our method enables an efficient and stable treatment of polynomial nonlinearities in a semi-implicit fashion. In order to accurately capture the impact of stochastic perturbations, we carefully incorporate It\^o correction terms into the SSAV approximation. Leveraging the smoothing properties of the underlying semigroup  and the $H^{-1}$-dissipative structure of the nonlinear term, we establish  the optimal strong convergence order of one-half for the proposed scheme in the trace-class noise case. Moreover, we show that the modified SAV energy asymptotically preserves the energy evolution law. 
	Finally, numerical experiments are provided to validate the theoretical results and to explore the influence of noise near the sharp-interface limit.}

\keywords{Stochastic scalar auxiliary variable,  stochastic Cahn--Hilliard equation, strong convergence,  energy evolution law}

%%\pacs[JEL Classification]{D8, H51}

%%\pacs[MSC Classification]{35A01, 65L10, 65L12, 65L20, 65L70}

\maketitle

\section{Introduction}
The Cahn--Hilliard equation was initially introduced  to describe phase separation dynamics in binary metallic alloys \cite{cahn1958free}. 
By incorporating stochastic perturbations representing thermal fluctuations and random solute vibrations \cite{SL18}, the stochastic Cahn--Hilliard equation provides a more realistic description of microstructure evolution. {This stochastic formulation has been applied broadly, including to nucleation dynamics \cite{BSW16}, spinodal decomposition \cite{BMW01,MSW00}, coarsening phenomena \cite{DD16}, cell proliferation and adhesion \cite{KS08}, bubble motion \cite{BS20}, and related processes.}

In this paper,
we consider the following stochastic Cahn--Hilliard equation
\begin{equation}\label{model}
	\begin{aligned}
		d\phi(t) &= \Delta \mu(t)\,dt + g(\phi(t))\,dW(t), \qquad &&\text{in } \mathcal{O}\times(0,T],\\
		\mu(t) &:= \frac{\delta E(\phi(t))}{\delta \phi}
		= -\Delta \phi(t) + F'(\phi(t)), \qquad &&\text{in } \mathcal{O}\times(0,T],
	\end{aligned}
\end{equation}
%{\b add the t variable for this kind equation in the rest of the paper. this line still exceeds the margin limit}
subject to homogeneous Dirichlet boundary conditions $\phi = \mu = 0$ on $\partial\mathcal{O}\times(0,T]$ and the non-random initial condition $\phi(0)=\phi^0$.  The spatial domain $\mathcal{O}\subset \mathbb{R}^d$, $d\in\{1,2,3\}$, is a bounded domain with smooth boundary  or a bounded convex domain with polygonal boundary. The Dirichlet problem is physically relevant because it governs the propagation of a solidification front into an ambient medium at rest relative to the front \cite{DN91}; see also \cite{CH20,EL92}.
The unknown $\phi$ denotes the order parameter {for phase transitions}, and the chemical potential $\mu$ is defined as the functional derivative of the Ginzburg--Landau free energy functional
\begin{align}\label{energy_def}
	E(\phi(t)) = \int_{\mathcal{O}}\Big[\frac12|\nabla \phi(t,x)|^2+f(\phi(t,x))\Big]dx
\end{align}
where
$f$ is a quartic polynomial, typically chosen as the double-well potential $f(\phi)=\frac{1}{4}(\phi^2-1)^2$. In the sequel, $F'$ and $g$ denote the Nemitskii operators associated with $f'$ and a bounded function $\sigma:\mathbb R\to\mathbb R$, respectively, and $W$ is a spatially homogeneous Wiener process of trace class (see Assumptions \ref{assum2} and \ref{assum1}). For the results on well-posedness and {regularity} of \eqref{model}, we refer to 
\cite{cui2023wellposedness,da2004irregular,DN91} and references therein. In these studies, the energy evolution law serves as a fundamental tool (see, e.g., \cite{elezovic1991stochastic}), and has also been used to investigate the long-time dynamics, such as the existence of invariant measures \cite{da1996stochastic}, {ergodicity \cite{GM15}, and the existence} and strict positivity of the solution densities \cite{cardon2001cahn,CC02,CH20}, for the underlying system.

In recent years, the stochastic Cahn--Hilliard equation has become a focal point of numerical research, as the absence of closed-form analytical solutions necessitates the development of robust computational methods. A central challenge in numerical  discretizations for stochastic evolution equations lies in the treatment of time discretization \cite{PJ01}. In particular, the stochastic stability and convergence behave fundamentally differently from the deterministic case. For the stochastic Cahn--Hilliard equation with polynomial nonlinearities, a common approach to address stability is to use fully implicit time-stepping schemes; see, for example, \cite{CHS21,DN91,FKLL18,QW20} for additive noise and \cite{CH20,FLZ20,hong2024density,ZL22} for multiplicative noise.  Recently, using a truncation strategy, \cite{GoldysSoenjayaTran2026} developed a fully discrete scheme for a class of fourth-order stochastic partial differential equations driven by spatially smooth multiplicative noise, based on a partially implicit time-stepping combined with a mixed finite element method.
However, these schemes require solving a large nonlinear stochastic system at each step, which is computationally expensive in high dimensions or on fine meshes. Moreover, a unified error-analysis framework that combines numerical discretization and iteration errors is still lacking, leaving the overall simulation error unclear.

Another popular numerical approach to solve the stochastic Cahn--Hilliard equation is via explicit or semi-implicit schemes, which aims to improve efficiency while maintaining stability, often through structure-preserving or tamed strategies. For example, tamed-exponential Euler methods have been proposed for \eqref{model} in the additive-noise setting \cite{brehier2022weak}, but they may fail to capture  the energy evolution law. Splitting-based structure-preserving schemes offer another option and are effective for the stochastic Allen--Cahn equation \cite{BCH19,BG19}; however, they are ill-suited to the stochastic Cahn--Hilliard equation because  the corresponding subsystems may not be explicitly solvable. A further alternative is the stochastic scalar auxiliary variable (SSAV) approach, inspired by scalar auxiliary variable (SAV) methods for deterministic gradient flows \cite{shen2018convergence,shen2018scalar}. SSAV techniques have been successfully adapted to stochastic models, including stochastic nonlinear Klein--Gordon \cite{cui2025stochastic} and Allen--Cahn equations \cite{metzger2024convergent,metzger2025strong}. For the stochastic Cahn--Hilliard equation with dynamic boundary conditions, \cite{metzger2023convergent} proposed an SAV-based finite element scheme and proved convergence in distribution to suitable weak solutions, but without establishing strong convergence. 

It is known that the strong convergence order of numerical scheme is a key measure of pathwise accuracy and is crucial for the complexity analysis of Monte Carlo and multilevel Monte Carlo methods \cite{CM08}. Nonetheless, to the best of our knowledge, no explicit or semi-implicit scheme has been shown to achieve strong convergence while preserving the energy evolution law for the stochastic Cahn--Hilliard equation with non-globally Lipschitz nonlinearities. This gap motivates the present work.

The  numerical analysis of the SAV-based scheme for \eqref{model}  faces several challenges. First, unlike the deterministic case, the energy evolution is governed by the complex interplay between random noise and the nonlinear potential, which complicates the construction of the SSAV framework. Capturing the energy evolution
demands careful control over the approximations of both the solution and its gradient. Second, the low temporal regularity of the Wiener process prevents the direct application of standard SAV formulations to stochastic systems, necessitating appropriate adaptations. Third, in contrast to the stochastic Allen--Cahn equation, the strong convergence analysis for \eqref{model} is further complicated by the presence of an unbounded linear operator acting on the nonlinearity, which introduces additional technical difficulties in establishing  error estimates.
% Firstly, the evolution of energy is governed by the interplay between random noise and a nonlinear potential and it does not necessarily exhibit the monotonic dissipation observed in deterministic systems. Preserving the energy evolution law, either exactly or asymptotically, is non-trivial, as it requires accounting for the weak approximation of not only the solution itself but also its gradient.
% Second, due to the low temporal regularity of the Wiener process, the standard SAV scheme does not directly apply to the stochastic system in general.  
% Last, compared with the case of stochastic Allen--Cahn equations, significant difficulties arise in the strong convergence analysis of numerical scheme for \eqref{model} due to the presence of an unbounded linear operator in front of  the nonlinearity.

To address these challenges, we  introduce an SSAV
$
r(t): = \sqrt{E_{\textup{p}}(\phi(t))},
$ where $E_{\textup{p}}$ is the potential functional (see \eqref{def:ssav}), and reformulate  \eqref{model} into the following SSAV system 
\begin{equation}\label{SAVmodel-intro}
	d\phi(t) = -A^2\phi(t) dt  +\frac{r(t)}{\sqrt{E_{\textup{p}}({\phi(t)})}}A {F'}(\phi(t))  dt+g(\phi(t))d W(t), %\label{SAVmodel_1-intro}  
	%\\   
	% \label{eq:r(tn+1)-intro}
	% d r(t)  &= \frac{1}{2\sqrt{E_{\textup{p}}({\phi(t)})}} \left\langle{F'}(\phi(t)), d\phi(t) \right\rangle \\
	% &\quad - \frac{1}{8E_{\textup{p}}(\phi(t))^{3/2}}
	% \sum_{k=1}^{\infty} \left\langle {F'}(\phi(t)),g(\phi(t))Q^{\frac 12}e_k\right\rangle ^2
	% dt \notag\\
	% &\quad + \frac{1}{4\sqrt{E_{\textup{p}}(\phi(t)) }} \sum_{k=1}^{\infty}\left\langle F''(\phi(t))g(\phi(t))Q^{\frac 12}e_k,g(\phi(t))Q^{\frac 12}e_k\,\right\rangle\  dt\notag
\end{equation}
where $A$ is the Laplacian equipped with homogeneous  Dirichlet boundary conditions. 
By freezing the drift coefficient $\frac{r(t)}{\sqrt{E_{\textup{p}}({\phi(t)})}}{F'}(\phi(t))$ of \eqref{SAVmodel-intro} over each  subinterval $[t_n,t_{n+1}]$ by  a modified nonlinearity $\tilde{f}^n$ (to be specified later), we  propose the following exponential Euler SSAV scheme \begin{align}\label{schme_phi-intro}
	X^{n+1} = e^{-A^2\tau}X^n
	+ (I - e^{-A^2\tau})A^{-1}\tilde{f}^n +e^{-A^2\tau}g(X^n)\delta W^n.
\end{align}
Here, $\{t_n=n\tau\}_{n=0}^N$ denotes the temporal grid with the step size $\tau=T/N$, $N\in\mathbb{N}^+$, and $\delta W^n=W(t_{n+1})-W(t_n)$ is the Wiener increment.

The modified nonlinearity $\tilde f^n$ has to be carefully designed, as it involves the numerical approximation of the SSAV $r(t)$ whose evolution is intrinsically different from that of SAV in the deterministic setting (see, e.g., \cite{shen2018scalar}).
This can be observed in the following asymptotic expansion 
% Let $\{t_n=n\tau\}_{n=0}^N$ denote the temporal grid with the step size $\tau=T/N$, $N\in\mathbb{N}^+$.} Then
% by discarding terms of order $o(\tau)$, the SSAV satisfies that for any $n=0,1,\cdots,N-1$,
\begin{align}\label{SSAVr_TE-intro}
r(t)  &=r(s)+\frac{1}{2\sqrt{E_{\textup{p}}({\phi(s)})}}\left\langle F'(\phi(s)), \phi(t)-\phi(s) \right\rangle \\
&\quad - \frac{1}{8(E_{\textup{p}}(\phi(s)))^{3/2}}
\left\langle F'(\phi(s)),\phi(t)-\phi(s) \,\right\rangle^2
\notag\\\notag
&\quad + \frac{1}{4\sqrt{E_{\textup{p}}(\phi(s)) }} \left\langle F''(\phi(s))(\phi(t)-\phi(s)),\phi(t)-\phi(s) \,\right\rangle+o(|t-s|)
\end{align}
due to the It\^o--Taylor expansion {\cite[Section 5.5]{KP92} and low regularity structure of $W$.  Since \eqref{SSAVr_TE-intro} contains two additional It\^o correction terms,
the direct usage of It\^o--Taylor method for \eqref{SSAVr_TE-intro} destroys the linear structure of the SSAV system, making the resulting scheme cannot be solved explicitly.} %{\r Are you sure that this statement is rigorous?}
Inspired by \cite{metzger2023convergent}, we design the discrete SSAV as follows
%to involve both the increment of the numerical solution and the Wiener increment 
\begin{align}\label{scheme_r_sto-intro}
r^{n+1} &=r^n + \frac{1}{2\sqrt{E_{\textup{p}}(X^n) }} \left\langle F'(X^n),X^{n+1} - X^n\,\right\rangle\\
&\quad- \frac{1}{8(E_{\textup{p}}(X^n) )^{3/2}}
\left\langle F'(X^n),g(X^n)\delta W^n\,\right\rangle 
\left\langle F'(X^n),X^{n+1} - X^n\,\right\rangle \notag\\
&\quad + \frac{1}{4\sqrt{E_{\textup{p}}(X^n) }} \left\langle F''(X^n)(X^{n+1} - X^n),
g(X^n)\delta W^n\right\rangle\notag,
\end{align}
which maintains the scheme's linear structure while ensuring consistency with the It\^o formula. Accordingly, the nonlinearity $\tilde{f}^n$ is then determined via (see \eqref{fn})
\begin{align}\label{lemma1_scheme_r_sto_before-intro}
{ 2r^{n+1} ( r^{n+1}-r^{n})}
&=\langle \tilde{f}^n, X^{n+1}-X^{n}\rangle.
\end{align}
Although replacing $X^{n+1}-X^n$ by $g(X^n)\delta W^n$ in \eqref{scheme_r_sto-intro} also gives an explicit update for the discrete SSAV, it fails to fulfill the SAV structure \eqref{lemma1_scheme_r_sto_before-intro} (see Remark \ref{modify_reason}).
We would also like to mention that since the proposed SAV formulation is fundamentally energy-based, it may not be suitable for the stochastic evolution equation driven by space--time white noise (see, e.g., \cite{CC02,CHS21}). %{\r \cite{} some ref for SCH driven by space--time white noise}.

To overcome the difficulties arising from the superlinear nonlinearity, two key components are essential: obtaining regularity estimates of the numerical solution $X^n$, and a careful quantification of  the difference between the square root of the discrete potential energy 
$\sqrt{E_{\textup{p}}(X^n)}$ and the discrete SSAV $r^n$. These elements are crucial for establishing the sharp strong convergence rate of the proposed scheme  \eqref{schme_phi-intro}. On the one hand, 
the special algebraic structure \eqref{lemma1_scheme_r_sto_before-intro} enables us to control the polynomial growing drift term $\tilde{f}^n$, which in turn allows us to prove that the scheme \eqref{schme_phi-intro} is unconditionally stable in $H^1(\mathcal{O})$ (see {Lemma \ref{lemma_stability-pre}}). 
As a consequence, we can show the averaged energy evolution law of the exponential Euler SSAV scheme \eqref{schme_phi-intro}
(see Theorem~\ref{Sec:energy:pro}), which captures the effects of noise at a discrete level and asymptotically preserves the averaged energy evolution law of \eqref{model}.
Then, using the semigroup approach, we lift the regularity of the numerical solution to the 
$H^{\beta}(\mathcal{O})$ 
space for some $\beta>\frac{d}{2}$, yielding the $L^\infty(\mathcal{O})$-stability of the numerical solution.

On the other hand,  by utilizing the temporal H\"older continuity estimates of the numerical solution in both the 
$L^2(\mathcal{O})$ and $H^1(\mathcal{O})$ norms,
we prove that  (see Lemma \ref{lemma3})
\begin{equation*}
\mathbb{E}\left[|r^{n}-\sqrt{E_{\textup{p}}( X^{n})}|^p\right]\leq C(p)\tau^{\frac{p}{2}},\qquad p\ge1.
\end{equation*} 
As a comparison,   \cite[Lemma~6.4]{MS24} presented a convergence order of $1/8$ for this SAV quantity under dynamic boundary conditions.  
Furthermore, by exploiting the one-sided Lipschitz continuity of $-F'$ in $L^{2}(\mathcal{O})$ and the local Lipschitz continuity of $F'$ in $H^1(\mathcal{O})$, %{\r this means that we have $H^3$ regularity?},
we establish the following sharp strong convergence rate  (see Theorem~\ref{Thm3_main})
\begin{align*}
\mathbb{E}\left[\|\phi(t_n)-X^{n}\|^2\right]\leq C \tau,
\end{align*}
based on a coupling of variational and semigroup approaches. {The strong convergence order $1/2$ is optimal in the sense that it coincides with the temporal H\"older continuity exponent of the exact solution to \eqref{model}.}

In summary, the main contributions of this work are as follows:
\begin{enumerate}
	\item We develop a novel semi-implicit time discretization framework for the stochastic Cahn--Hilliard equation with multiplicative noise by integrating the exponential Euler method with the SAV approach. The resulting scheme is iteration-free and unconditionally stable.
	
	\item We establish the optimal strong convergence rate of the proposed scheme through a coupling of variational and semigroup approaches. This hybrid analytical framework not only overcomes the technical challenges posed by non-globally Lipschitz nonlinearity and multiplicative noise, but also can be extended to related models, such as the stochastic Allen--Cahn equation.
	
	\item We prove that the proposed scheme asymptotically preserves the averaged energy evolution law of the underlying continuous system, thereby revealing a deeper consistency between the discrete and continuous dynamics. Numerical experiments validate this theoretical finding and further demonstrate the divergence of the averaged energy evolution law for the standard SAV scheme.
	\end{enumerate}

	The remainder of this paper is organized as follows. In Section~\ref{section2}, we present the preliminaries and main results. Section~\ref{sec:Con} introduces the construction of the exponential Euler SSAV scheme \eqref{schme_phi-intro} and demonstrates its explicit solvability. In Section~\ref{sec3:regularity}, we establish regularity estimates for the numerical solution, which form the basis for the proofs of the main results given in Sections~\ref{section_converge} and~\ref{Sec:energy}. Finally, Section~\ref{section5} presents several numerical experiments that confirm both the accuracy and  efficiency of the proposed scheme,  as well as its ability to capture the sharp-interface dynamics. For the sake of completeness, the appendix provides proofs of several auxiliary results used in the paper.

	\section{Preliminaries and main results}\label{section2}

	In this section, we present the preliminaries and state the main result of this paper. In Subsection~\ref{subsec:Pre}, we specify the main assumptions on the coefficients \(F'\) and \(g\), as well as on the driving Wiener process \(W\), under which the stochastic Cahn--Hilliard equation \eqref{model} admits a unique mild solution. In Subsection~\ref{subsec:MR}, we present the optimal strong convergence order and the energy evolution law of the proposed scheme~\eqref{schme_phi-intro}.

	%
	%first introduce the main assumptions on the coefficients \( F' \) and \( g \), as well as on the driving Wiener process \( W \), under which the stochastic Cahn--Hilliard equation \eqref{model} admits a unique mild solution. Then we present the main results in subsection \ref{}.
	%
	%
	%to effectively simulate the mild solution to \eqref{model}, {\color{blue}we propose a semi-implicit, unconditionally stable scheme for \eqref{model} based on the SSAV approach, and establish the explicit solvability, 
%	
%	
%	optimal strong convergence order, and the energy evolution law of the numerical solution.}
%

We begin by introducing some useful notations. 
%{\r Let $a\vee b:=\max\{a,b\}$ for $a,b\in\mathbb{R}$.} 
For $p\in[1,\infty)$, denote 
by \(L^p(\mathcal{O}) \) the space of $p$th integrable functions  defined on $\mathcal{O}$, equipped with the norm $\|v\|_{L^p} :=(\int_{\mathcal{O}} |v(x)|^p \, dx)^{1/p}$ for $p\in[1,\infty)$. For $p=\infty$,  $L^\infty$ consists of measurable functions $v$ such that $\|v\|_{L^\infty}:= \operatorname{ess\,sup}_{x \in \mathcal{O}} |v(x)|<\infty$.
When \( p = 2 \), we write \( H := L^2(\mathcal{O}) \) and \( \|\cdot\| := \|\cdot\|_{L^2} \) for brevity. The symbol \( \langle \cdot, \cdot \rangle \)  represents the inner product in \( H \). Let \( \mathcal{L}(H) \) be the space of bounded linear operators from the Hilbert space \( H \) to itself. Given two Hilbert spaces $U$ and $V $, we define \( \mathcal{L}_2(U, V) \) as the space of Hilbert--Schmidt operators from \( U \) to \( V \), equipped with the norm
%\[
$\|\Psi\|_{\mathcal{L}_2(U,V)} := \left( \sum_{k=1}^{\infty} \|\Psi  f_k\|_{V}^2 \right)^{1/2}$ for $\Psi\in \mathcal{L}_2(U,V)$,
%\]
where \( \{f_k\}_{k \in \mathbb{N}^{+}} \) is a complete orthonormal basis of \( U \). 
Throughout the paper, \( C \) denotes a generic positive constant that may vary from one occurrence to another and is independent of the discretization parameter. This constant may depend on the terminal time \( T \), the initial value \( \phi^0 \), the covariance operator of $W$, and the coefficients \( F' \) and \( g \), but such dependence is not made explicit for simplicity.

\subsection{Preliminaries}\label{subsec:Pre}In this subsection, we introduce the main assumptions,  mild formulation, and averaged energy evolution law for the stochastic Cahn--Hilliard equation \eqref{model}.
% {\b at somewhere (maybe after the main result) we should mention why we choose this domain? My thinking: 1. for the Lipschitz domain, our temporal scheme still works for trace class noise case. 2. disadvantage, spectral Galerkin method may be not good choice, FDM or FEM may be better, the analysis needed to be modifed corresponding.}

% {\b the below is strange, why the domain is different from $[0,1]^d$?}

Since the spatial domain $\mathcal{O}\subset \mathbb{R}^d$, $d\in\{1,2,3\}$, is a bounded domain with smooth boundary $\partial\mathcal{O}$ or a bounded convex domain with polygonal boundary,  by \cite[Section 2.3]{KR14}, the Dirichlet Laplacian $A:D(A)\subset H\to H$ is densely defined, self-adjoint and positive definite with compact inverse. Hereafter, $D(\cdot)$ denotes the domain of an operator.
Then
%{\r Recall that \(A\) denotes} the Laplace operator on \(\mathcal O=(0,1)^d\), $d\in\{1,2,3\}$, with homogeneous Dirichlet boundary conditions.
there exists a non-decreasing sequence $\{\lambda_j\}_{j\ge1}\subset (0,\infty)$ and a complete orthonormal basis  \(\{e_j\}_{j\ge1}\subset D(A)\) of \(H\) such that 
$-Ae_j=\lambda_j e_j$  and $\lim_{j\to\infty}\lambda_j=\infty$.
For \(\alpha\in\mathbb{R}\), define the fractional power $(-A)^{\alpha}$ of the operator $-A$ by
$
(-A)^{\alpha}v=\sum_{j=1}^{\infty}\lambda_j^{\alpha}\,\langle v,e_j\rangle\,e_j
$
with the domain $D((-A)^{\alpha}):=\{v=\sum_{j=1}^\infty \langle v,e_j\rangle e_j:\sum_{j=1}^{\infty}\lambda_j^{2\alpha}\,\langle v,e_j\rangle^2<\infty\}$.
%For simplicity, we take $\mathcal{O}=(0,1)^d$ the unit cube in $\mathbb R^d$.
It is well known that \(-A^{2}\) generates an  \(C_{0}\)-semigroup \(\{S(t)=e^{-tA^{2}},t\ge 0\}\) of contractions on \(H\) (see, e.g., \cite{CH20}), i.e.,
$\|S(t)\|_{\mathcal{L}(H)}\leq 1$ for all $t\ge0$.
Furthermore, $\{S(t), t>0\}$ has the following smoothing effect (see, e.g., \cite[Lemma B.9]{KR14})
\begin{align}
\left\|(-A)^{\gamma}S(t)\right\|_{\mathcal{L}(H)}&\leq C(\gamma)t^{-\frac{\gamma}{2}},\qquad \gamma\geq 0,\label{assum1_eq1}\\
\left\|(-A)^{-\nu}(I-S(t))\right\|_{\mathcal{L}(H)}&\leq C(\nu)t^{\frac{\nu}{2}},\qquad\nu\in[0,2].\label{assum1_eq2}
\end{align}

For $s>0$, let $H^s(\mathcal{O})$ denote the standard Sobolev space with the norm $\|\cdot\|_{H^s}$, and for $s>\frac12$, $\dot{H}^s(\mathcal{O}):=\{v\in H^s(\mathcal{O}): v=0 \text{ on }\partial\mathcal{O}\}$.
According to \cite[Theorems 16.12 \& 16.13]{yagi2009abstract}, for $s\in(0,\tfrac12)$ the Sobolev norm $\|\cdot\|_{H^s}$ is equivalent to 
$\|(-A)^{\frac{s}{2}}\cdot\|$ on $H^s(\mathcal{O})$, 
and for $s\in(\tfrac12,\tfrac32)\cup(\tfrac32,2)$ the Sobolev norm $\|\cdot\|_{H^s}$ is equivalent to 
$\|(-A)^{\frac{s}{2}}\cdot\|$ on $\dot{H}^s(\mathcal{O})$. %{\color{red} question: what is your notation for fractional interpolation sobolev space? do we need it?}%{\color{blue}We use $D((-A)^{\frac s2})$ as the fractional interpolation sobolev space.}
We will also frequently use the 
equivalence between $\|\cdot\|_{H^1}$ and $\|\nabla \cdot\|$ on $\dot{H}^1(\mathcal{O})$, namely, there exists a constant $C>0$ such that
$C^{-1}\|\nabla u\|\le  \|u\|_{H^1}\le C  \|\nabla u\|$ for all $u\in \dot{H}^1(\mathcal{O})$, due to the Poincar\'e inequality. 
We note that the numerical analysis of the proposed scheme also extends to \eqref{model} with homogeneous Neumann boundary conditions by a slight modification. In this setting, one can first extract the constant mode to account for the kernel of the Neumann Laplacian, and then carry out the analysis on the mean-zero subspace (see, e.g., \cite{CHS21,QW20}).

Next, we specify the main assumptions on the drift and diffusion coefficients, as well as on the driving Wiener process.

%Our analysis relies on the following assumption for the drift term $F'$, with the typical example $F'(\phi)=\phi^3-\phi$ in mind.

\begin{assumption}\label{assum2}
Let 
$f(\xi)=c_1\xi^4+c_2\xi^3+c_3\xi^2$ with $c_1>0$ and $c_2,c_3\in\mathbb{R}$.
Assume that $F':L^{6}(\mathcal{O})\to H$ is the Nemytskii operator associated with $f':\mathbb{R}\to\mathbb{R}$, i.e.,
$F'(v)(x)= f'(v(x)), x\in\mathcal{O}$ for $v\in L^6(\mathcal{O})$.
\end{assumption}
%\begin{remark}\label{rem-coer} {\r better to remove the remark environment}

In the sequel, let $\phi^0\in \dot{H}^1(\mathcal{O})$ be non-random, under which
\begin{align}\label{pro:eq:phi0}
E(\phi^0) =\frac12\|\nabla\phi^0\|^{2}+\int_{\mathcal{O}}f(\phi^0(x))dx\le \frac12\|\nabla\phi^0\|^2+C\left(\|\phi^0\|_{L^4}^{4}+1\right),
\end{align}
in view of Assumption \ref{assum2} and the Sobolev embedding $H^1(\mathcal{O})\hookrightarrow L^4(\mathcal{O}) $.
Under Assumption \ref{assum2}, there exists a positive constant $L_f$ such that $-f^{\prime\prime}\le L_f$, which implies 
\begin{align}
-\langle F'(u)-F'(v),\,u-v\rangle &\le \,L_f\,\|u-v\|^{2}, \quad {u,v\in  L^{6}(\mathcal{O})}.\label{assum2_eq1}
\end{align}
It follows from the quadratic growth of $f^{\prime\prime}$ that
\begin{equation}\label{assum2_eq2}
\|F'(u)-F'(v)\| \le C\bigl(1+\|u\|_{L^\infty}^{2}+\|v\|_{L^\infty}^{2}\bigr)\,\|u-v\|, 
\quad {u,v\in L^\infty(\mathcal{O})}.
\end{equation}
Moreover, the potential energy functional 
$\int_{\mathcal{O}} f(v(x))dx$,  is coercive in $L^4(\mathcal{O})$; that is, there are positive constants $c$ and $\Theta$ such that
\begin{equation}\label{eq-coer}
\int_{\mathcal{O}} f(v(x))dx +\Theta\ge c\int_{\mathcal{O}}(|v(x)|^4+1)dx,\qquad v\in L^{4}(\mathcal{O}).
\end{equation}
%{\r why not $\int_{\mathcal{O}} f(v(x))dx +\Theta\ge c\int_{\mathcal{O}}|v(x)|^4dx$ for all $v\in L^{4}(\mathcal{O})$?}
%\end{remark}

Let \( W = \{W(t)\}_{t \geq 0} \)  in \eqref{model} be a \( Q \)-Wiener process defined on a  complete filtered probability space \( (\Omega, \mathcal{F}, \{\mathcal{F}_t\}_{t \geq 0}, \mathbb{P}) \), which admits the Karhunen--Lo\`eve expansion
$
W(t) = \sum_{k=1}^\infty Q^{1/2} e_k \beta_k(t)$ for $t\geq 0$.
Here \( \{\beta_k\}_{k \in \mathbb{N}^{+}} \) is a sequence of independent real-valued standard Brownian motions. Assume that $Q\in\mathcal{L}(H)$ satisfies $Qe_k=\mathsf{q}_ke_k$ for $k\ge1$, where $\mathsf{q}_k\ge0$. Denote by $Q^{1/2} H$ the image of $Q^{1/2}$ on $H$ endowed with the inner product $(v,w)_0:=\langle Q^{-\frac12}v ,Q^{-\frac12}w\rangle$ for $v,w\in Q^{1/2} H$, where $Q^{-\frac12}$ is the pseudo inverse of $Q^{\frac12}$.
%{\r pseduo inverse or inverse here?}
Then $\{Q^{1/2}e_k\}_{k=1}^\infty$ forms a complete orthonormal basis of $Q^{1/2} H$. 
We make the following assumption on the diffusion term of \eqref{model}.

\begin{assumption}\label{assum1}
The mapping \( g: H \to {\mathcal{L}_2^0}:={\mathcal{L}_2( Q^{1/2} H,H)} \) is defined by
%\begin{align}\label{gphi}
$(g(v)w)(x) := \sigma(v(x)) w(x)$ for $v\in H$ and $w \in Q^{1/2}H$,
%\end{align}
where \( \sigma: \mathbb{R} \to \mathbb{R} \) is a bounded and continuously differentiable function {with bounded derivative}. Moreover, 
$
\sum_{k=1}^{\infty} \|Q^{\frac{1}{2}} e_k\|^2_{L^{\infty}} + \sum_{k=1}^{\infty} \|\nabla (Q^{\frac{1}{2}} e_k)\|^2 \le C
$ for some $C>0$.
\end{assumption}

%{\r do we need use remark environment here?}
%\begin{remark}
The boundedness of $\sigma'$ implies that the mapping $g:H\to{\mathcal{L}_2^0}$ is Lipschitz continuous, i.e.,
\begin{align}\label{assume_Lip}
\|{g}({v})-{g}({w})\| _{\mathcal{L}_2^0} \leq C\|{v}-{w}\|, \qquad v,w\in H. 
\end{align}
In addition, under Assumption \ref{assum1}, 
$g:\dot{H}^1(\mathcal{O})\to \mathcal{L}_2(Q^{1/2}H,\dot{H}^1(\mathcal{O}))$ exhibits linear growth. %{\r what is $\dot H^1$?}
Indeed, by the boundedness of $\sigma$ and $\sigma'$, as well as the chain rule, for any $v\in \dot{H}^1(\mathcal{O})$,
\begin{align}\label{lema1_eq_R6-new}
&\|g(v)\|^2_{\mathcal{L}_2(Q^{\frac12}H,\dot{H}^1(\mathcal{O}))}=\sum_{k=1}^\infty\|(-A)^{\frac12}(g(v)Q^{\frac12}e_k)\|^2 \le C\sum_{k=1}^\infty\|\nabla(g(v)Q^{\frac12}e_k)\|^2\\\notag
&\le C\sum_{k=1}^\infty\|\sigma(v)\|_{L^\infty}^2\|\nabla(Q^{\frac12}e_k)\|^2+C\sum_{k=1}^\infty\|\sigma'(v)\|_{L^\infty}^2\|\nabla v\|^2\|Q^{\frac12}e_k\|_{L^\infty}^2\\\notag
&\le C(1+\|\nabla v\|^2).
\end{align}
%	Here in the second identity, we have also used $\|(-A)^{\frac12}\phi\|^2=\int_{\mathcal{O}}|\nabla\phi|^2dx$.
%	
%	
%	{\r where you use it?}{\color{blue}$\|(-A)^{\frac12}\phi\|^2=\langle -A\phi,\phi\rangle=\langle -\Delta\phi,\phi\rangle=\int_{\mathcal{O}}|\nabla\phi|^2dx$},
This property will play a crucial role in establishing the unconditional stability 
%and energy evolution law  %{\b why is important in the evolution law? it is inequality?}
of the numerical solution in $H^1(\mathcal{O})$ (see the proof of {Lemma \ref{lemma_stability-pre}} for details).
%\end{remark}

Under Assumptions \ref{assum2} and \ref{assum1}, the stochastic Cahn--Hilliard equation \eqref{model} admits a unique mild solution $\phi=\{\phi(t),t\in[0,T]\}$ given by (see, e.g., \cite{cardon2001cahn,cui2023wellposedness})
\begin{align*}%\label{mid}
\phi(t) = S(t)\phi^0+\int_{0}^{t} S(t-s)A{F'}(\phi(s))ds+\int_{0}^{t} S(t-s)g(\phi(s))dW(s).
\end{align*}

\begin{proposition}\label{App_prop}
Let Assumptions \ref{assum2} and \ref{assum1} hold, and let $\phi^0\in \dot{H}^{\beta}(\mathcal{O})$ for some $\beta\in[1,2)$.
%{\b why not $\beta=[1,2)$?}
Then for any $p\ge 1$, there exists a constant $C:=C(p,\beta)>0$ such that
\begin{align}\label{eq:phi(t)beta}
	\mathbb{E}\bigg[\sup_{t\in[0,T]}\|\phi(t)\|_{H^{\beta}}^{p}\bigg] \le C.
\end{align}

\end{proposition}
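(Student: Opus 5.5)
The plan has two stages. First I establish a uniform energy bound in $H^1(\mathcal{O})$ using the Lyapunov structure of the equation. Then I bootstrap to $H^\beta(\mathcal{O})$ through the mild formulation and the smoothing estimates \eqref{assum1_eq1}--\eqref{assum1_eq2}.

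\textbf{Energy estimate.} I apply Itô's formula to $E(\phi(t))$. To justify this rigorously I would work with a spectral Galerkin approximation and pass to the limit at the end. The Itô formula gives
\begin{align*}
dE(\phi)=-\|\nabla\mu\|^2dt+\langle\mu,g(\phi)dW\rangle+\tfrac12\sum_k\Big(\|\nabla(g(\phi)Q^{\frac12}e_k)\|^2+\langle F''(\phi)g(\phi)Q^{\frac12}e_k,g(\phi)Q^{\frac12}e_k\rangle\Big)dt .
\end{align*}
The correction terms are handled as follows:
\begin{itemize}
\item The gradient correction is bounded by $C(1+\|\nabla\phi\|^2)$, using \eqref{lema1_eq_R6-new}.
\item The $F''$ correction is bounded by $C(1+\|\phi\|_{L^2}^2)$. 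This uses $|f''(\xi)|\le C(1+\xi^2)$, the boundedness of $\sigma$, and $\sum_k\|Q^{\frac12}e_k\|_{L^\infty}^2\le C$.
\end{itemize}
Both bounds are $\le C(1+E(\phi))$ by \eqref{eq-coer}.

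For the martingale term I write $\langle\mu,g(\phi)Q^{\frac12}e_k\rangle=\langle\nabla\phi,\nabla(g(\phi)Q^{\frac12}e_k)\rangle+\langle F'(\phi),g(\phi)Q^{\frac12}e_k\rangle$. This integration by parts is legitimate because $g(\phi)Q^{\frac12}e_k$ vanishes on the boundary whenever $\phi$ does. Its quadratic variation density is then at most $C(1+E(\phi))^2$, again via the $L^\infty$ bound on $Q^{\frac12}e_k$ and $\|F'(\phi)\|_{L^1}\le C(1+\|\phi\|_{L^3}^3)$.

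Taking the $p$th power and applying the Burkholder--Davis--Gundy inequality, followed by Young's inequality and Gronwall's lemma, yields
\[
\mathbb E\big[\sup_{t\in[0,T]}E(\phi(t))^p\big]\le C(p).
\]
Combined with \eqref{pro:eq:phi0} and \eqref{eq-coer}, this gives $\mathbb E\sup_t\|\phi(t)\|_{H^1}^{p}\le C$, which is the case $\beta=1$.

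\textbf{Bootstrap to $H^\beta$.} For $\beta\in(1,2)$ I use the mild formulation and measure norms through $\|(-A)^{\beta/2}\cdot\|$. This requires $\beta\neq 3/2$; the value $\beta=3/2$ follows by monotonicity of the norms from a slightly larger $\beta$.
\begin{itemize}
\item \emph{Initial term.} It is bounded by $\|\phi^0\|_{H^\beta}$.
\item \emph{Drift term.} By \eqref{assum1_eq1}, $\|(-A)^{\beta/2}S(t-s)AF'(\phi)\|\le C(t-s)^{-\frac{\beta+2}{4}}\|F'(\phi)\|$. Here $\|F'(\phi)\|\le C(1+\|\phi\|_{L^6}^3)\le C(1+\|\phi\|_{H^1}^3)$ by the Sobolev embedding in $d\le3$. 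The exponent satisfies $\frac{\beta+2}{4}<1$ exactly when $\beta<2$, so the kernel is integrable and the term is bounded uniformly in $t$, pathwise.
\item \emph{Stochastic convolution.} I need a supremum-in-time bound in $\dot H^\beta$. Here the integrand is at most $\dot H^1$-valued: by \eqref{lema1_eq_R6-new}, $\|g(\phi)\|_{\mathcal L_2(Q^{1/2}H,\dot H^1)}\le C(1+\|\nabla\phi\|)$. I would use the factorization method of Da Prato--Kwapień--Zabczyk. With $Y_\alpha(s)=\int_0^s(s-r)^{-\alpha}S(s-r)g(\phi(r))dW(r)$, the BDG inequality gives $\mathbb E\|(-A)^{\beta/2}Y_\alpha(s)\|^p$ controlled by $\big(\int_0^s(s-r)^{-2\alpha-\frac{\beta-1}{2}}dr\big)^{p/2}$. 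This is finite when $2\alpha+\frac{\beta-1}{2}<1$, which is possible since $\beta<3$. The deterministic map $Y_\alpha\mapsto\int_0^t(t-s)^{\alpha-1}S(t-s)Y_\alpha(s)ds$ is then bounded into $C([0,T];\dot H^\beta)$ for $p>1/\alpha$, giving $\mathbb E\sup_t$ bounds for large $p$. Smaller $p$ follow by Hölder's inequality.
\end{itemize}

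\textbf{Main obstacle.} I expect the main difficulty to be the rigorous energy estimate. The Itô correction $\langle F''(\phi)g\,Q^{\frac12}e_k,g\,Q^{\frac12}e_k\rangle$ must be controlled, which is exactly where the $L^\infty$ summability of $Q^{\frac12}e_k$ in Assumption \ref{assum1} is needed. The Itô formula must also be justified for a functional that is not $C^2$ on $H$. I would first try the Galerkin route with uniform bounds and a lower semicontinuity argument, and otherwise cite the well-posedness references for the $H^1$ moment bound. The remaining steps are standard semigroup bookkeeping.
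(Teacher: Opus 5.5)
Your proposal is correct. Its two-stage architecture matches the paper's: an energy bound, then smoothing of the mild formulation with factorization for the stochastic convolution. The difference lies in how you close the energy estimate.

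\textbf{The paper's route.} It proceeds in three steps.
\begin{enumerate}
\item It applies It\^o's formula to $\|\phi\|^2$ to get $\mathbb{E}\|\phi(t)\|^{2p}+\mathbb{E}\big(\int_0^t\|A\phi(s)\|^2ds\big)^p\le C$.
\item It uses the energy law \eqref{dissi_conti}. The gradient It\^o correction is bounded by $C(1+\|A\phi\|^2+\|\phi\|^2)$. The martingale is bounded through $\|\mu\|_{L^1}$ together with the Gagliardo--Nirenberg bound $\|F'(\phi)\|_{L^1}^2\le C(1+\|\phi\|_{H^2}^2+\|\phi\|^{18})$. Everything is then controlled by the Step~1 dissipation, with no Gronwall argument on $E$. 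The result is only $\sup_t\mathbb{E}[E(\phi(t))^p]\le C$.
\item It invokes Lemma~\ref{lemma_tool}. This uses H\"older's inequality in time for the drift, so it needs only $\int_0^T\mathbb{E}\|F'(\phi(s))\|^p\,ds<\infty$. It treats the stochastic convolution with $g$ merely $\mathcal{L}_2^0$-valued.
\end{enumerate}

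\textbf{Your route.} You bound the It\^o corrections and the quadratic variation by powers of $1+E+\Theta$ and run BDG, Young and Gronwall. This removes the need for the $L^2$ step and gives the stronger bound $\mathbb{E}\sup_t(E+\Theta)^p\le C$. That supremum bound is what lets you estimate the drift convolution pathwise.

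Your use of the $\dot H^1$-valued bound \eqref{lema1_eq_R6-new} in the factorization is harmless but unnecessary. Since $\beta<2$, the kernel $(u-s)^{-2(\beta/4+\varrho)}$ is already integrable with only the $\mathcal{L}_2^0$ bound on $g$.

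Both routes rest on the same formal energy identity \eqref{dissi_conti}. The paper also omits the approximation argument behind it.

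\textbf{Small corrections.}
\begin{itemize}
\item Run the Gronwall argument on $E+\Theta\ge 0$, not on $E$, which can be negative when $c_3<0$.
\item $g(\phi)Q^{1/2}e_k$ vanishes on $\partial\mathcal{O}$ because $Q^{1/2}e_k=\mathsf{q}_k^{1/2}e_k\in D(A)$ does. It is not because $\phi$ does, since $\sigma(0)\neq 0$ in general.
\item Your patch for $\beta=3/2$ via a slightly larger $\beta$ does not cover the term $S(t)\phi^0$ when $\phi^0$ is only in $\dot H^{3/2}$. The drift and stochastic convolution terms are fine for every $\beta'<2$, so only that term needs a separate argument. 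The paper does not address this point either.
\end{itemize}
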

{We include the proof of Proposition \ref{App_prop} in Appendix \ref{proof-App_prop}  for the completeness}.
{Based on a standard finite-dimensional approximation argument (see e.g., \cite[section 2.3]{da1996stochastic}) and applying It\^o's formula, one can see that  the stochastic Cahn--Hilliard equation \eqref{model} satisfies the following energy evolution law} 
%{\r do we need say the rigorous proof need use finite dimensional approximation argument or cite some ref.} 
\begin{align}\label{dissi_conti}
dE(\phi(t)) &= -\left\|\nabla\mu(t)\right\|^{2}dt + \left\langle \mu(t), g(\phi(t))\,dW(t)\right\rangle  \\
&\quad + \frac{1}{2}\sum_{k=1}^{\infty} \left\langle \left(-A + F''(\phi(t))\right) g(\phi(t))Q^{\frac{1}{2}}e_k, g(\phi(t))Q^{\frac{1}{2}}e_k \right\rangle dt,\notag
\end{align}
where \(\mu(t)\) is the chemical potential given by \eqref{model}.
In this paper, we omit such standard  finite-dimensional approximation procedures for convenience.

\subsection{Main results}\label{subsec:MR}
%\subsubsection{Numerical scheme}

%    {\color{blue}In this subsection, we present the main results, 
%    It is shown that the proposed scheme is strong convergent with order $\frac12$ and the 
%    discrete energy evolution law asymptotically preserves the continuous energy evolution law \eqref{dissi_conti}.}

%propose a semi-implicit, energy-stable scheme for \eqref{model}, based on the SAV approach.

Our first main result, Theorem \ref{Thm3_main}, shows that the numerical solution associated with the proposed scheme \eqref{schme_phi-intro} is strongly convergent to the mild solution $\phi$ to \eqref{model}. We remark that the strong convergence order $1/2$ in Theorem  \ref{Thm3_main} is optimal in the sense that it coincides with the temporal H\"older continuity exponent of $\phi$.

\begin{theorem}\label{Thm3_main}
Let Assumptions \ref{assum2} and \ref{assum1} hold, and let $\phi^0\in \dot{H}^2(\mathcal{O})$.  Then for any $p \geq 1$, there exists a constant $C>0$ such that for any $n\in\{ 0,1,\cdots,N\}$,
\begin{align*}
	\mathbb{E}\left[\|\phi(t_n)-X^{n}\|^2\right]\leq C \tau.
\end{align*}
\end{theorem}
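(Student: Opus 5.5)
The plan is to couple the mild (semigroup) formulation with an $H^{-1}$-type variational energy estimate for the error $e^n:=\phi(t_n)-X^n$. Two sets of facts from the paper are taken as given. The first is the regularity of the numerical solution: unconditional $H^1$-stability (Lemma \ref{lemma_stability-pre}), moment bounds in $H^\beta$ for some $\beta>\frac d2$, and hence uniform $L^\infty$ moment bounds for $X^n$. The second is the SAV consistency estimate of Lemma \ref{lemma3}, $\mathbb{E}|r^n-\sqrt{E_{\textup{p}}(X^n)}|^p\le C\tau^{p/2}$.

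\textbf{Step 1: rewrite the drift.} Write $\tilde f^n=F'(X^n)+R^n$, where $R^n$ collects the following pieces:
\begin{itemize}
\item the factor $\bigl(\frac{r^{n+1}}{\sqrt{E_{\textup{p}}(X^n)}}-1\bigr)F'(X^n)$;
\item the two It\^o-correction terms, which are $O(\|X^{n+1}-X^n\|\,\|\delta W^n\|)$ after dividing by $r^{n+1}$.
\end{itemize}
Using Lemma \ref{lemma3}, the H\"older continuity of $X^n$ in $L^2$ and $H^1$, and the $L^\infty$ bounds, I expect $\mathbb{E}\|R^n\|^p\le C\tau^{p/2}$. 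The scheme then becomes an exponential Euler scheme with a perturbed drift:
\[
X^{n+1}=S(\tau)X^n+(I-S(\tau))A^{-1}F'(X^n)+S(\tau)g(X^n)\delta W^n+(I-S(\tau))A^{-1}R^n .
\]

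\textbf{Step 2: error recursion and auxiliary process.} Compare with the exact mild solution over $[t_n,t_{n+1}]$. Since $F'$ is only one-sided Lipschitz in $H$, I split $e^n=\rho^n+\theta^n$. Here $\rho^n$ contains the purely "consistency" parts:
\begin{itemize}
\item the freezing errors $\int_{t_n}^{t_{n+1}}S(t_{n+1}-s)A\bigl(F'(\phi(s))-F'(\phi(t_n))\bigr)ds$;
\item the noise-freezing errors $\int_{t_n}^{t_{n+1}}\bigl(S(t_{n+1}-s)g(\phi(s))-S(\tau)g(\phi(t_n))\bigr)dW(s)$;
\item the $R^n$ contributions.
\end{itemize}
I will estimate these with the smoothing bounds \eqref{assum1_eq1}--\eqref{assum1_eq2}, Proposition \ref{App_prop} with $\beta$ close to $2$ (which gives $H^2$-regularity, hence $L^\infty$ and H\"older-$\frac12$ in $H$ for $\phi$), the local Lipschitz bound \eqref{assum2_eq2}, and \eqref{assume_Lip}. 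The target is $\mathbb{E}\|(-A)^{-1/2}\rho\|^2\lesssim\tau$ and, where needed, $\mathbb{E}\|\rho\|^2\lesssim\tau$. For the drift terms I will trade $A$ against $S$ so that the singular factor $\|AS(t)\|\sim t^{-1/2}$ is integrable after pairing with the $\frac12$-H\"older increments. Discrete summation in $n$ should then give the order $\tau^{1/2}$ in the norm.

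\textbf{Step 3: variational estimate for the nonlinear remainder.} The remainder $\theta$ satisfies a recursion of the form
\[
\theta^{n+1}-\theta^n=\text{(linear $A^2$ part)}+A\bigl(F'(\phi(t_n))-F'(X^n)\bigr)\tau+\text{noise difference}.
\]
I will test it in the $H^{-1}$ inner product, i.e.\ against $(-A)^{-1}\theta$. This turns the nonlinear term into $-\langle F'(\phi)-F'(X),\theta\rangle$, which \eqref{assum2_eq1} bounds by $L_f\|\phi-X\|^2$. That quantity must be absorbed by the dissipation $\|(-A)^{1/2}\theta\|^2\tau$ through the interpolation $\|\theta\|^2\le\epsilon\|(-A)^{1/2}\theta\|^2+C_\epsilon\|(-A)^{-1/2}\theta\|^2$. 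The cross terms with $\rho$ are handled by \eqref{assum2_eq2} and the $L^\infty$ moments via H\"older's inequality. The noise difference is a martingale with $\mathcal L_2^0$-norm controlled through \eqref{assume_Lip} by $\|e^n\|$. Because \eqref{assume_Lip} gives an $L^2$ rather than an $H^{-1}$ bound, it is again absorbed by interpolation and the dissipation. A discrete Gronwall argument then yields $\mathbb{E}\|(-A)^{-1/2}\theta^n\|^2+\tau\sum_k\mathbb{E}\|\theta^k\|^2\lesssim\tau$. This controls only a time-summed $L^2$ norm, so to upgrade to pointwise $\mathbb{E}\|e^n\|^2\lesssim\tau$ I will reinsert the bound into the mild error formula. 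There the nonlinear difference appears as $\sum_k\int S(t_n-s)A(\cdots)$, and \eqref{assum2_eq2} with Cauchy--Schwarz and $\|AS(t)\|\lesssim t^{-1/2}$ converts the summed $L^2$ bound into a pointwise one.

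\textbf{Main obstacle.} The hardest step is expected to be the interaction of the non-globally Lipschitz term with the unbounded $A$ in front of it. The $L^\infty$ moment factors in \eqref{assum2_eq2} are random, so Gronwall cannot be applied with them inside. I would first try to keep them out of the Gronwall loop entirely: bound every product involving them by H\"older's inequality with high moments only in the $\rho$- and $R^n$-terms, where a factor $\tau^{1/2}$ is already present. If that fails, the fallback is a localization/stopping argument on $\{\sup_n\|X^n\|_{L^\infty}\le M\}$ combined with the high-moment bounds.
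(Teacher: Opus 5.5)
\textbf{Step 1 is sound.} It is exactly the ingredient the paper needs: $\tilde f^n-F'(X^n)$ is $O(\tau^{1/2})$ in every $L^p(\Omega;H)$. (The $\chi^n$ part is $O(\|g(X^n)\delta W^n\|)$, not a product with $X^{n+1}-X^n$, but your conclusion stands.)

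\textbf{The gap is in Step 3.} With your splitting, $\theta$ obeys the exponential Euler recursion $\theta^{n+1}=S(\tau)\theta^n+(I-S(\tau))A^{-1}D^n+S(\tau)G^n\delta W^n$, where $D^n:=F'(\phi(t_n))-F'(X^n)$ and $G^n:=g(\phi(t_n))-g(X^n)$. It is not of the form $\theta^n+\tau AD^n+\cdots$.

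Consequently, in the $H^{-1}$ energy identity the frozen $D^n$ is paired with $\theta^n$ through an operator weight such as $\int_\tau^{2\tau}S(s)\,ds$ rather than $\tau I$. After a continuous interpolation it is paired with $\theta(t)$, $t\in(t_n,t_{n+1}]$, instead. Either way, \eqref{assum2_eq1} does not apply directly.

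Reducing to $-\tau\langle D^n,e^n\rangle$ leaves discrepancy terms such as $\int_{t_n}^{t_{n+1}}\|(I-S(t-t_n))^{1/2}(-A)^{-1/2}D^n\|^2\,dt$ and $\int_0^\tau\langle (I-S(s))D^n,\theta^n\rangle\,ds$. Their natural bounds are $\delta\tau\|(-A)^{1/2}\theta^n\|^2+C\tau^{3/2}\Lambda_n^2\|e^n\|^2$, with the random factor $\Lambda_n:=1+\|\phi(t_n)\|_{L^\infty}^2+\|X^n\|_{L^\infty}^2$.

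Since $e^n=\theta^n+\rho^n$, the random local Lipschitz constant multiplies the unknown $\theta^n$ itself. So confining it to the $\rho$- and $R^n$-terms is impossible for this decomposition. With only a priori moment bounds, these terms add $O(\tau^{1/2})$ to $\mathbb{E}\|(-A)^{-1/2}\theta^n\|^2$, which gives only order $1/4$.

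To reach order $1/2$ you would need either an extra high-moment bootstrap, or a stopping-time localization at a $\tau$-dependent level (e.g.\ $\Lambda\lesssim\tau^{-1/4}$) that exploits the surplus $\tau^{1/2}$. A fixed-level localization gives no rate at all. None of this is in the proposal.

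\textbf{The upgrade to a pointwise bound also fails as stated.} Cauchy--Schwarz against $\|AS(t_n-s)\|\lesssim(t_n-s)^{-1/2}$ produces $\int_0^{t_n}(t_n-s)^{-1}ds=\infty$. Minkowski's inequality instead requires the very pointwise bound you are after, multiplied by a random factor, so it does not close.

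\textbf{How the paper avoids both problems.} It introduces the auxiliary process $\Phi$ in \eqref{auxiliary_solution}, a genuine continuous-time SPDE driven by the frozen numerical inputs $F'(X^{\kappa_N(t)/\tau})$ and $g(X^{\kappa_N(t)/\tau})$.

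First, $\Phi(t_n)-X^n$ involves the unknowns only through $F'(X^j)-\tilde f^j$ and $(I-S(s-t_j))g(X^j)$. So Proposition~\ref{Thm1} is a pure consistency estimate with no Gronwall step; this is where your Step 1 belongs.

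Second, the error $\tilde\varepsilon=\phi-\Phi$ solves an SPDE. It\^o's formula for $\|(-A)^{-1/2}\tilde\varepsilon(t)\|^p$ produces $-\langle F'(\phi(t))-F'(X(\kappa_N(t))),\tilde\varepsilon(t)\rangle$ at the same time $t$. Splitting at $\Phi(t)$ gives two pieces:
\begin{itemize}
\item the one-sided Lipschitz term for $F'(\phi(t))-F'(\Phi(t))$, with the matching test function;
\item a remainder in which the random factor multiplies only $\|\Phi(t)-X(\kappa_N(t))\|$, already $O(\tau^{1/2})$ in all moments by Proposition~\ref{Thm1} and Lemma~\ref{sec4_lem1}.
\end{itemize}

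Third, for the pointwise $L^2$ bound (Proposition~\ref{Thm2_p}), the paper moves $(-A)^{1/2}$ onto the nonlinearity via the $H^1$ local Lipschitz estimate $\|(-A)^{1/2}(F'(v)-F'(w))\|\le C(1+\|v\|_{H^{\beta_1}}^2+\|w\|_{H^{\beta_1}}^2)\|(-A)^{1/2}(v-w)\|$ with $\beta_1\in(\tfrac32,2)$. The kernel then becomes $(t-s)^{-1/4}$, which is square-integrable. This is paired with the $H^1$-dissipation bound $\mathbb{E}[(\int_0^t\|(-A)^{1/2}\tilde\varepsilon(s)\|^2ds)^2]\le C\tau^2$, which in turn needs Lemma~\ref{Lemma4Thm2} with $p=4$.
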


According to \eqref{dissi_conti}, the averaged energy evolution law of \eqref{model} reads
\begin{align*}
\mathbb{E}[E(\phi(t))] +\Theta&= E(\phi^0) +\Theta-\int_0^t\mathbb{E}[\left\|\nabla\mu(s)\right\|^{2}]ds \\
&\quad + \frac{1}{2}\int_0^t\mathbb{E}\bigg[\sum_{k=1}^{\infty} \left\langle \left(-A + F''(\phi(s))\right) g(\phi(s))Q^{\frac{1}{2}}e_k, g(\phi(s))Q^{\frac{1}{2}}e_k \right\rangle\bigg] ds\notag
\end{align*}
for any $t\in[0,T]$.
In our numerical study,
the original energy $E(\phi(t_n))+\Theta=\frac12\|\nabla \phi(t)\|^2+r(t_n)^2$ %{\r $r(t_n)^2$?}
is approximated by the following modified SAV energy
\begin{equation}\label{eq:Emodify}
E_{\mathrm{mod}}^{\,n}:=E_{\mathrm{mod}}(X^{n},r^{n})\quad \text{with}\quad
E_{\mathrm{mod}}(X,r):=\frac{1}{2}\,\|\nabla X\|^{2}+|r|^{2}.
\end{equation}
Our second main result is Theorem \ref{Sec:energy:pro} on the averaged evolution law of the modified SSAV energy.

\begin{theorem}\label{Sec:energy:pro}
Let Assumptions \ref{assum2} and \ref{assum1} hold, and let $\phi^0\in \dot{H}^{2}(\mathcal{O})$. Then 
for each \( m \in \{1,2, \cdots, N\} \),\begin{align}\notag
	\label{discrete_energy_law}
	\mathbb{E}\left[E^m_{
		\textup{mod}}\right]&=E^0_{
		\textup{mod}} -\frac{1}{2}\sum_{n=0}^{m-1}
	\mathbb{E}\left[\|(I-S^2(\tau))^{\frac{1}{2}}(-A)^{-\frac 12}\tilde{\mu}^n\|^2\right]\\
	&\quad+ \frac12\tau\sum_{n=0}^{m-1}\mathbb{E}\bigg[\sum_{k=1}^{\infty}\left\langle (-A+F''( X^{n}))g( X^n)Q^{\frac{1}{2}}e_k,g( X^n)Q^{\frac{1}{2}}e_k\right\rangle\bigg]+\mathcal{R}_m^{\tau},
\end{align}
where the modified chemical potential 
\begin{equation}\label{eq:tildemun}
	\tilde\mu^n:=-AX^n+\tilde{f}^n-A(g(X^n)\delta W^n),
\end{equation}
and
the remainder term $\mathcal{R}_m^{\tau}$ satisfies $\lim\limits_{\tau\to 0}\mathcal{R}_m^{\tau}=0$.      
\end{theorem}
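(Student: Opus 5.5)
The plan is to derive an exact one-step identity for $E_{\mathrm{mod}}^{n+1}-E_{\mathrm{mod}}^{n}$, take expectations, and then sum over $n=0,\dots,m-1$. Everything not matching the explicit terms in the statement is collected into $\mathcal{R}_m^\tau$. Two algebraic facts drive the computation.

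\textbf{The SAV part.} By the SAV structure \eqref{lemma1_scheme_r_sto_before-intro},
\[
|r^{n+1}|^2-|r^n|^2=2r^{n+1}(r^{n+1}-r^n)-|r^{n+1}-r^n|^2=\langle\tilde f^n,X^{n+1}-X^n\rangle-|r^{n+1}-r^n|^2 .
\]

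\textbf{The gradient part.} Writing $\frac12\|\nabla X\|^2=\frac12\langle -AX,X\rangle$, we have
\[
\tfrac12\|\nabla X^{n+1}\|^2-\tfrac12\|\nabla X^n\|^2=\langle -AX^n,X^{n+1}-X^n\rangle+\tfrac12\|\nabla(X^{n+1}-X^n)\|^2 .
\]
Adding the two identities gives
\[
E_{\mathrm{mod}}^{n+1}-E_{\mathrm{mod}}^n=\langle -AX^n+\tilde f^n,\,X^{n+1}-X^n\rangle+\tfrac12\|(-A)^{1/2}(X^{n+1}-X^n)\|^2-|r^{n+1}-r^n|^2 .
\]

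\textbf{Rewriting the increment via $\tilde\mu^n$.} Next I express $X^{n+1}-X^n$ through $\tilde\mu^n$ using \eqref{schme_phi-intro}. Since $(I-S(\tau))X^n=(I-S(\tau))A^{-1}(AX^n)$, we get
\[
X^{n+1}-X^n=-(I-S(\tau))(-A)^{-1}\bigl(-AX^n+\tilde f^n\bigr)+S(\tau)g(X^n)\delta W^n .
\]
Adding and subtracting $-A(g(X^n)\delta W^n)$ inside the bracket, this becomes
\[
X^{n+1}-X^n=-(I-S(\tau))(-A)^{-1}\tilde\mu^n+g(X^n)\delta W^n .
\]
This clean form is the reason for the definition \eqref{eq:tildemun}. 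Substituting it into the one-step identity, the terms quadratic in $\tilde\mu^n$ are
\[
-\langle\tilde\mu^n,(I-S)(-A)^{-1}\tilde\mu^n\rangle+\tfrac12\|(I-S)(-A)^{-1/2}\tilde\mu^n\|^2 .
\]
Because $(I-S)-\frac12(I-S)^2=\frac12(I-S^2)$, these combine to exactly $-\frac12\|(I-S^2(\tau))^{1/2}(-A)^{-1/2}\tilde\mu^n\|^2$, which is the dissipation term in the statement.

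\textbf{Noise terms.} The remaining terms are:
\begin{itemize}
\item the linear term $\langle -AX^n+\tilde f^n,\,g(X^n)\delta W^n\rangle$;
\item the cross term $\langle (I-S)(-A)^{-1/2}\tilde\mu^n,\,(-A)^{1/2}g(X^n)\delta W^n\rangle$;
\item the quadratic term $\frac12\|(-A)^{1/2}g(X^n)\delta W^n\|^2$;
\item the SAV defect $-|r^{n+1}-r^n|^2$.
\end{itemize}
These are handled as follows.
\begin{itemize}
\item In the linear term, the part $\langle -AX^n,\,g\delta W^n\rangle$ has zero mean by $\mathcal F_{t_n}$-measurability. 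The part $\langle\tilde f^n,g\delta W^n\rangle$ is not mean zero, since $\tilde f^n$ depends on $\delta W^n$ through $r^{n+1}$. Expanding $\tilde f^n$ from \eqref{fn} and \eqref{scheme_r_sto-intro} around $F'(X^n)\,r^n/\sqrt{E_p(X^n)}$, its expected leading contribution is $\tau\sum_k\langle F''(X^n)gQ^{1/2}e_k,gQ^{1/2}e_k\rangle$ (up to the factor fixing the final $\frac12$). This is exactly what the It\^o correction terms in \eqref{scheme_r_sto-intro} are designed to produce.
\item The quadratic term has expectation $\frac\tau2\sum_k\langle -A\,gQ^{1/2}e_k,gQ^{1/2}e_k\rangle$.
\item The defect $\mathbb E|r^{n+1}-r^n|^2$ produces $\frac{\tau}{4E_p(X^n)}\sum_k\langle F'(X^n),gQ^{1/2}e_k\rangle^2$ at leading order. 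This term cancels against the corresponding contribution from the first It\^o correction in \eqref{scheme_r_sto-intro} inside $\langle\tilde f^n,g\delta W^n\rangle$.
\end{itemize}
After these cancellations, the surviving expectations reproduce $\frac{\tau}{2}\mathbb E\sum_k\langle(-A+F''(X^n))gQ^{1/2}e_k,gQ^{1/2}e_k\rangle$.

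\textbf{Remainder (main obstacle).} Everything else goes into $\mathcal R^\tau_m$: the cross term, the higher-order pieces of $\tilde f^n$, and the mismatches $r^n$ versus $\sqrt{E_p(X^n)}$. Showing $\mathcal R_m^\tau\to0$ is the hard part. My plan is to bound each one-step remainder by $C\tau^{3/2}$ in expectation, which gives $\sum_{n<m}\le C\tau^{1/2}$. The tools are:
\begin{itemize}
\item the $H^1$ stability of Lemma \ref{lemma_stability-pre} and the $H^\beta$, $\beta>d/2$, regularity (hence $L^\infty$ bounds), which control $F'(X^n)$ and $F''(X^n)$ in all moments;
\item the Hölder estimates $\mathbb E\|X^{n+1}-X^n\|_{H^1}^p\le C\tau^{p/2}$;
\item Lemma \ref{lemma3}, which gives $\mathbb E|r^n-\sqrt{E_p(X^n)}|^p\le C\tau^{p/2}$;
\item a positive lower bound on $E_p$, by coercivity \eqref{eq-coer} with $\Theta$ included.
\end{itemize}
For the cross term, I would use $\|(I-S)(-A)^{-1/2}\|\le C\tau^{1/2}$ together with a $\tau$-independent moment bound on $\tilde\mu^n$ in a negative norm. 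If only a rate of $\tau^{\epsilon}$ is available rather than $\tau^{1/2}$, it still suffices for $\lim_{\tau\to0}\mathcal R_m^\tau=0$.
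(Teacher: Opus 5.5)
Your architecture matches the paper's: the exact one-step identity \eqref{dis_energy_diss-new}, exact expectations of the martingale and It\^o-type terms, and a remainder $\mathcal R_m^\tau=\sum_{n<m}\mathbb{E}[\mathcal J^n]$. Two corrections to your bookkeeping come first.

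(i) There is no cross term. Write $-AX^n+\tilde f^n=\tilde\mu^n+A(g(X^n)\delta W^n)$ and pair it with $X^{n+1}-X^n=-(I-S(\tau))(-A)^{-1}\tilde\mu^n+g(X^n)\delta W^n$. This produces $+\langle (I-S(\tau))\tilde\mu^n,g(X^n)\delta W^n\rangle$, while expanding $\frac12\|(-A)^{1/2}(X^{n+1}-X^n)\|^2$ produces the same quantity with a minus sign, so the two cancel identically. That is fortunate, because your proposed bound could not control this term. By \eqref{assum1_eq2}, $(I-S(\tau))(-A)^{-1/2}$ has norm of order $\tau^{1/4}$, not $\tau^{1/2}$. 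Moreover, $\tilde\mu^n$ is uniformly bounded only in $H^{-1}$, because of $-A(g(X^n)\delta W^n)$. Even granting your bound, a pathwise estimate gives $O(\tau)$ per step, i.e.\ $O(1)$ after summation.

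(ii) At order $\tau$, the defect $-|r^{n+1}-r^n|^2$ and the first part of $\chi^n$ both contribute $-\frac{1}{4E_{\textup{p}}(X^n)}\langle F'(X^n),g(X^n)\delta W^n\rangle^2$. They have the same sign, so they do not cancel each other. They are cancelled jointly by the term $+\frac{1}{2E_{\textup{p}}(X^n)}\langle F'(X^n),g(X^n)\delta W^n\rangle^2$ contained in $\frac{r^{n+1}-r^n}{\sqrt{E_{\textup{p}}(X^n)}}\langle F'(X^n),g(X^n)\delta W^n\rangle$; this is $J_1^n+J_4^n+J_8^n=0$ in the paper.

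\textbf{The genuine gap is in the remainder}, which you rightly call the main obstacle. Your leading-order statements tacitly replace $X^{n+1}-X^n$ by $g(X^n)\delta W^n$ in the leading part $\frac{1}{2\sqrt{E_{\textup{p}}(X^n)}}\langle F'(X^n),X^{n+1}-X^n\rangle$ of $r^{n+1}-r^n$. With the tools you list, that replacement is not a higher-order effect. Set $\delta_n:=X^{n+1}-X^n-g(X^n)\delta W^n=(S(\tau)-I)X^n+(I-S(\tau))A^{-1}\tilde f^n+(S(\tau)-I)g(X^n)\delta W^n$. The three order-$\tau$ terms of $\mathcal J^n$ combine exactly into $-\frac{1}{4E_{\textup{p}}(X^n)}\langle F'(X^n),\delta_n\rangle^2$, plus terms that genuinely are $O(\tau^{3/2})$ in $L^1(\Omega)$. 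A priori, however, $\delta_n$ is only $O(\tau^{1/2})$ in $L^2(\Omega;H)$: for $X^n\in\dot H^2$, \eqref{assum1_eq2} gives only $\|(S(\tau)-I)X^n\|\le C\tau^{1/2}\|AX^n\|$. So this term is $O(\tau)$ per step and $O(1)$ after summation. Neither your $C\tau^{3/2}$ claim nor $\mathcal R_m^\tau\to0$ follows, and this mismatch does not appear among the remainder sources you list.

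What is missing is an estimate showing that the non-noise part of the increment is of higher order. The paper takes it from \eqref{eq:initial}, a consequence of \eqref{lemma6_eq3_R1} (extra smoothing of the discrete drift and stochastic convolutions): $\|(S(\tau)-I)X^n\|_{L^2(\Omega;H)}\le C(\alpha)(1+t_n^{-\alpha/2})\tau^{(1+\alpha)/2}$ for $n\ge1$, $\alpha\in(0,\frac12)$. Together with \eqref{Term_J12_T2}--\eqref{Term_J12_T3}, this yields $\sum_{n<m}\mathbb{E}[|J_2^n|+|J_5^n|]\le C(\alpha)\tau^{\alpha/2}$. The remaining pieces are $O(\tau^{3/2})$ per step using only $H$-norm increments and $L^\infty$ bounds. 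The per-step $H^1$ H\"older bound you invoke is not available; the paper only has the summed bound of Lemma~\ref{lemma5}.

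Alternatively, pair by duality with $F'(X^n)\in\dot H^1$ and use the uniform $\dot H^2$ bound on $X^n$. This gives $\langle F'(X^n),\delta_n\rangle=O(\tau^{3/4})$ in every $L^q(\Omega)$, which, combined with the square structure above, would actually deliver your $C\tau^{3/2}$ per step.
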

Formally, the modified chemical potential \( \tilde\mu^n \) serves as a numerical approximation of the chemical potential
\(
\mu(t_n) = -A\phi(t_n) + F'(\phi(t_n)),
\)
which can be proved rigorously when the driving Wiener process possesses suitable spatial regularity.
Consequently, as the time step tends to zero, the averaged discrete energy evolution law \eqref{discrete_energy_law} for the modified SAV energy recovers the averaged energy evolution law of \eqref{model}.
Hence, the proposed scheme \eqref{schme_phi-intro} asymptotically preserves the averaged energy evolution law of the stochastic Cahn--Hilliard equation \eqref{model}.

%  {\r put main result here since this paper is too long}
\section{Exponential Euler SSAV scheme}\label{sec:Con}
In this section, we present the construction of the exponential Euler SSAV scheme \eqref{schme_phi-intro} and show that it is explicitly solvable.
Following \cite{cui2025stochastic,shen2018convergence,shen2018scalar}, the nonlinear term is treated explicitly through introducing an SSAV $r : [0,T]\times\Omega \to \mathbb{R}$, defined as
$
r(t,\omega) := \sqrt{E_{\textup{p}}(\phi(t,\omega))},
$
where $E_{\textup{p}}$ is the potential energy functional
\begin{equation}\label{def:ssav}
E_{\textup{p}}(v) := \int_{\mathcal{O}} f(v(x))\,dx + \Theta,
\qquad v \in L^{4}(\mathcal{O}).
\end{equation}
Here,
the constant $\Theta>0$ is the same as in \eqref{eq-coer} so that 
\begin{align}\label{lema1_eq_R1_insert_hey}
E_{\textup{p}}(v)\ge c\int_{\mathcal{O}}\left(|v(x)|^4+1\right)dx\ge c,\qquad \forall v\in L^{4}(\mathcal{O}).
\end{align}% {\r you keep $1$ here, the goal is ?}
We remark that the non-negativity of $E_{\textup{p}}$
ensures that the SSAV $r$ is well-defined, whereas the coercivity in \eqref{lema1_eq_R1_insert_hey} is imposed for technical reasons and will be used in the stability and convergence analysis of the proposed scheme. For general nonlinearities not satisfying \eqref{lema1_eq_R1_insert_hey}, one may instead employ the convex splitting technique (see, e.g., \cite{ES93}) to solve \eqref{model}.

In view of \eqref{def:ssav} and the It\^o formula, the original equation \eqref{model} can be recast into the following  SSAV reformulation 
\begin{subequations}\label{SAVmodel}
\begin{align}
	d\phi(t) &= -A^2\phi(t) dt  +\frac{r(t)}{\sqrt{E_{\textup{p}}({\phi(t)})}}A {F'}(\phi(t))  dt+g(\phi(t))d W(t), \label{SAVmodel_1}  
	\\   
	\label{eq:r(tn+1)}
	d r(t)  &= \frac{1}{2\sqrt{E_{\textup{p}}({\phi(t)})}} \left\langle{F'}(\phi(t)), d\phi(t) \right\rangle \\
	&\quad - \frac{1}{8E_{\textup{p}}(\phi(t))^{3/2}}
	\sum_{k=1}^{\infty} \left\langle {F'}(\phi(t)),g(\phi(t))Q^{\frac 12}e_k\right\rangle ^2
	dt \notag\\
	&\quad + \frac{1}{4\sqrt{E_{\textup{p}}(\phi(t)) }} \sum_{k=1}^{\infty}\left\langle F''(\phi(t))g(\phi(t))Q^{\frac 12}e_k,g(\phi(t))Q^{\frac 12}e_k\,\right\rangle\  dt\notag
\end{align}
\end{subequations}
for $t\in(0,T]$ subject to the initial values $\phi(0)=\phi^0$ and $r(0)=\sqrt{E_{\textup{p}}(\phi^0)}$.

To discretize  \eqref{SAVmodel} in time, we partition $[0,T]$ into $N\,(N\in\mathbb{N}^+)$ uniform subintervals with time step size $\tau=T/N$, and denote the time grid by $\{t_n:=n\tau\}_{n=0}^N$. 
Over each subinterval $(t_n,t_{n+1}]$, we freeze the coefficient
$\frac{r(t)}{\sqrt{E_{\textup{p}}(\phi(t))}}\,F'(\phi(t))$ in \eqref{SAVmodel_1} through a suitable approximation, denoted by $\tilde{f}^n$ (to be specified later), and thereby obtain the following approximation of $\phi(t)$:
\begin{align}%\label{SAVmodel}
d\tilde{\phi}(t) &= -A^2\tilde{\phi}(t) dt  +A \tilde{f}^n  dt+g(\tilde{\phi}(t))d W(t), \quad t\in (t_n,t_{n+1}].\label{SAVmodel_1-new}  
\end{align}
Then applying the exponential Euler method to \eqref{SAVmodel_1-new}, we obtain the numerical scheme \eqref{schme_phi-intro} 
%\begin{align}\label{schme_phi}
%	X^{n+1} = S(\tau)X^n
%	+ \left(I - S(\tau)\right)A^{-1}\tilde{f}^n + S(\tau)g(X^n)\delta W^n
%\end{align}
for any $ n\in\{0,1,\cdots,N-1\}$,
with $X^0 = \phi^0$. %where \( \delta W^n := W(t_{n+1}) - W(t_n) \) represents the discrete-time increment of the Wiener process $W$.
%To obtain the numerical scheme {\eqref{schme_phi-intro}},
It remains to define a consistent approximation $\tilde{f}^n$ for the quantity $\frac{r(t)}{\sqrt{E_{\textup{p}}(\phi(t))}}\,F'(\phi(t))$ with $t\in[t_{n},t_{n+1}]$.

\begin{remark}\label{rem-rt}
For the deterministic Cahn--Hilliard equation (i.e., $g\equiv0$), a suitable choice of $\tilde{f}^n$ is 
\begin{align}\label{fnd}
	\tilde{f}_{\textrm{d}}^{\,n}
	\;=\;
	\frac{r^{n+1}_\textrm{d}}{\sqrt{E_{\textup{p}}(X^n)}}\,F'(X^n),
\end{align}
where $r^{n+1}_\textrm{d}$ is the numerical solution of $r(t_{n+1})$ generated iteratively by 
\begin{align}\label{scheme_r_sto-d}
	r^{n+1}_\textrm{d} &=r^n_\textrm{d} + \frac{1}{2\sqrt{E_{\textup{p}}(X^n) }}\langle F'(X^n),
	X^{n+1} - X^n\rangle
\end{align}
for any $ n\in\{0,1,\cdots,N-1\}$,
with the initial value $r^0_\textrm{d}=\sqrt{E_{\textup{p}}(\phi^0)}$ (see, e.g., \cite{shen2018scalar}). However, this construction does not directly extend to the stochastic case. % {\r this remark is very important for the readers: it is better to explain that direct use the deterministic SAV will leads to the convergence to another spde or divergent} 
To illustrate this issue, we apply the Taylor expansion to $\sqrt{E_{\textup{p}}(\phi(t_{n+1}))}$, discarding terms of order three and higher, which formally gives (see also \eqref{SSAVr_TE-intro})
\begin{align}\label{SSAVr_TE}
	r(t_{n+1})  &\approx r(t_{n})+\frac{1}{2\sqrt{E_{\textup{p}}({\phi(t_n)})}}\left\langle F'(\phi(t_n)), \phi(t_{n+1})-\phi(t_n) \right\rangle +R_n,
\end{align}
where $R_n$ is the quadratic term given by 
\begin{align*}
	R_n&:= - \frac{1}{8E_{\textup{p}}(\phi(t_n))^{3/2}}
	\left\langle F'(\phi(t_n)),\phi(t_{n+1})-\phi(t_n) \,\right\rangle^2
	\notag\\
	&\quad + \frac{1}{4\sqrt{E_{\textup{p}}(\phi(t_n)) }} \left\langle F''(\phi(t_n))(\phi(t_{n+1})-\phi(t_n)),\phi(t_{n+1})-\phi(t_n) \,\right\rangle.\notag
\end{align*}
%{\r add roughly speaking  here and in the introduction?}
A comparison of \eqref{SSAVr_TE} and \eqref{scheme_r_sto-d} shows that  the difference $r(t_{N})-r^{N}_\textrm{d}$ contains a telescoping sum 
$\sum_{n=0}^{N-1} R_n$.	{Roughly speaking,}
in the stochastic case, the solution increment 
\(\phi(t_{n+1}) - \phi(t_n)\) is of order \(\mathcal{O}(\tau^{1/2})\). 
{Hence, the telescoping sum
	\(
	\sum_{n=0}^{N-1} R_n
	\)
	does not vanish as \( \tau \to 0 \), which prevents the convergence of
	\( r_{\mathrm{d}}^{N} \) to \( r(T) \) as \( \tau \to 0 \).
	In other words, directly adopting the standard SAV update \eqref {scheme_r_sto-d} together with \eqref{schme_phi-intro} with the choice $\tilde{f}^n=\tilde{f}_{\textrm{d}}^{\,n}$ leads to a numerical scheme that either converges to a different stochastic system or diverges, rather than converging to the target model \eqref{SAVmodel}.}
In fact, the accumulation of the quadratic term $R_n$ over time gives rise to the It\^o correction term in \eqref{eq:r(tn+1)}, and thus
the quadratic terms in \eqref{SSAVr_TE} should be retained in the numerical discretization of 
\(r(t)\), to ensure consistency with the It\^o formula and to accurately capture the 
energy evolution law  \eqref{dissi_conti} of \eqref{model}  {(see Figure \ref{Energy2} for the influence of the modified SSAV)}. %{\r I think that we have numerical test to illustrate this, why not cite them here?}

\end{remark}
Since the quadratic terms in \eqref{SSAVr_TE} involve the second power of $\phi(t_{n+1}) - \phi(t_n)$, a straightforward Euler-type discretization of \eqref{SSAVr_TE} destroys the linear structure of the proposed scheme and fail to produce an explicit or semi-implicit numerical discretization.
Following the strategy in \cite{metzger2023convergent},
% {\r there is one thing missing here and in the introduction: difference and comparison between their method and ours. the reviewer may wonder what is advantage of our method compared to \cite{metzger2023convergent} and his related work } 
we replace one factor $\phi(t_{n+1}) - \phi(t_n)$ in the It\^o correction terms  by $g(X^n)\,\delta W^n$, and update the numerical SSAV $\{r^n\}_{n=0}^{N}$ via \eqref{scheme_r_sto-intro}
%\begin{align}\label{scheme_r_sto}
%	r^{n+1} &=r^n + \frac{1}{2\sqrt{E_{\textup{p}}(X^n) }} \left\langle F'(X^n),X^{n+1} - X^n\,\right\rangle  \\\notag
%	&\quad- \frac{1}{8(E_{\textup{p}}(X^n) )^{3/2}}
%	\left\langle F'(X^n),g(X^n)\delta W^n\,\right\rangle 
%	\left\langle F'(X^n),X^{n+1} - X^n\,\right\rangle \\\notag
%	&\quad + \frac{1}{4\sqrt{E_{\textup{p}}(X^n) }} \left\langle F''(X^n)(X^{n+1} - X^n),g(X^n)\delta W^n\right\rangle
%\end{align}
for any $n\in\{0,1,\cdots,N-1\}$,
with the initial value $r^0=\sqrt{E_{\textup{p}}(\phi^0)}$. 
Instead of using \eqref{fnd},  in the stochastic case, we adopt
\begin{align}\label{fn}
\tilde{f}^{n}=\frac{r^{n+1}}{\sqrt{E_{\textup{p}}(X^n)}}F'(X^n)+\chi^n,
\end{align}
where the modified term 
\begin{align}\label{chi}
\chi^n
&:= -\,\frac{r^{n+1}}{4E_{\textup{p}}(X^n)^{3/2}}
F'(X^n)\!\left\langle F'(X^n),g(X^n)\delta W^n\right\rangle\\\notag
&\quad+
\frac{r^{n+1}}{2\sqrt{E_{\textup{p}}(X^n)}}
F''(X^n)(g(X^n)\delta W^n)
\end{align}% {\r inner product is missing?}
is added to compensate the It\^o correction term on the right hand side of \eqref{scheme_r_sto-intro} (see \eqref{lemma1_scheme_r_sto_before} for more details). The temporal semi-discretization \eqref{schme_phi-intro}, together with \eqref{scheme_r_sto-intro} and \eqref{fn}, constitutes the proposed scheme, which is referred to as the exponential Euler SSAV scheme. In this work, we focus on the time discretization \eqref{schme_phi-intro} for \eqref{model}; the extension to a fully discrete scheme by incorporating suitable spatial discretizations will be addressed in future work.

\begin{remark}\label{modify_reason}
The update of $r^n$ in \eqref{scheme_r_sto-intro} together with the definition of $\tilde f^n$ in \eqref{fn} ensures that
\begin{align}\label{lemma1_scheme_r_sto_before}
	{ 2r^{n+1} ( r^{n+1}-r^{n})}
	&=\langle\tilde{f}^{n}, X^{n+1}-X^{n}\rangle,\qquad n=0,1,\cdots,N-1.
\end{align}
Although applying the Euler method to \eqref{eq:r(tn+1)} or replacing the solution increment $X^{n+1}-X^n$ by $g(X^n)\,\delta W^n$ in the It\^o correction terms of \eqref{scheme_r_sto-intro} also yields an explicit update for $\{r^n\}_{n=0}^N$, neither of these two approaches satisfies \eqref{lemma1_scheme_r_sto_before}. As in the standard SAV scheme for deterministic gradient flows (see \cite{shen2018scalar}), the relation \eqref{lemma1_scheme_r_sto_before} is crucial for establishing the unconditional stability and the averaged energy evolution law of the exponential Euler SSAV scheme \eqref{schme_phi-intro} (see the proofs of Lemma~\ref{lemma_stability} and Theorem~\ref{Sec:energy:pro} for more details). 
%	{\r since this remark is also important, we need also write one-two sentence to highlight in the introduction}
%In addition, we note that when the noise vanishes, the exponential Euler SSAV scheme \eqref{schme_phi-intro} naturally reduces to the standard SAV method for the deterministic Cahn--Hilliard equation.
\end{remark}

To end this section, we point out that the exponential Euler SSAV scheme  \eqref{schme_phi-intro} can be explicitly solved as follows. First, for each \(n= 0,1,\cdots,N-1\), we denote
\begin{align*}%\label{eq:bn}
b^{n}: = \frac{F'(X^{n})}{\sqrt{E_{\textup{p}}(X^{n}) }}
- \frac{F'(X^{n}) \langle F'(X^{n}), g(X^{n}) \delta W^n\rangle}{4 E_{\textup{p}}(X^{n})^{3/2}}
+ \frac{F''(X^{n}) g(X^{n}) \delta W^n}{2\sqrt{E_{\textup{p}}(X^{n}) }}.
\end{align*}
Then, \eqref{scheme_r_sto-intro} and \eqref{fn} can be rewritten as 
\(r^{n+1} =r^n + \frac{1}{2}\langle b^n,X^{n+1} - X^n\rangle\)
and
\(\tilde{f}^n = r^{n+1}b^n = r^nb^n + \frac{1}{2}b^n\langle b^n,X^{n+1} - X^n\rangle.\)
Substituting this expression of $\tilde{f}^n$ into \eqref{schme_phi-intro}, we obtain 
\begin{align}\label{scheme-rewrite}
X^{n+1} - \frac{1}{2}(I - S(\tau)) A^{-1} b^{n} \langle b^{n}, X^{n+1}\rangle = w^n,
\end{align}
where 
$
w^{n} = S(\tau) X^{n} + (I - S(\tau)) A^{-1} b^{n} r^{n} - \frac{1}{2}(I - S(\tau)) A^{-1} b^{n} \langle b^{n}, X^{n}\rangle + S(\tau) g(X^{n}) \delta W^{n}.
$
Taking the inner product on both sides of \eqref{scheme-rewrite} with \( b^{n} \), it follows that
$
\langle X^{n+1}, b^{n}\rangle + \gamma^{n} \langle b^{n}, X^{n+1}\rangle = \langle w^{n}, b^{n}\rangle,
$
where 
$$
\gamma^{n}
=-\frac{1}{2}\left\langle (I - S(\tau)) A^{-1} b^{n}, b^{n} \right\rangle
=\frac12\|(I-S(\tau))^{1/2}(-A)^{-1/2}b^{n}\|^{2}\ge 0.
$$
This yields
$\langle X^{n+1}, b^{n}\rangle = (1 + \gamma^{n})^{-1}\langle b^{n}, w^{n}\rangle.$
Hence, the updated value \( X^{n+1} \) can be computed from \eqref{scheme-rewrite} in a fully explicit manner.

\section{Regularity estimates}\label{sec3:regularity}
In this section, we establish several regularity estimates for the exponential Euler SSAV scheme \eqref{schme_phi-intro}. {These estimations are essential for the proofs of the main results Theorems \ref{Thm3_main} and \ref{Sec:energy:pro}.}

\subsection{Spatial regularity estimate}\label{sec3:regularity_space}
We begin by deriving the $H^1(\mathcal{O})$-spatial regularity estimate of the numerical solution in the following lemma, which implies that the exponential Euler SSAV scheme \eqref{schme_phi-intro} is unconditionally stable.

% {\r we can put some parts in the appendix}

\begin{lemma}\label{lemma_stability-pre}
Let Assumptions \ref{assum2} and \ref{assum1} hold, and let $ \phi^0\in \dot{H}^1(\mathcal{O})$. Then for any $p\geq 1$, there exists a positive constant $C:=C(p)$ such that
%{\r a quick question: why not put $m=N-1$?}	
\begin{align}\label{Lemma1_step1}
	\mathbb{E}\left[\|\nabla  X^{n}\|^{2p}\right]+\mathbb{E}\left[| r^{n}|^{2p}\right] \leq C(p),\qquad\forall~n\in\{0,1,\cdots,N\}.
\end{align}
\end{lemma}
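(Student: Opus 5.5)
The plan is a discrete energy argument for the modified SAV energy $E_{\mathrm{mod}}^n=\frac12\|\nabla X^n\|^2+|r^n|^2$ from \eqref{eq:Emodify}. I will first prove a one-step identity. Then I will control the noise-generated terms by quantities of order $E^n_{\mathrm{mod}}$, and finally take $p$-th moments and apply a discrete Gronwall argument.

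\emph{Step 1 (one-step energy identity).} Using $S(\tau)-I=(I-S(\tau))A^{-1}(-A)$ and $S(\tau)=I-(I-S(\tau))$, rewrite \eqref{schme_phi-intro} as
\begin{align*}
X^{n+1}-X^n-g(X^n)\delta W^n=-(I-S(\tau))(-A)^{-1}\tilde\mu^n ,
\end{align*}
with $\tilde\mu^n$ as in \eqref{eq:tildemun}. Take the inner product with $\tilde\mu^n$. Use the polarization identity $\langle -AX^n,X^{n+1}-X^n\rangle=\frac12(\|\nabla X^{n+1}\|^2-\|\nabla X^n\|^2-\|\nabla(X^{n+1}-X^n)\|^2)$. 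Use the SAV relation \eqref{lemma1_scheme_r_sto_before} in the form $\langle\tilde f^n,X^{n+1}-X^n\rangle=|r^{n+1}|^2-|r^n|^2+|r^{n+1}-r^n|^2$. This gives
\begin{align*}
E^{n+1}_{\mathrm{mod}}-E^n_{\mathrm{mod}}+|r^{n+1}-r^n|^2+\|(I-S(\tau))^{\frac12}(-A)^{-\frac12}\tilde\mu^n\|^2
=\tfrac12\|\nabla(X^{n+1}-X^n)\|^2+\langle g(X^n)\delta W^n,\tilde\mu^n\rangle+\langle \nabla(X^{n+1}-X^n),\nabla(g(X^n)\delta W^n)\rangle .
\end{align*}
This is the same identity that will underlie Theorem~\ref{Sec:energy:pro}. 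The dissipation term and $|r^{n+1}-r^n|^2$ on the left are available to absorb bad terms.

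\emph{Step 2 (bounding the right-hand side).} Substitute $X^{n+1}-X^n=g(X^n)\delta W^n-(I-S(\tau))(-A)^{-1}\tilde\mu^n$. In the gradient terms, $\|(-A)^{\frac12}(I-S(\tau))(-A)^{-1}\tilde\mu^n\|^2\le \|(I-S(\tau))^{\frac12}(-A)^{-\frac12}\tilde\mu^n\|^2$ since $0\le I-S(\tau)\le I$. Hence, after Young's inequality, these terms are either absorbed by the dissipation (with a factor $<1$; this requires care with the factor $\frac12$) or bounded by $\|\nabla (g(X^n)\delta W^n)\|^2$. By \eqref{lema1_eq_R6-new} the latter has conditional expectation at most $C\tau(1+\|\nabla X^n\|^2)$.

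The terms $\langle g(X^n)\delta W^n,-AX^n-A(g(X^n)\delta W^n)\rangle$ split into a conditionally mean-zero martingale increment plus a term of size $C\tau(1+\|\nabla X^n\|^2)$ in expectation.

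The delicate term is $\langle g(X^n)\delta W^n,\tilde f^n\rangle=r^{n+1}\langle b^n,g(X^n)\delta W^n\rangle$. For it I use two consequences of \eqref{lema1_eq_R1_insert_hey}, the Sobolev embedding $\dot H^1\hookrightarrow L^4$ and Assumption~\ref{assum1}:
\begin{align*}
\frac{|\langle F'(X^n),v\rangle|}{\sqrt{E_{\textup p}(X^n)}}\le C\frac{\|F'(X^n)\|_{L^{4/3}}}{\sqrt{E_{\textup p}(X^n)}}\|v\|_{L^4}\le C E_{\textup p}(X^n)^{\frac14}\|v\|_{L^4},\qquad \frac{\|F''(X^n)\|}{\sqrt{E_{\textup p}(X^n)}}\le C,
\end{align*}
together with $E_{\textup p}(X^n)\le C(1+\|\nabla X^n\|^4)$. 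These give $|\langle b^n,g(X^n)\delta W^n\rangle|\le C(1+\|\nabla X^n\|)\,\mathcal N^n$. Here $\mathcal N^n$ is a polynomial in the Gaussian quantities $\|g(X^n)\delta W^n\|_{L^4}$ and $\|g(X^n)\delta W^n\|_{L^\infty}$, and these quantities have conditional moments of order $\tau^{q/2}$.

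Write $r^{n+1}=r^n+(r^{n+1}-r^n)$. The part with $r^n$ is again a mean-zero increment plus a term of conditional size $C\tau\, E^n_{\mathrm{mod}}$ coming from the quadratic pieces of $b^n$. The part with $r^{n+1}-r^n$ is handled by $\frac12|r^{n+1}-r^n|^2+C(1+\|\nabla X^n\|^2)(\mathcal N^n)^2$, and the first piece is absorbed on the left. In this way I aim for
\begin{align*}
E^{n+1}_{\mathrm{mod}}\le E^n_{\mathrm{mod}}+M^n+\rho^n ,\qquad \mathbb E[M^n\,|\,\mathcal F_{t_n}]=0,
\end{align*}
with $\mathbb E[|\rho^n|^q\,|\,\mathcal F_{t_n}]\le C\tau^q(1+E^n_{\mathrm{mod}})^q$ and $\mathbb E[|M^n|^q\,|\,\mathcal F_{t_n}]\le C\tau^{q/2}(1+E^n_{\mathrm{mod}})^q$.

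\emph{Step 3 (moments).} For the $2p$-th moments, set $y_n=1+E^n_{\mathrm{mod}}$ and expand $(y_n+M^n+\rho^n)^p$ binomially. The linear term in $M^n$ vanishes under conditional expectation. Every other term is at most $C\tau\, y_n^p$ in conditional expectation, by the conditional moment bounds above and Hölder's inequality. This yields $\mathbb E[y_{n+1}^p]\le(1+C\tau)\mathbb E[y_n^p]$. The discrete Gronwall lemma and $E^0_{\mathrm{mod}}<\infty$ (by \eqref{pro:eq:phi0}) then give \eqref{Lemma1_step1}.

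I expect the main obstacle to be Step 2. The specific difficulties are the implicit dependence of $\tilde f^n$ on $r^{n+1}$, which in turn depends on $X^{n+1}$, and the cross terms between $\delta W^n$ and $X^{n+1}-X^n$. They must be split so that every non-martingale contribution is either absorbed by the dissipation and $|r^{n+1}-r^n|^2$, or is genuinely $O(\tau)$ with at most linear growth in $E^n_{\mathrm{mod}}$. The normalization by $\sqrt{E_{\textup p}(X^n)}$ is what makes this growth linear rather than superlinear.
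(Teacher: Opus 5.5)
Your proof is correct. It rests on the same ingredients as the paper's: the same one-step energy identity (yours is equivalent to \eqref{dis_energy_diss} via $2(I-S(\tau))-(I-S(\tau))^2=I-S^2(\tau)$), the same splitting $\tilde f^n=r^nb^n+(r^{n+1}-r^n)b^n$ with $|r^{n+1}-r^n|^2$ absorbed on the left, and the same normalization by $\sqrt{E_{\textup{p}}(X^n)}$ that keeps the growth linear in the energy. The difference is in how the moments are taken. The paper decouples $r^n$ from the noise pathwise, using Young's inequality with weights $\tau$ and $\tau^{-1}$. This produces $C\tau|r^n|^2$ plus $C(1+\tau^{-1})$ times squared quadratic noise terms, which are $O(\tau^2)$. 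The paper then sums over $n$, raises the summed inequality to the $p$th power, bounds $R_1^m,\dots,R_5^m$ with the BDG inequality, and closes with a discrete Gronwall inequality.

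You instead keep $r^n$ coupled to the noise and bound $r^n$ times the quadratic noise pieces directly by conditioning on $\mathcal F_{t_n}$. The binomial expansion then gives the one-step recursion $\mathbb E[y_{n+1}^p\,|\,\mathcal F_{t_n}]\le(1+C\tau)y_n^p$. This is more elementary, with no BDG for martingale sums and no $\tau^{\pm1}$ weights, and it gives exactly the pointwise bound stated. What the paper's summed form buys is that the same inequality, combined with Doob's inequality, directly yields Corollary \ref{lemma_stability}. That corollary gives the supremum over $n$ inside the expectation, plus moment bounds for $\sum_n|r^{n+1}-r^n|^2$ and for the accumulated dissipation $\sum_n\|(I-S^2(\tau))^{1/2}(-A)^{-1/2}\tilde\mu^n\|^2$, which are used later (e.g.\ in Lemmas \ref{lemma5} and \ref{lemma3}). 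Your recursion discards these terms, so you would need to return to a summed inequality to recover them.

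Two small corrections.
\begin{itemize}
\item The cross term in your Step~1 identity should carry a minus sign: $-\langle\nabla(X^{n+1}-X^n),\nabla(g(X^n)\delta W^n)\rangle$. This is harmless, since in Step~2 you only bound it in absolute value.
\item You do not need $\|g(X^n)\delta W^n\|_{L^\infty}$. Its moments are not obviously controlled under Assumption \ref{assum1}, which only makes $\sum_k\|Q^{1/2}e_k\|_{L^\infty}^2$ finite. The bound $|\langle F''(X^n)v,v\rangle|\le\|F''(X^n)\|\,\|v\|_{L^4}^2$ already suffices, so $L^4$ moments of $g(X^n)\delta W^n$ are enough.
\end{itemize}
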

\begin{proof}
Let \( n \in \{0,1,\cdots,N-1\} \). Applying the integration by parts formula and utilizing the commutativity of $S(\tau)$ and $A$,  it follows {from \eqref{schme_phi-intro}} that 
\begin{align}\label{dphi2}
	&\frac{1}{2}\left[\|\nabla X^{n+1}\|^{2}-\|\nabla X^{n}\|^{2}\right]\\\notag
	&=\frac{1}{2}\left\langle(S^{2}(\tau)-I) X^{n},(-A) X^{n}\right\rangle+\frac{1}{2}\big\langle\left(I-S(\tau)\right)^{2}A^{-1}\tilde{f}^{n},-\tilde{f}^{n}\big\rangle\\
	&\quad+\frac{1}{2}\langle S^{2}(\tau)g( X^{n})\delta  W^{n},(-A)g( X^{n})\delta  W^{n}\rangle+\langle S(\tau)(I-S(\tau)) X^{n},-\tilde{f}^{n}\rangle\notag\\
	&\quad+\langle S^{2}(\tau) X^{n},(-A)g( X^{n})\delta  W^{n}\rangle+\langle S(\tau)(I-S(\tau))A^{-1}\tilde{f}^{n},(-A)g( X^{n})\delta  W^{n}\rangle.\notag
\end{align}
From \eqref{lemma1_scheme_r_sto_before} and utilizing \eqref{schme_phi-intro}, we arrive at 
\begin{align}\label{lemma1_scheme_r_sto}
	{ 2r^{n+1} ( r^{n+1}-r^{n})}
	%&=\langle\tilde{f}^{n}, X^{n+1}-X^{n}\rangle\\\notag
	&=\langle(I-S(\tau)) X^{n},-\tilde{f}^{n}\rangle+\langle(S(\tau)-I)A^{-1}\tilde{f}^{n},
	-\tilde{f}^{n}\rangle\\\notag
	&\quad+\langle -S(\tau)A^{-1}\tilde{f}^{n},(-A)g( X^{n})\delta  W^{n}\rangle.\notag
\end{align}
Combining  \eqref{dphi2} and \eqref{lemma1_scheme_r_sto} together, and using
the elementary identity \(2a(a-b)=a^{2}-b^{2}+(a-b)^{2}\) for $a,b\in \mathbb{R}$,
we have
\begin{align}\label{dis_energy_diss}
	&\frac{1}{2}\left[\left\|\nabla   X^{n+1}\right\|^{2}-\left\|\nabla X^{n}\right\|^{2}\right]
	+|r^{n+1}|^{2}-|r^{n}|^{2}+|r^{n+1}-r^n|^{2}\\
	&\quad+ \frac{1}{2}\|(I-S^2(\tau))^{\frac{1}{2}}(-A)^{\frac 12}\tilde{\mu}^n\|^2\notag\\
	&=\frac{1}{2}\left\langle(S^{2}(\tau)-I)( X^{n}-A^{-1}\tilde{f}^{n}+g( X^{n})\delta  W^{n}),(-A)( X^{n}-A^{-1}\tilde{f}^{n}+g( X^{n})\delta  W^{n})\right\rangle\notag\\\nonumber 
	&\quad +\left\langle X^{n}-A^{-1}\tilde{f}^{n},(-A)g( X^{n})\delta  W^{n}\right\rangle +\frac{1}{2}\left\langle g( X^{n})\delta  W^{n},(-A)g( X^{n})\delta  
	W^{n}\right\rangle\\\nonumber
	&\quad +\frac{1}{2}\|(I-S^2(\tau))^{\frac{1}{2}}(-A)^{\frac 12}\tilde{\mu}^n\|^2\notag\\
	&=\left\langle -AX^{n}+\tilde{f}^{n},  g( X^{n})\delta  W^{n}\right\rangle
	+\frac{1}{2}\|(-A)^{\frac12}\left(g( X^{n})\delta  W^{n}\right)\|^2,\notag
\end{align}
where the definition of $\tilde{\mu}^n$ was applied in the last step.
It follows from the formulation \eqref{fn} of $\tilde{f}^n$, as well as Young's inequality, that
\begin{align*}
	&\langle \tilde{f}^{n},g( X^{n})\delta  W^{n}\rangle\\
	&=   \frac{r^n}{\sqrt{E_{\textup{p}}( X^{n})}}\left\langle F'( X^n),g( X^n)\delta  W^n\right\rangle\notag+\frac{r^{n+1}-r^n}{\sqrt{E_{\textup{p}}( X^{n})}}\left\langle F'( X^n),g( X^n)\delta  W^n\right\rangle\notag\\
	&\quad +\frac{r^{n+1}-r^n}{2\sqrt{E_{\textup{p}}( X^{n})}}\left\langle F''( X^n)g( X^n)\delta  W^n,g( X^n)\delta  W^n\right\rangle\notag\\
	&\quad- \frac{r^{n+1}-r^n}{4E_{\textup{p}}( X^{n})^{3/2}}\left\langle F'( X^n),g( X^n)\delta  W^n\right\rangle^2\notag\\
	&\quad+\frac{r^{n}}{2\sqrt{E_{\textup{p}}( X^{n})}}\left\langle F''( X^n)g( X^n)\delta  W^n,g( X^n)\delta  W^n\right\rangle\notag\\
	&\quad- \frac{r^{n}}{4E_{\textup{p}}( X^{n})^{3/2}}\left\langle F'( X^n),g( X^n)\delta  W^n\right\rangle^2\notag\\
	&\le \frac{r^n}{\sqrt{E_{\textup{p}}( X^{n})}}\left\langle F'( X^n),g( X^n)\delta  W^n\right\rangle\notag+\frac{3}{4}|r^{n+1}-r^n|^2+\frac{1}{E_{\textup{p}}( X^{n})}\left\langle F'( X^n),g( X^n)\delta  W^n\right\rangle^2\\
	&\quad+ C \tau |r^{n}|^2 +C(1+\tau^{-1})\frac{1}{E_{\textup{p}}( X^{n})}\left\langle F''( X^n)g( X^n)\delta  W^n,g( X^n)\delta  W^n\right\rangle^2\\
	&\quad+C(1+\tau^{-1})\frac{1}{E_{\textup{p}}( X^{n})^3}\left\langle F'( X^n),g( X^n)\delta  W^n\right\rangle^4.
\end{align*}
Combining this with \eqref{dis_energy_diss}, and then summing the resulting inequality from \( n = 0 \) to \( m \), it holds that  
\begin{align}\label{lema1_eq6_before_before}
	&\|\nabla  X^{m+1}\|^{2}
	+|r^{m+1}|^{2}+\frac14\sum_{n=0}^m|r^{n+1}-r^n|^{2}
	+\frac12\sum_{n=0}^{m}\|(I-S^2(\tau))^{\frac{1}{2}}(-A)^{-\frac 12}\tilde{\mu}^n\|^2\\
	&\leq \|\nabla \phi^{0}\|^{2}+|r^{0}|^{2}
	+C \tau \sum_{n=0}^m|r^{n}|^{2}
	+\sum_{n=0}^m\frac{1}{E_{\textup{p}}(X^{n})}\left\langle F'( X^n),g( X^n)\delta  W^n\right\rangle^2\notag\\
	&\quad+C(1+\tau^{-1})\sum_{n=0}^m\frac{1}{E_{\textup{p}}(X^{n})}\left\langle F''( X^n)g( X^n)\delta  W^n,g( X^n)\delta  W^n\right\rangle^2\notag\\
	&\quad+C(1+\tau^{-1})\sum_{n=0}^m\frac{1}{E_{\textup{p}}(X^{n})^3}\left\langle F'( X^n),g( X^n)\delta  W^n\right\rangle^4
	+\frac{1}{2}\sum_{n=0}^m\| (-A)^{\frac12}(g( X^{n})\delta  W^{n})\|^2\notag\\
	&\quad+\sum_{n=0}^m\Big\langle-A X^n+\frac{r^n}{\sqrt{E_{\textup{p}}(X^{n})}}F'( X^n),g( X^n)\delta  W^n\Big\rangle.\notag
\end{align}
Taking the \( p \)th power on both sides of \eqref{lema1_eq6_before_before}, 
we obtain by H\"older's inequality that
\begin{align}\label{lema1_eq6_before}
	&\|\nabla  X^{m+1}\|^{2p}
	+|r^{m+1}|^{2p}+\bigg(\frac14\sum_{n=0}^m|r^{n+1}-r^n|^{2}\bigg)^p\\
	&\quad{+\bigg(\frac12\sum_{n=0}^{m}\|(I-S^2(\tau))^{\frac{1}{2}}(-A)^{-\frac 12}\tilde{\mu}^n\| ^{2}\bigg)^p}\notag\\
	&\leq C\|\nabla \phi^{0}\|^{2p}+C|r^{0}|^{2p}\notag\\
	&\quad+C \tau \sum_{n=0}^m|r^{n}|^{2p}+C\bigg(\sum_{n=0}^m\frac{1}{E_{\textup{p}}( X^n)}\left|\left\langle F'( X^n),g( X^n)\delta  W^n\right\rangle\right|^2\bigg)^p\notag\\
	&\quad+C(1+\tau^{-1})^p\bigg(\sum_{n=0}^m\frac{1}{E_{\textup{p}}( X^n)}\left\langle F''( X^n)g( X^n)\delta  W^n,g( X^n)\delta  W^n\right\rangle^2\bigg)^p\notag\\
	&\quad+C(1+\tau^{-1})^p\bigg(\sum_{n=0}^m\frac{1}{E_{\textup{p}}(X^{n})^3}\left\langle F'( X^n),g( X^n)\delta  W^n\right\rangle^4\bigg)^p\notag\\
	&\quad+C\bigg(\sum_{n=0}^m\frac{1}{2}\| (-A)^{\frac12}(g( X^{n})\delta  W^{n})\|^2\bigg)^p\notag\\
	&\quad+C\bigg|\sum_{n=0}^m\Big\langle-A X^n+\frac{r^n}{\sqrt{E_{\textup{p}}(X^{n})}}F'( X^n),g( X^n)\delta  W^n\Big\rangle\bigg|^p\notag\\
	&=:C\|\nabla \phi^{0}\|^{2p}+C|r^{0}|^{2p}
	+C \tau \sum_{n=0}^m|r^{n}|^{2p}+R_1^m+R_2^m+R_3^m+R_4^m+R_5^m.\notag
\end{align}
We next estimate the expectations of the terms \( R_1^m, \cdots, R_5^m \), separately.

\textbf{Estimate of \( R_1^m \).} 
According to Assumption~\ref{assum1} and the Sobolev embedding \( L^\infty(\mathcal{O}) \hookrightarrow L^q(\mathcal{O}) \) with $1\le q\le \infty$, it holds that for any $1\le q\le \infty$,
\begin{align}\label{lema1_eq_R1_insert_Q}
	\sum_{k=1}^{\infty}\|g( X^n)Q^{\frac{1}{2}}e_{k}\|_{L^q}^{2}
	%&\le C\sum_{k=1}^{\infty}\|g( X^n)Q^{\frac{1}{2}}e_{k}\|_{L^{\infty}}^{2}\\\notag
	&\le C\|g( X^n)\|^2_{L^{\infty}}\sum_{k=1}^{\infty}\|Q^{\frac{1}{2}}e_{k}\|_{L^{\infty}}^{2}
	\le C.
\end{align}
By Young's inequality, \eqref{lema1_eq_R1_insert_hey}, and Assumption~\ref{assum2}, we derive that for any $q>0$,
\begin{align}\label{lema1_eq_R3_insert} 
	\left\|F'( X^n)\right\|_{L^{4/3}}^{4q}
	\leq C(1+\|X^n\|_{L^4}^{4})^{3q}\le C{E_{\textup{p}}(X^{n})^{3q}}.
\end{align} %{\r comment: I see the reason how $1$ comes from}
In particular, applying \eqref{lema1_eq_R3_insert} gives
\begin{align}\label{lema1_eq_R1_insert}
	\mathbb{E}\bigg[\frac{\left\|F'( X^n)\right\|^{2p}_{L^{4/3}}}{E_{\textup{p}}(X^{n})^p}\bigg]
	&\leq C\mathbb{E}\left[\left\|F'( X^n)\right\|^{\frac23p}_{L^{4/3}}\right]\\\notag
	&\leq  C\mathbb{E}\left[1+\|X^n\|_{L^4}^{2p}\right]\leq C + C\mathbb{E}\left[\|\nabla X^n\|^{2p}\right],
\end{align}
where the last inequality follows from the Sobolev embedding \( H^1(\mathcal{O}) \hookrightarrow L^4(\mathcal{O}) \) and the Dirichlet boundary conditions for the numerical solution.
By H\"older's inequality, the martingale property of stochastic integrals, the Burkholder--Davis--Gundy (BDG) inequality \cite[Chap.~2]{KR14}, \eqref{lema1_eq_R1_insert}, and \eqref{lema1_eq_R1_insert_Q}, we obtain 
\begin{align*}%\label{lema1_eq_R1_final}
	\mathbb{E}[R_1^m]
	& \leq C (m+1)^{p-1}\sum_{n=0}^m \mathbb{E}\bigg[\bigg|\int_{t_n}^{t_{n+1}}\bigg\langle\frac{1}{\sqrt{E_{\textup{p}}(X^{n})}}F'( X^n),g( X^n)d W(s) \bigg\rangle\bigg|^{2p}\bigg]\\
	&\leq C (m+1)^{p-1}\sum_{n=0}^m \tau^p\mathbb{E}\bigg[\frac{1}{E_{\textup{p}}(X^{n})^p}\bigg(\sum_{k=1}^{\infty}\langle F'( X^n),g( X^n)Q^{\frac{1}{2}}e_{k} \rangle^2\bigg)^{p}\bigg]\\
	&\leq  C\sum_{n=0}^m \tau\mathbb{E}\bigg[\frac{\left\|F'( X^n)\right\|^{2p}_{L^{4/3}}}{E_{\textup{p}}(X^{n})^p}\bigg(\sum_{k=1}^{\infty}\|g( X^n)Q^{\frac{1}{2}}e_{k}\|_{L^4}^{2}\bigg)^p\bigg]\\
	&\leq C + C \sum_{n=0}^m \tau \mathbb{E} \left[\|\nabla X^n\|^{2p} \right].
\end{align*}

\textbf{Estimate of \( R_2^m \).} 
By Young's inequality, \eqref{lema1_eq_R1_insert_hey} and with the help of Assumption~\ref{assum2}, one gives
\begin{align}\label{lema1_eq_R2_insert}
	\||F''( X^n)|^{1/2}\|_{L^4}^{4p}=  \|F''( X^n)\|^{2p}
	\leq C(1+\|X^n\|_{L^4}^4)^p\le CE_{\textup{p}}(X^{n})^p.
\end{align}
Applying H\"older's inequality, the BDG inequality  
% {\r How do you apply BDG inequality? unclear for me}
together with \eqref{lema1_eq_R1_insert_Q} and \eqref{lema1_eq_R2_insert},
we infer that
\begin{align*}%\label{lema1_eq_R2}
	\mathbb{E}[R_2^m] 
	&\leq C\tau^{-2p+1}\sum_{n=0}^m\mathbb{E}\bigg[\bigg\|\int_{t_n}^{t_{n+1}}\frac{1}{E_{\textup{p}}(X^{n})^{1/4}}|F''( X^n)|^\frac{1}{2}g( X^n)d  W(s)\bigg\|^{4p}\bigg]\\
	&\leq C\tau\sum_{n=0}^m\mathbb{E}\bigg[\frac{1}{E_{\textup{p}}(X^{n})^p}\bigg(\sum_{k=1}^{\infty}\left\||F''( X^n)|^\frac{1}{2}g( X^n)Q^{\frac{1}{2}}e_k\right\|^{2}\bigg)^{2p}\bigg]\\
	&\leq C\tau\sum_{n=0}^m\mathbb{E}\bigg[\frac{\||F''( X^n)|^\frac{1}{2}\|^{4p}_{L^{4}}}{E_{\textup{p}}(X^{n})^p}\bigg(\sum_{k=1}^\infty \|g( X^n)Q^{\frac{1}{2}}e_k\|^{2}_{L^4}\bigg)^{2p}\bigg]\leq C.
\end{align*}

\textbf{Estimate of \( R_3^m \).}  
Invoking the BDG inequality, \eqref{lema1_eq_R1_insert_Q}, and \eqref{lema1_eq_R3_insert} results in
\begin{align}\label{lema1_eq_R3_insert1} 
	&\mathbb{E}\left[\frac{1}{E_{\textup{p}}(X^{n})^{3p}}\left\langle F'( X^n),g( X^n)\delta   W^n\right\rangle^{4p}\right]\\
	%&=\mathbb{E}\bigg[\bigg|\int_{t_n}^{t_{n+1}}\bigg\langle\frac{1}{E_{\textup{p}}(X^{n})^{3/4}} F'( X^n),g( X^n)d  W(s)\bigg\rangle\bigg|^{4p}\bigg]\notag\\
	&\leq C\mathbb{E}\bigg[\frac{1}{E_{\textup{p}}(X^{n})^{3p}}\bigg(\tau\sum_{k=1}^\infty\left\langle F'( X^n),g( X^n)Q^{\frac{1}{2}}e_{k}\right\rangle^2\bigg)^{2p}\bigg]\notag\\
	&\leq C\tau^{2p}\mathbb{E}\bigg[\frac{\left\|F'( X^n)\right\|^{4p}_{L^{4/3}}}{E_{\textup{p}}(X^{n})^{3p}}\bigg(\sum_{k=1}^{\infty}\|g( X^n)Q^{\frac{1}{2}}e_k\|^2_{L^4}\bigg)^{2p}\bigg]\leq C\tau^{2p}.\notag
\end{align}
H\"older's inequality and \eqref{lema1_eq_R3_insert1} yield
\begin{equation*}%\label{lema1_eq_R3}
	\mathbb{E}[R_3^m] 
	\leq C(1+\tau^{-1})^p (m+1)^{p-1}\sum_{n=0}^m\mathbb{E}\left[\frac{1}{E_{\textup{p}}(X^{n})^{3p}}\left\langle F'( X^n),g( X^n)\delta   W^n\right\rangle^{4p}\right]
	\leq C.
\end{equation*}

\textbf{Estimate of \( R_4^m \).} 
%
%\begin{align}\label{lema1_eq_R6-new}\notag
%    &\sum_{k=1}^{\infty}\left\|(-A)^{\frac12}(g( X^n)Q^{\frac{1}{2}}e_{k})\right\|^2= \sum_{k=1}^{\infty}\left\|\nabla (g( X^n)Q^{\frac{1}{2}}e_{k})\right\|^2\\
%  & \leq C\sum_{k=1}^{\infty}\left\|\sigma'( X^n)\right\|^2_{L^{\infty}}\left\|\nabla  X^n\right\|^2\|Q^{\frac{1}{2}}e_k\|^2_{L^{\infty}}+C\sum_{k=1}^{\infty}\left\|\sigma( X^n)\right\|^2_{L^{\infty}}\|\nabla (Q^{\frac{1}{2}}e_{k})\|^2\leq C + C\left\|\nabla  X^n\right\|^{2}.
%\end{align}
By \eqref{lema1_eq_R6-new},  H\"older's inequality, and the BDG inequality, 
\begin{align}\label{lema1_eq_R6}
	\mathbb{E}[R_4^m] &
	%\le C\mathbb{E}\left[\left(\sum_{n=0}^m\left\|(-A)^{\frac12}\int_{t_n}^{t_{n+1}} g( X^{n})d W(s)\right\|^2\right)^p\right]
	\le C(m+1)^{p-1}\sum_{n=0}^m\mathbb{E}\left[\Big\|(-A)^{\frac12}\int_{t_n}^{t_{n+1}} g( X^{n})d W(s)\Big\|^{2p}\right]\\
	&\leq C(m+1)^{p-1}\sum_{n=0}^m\mathbb{E}\bigg[\bigg(\int_{t_n}^{t_{n+1}}\|g(X^n)\|^2_{\mathcal{L}_2(Q^{\frac12}H,\dot{H}^1(\mathcal{O}))}ds\bigg)^p\bigg]\notag\\
	&\leq C + C\tau\sum_{n=0}^m\mathbb{E}
	\left[\left\|\nabla  X^n\right\|^{2p}\right].\notag
\end{align}

\textbf{Estimate of \( R_5^m \).}  In view of H\"older's inequality and the BDG inequality,
\begin{align}\label{lema1_eq_R7}
	\mathbb{E}[R_5^m]&\leq C\mathbb{E}\bigg[\bigg|\sum_{n=0}^m\bigg\langle-A X^n+\frac{r^n}{\sqrt{E_{\textup{p}}(X^{n})}}F'( X^n),g( X^n)\delta  W^n\bigg\rangle\bigg|^p\bigg]\\
	&\le C\mathbb{E}\left[\bigg(\sum_{n=0}^m\tau\sum_{k=1}^{\infty}\langle -AX^n,g( X^n)Q^{\frac{1}{2}}e_{k}\rangle^2\bigg)^{\frac{p}{2}}\right]\notag\\
	&\quad+C\mathbb{E}\Bigg[\bigg(\sum_{n=0}^m\tau\sum_{k=1}^{\infty}\bigg\langle\frac{r^n}{\sqrt{E_{\textup{p}}(X^{n})}}F'( X^n),g( X^n)Q^{\frac{1}{2}}e_{k}\bigg\rangle^2\bigg)^{\frac{p}{2}}\Bigg]\notag\\
	&=:R_{51}^m+R_{52}^m.\notag
\end{align}
By H\"older's inequality, Young's inequality,
the integration by parts formula, and \eqref{lema1_eq_R6-new}, we derive that 
\begin{align}\label{lema1_eq_R71}
	R_{51}^m
	% &\leq C\mathbb{E}\left[\bigg(\sum_{n=0}^m\tau\sum_{k=1}^{\infty}\langle -AX^n,g( X^n)Q^{\frac{1}{2}}e_{k}\rangle^2\bigg)^{\frac{p}{2}}\right]\notag\\
	&\leq C(m+1)^{\frac{p}{2}-1}\sum_{n=0}^m\tau^{\frac{p}{2}}\mathbb{E}\left[\bigg(\sum_{k=1}^{\infty}\langle\nabla X^n,\nabla(g( X^n)Q^{\frac{1}{2}}e_{k})\rangle^2\bigg)^{\frac{p}{2}}\right]\\
	&\leq C\tau\sum_{n=0}^{m}\mathbb{E}\left[\|\nabla X^n\|^{2p}\right]+C\tau\sum_{n=0}^{m}\mathbb{E}\bigg[\bigg(\sum_{k=1}^{\infty}\|\nabla(g( X^n)Q^{\frac{1}{2}}e_{k})\|^{2}\bigg)^p\bigg]\notag\\
	&\leq C+C\tau\sum_{n=0}^{m}\mathbb{E}\left[\|\nabla X^n\|^{2p}\right].\notag
\end{align}
In a similar manner, by using the BDG inequality,  
H\"older's inequality, Young's inequality, \eqref{lema1_eq_R1_insert_Q}
together with \eqref{lema1_eq_R1_insert}, we conclude that
\begin{align}\label{lema1_eq_R72}
	R_{52}^m
	%&\leq C\mathbb{E}\Bigg[\bigg(\sum_{n=0}^m\tau\sum_{k=1}^{\infty}\bigg\langle\frac{r^n}{\sqrt{E_{\textup{p}}(X^{n})}}F'( X^n),g( X^n)Q^{\frac{1}{2}}e_{k}\bigg\rangle^2\bigg)^{\frac{p}{2}}\Bigg]\notag\\
	&\leq C(m+1)^{\frac{p}{2}-1}\sum_{n=0}^m\tau^{\frac{p}{2}}
	\mathbb{E}\Bigg[\bigg(\sum_{k=1}^{\infty}\bigg\langle\frac{r^n}{\sqrt{E_{\textup{p}}(X^{n})}}F'( X^n),g( X^n)Q^{\frac{1}{2}}e_{k}\bigg\rangle^2\bigg)^{\frac{p}{2}}\Bigg]\\
	&\leq C\tau\sum_{n=0}^{m}\mathbb{E}\left[\left|r^{n}\right|^{2p}\right]
	+C\tau\sum_{n=0}^{m}\mathbb{E}\bigg[\bigg(\frac{\left\|F'( X^n)\right\|^2_{L^{4/3}}}{E_{\textup{p}}(X^{n})}\bigg)^{p}\bigg(\sum_{k=1}^\infty\|g( X^n)Q^{\frac{1}{2}}e_{k}\|^2_{L^4}\bigg)^p\bigg]\notag\\
	&\leq C +C\tau\sum_{n=0}^{m}\mathbb{E}\left[\left|r^{n}\right|^{2p}\right]. \notag
\end{align}
Inserting \eqref{lema1_eq_R71} and \eqref{lema1_eq_R72} into \eqref{lema1_eq_R7} leads to
\begin{equation*}%\label{lema1_eq_R7_final}
	\mathbb{E}[R_5^m] \leq C+C\tau\sum_{n=0}^{m}\mathbb{E}\left[\left|r^{n}\right|^{2p}\right]
	+C\tau\sum_{n=0}^{m}\mathbb{E}\left[\|\nabla X^n\|^{2p}\right].
\end{equation*}

Combining the above estimates for \( R_1^m \) through \( R_5^m \), we deduce by taking expectations on both sides of \eqref{lema1_eq6_before} that for any \( m \in \{0,1, \cdots, N-1\} \),
\begin{align*}%\label{lema1_eq7}
	\mathbb{E}\left[\left\|\nabla  X^{m+1}\right\|^{2p}\right]
	+\mathbb{E}\left[|r^{m+1}|^{2p}\right]
	&\leq C + C\mathbb{E}\left[\left\|\nabla \phi^{0}\right\|^{2p}\right] + C\mathbb{E}\left[|r^{0}|^{2p}\right] 
	\\
	&\quad + C\tau\sum_{n=0}^m\mathbb{E}\left[|r^n|^{2p}\right] + C\tau\sum_{n=0}^m\mathbb{E}\left[\|\nabla X^n\|^{2p}\right]. 
\end{align*}
Hence,
on account of \eqref{lema1_eq_R1_insert_hey}, we can conclude  \eqref{Lemma1_step1}
%Assumption \ref{assum2}, \( \phi^0 \in \dot{H}^1(\mathcal{O}) \), and the Sobolev embedding \( H^1(\mathcal{O}) \hookrightarrow L^4(\mathcal{O}) \), one has
%\begin{equation}\label{eq:r0}
%    r^0 = \sqrt{E_{\textup{p}}(\phi^0) } \le \sqrt{ C\int_{\mathcal{O}} (1+|\phi^0(x)|^4) \,dx}\leq C.
%\end{equation}
%
% Therefore, 
by applying the discrete Gronwall's inequality.
\end{proof}

%{\r better to  say $\sigma$ or $\sigma'$? 
% What is the derivative of the operator $g$?
%}
We  would like to point out that the proof of Lemma~\ref{lemma_stability-pre} crucially relies on the boundedness of $\sigma$ and its derivative $\sigma'$; see in particular the estimate of $R_4^m$ in \eqref{lema1_eq_R6}. Extending the analysis to more general diffusion models, including those with unbounded diffusion coefficients or gradient-type noise, would require substantially different techniques and is therefore left for future work.

%{\r here, some discussion is needed regarding the boundedness assumption on $g'$? why not unbounded? comment on gradient type noise }

Based on Lemma \ref{lemma_stability-pre}, we obtain the following regularity estimate for the numerical solution under a stronger topology; see Appendix \ref{App_Cor1} for the proof.
\begin{corollary}\label{lemma_stability}
Let Assumptions \ref{assum2} and \ref{assum1} hold, and let $ \phi^0\in \dot{H}^1(\mathcal{O})$. Then for any $p\geq 1$, there exists a positive constant $C:=C(p)$ such that
%{\r a quick question: why not put $m=N-1$?}	
\begin{align}\label{lemma_re}\notag
	\mathbb{E}\left[\sup_{0\leq n\leq N-1}\|\nabla  X^{n+1}\|^{2p}\right]&+\mathbb{E}\left[\sup_{0\leq n\leq N-1}| r^{n+1}|^{2p}\right] +\mathbb{E}\bigg[\bigg(\sum_{n=0}^{N-1}|r^{n+1}-r^n|^2\bigg)^p\bigg]\\
	&+\mathbb{E}\bigg[\bigg(\sum_{n=0}^{N-1}\|(I-S^2(\tau))^{\frac{1}{2}}(-A)^{-\frac 12}\tilde{\mu}^n\|^2\bigg)^p\bigg]\leq C,
\end{align}
where $\tilde{\mu}^n $ is the modified chemical  potential defined in \eqref{eq:tildemun}.
%   {\r comment: here the format $\tilde u$ is not same as $\tilde u$, right}
\end{corollary}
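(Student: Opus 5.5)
The plan is to return to the pathwise inequality \eqref{lema1_eq6_before_before} from the proof of Lemma~\ref{lemma_stability-pre}. It holds for every $m\in\{0,\dots,N-1\}$ and controls, at once, $\|\nabla X^{m+1}\|^2$, $|r^{m+1}|^2$, $\frac14\sum_{n\le m}|r^{n+1}-r^n|^2$ and $\frac12\sum_{n\le m}\|(I-S^2(\tau))^{\frac12}(-A)^{-\frac12}\tilde\mu^n\|^2$. The two sums are nondecreasing in $m$, so they are dominated at $m=N-1$. For the other terms, I take the supremum over $m\in\{0,\dots,N-1\}$ on both sides before raising to the $p$th power and taking expectations.

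Every term on the right-hand side except the last is a sum of nonnegative quantities. Its supremum over $m$ is therefore attained at $m=N-1$. Consequently the bounds for $R_1^{N-1},\dots,R_4^{N-1}$ and for $C\tau\sum_{n}|r^n|^{2p}$ obtained in the proof of Lemma~\ref{lemma_stability-pre} apply directly. Those bounds involved only $\tau\sum_n\mathbb{E}[\|\nabla X^n\|^{2p}]$ and $\tau\sum_n\mathbb{E}[|r^n|^{2p}]$. By Lemma~\ref{lemma_stability-pre} these quantities are now already bounded by a constant, so all of those terms are $\le C$.

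\textbf{Main step: the martingale term.} The only term requiring new work is
\[
\mathbb{E}\Big[\sup_{0\le m\le N-1}\Big|\sum_{n=0}^m\Big\langle -AX^n+\tfrac{r^n}{\sqrt{E_{\textup{p}}(X^n)}}F'(X^n),g(X^n)\delta W^n\Big\rangle\Big|^p\Big].
\]
The partial sums form a discrete martingale with respect to $\{\mathcal F_{t_n}\}$, since $X^n,r^n$ are $\mathcal F_{t_n}$-measurable. Equivalently, they are the values at grid points of the continuous martingale $\int_0^t\langle\cdot,g(X^{\lfloor s/\tau\rfloor})dW(s)\rangle$. I would therefore use the BDG inequality in its maximal form, which bounds this expectation by exactly the quadratic-variation expressions $R_{51}^{N-1}+R_{52}^{N-1}$ from \eqref{lema1_eq_R7}. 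These were estimated in \eqref{lema1_eq_R71}--\eqref{lema1_eq_R72} by $C+C\tau\sum_n\mathbb{E}[\|\nabla X^n\|^{2p}+|r^n|^{2p}]$, which is $\le C$ by Lemma~\ref{lemma_stability-pre}.

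For $p<2$, I would either apply the argument with $2p$ and use Jensen's inequality, or note that BDG holds for all $p>0$. Combining these estimates gives \eqref{lemma_re}. No Gronwall argument is needed at this stage, because the non-supremum moment bounds from Lemma~\ref{lemma_stability-pre} already close the estimate. The only delicate point I expect is justifying the maximal BDG step for the discrete-time martingale; the piecewise-constant stochastic-integral representation handles it.
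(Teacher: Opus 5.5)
Your proposal is correct and follows essentially the same route as the paper's proof in Appendix~\ref{App_Cor1}. That proof also takes the supremum over $m$ in the pathwise inequality, uses the monotonicity of the nonnegative sums $R_1^m,\dots,R_4^m$ to reduce to $m=N-1$, bounds $\sup_m R_5^m$ by the quadratic-variation terms $R_{51}^{N-1}+R_{52}^{N-1}$ via a maximal martingale inequality (the paper cites Doob's inequality, you use maximal BDG), and closes everything with the moment bounds of Lemma~\ref{lemma_stability-pre}.
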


The sharp error analysis of the exponential Euler SSAV scheme \eqref{schme_phi-intro} involves the $H^\beta(\mathcal{O})$ ($\beta>1$) spatial regularity estimates for both the exact and numerical solutions.
To facilitate the higher  regularity estimate of the numerical solution,
we introduce its continuous time version $X=\{X(t)\}_{t\in[0,T]}$ defined by
\begin{equation}\label{Xn_contin}
d X(t)  =-A^2 X(t) d t+A \tilde{f}^{\kappa_N(t) / \tau} d t+S(t-\kappa_N(t)) g(X(\kappa_N(t))) d W(t)
\end{equation}
for any $t\in(0,T]$,
with the initial value $X(0) =\phi^0$. Hereafter, for $t\in[0,T]$, we define $\kappa_N(t):=\lfloor t/\tau\rfloor\tau$, {where $\lfloor\cdot\rfloor$ denotes the floor function}. % {\r have we defined the floor function} 
Notice that 
$X(t_n) = X^{n}$ for all $n\in \{0,1,\cdots,N\}$. 

To present the regularity analysis of numerical and exact solutions in a unified manner, we  
consider a stochastic process $\{Z(t)\}_{t\in[0,T]}$ defined via
\begin{align}\label{lemma6_eq3_R1_Zn}
dZ(t)=-A^2 Z(t) d t
+A\tilde{F}(t)dt
+\tilde{G}(t)dW(t),\qquad t\in[0,T],
\end{align}
subject to the initial condition $Z(0)=\phi^0$,
where $\tilde{F}:[0,T]\times\Omega\to H$ and
$\tilde{G}:[0,T]\times\Omega\to {\mathcal{L}_2^0}$ are
$\{\mathcal{F}_t\}_{t\in[0,T]}$-adapted stochastic processes. We focus on the following two cases.
\begin{enumerate}
\item[(a)] If $\tilde{F}(s)=F'(\phi(s))$ and $\tilde{G}(s)=g(\phi(s))$, then
$Z=\phi$ is the mild solution of the original problem \eqref{model}.
\item[(b)] If $\tilde{F}(s)=\tilde{f}^{\,\kappa_N(s)/\tau}$ and
$\tilde{G}(s)=S(s-\kappa_N(s))g(X^{\kappa_N(s)/\tau})$,
then $Z=X$ is the continuous version of the numerical solution $\{X^n\}_{n=0}^N$.
\end{enumerate}

The following lemma identifies sufficient conditions to guarantee the $H^\beta(\mathcal{O})$-spatial regularity of the mild solution $Z$ to \eqref{lemma6_eq3_R1_Zn}. 
\begin{lemma}\label{lemma_tool}
Let  $\tilde{F}\in L^p(0,T;L^p(\Omega;H))$ and $\tilde{G}\in L^p(0,T;L^p(\Omega;\mathcal{L}_2(Q^\frac12 H,H)))$ for any \( p \geq 1 \).
If $\phi^0\in D((-A)^{\frac{\beta}{2}})$ for some  \( \beta \in (0,2) \),  
then
for any \( p \geq 1 \), there exists a constant $C:=C(p,\beta)>0$ such that
\begin{equation*}%\label{lemma_tool_re1}
	\mathbb{E}\bigg[\sup_{t\in[0,T]} \|(-A)^{\frac{\beta}{2}} Z(t)\|^{p} \bigg] \leq C\bigg(\|(-A)^{\frac{\beta}{2}}\phi^0\|^p+\mathbb{E}\int_0^{T} \Big[\|\tilde{F}(s)\|^{p} +\|\tilde {G}(s)\|_{\mathcal{L}_2^0}^{p}\Big] ds\bigg).
\end{equation*}
\end{lemma}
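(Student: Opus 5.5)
Since $Z$ is the mild solution of \eqref{lemma6_eq3_R1_Zn}, we write
\begin{equation*}
Z(t)=S(t)\phi^0+\int_0^t S(t-s)A\tilde F(s)\,ds+\int_0^t S(t-s)\tilde G(s)\,dW(s)=:Z_1(t)+Z_2(t)+Z_3(t)
\end{equation*}
and bound $\mathbb{E}[\sup_t\|(-A)^{\beta/2}Z_i(t)\|^p]$ for each piece separately. It suffices to treat large $p$, since smaller $p$ follow from H\"older's inequality.

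\textbf{Step 1 (initial datum).} Because $S(t)$ is a contraction commuting with $(-A)^{\beta/2}$, we get $\|(-A)^{\beta/2}Z_1(t)\|\le\|(-A)^{\beta/2}\phi^0\|$ uniformly in $t$.

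\textbf{Step 2 (drift).} By \eqref{assum1_eq1} with $\gamma=1+\beta/2$,
\begin{equation*}
\|(-A)^{\beta/2}Z_2(t)\|\le C\int_0^t (t-s)^{-\frac12-\frac\beta4}\|\tilde F(s)\|\,ds .
\end{equation*}
Since $\beta<2$, the exponent $\frac12+\frac\beta4$ is strictly less than $1$. For $p$ large enough that $q=p/(p-1)$ satisfies $q(\frac12+\frac\beta4)<1$, H\"older's inequality in time gives
\begin{equation*}
\sup_t\|(-A)^{\beta/2}Z_2(t)\|^p\le C\int_0^T\|\tilde F(s)\|^p\,ds .
\end{equation*}
Taking expectations finishes this term.

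\textbf{Step 3 (stochastic convolution, the main obstacle).} The supremum sits inside the expectation, and a convolution is not a martingale, so BDG cannot be applied directly. I would use the Da Prato--Kwapie\'n--Zabczyk factorization method. Pick $\alpha\in(\frac\beta4,\frac12)$, which is possible because $\beta<2$. Set
\begin{equation*}
Y_\alpha(s)=\int_0^s (s-r)^{-\alpha}S(s-r)\tilde G(r)\,dW(r),
\end{equation*}
so that
\begin{equation*}
Z_3(t)=\frac{\sin(\pi\alpha)}{\pi}\int_0^t (t-s)^{\alpha-1}S(t-s)Y_\alpha(s)\,ds .
\end{equation*}

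First, the BDG inequality together with \eqref{assum1_eq1} gives
\begin{equation*}
\mathbb{E}\|Y_\alpha(s)\|^p\le C\,\mathbb{E}\Big(\int_0^s (s-r)^{-2\alpha}\|\tilde G(r)\|_{\mathcal L_2^0}^2\,dr\Big)^{p/2}.
\end{equation*}
Since $2\alpha<1$, Young's convolution inequality (or H\"older's inequality) bounds $\int_0^T\mathbb{E}\|Y_\alpha(s)\|^p\,ds$ by $C\int_0^T\mathbb{E}\|\tilde G(s)\|_{\mathcal L_2^0}^p\,ds$.

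Second, using $\|(-A)^{\beta/2}S(t-s)\|\le C(t-s)^{-\beta/4}$, we obtain
\begin{equation*}
\|(-A)^{\beta/2}Z_3(t)\|\le C\int_0^t (t-s)^{\alpha-1-\frac\beta4}\|Y_\alpha(s)\|\,ds .
\end{equation*}
The exponent $\alpha-1-\frac\beta4$ exceeds $-1$, so for $p$ large with $q(1+\frac\beta4-\alpha)<1$, H\"older's inequality yields
\begin{equation*}
\sup_t\|(-A)^{\beta/2}Z_3(t)\|^p\le C\int_0^T\|Y_\alpha(s)\|^p\,ds .
\end{equation*}

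The delicate point is choosing $\alpha$ and $p$ so that both integrability constraints hold simultaneously. This is exactly where $\beta<2$ is used.

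\textbf{Step 4 (conclusion).} Combining Steps 1--3 and applying H\"older's inequality to pass to small $p$ gives the claimed estimate.
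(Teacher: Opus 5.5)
Your proposal is correct and follows the paper's proof essentially line by line. You use contractivity for $S(t)\phi^0$, the smoothing bound \eqref{assum1_eq1} with H\"older in time for the drift, and the Da Prato--Zabczyk factorization for the stochastic convolution. The only cosmetic difference is where $(-A)^{\beta/2}$ sits: you put it on the outer deterministic convolution, which needs $\beta/4+1/p<\alpha<1/2$, whereas the paper puts it inside $\tilde Y$, which needs $1/p<\rho<1/2-\beta/4$; these are the same constraint under $\alpha=\rho+\beta/4$.

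One caveat, which you share with the paper: the reduction ``smaller $p$ follow from H\"older's inequality'' does not give the displayed inequality with the same $p$ on both sides. It only gives a bound by $p'$-th moments, for some larger $p'$, raised to the power $p/p'$. In fact the inequality as displayed fails for $p<4/(2-\beta)$, as the deterministic choice $\tilde F(s)=h\mathbf{1}_{[0,\lambda_j^{-2}]}(s)e_j$ with $j\to\infty$ shows. This is harmless, because every later application only uses finiteness of all moments.
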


Since the proof of Lemma \ref{lemma_tool} is standard,  we include it in Appendix \ref{App-lemma-tool} for completeness.
%As an application of Lemma \ref{lemma_tool},
% we also include the regularity estimate of the mild solution to \eqref{model} in Appendix \ref{Appendix}; see Proposition \ref{App_prop} for more details. {\r I do not understand eng. The regularity of the exact solu. is also included in Lemma 3.2? why need Proposition \ref{App_prop}?}
We are now in a position to derive a higher spatial regularity estimate for the numerical solution.

\begin{lemma}\label{coro1}
Let Assumptions \ref{assum2} and \ref{assum1} hold, and $\phi^0\in \dot{H}^\beta(\mathcal{O})$ for some $\beta\in[1,2)$. Then for any $p\ge 1$, there exists a constant $C:=C(p,\beta)>0$ such that 
\begin{align}\label{eq:AXtp}
	\mathbb{E}\left[\sup_{t\in[0,T]}\|(-A)^{\frac{\beta}{2}} X(t)\|^{2p}\right]\le C.
\end{align}% {\r where is $n$ in $X$?}
\end{lemma}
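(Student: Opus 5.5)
The plan is to apply Lemma~\ref{lemma_tool} in case (b), with $p$ replaced by $2p$. There $Z=X$, $\tilde F(s)=\tilde f^{\kappa_N(s)/\tau}$ and $\tilde G(s)=S(s-\kappa_N(s))g(X^{\kappa_N(s)/\tau})$. Since $\phi^0\in\dot H^\beta(\mathcal O)$ with $\beta\in[1,2)$, the norm equivalences recalled in Section~\ref{subsec:Pre} give $\phi^0\in D((-A)^{\beta/2})$. This is immediate for $\beta=1$ and for $\beta\in(1,\tfrac32)\cup(\tfrac32,2)$. For the exceptional value $\beta=\tfrac32$ I would invoke the same characterization of $D((-A)^{3/4})$ used in Proposition~\ref{App_prop}; failing that, I would restrict attention to the other values of $\beta$. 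The conclusion \eqref{eq:AXtp} then follows once two bounds hold: $\sup_n\mathbb E\|\tilde f^n\|^{2p}\le C$ and $\sup_s\mathbb E\|\tilde G(s)\|_{\mathcal L_2^0}^{2p}\le C$.

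The diffusion term is easy. Because $S$ is a contraction and $\sigma$ is bounded,
\[
\|\tilde G(s)\|_{\mathcal L_2^0}^2\le \sum_k\|\sigma(X^{n})Q^{\frac12}e_k\|^2\le C\sum_k\|Q^{\frac12}e_k\|_{L^\infty}^2\le C
\]
by Assumption~\ref{assum1}.

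For the drift, I would write $\tilde f^n=r^{n+1}b^n$ and bound the three pieces of $b^n$ in $H$, using only the $H^1$-stability from Lemma~\ref{lemma_stability-pre} and Corollary~\ref{lemma_stability}.
\begin{itemize}
\item Since $d\le3$, $H^1\hookrightarrow L^6$, so $\|F'(X^n)\|\le C(1+\|X^n\|_{L^6}^3)\le C(1+\|\nabla X^n\|^3)$. By \eqref{lema1_eq_R1_insert_hey}, $E_{\textup p}(X^n)\ge c$, so the prefactors $E_{\textup p}^{-1/2}$ and $E_{\textup p}^{-3/2}$ are bounded.
\item The second piece is $F'(X^n)\langle F'(X^n),g(X^n)\delta W^n\rangle$. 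I bound the inner product by $\|F'(X^n)\|\,\|g(X^n)\delta W^n\|$.
\item The third piece is handled by H\"older's inequality:
\[
\|F''(X^n)g(X^n)\delta W^n\|\le \|F''(X^n)\|_{L^3}\|g(X^n)\delta W^n\|_{L^6},\qquad \|F''(X^n)\|_{L^3}\le C(1+\|\nabla X^n\|^2).
\]
\item For the noise factors, conditionally on $\mathcal F_{t_n}$ I use the BDG inequality (or Gaussianity) in $L^6$ together with \eqref{lema1_eq_R1_insert_Q}. This gives $\mathbb E[\|g(X^n)\delta W^n\|_{L^6}^{q}\mid\mathcal F_{t_n}]\le C(q)\tau^{q/2}$, and similarly in $L^2$.
\end{itemize}
Combining these with H\"older's inequality in $\omega$ and the bounds $\mathbb E\|\nabla X^n\|^{q}+\mathbb E|r^{n+1}|^{q}\le C(q)$ for all $q$ from Lemma~\ref{lemma_stability-pre} gives $\mathbb E\|\tilde f^n\|^{2p}\le C$ uniformly in $n$ and $\tau$.

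The main obstacle, as I see it, is exactly this moment bound on $\tilde f^n$ in $H$. It must be obtained from $H^1$ information alone, and it relies on $d\le3$ so that the cubic $F'$ and the quadratic $F''$ are controlled via $L^6$. Two further points need care: that $\tilde f^n$ contains $r^{n+1}$, which is not $\mathcal F_{t_n}$-measurable; and that the cross terms mixing $r^{n+1}$ with $\delta W^n$ are handled by Cauchy--Schwarz in $\omega$ rather than by conditioning. Since all needed moments are finite for every exponent, Cauchy--Schwarz suffices. Once both bounds hold, Lemma~\ref{lemma_tool} yields \eqref{eq:AXtp} with a constant depending only on $p$, $\beta$, $T$ and the data.
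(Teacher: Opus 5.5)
Your proposal is correct and follows the paper's proof. You apply Lemma~\ref{lemma_tool} in case (b), bound $\tilde G$ by contractivity of $S(\cdot)$ and \eqref{lema1_eq_R1_insert_Q}, and bound $\tilde f^n$ in $L^p(\Omega;H)$ using the $H^1$-stability of Corollary~\ref{lemma_stability}, the embedding $H^1\hookrightarrow L^6$, and BDG for the increment; this matches the paper's splitting $\tilde f^n=\frac{r^{n+1}}{\sqrt{E_{\textup{p}}(X^n)}}F'(X^n)+\chi^n$ with its appendix bound $\mathbb{E}[\|\chi^n\|^p]\le C\tau^{\frac p2}$, apart from cosmetic differences: the paper handles $F''(X^n)g(X^n)\delta W^n$ by the Hilbert-space BDG with $\|F''(X^n)\|\,\|g(X^n)Q^{\frac12}e_k\|_{L^\infty}$ instead of your $L^3$--$L^6$ H\"older split, it records the $\tau^{1/2}$ smallness of $\chi^n$ because Proposition~\ref{Thm1} reuses it, and it does not single out $\beta=\tfrac32$ but simply takes $\phi^0\in D((-A)^{\beta/2})$ for granted.
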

\begin{proof}
The proof is based on applying Lemma \ref{lemma_tool} with
\begin{align}\label{eq:FG}
	\tilde{F}(s)=\tilde{f}^{\,\kappa_N(s)/\tau}
	\quad\text{and}\quad
	\tilde{G}(s)=S\big(s-\kappa_N(s)\big)g(X(\kappa_N(s))).
\end{align}
By the contractivity of the semigroup $S(\cdot)$ and \eqref{lema1_eq_R1_insert_Q},
\begin{align}\label{eq:GL2}
	&\mathbb{E}\int_0^{T} \|\tilde {G}(s)\|_{\mathcal{L}_2^0}^{p}ds
	=\sum_{n=0}^{N-1}\int_{t_n}^{t_{n+1}}\mathbb{E}
	\left[\|S(s-t_n)g(X^n)\|_{\mathcal{L}_2^0}^{p}\right] ds\\\notag
	&\le\sum_{n=0}^{N-1}\int_{t_n}^{t_{n+1}} \|S(s-t_n)\|_{\mathcal{L}(H)}^{p}\mathbb{E}\left[\|g(X^n)\|_{{\mathcal{L}_2^0}}^{p}\right]ds
	\le C.
\end{align}
%		\begin{align}\label{eq:GL2}
	%			&\mathbb{E}\int_0^{T} \|\tilde {G}(s)\|_{\mathcal{L}_2^0}^{p}ds\\\notag
	%			&
	%			=\sum_{n=0}^{N-1}\int_{t_n}^{t_{n+1}}\mathbb{E}\bigg[\Big(\sum_{k=1}^{\infty}\|S(s-t_n)\,g(X^n)Q^{\frac 12}e_k\|^2\Big)^{\frac p2}\bigg] ds\\\notag
	%			&\le\sum_{n=0}^{N-1}\int_{t_n}^{t_{n+1}} \|S(s-t_n)\|_{\mathcal{L}(H)}^{p}\mathbb{E}\bigg[\Big(\sum_{k=1}^{\infty}\|g\left(X^n\right)Q^{\frac 12}e_k\|^2\Big)^{\frac p2}\bigg]ds
	%			\le C.
	%		\end{align}
Using H\"older's inequality, {Corollary \ref{lemma_stability}}, Assumption \ref{assum2},  the Sobolev embedding $H^1(\mathcal{O})\hookrightarrow L^6 (\mathcal{O})$, the fact that $X^n\in \dot{H}^1(\mathcal{O})$ yields that for any $p\ge1$,
\begin{equation}\label{eq:FXn}%\label{lemma2_J1_term1}
	\mathbb{E}\big[\|F'( X^{n})\|^{4p}\big]\leq C +C\mathbb{E}\big[\|\nabla X^n\|^{12p}\big]
	\leq C(p).
\end{equation}

Recall that by \eqref{lema1_eq_R1_insert_hey}, the functional $E_{\textup{p}}$ is bounded from below by a positive constant.
In view of \eqref{fn}, {Corollary \ref{lemma_stability}}, and Young's inequality, it holds that
\begin{align}\label{lemma6_eq3_R12_12-old}
	\mathbb{E}[\|\tilde{f}^{n}\|^p]
	&\le C\mathbb{E}[|r^{n+1}|^{2p}]+
	\mathbb{E}[\|F'(X^{n})\|^{2p}] + C\mathbb{E}[\|\chi^n\|^p]\le C+C\mathbb{E}[\|\chi^n\|^p]
\end{align}
for any $n=0,1,\cdots,N$.  
We claim that  for any $p\ge1$,
\begin{equation}\label{eq:chinp}
	\mathbb{E}\left[\|\chi^{n}\|^p\right]\le  C(p)\tau^{\frac{p}{2}},
\end{equation} 
whose proof is put in the Appendix \ref{App_4.24} for completeness.
Substituting \eqref{eq:chinp} into \eqref{lemma6_eq3_R12_12-old} ensures that for any $p\ge1$,
\begin{equation}\label{lemma6_eq3_R12_12}
	\mathbb{E}[\|\tilde{f}^{n}\|^p]
	\le C(p).
\end{equation} 
This, along with \eqref{eq:GL2}, validates the conditions of Lemma~\ref{lemma_tool} for the functions $\tilde F$ and $\tilde G$ defined in \eqref{eq:FG}. Finally, we can obtain \eqref{eq:AXtp} by applying Lemma~\ref{lemma_tool}.
\end{proof}

%  {\r we may remove the remark environment }\begin{remark}\label{rem:H2}
\begin{remark}\label{rem:H2-reg}
	Based on Lemma~\ref{coro1}, if in addition $\phi^0\in \dot{H}^2(\mathcal{O})$, one can further derive an $H^2(\mathcal{O})$-spatial regularity estimate for both the mild solution $\phi(t)$ and the continuous version $X(t)$ of the numerical solution. For instance, for the numerical solution, by noticing that $Z:=(-A)^{\frac12}X$ solves
	\begin{equation*}
		dZ(t) =-A^2 Z(t) d t+A (-A)^{\frac12}\tilde{f}^{\kappa_N(t) / \tau} d t+S(t-\kappa_N(t)) (-A)^{\frac12}g(X(\kappa_N(t))) d W(t),
	\end{equation*}
	{one can apply Lemma~\ref{lemma_tool} with the choices $\tilde{F}(s)=(-A)^{1/2}\tilde{f}^{\kappa_N(s)/\tau}$ and $\tilde{G}(s)=(-A)^{1/2} S(s-\kappa_N(s)) g(X(\kappa_N(s)))$ to obtain $(-A)^{\frac12}X\in L^p(\Omega;L^\infty(0,T;D((-A)^{\frac12}))$.}
\end{remark}
%{\r This single sentence is a misunderstanding. You may want to add one-two sentences to explain it.}

%	\end{remark}

The Sobolev embedding $H^{\beta}(\mathcal{O}) \hookrightarrow L^{\infty}(\mathcal{O})$ for $\beta >\tfrac{d}{2}$, the norm equivalence between $\|\cdot\|_{H^{\beta}}$ and $\|(-A)^{\frac{\beta}{2}}\cdot\|$ in the space $\dot{H}^{\beta}(\mathcal{O})$,  together with Lemma \ref{coro1} with $\beta\in(\frac{d}{2},2)\cap[1,2)$, %{\b $2>\beta>\frac d2$ in this paper, please modify them all?}
give that for any $p\ge1$,
\begin{equation}\label{eq:Linfty}
\mathbb{E}\left[\sup_{0\le n\le N}\|X^{n}\|_{L^\infty}^p\right]
%\le C \mathbb{E}\bigg[\sup_{t\in[0,T]}\|X(t)\|_{H^{\beta}}^{2p}\bigg] 
\le C \mathbb{E}\bigg[\sup_{t\in[0,T]}\|(-A)^{\frac{\beta}{2}}X(t)\|^{2p}\bigg]
\leq C(p,\beta).
\end{equation}
% {\b modify them $C(p,T,\beta)$?}

%{\r general comment: this section is too long, put some proof into appendix}

\subsection{Temporal regularity estimate}
In this subsection, we present temporal regularity results for the numerical solution of the exponential Euler SSAV scheme \eqref{schme_phi-intro}, including the temporal H\"older continuity in $L^2(\mathcal{O})$ and an a priori bound for the accumulated discrete  increment sum in  \( H^1 (\mathcal{O})\).

\begin{lemma}\label{lem:time}
Let Assumptions \ref{assum2} and \ref{assum1} hold, and let $\phi^0\in \dot{H}^\beta(\mathcal{O})$ for some $\beta\in (\frac{d}{2},2)\cap[1,2)$. %{\b same comment for $\beta$?}. 
Then for any $p\ge 1$, there exists a constant $C:=C(p,\beta)>0$ such that 
\begin{align*}
	\|X^{n+1}- X^n\|_{L^{2p}(\Omega;H)}\leq C\tau^{\frac{\beta}{4}},\qquad 
	n\in\{0,1,\cdots,N-1\}. 
\end{align*}

\end{lemma}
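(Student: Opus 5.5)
We propose to work directly with the one-step form of the scheme \eqref{schme_phi-intro}. Subtracting $X^n$ gives the decomposition
\begin{align*}
X^{n+1}-X^n=(S(\tau)-I)X^n+(I-S(\tau))A^{-1}\tilde f^n+S(\tau)g(X^n)\delta W^n=:I_1+I_2+I_3.
\end{align*}
We estimate each term in $L^{2p}(\Omega;H)$ separately, using the smoothing estimates \eqref{assum1_eq1}--\eqref{assum1_eq2} together with the moment bounds already available from Lemma \ref{coro1}, \eqref{lemma6_eq3_R12_12} and \eqref{lema1_eq_R1_insert_Q}.

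For $I_1$, we write $(S(\tau)-I)X^n=(-A)^{-\beta/2}(I-S(\tau))\,(-A)^{\beta/2}X^n$. Applying \eqref{assum1_eq2} with $\nu=\beta/2\in[0,2]$ gives
\begin{align*}
\|I_1\|\le C\tau^{\beta/4}\|(-A)^{\beta/2}X^n\|.
\end{align*}
Since $X^n=X(t_n)$, Lemma \ref{coro1} makes the $2p$-th moment of the right-hand side uniformly bounded, so $\|I_1\|_{L^{2p}(\Omega;H)}\le C\tau^{\beta/4}$.

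For $I_2$, we use \eqref{assum1_eq2} with $\nu=1$, which gives $\|(I-S(\tau))A^{-1}\|_{\mathcal L(H)}\le C\tau^{1/2}$. Combined with the uniform bound $\mathbb E\|\tilde f^n\|^{2p}\le C$ from \eqref{lemma6_eq3_R12_12}, this yields $\|I_2\|_{L^{2p}(\Omega;H)}\le C\tau^{1/2}$. Because $\beta<2$, we have $\tau^{1/2}\le C\tau^{\beta/4}$.

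For $I_3$, we use that $\delta W^n$ is independent of the $\mathcal F_{t_n}$-measurable quantity $g(X^n)$ and that $S(\tau)$ is a contraction. The BDG inequality applied to $\int_{t_n}^{t_{n+1}}S(\tau)g(X^n)\,dW(s)$ then gives
\begin{align*}
\mathbb E\|I_3\|^{2p}\le C\tau^{p}\,\mathbb E\|g(X^n)\|_{\mathcal L_2^0}^{2p}\le C\tau^p,
\end{align*}
where the last step uses \eqref{lema1_eq_R1_insert_Q} with $q=2$, which holds because $\sigma$ is bounded. Hence $\|I_3\|_{L^{2p}(\Omega;H)}\le C\tau^{1/2}\le C\tau^{\beta/4}$.

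Summing the three bounds gives the claim $\|X^{n+1}-X^n\|_{L^{2p}(\Omega;H)}\le C\tau^{\beta/4}$. The only delicate input is the uniform moment bound on $\tilde f^n$, which relies on the claim \eqref{eq:chinp} for $\chi^n$ and the $r^{n+1}$ bounds of Corollary \ref{lemma_stability}; since these are already established, we expect no real obstacle. The rate is limited by $I_1$, that is, by the spatial regularity $\beta$ of the numerical solution. The assumption $\beta>d/2$ is not needed here beyond what Lemma \ref{coro1} requires.
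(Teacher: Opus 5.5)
Your proof is correct and takes essentially the paper's approach: the paper omits the details and cites only \eqref{assum1_eq1}--\eqref{assum1_eq2} together with the $H^\beta$-regularity of Lemma~\ref{coro1}, and your one-step splitting into $(S(\tau)-I)X^n$, the $\tilde f^n$-term and the noise term (the same splitting $R_1^n+R_2^n+R_3^n$ used in the proof of Lemma~\ref{lemma6}) is exactly that standard argument. Your side remark is also accurate: $\beta>\frac d2$ plays no role here, since \eqref{lemma6_eq3_R12_12} and Lemma~\ref{coro1} only require $\phi^0\in\dot H^\beta$ with $\beta\in[1,2)$.
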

\begin{proof}
The proof is standard, relying on \eqref{assum1_eq1}, \eqref{assum1_eq2}, and the $H^\beta(\mathcal{O})$-spatial regularity estimate of $X^n$ (see Remark \ref{rem:H2-reg} for $\beta=2$ and Lemma \ref{coro1} for $\beta<2$). We therefore omit the details and refer the reader to, for example, \cite[Theorem 2.31]{KR14} for a similar argument. 	
\end{proof}
% {\color{red} for $\beta=2$ it seems that you need additional works?} {\color{blue}see Remark 4.1.}

%  {\b improve the following eng}
% {\b $\phi$ or $\phi^0$? improve eng. "may still be ?"}
% {\r In fact, even if the initial value $\phi^0 \notin \dot{H}^2(\mathcal{O})$, the numerical solution may still be $\frac{1}{2}$-order temporally H\"older continuous for times away from the initial time \(t_0 = 0\). }

We notice that  even if the initial value $\phi^0 \notin \dot{H}^2(\mathcal{O})$, 
one can still observe $\tfrac12$-H\"older continuity in time for the numerical solution 
away from the initial time $t_0 = 0$.

%{\r if eventually in our theorem, we impose $H^2$-regularity, Lemma 4.4. is enough, right?} {\color{blue}We need to use \eqref{lemma6_eq3_R1} in section 6.}
\begin{lemma}\label{lemma6}
Let Assumptions \ref{assum2} and \ref{assum1} hold, and let $\phi^0\in \dot{H}^{\beta}(\mathcal{O})$ for some $\beta\in(\frac{d}{2},2)\cap[1,2)$. Then for any $p\ge 1$, there exists a constant $C:=C(p,\beta)>0$ %{\b depending on $T$?} 
such that for any $n\in\{1,2,\cdots,N-1\}$,
\begin{align}
	\label{lemma6_Re}
	\|X^{n+1}- X^n\|_{L^{2p}(\Omega;H)}\leq  C\tau^{\frac{1}{2}}(1+t_n^{\frac\beta4-\frac12}). 
\end{align}

%In addition, $\|X^{n+1}- X^n\|_{L^{2p}(\Omega;H)}\le C\tau^{\frac\beta 4}$ for all $n\in\{0,1,\cdots,N-1\}$.
\end{lemma}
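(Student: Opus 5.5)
We will write the increment as a difference of two mild representations and use the smoothing of $S$ away from $t=0$ to gain the factor $t_n^{\frac\beta4-\frac12}$. Iterating \eqref{schme_phi-intro} gives, for $n\ge1$,
\begin{align*}
X^{n}=S(t_n)\phi^0+\sum_{j=0}^{n-1}S(t_n-t_{j+1})(I-S(\tau))A^{-1}\tilde f^{j}+\sum_{j=0}^{n-1}S(t_n-t_j)g(X^j)\delta W^j .
\end{align*}
It is simpler, however, to use the one-step identity directly:
\begin{align*}
X^{n+1}-X^n=(S(\tau)-I)X^n+(I-S(\tau))A^{-1}\tilde f^n+S(\tau)g(X^n)\delta W^n .
\end{align*}
The last two terms are easy and give order $\tau^{1/2}$ without any time weight. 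For the drift term, \eqref{assum1_eq2} with $\nu=1$ and the moment bound \eqref{lemma6_eq3_R12_12} give $\|(I-S(\tau))A^{-1}\tilde f^n\|\le C\tau^{\frac12}\|\tilde f^n\|$. For the noise term, contractivity, BDG and \eqref{lema1_eq_R1_insert_Q} give $\|S(\tau)g(X^n)\delta W^n\|_{L^{2p}(\Omega;H)}\le C\tau^{1/2}$.

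The main term is $(S(\tau)-I)X^n$. By \eqref{assum1_eq2} with $\nu=1$ it is bounded by $C\tau^{1/2}\|(-A)X^n\|$, so we need the weighted $H^2$ bound
\begin{align*}
\|(-A)X^n\|_{L^{2p}(\Omega;H)}\le C\big(1+t_n^{\frac\beta4-\frac12}\big),\qquad n\ge1 .
\end{align*}
To prove it, we apply $(-A)$ to the iterated mild representation above and split it into three pieces.

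\textbf{Initial term.} By \eqref{assum1_eq1}, $\|(-A)^{1-\frac\beta2}S(t_n)(-A)^{\frac\beta2}\phi^0\|\le Ct_n^{\frac\beta4-\frac12}\|\phi^0\|_{H^\beta}$. This is the only source of the singular weight.

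\textbf{Drift sum.} The operator $(-A)S(t_n-t_{j+1})(I-S(\tau))A^{-1}$ is not summable naively. The plan is to avoid it by writing the drift in its continuous form $\int_0^{t_n}S(t_n-s)A\tilde F(s)\,ds$ with $\tilde F(s)=\tilde f^{\kappa_N(s)/\tau}$, and applying $(-A)$ to get the kernel $(-A)^2S(t_n-s)$, whose norm is of order $(t_n-s)^{-1}$. This is not integrable, so $L^2$ bounds on $\tilde f$ alone will not suffice. We will instead use the extra regularity $\tilde f^{j}\in H^1$: by \eqref{eq:Linfty} and Corollary \ref{lemma_stability} we have $\|\nabla F'(X^j)\|\le C(1+\|X^j\|_{L^\infty}^2)\|\nabla X^j\|$, and similarly for $\chi^j$ using Assumption \ref{assum1}. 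Then the kernel is $\|(-A)^{3/2}S(t_n-s)\|\le C(t_n-s)^{-3/4}$, which is integrable. If the $H^1$ bound on $\chi^j$ fails, we fall back on the bound $\|\chi^n\|\lesssim\tau^{1/2}$ from \eqref{eq:chinp}, which offsets a $\tau^{-1/2}$ loss in the last step.

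\textbf{Stochastic convolution.} The term $(-A)\int_0^{t_n}S(t_n-s)S(s-\kappa_N(s))g(X(\kappa_N(s)))\,dW(s)$ is handled by BDG with the $\dot H^1$ linear growth \eqref{lema1_eq_R6-new}. Since $\|(-A)^{1/2}S(t_n-s)\|^2\le C(t_n-s)^{-1/2}$ is integrable, this piece is bounded uniformly in $n$.

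Collecting these gives the weighted $H^2$ bound and hence \eqref{lemma6_Re}. The main obstacle is the drift piece, namely obtaining $L^p(\Omega;\dot H^1)$ moment bounds for $\tilde f^n$, in particular for $F''(X^n)(g(X^n)\delta W^n)$ inside $\chi^n$. We will derive these from the $L^\infty$ stability \eqref{eq:Linfty}, Corollary \ref{lemma_stability}, and $\sum_k\|\nabla Q^{1/2}e_k\|^2<\infty$.
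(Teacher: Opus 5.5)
Your proposal is correct and is essentially the paper's argument: the same one-step splitting into $(S(\tau)-I)X^n$, the drift increment and the noise increment, with $(S(\tau)-I)X^n$ controlled through the iterated mild representation of $X^n$. That control uses the smoothing bounds \eqref{assum1_eq1}--\eqref{assum1_eq2}, the $\dot H^1$ growth \eqref{lema1_eq_R6-new} of $g$, and the $H^1$ moment bound \eqref{eq:fnH1} on $\tilde f^n$, which the paper proves in its appendix along the lines you sketch. Factoring out $\|(-A)^{-1}(I-S(\tau))\|\le C\tau^{1/2}$ to get a weighted $H^2$ bound on $X^n$ is exactly the paper's estimate \eqref{lemma6_eq3_R1} with $\alpha=0$; the paper keeps a free $\alpha\in[0,\frac12)$ only because it reuses the sharper rate later in the energy-law proof.
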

\begin{proof}
For any $n=1,2,\cdots,N-1$, we can decompose
\begin{align*}
	X^{n+1}-X^n&= (S(\tau)-I)X^n +\int_{t_n}^{t_{n+1}}S(t_{n+1}-s)A\tilde{f}^nds
	\\
	&\quad+\int_{t_n}^{t_{n+1}}S(\tau)g(X^n)dW(s)
	=:R_1^n+R_2^n+R_3^n.
\end{align*}
Utilizing \eqref{assum1_eq2}, the Minkowski inequality, and \eqref{lemma6_eq3_R12_12}, it holds that 
\begin{align}\label{lemma6_eq3_R2}
	\mathbb{E}[\|R_2^n\|^{2p}]\leq C \mathbb{E}\bigg[\Big|\int_{t_n}^{t_{n+1}}(t_{n+1}-s)^{-\frac{1}{2}}
	\|\tilde{f}^n\|ds\Big|^{2p}\bigg]
	\le C\tau^p \mathbb{E}[\|\tilde{f}^n\|^{2p}]
	\leq C\tau^{p}.
\end{align}
By the BDG inequality, the contractivity of $S(\cdot)$, and \eqref{lema1_eq_R1_insert_Q}, we derive
\begin{align}\label{lemma6_eq3_R3}
	\mathbb{E}[\|R_3^n\|^{2p}]\leq C\mathbb{E}\bigg[\Big|\int_{t_n}^{t_{n+1}}\|S(\tau)g(X^n)
	\|_{\mathcal{L}_2^0}^{2}ds\Big|^{p}\bigg]\leq C\tau^p.
\end{align}

To estimate $R_1^n$ for $1\le n\le N$, recall that 
\begin{align*}
	X^n=S(t_n) \phi^0+\sum_{i=0}^{n-1}\int_{t_i}^{t_{i+1}}S(t_n-s) A\tilde{f}^i d s+\sum_{i=0}^{n-1}\int_{t_i}^{t_{i+1}} S(t_n-t_i)g(X^i) dW(s).
\end{align*}
Since ${\phi^0}\in \dot{H}^\beta(\mathcal{O})$, in view of \eqref{assum1_eq1} and \eqref{assum1_eq2}, for any $\alpha\in[0,1]$,
\begin{align}\label{eq:intial}\notag
	\|(S(\tau)-I)S(t_n) {\phi^0}\|&=\|(S(\tau)-I)(-A)^{-1-\alpha}(-A)^{-\frac\beta2+1+\alpha} S(t_n)(-A)^{\frac\beta2}{\phi^0}\|\\
	&\le C(\beta,\alpha)t_n^{\frac{\beta}{4}-\frac12\alpha-\frac12}\tau^{\frac12(1+\alpha)}.
\end{align}  
% {\r why $\alpha \neq \frac 12$?}
Since $\phi^0\in \dot{H}^{1}(\mathcal{O})$, the BDG inequality, \eqref{assum1_eq1}, \eqref{assum1_eq2},  \eqref{lema1_eq_R6-new}, and {Corollary \ref{lemma_stability}} imply that for any $\alpha\in[0,\frac12)$,
\begin{align}\label{eq:sto-re}
	&\mathbb{E}\bigg[\Big\|\sum_{i=0}^{n-1}\int_{t_i}^{t_{i+1}} (S(\tau)-I)S(t_n-t_i)g(X^i) dW(s)\Big\|^{2p}\bigg]\\\notag
	%	&\le C \mathbb{E}\bigg[\Big|\sum_{i=0}^{n-1}\int_{t_i}^{t_{i+1}} \|(S(\tau)-I)S(t_n-t_i) g(X^i)\|_{\mathcal{L}_2^0}^2 ds\Big|^p\bigg]\\\notag
	&\le C \mathbb{E}\bigg[\Big|\sum_{i=0}^{n-1}\int_{t_i}^{t_{i+1}} \|(S(\tau)-I)(-A)^{-1-\alpha} (-A)^{\frac12+\alpha}S(t_n-t_i) (-A)^{\frac12}g(X^i)\|_{\mathcal{L}_2^0}^2 ds\Big|^p\bigg]\\\notag
	&\le C \mathbb{E}\bigg[\Big|\sum_{i=0}^{n-1}\int_{t_i}^{t_{i+1}} \tau^{1+\alpha}(t_n-t_i)^{-\frac12-\alpha}(1+\|\nabla X^i\|^2) ds\Big|^p\bigg]\le C\tau^{p(1+\alpha)}.
\end{align}
{
	Due to \eqref{assum1_eq1} and \eqref{assum1_eq2},  for any $\alpha\in[0,\frac12)$,
	\begin{align*}
		&\bigg\|\sum_{i=0}^{n-1}\int_{t_i}^{t_{i+1}}(S(\tau)-I)S(t_n-s) A\tilde{f}^i d s\bigg\|\\
		&\le \sum_{i=0}^{n-1}\int_{t_i}^{t_{i+1}}\|(-A)^{-1-\alpha}(S(\tau)-I)\|_{\mathcal{L}(H)}
		\|(-A)^{\frac32+\alpha}S(t_n-s) \|_{_{\mathcal{L}(H)}}
		\|(-A)^{\frac12}\tilde{f}^i\| ds\\
		&\le C\tau^{\frac12(1+\alpha)}\sum_{i=0}^{n-1}\int_{t_i}^{t_{i+1}}(t_n-s)^{-\frac34-\frac12\alpha}\|(-A)^{\frac12}\tilde{f}^i\| ds.
	\end{align*}
	%  Then as a result of \eqref{eq:fnH1} and \eqref{lemma6_beta=2_1_insert}, it holds that for any $q\ge1$, 
	% \begin{equation}\label{eq:fder}
		% \mathbb{E}\left[\|(-A)^{\frac12}\tilde{f}^n\|^q\right]+\mathbb{E}\left[\|(-A)^{\frac12}F'(X^n)\|^q\right]\le C.
		% \end{equation}
	To proceed, we claim that for any $q\ge1$,
	\begin{equation}\label{eq:fnH1}
		\mathbb{E}\left[\|(-A)^{\frac12} \tilde{f}^n\|^{q}\right]\le C(q).
	\end{equation}
	The proof of \eqref{eq:fnH1}  can be found in Appendix \ref{App-fnH1}.
	%{\r It seems that you say this twice. why?}
	Invoking \eqref{eq:fnH1} and the Minkowski inequality, it follows that for any $p\ge1$ and $\alpha\in[0,\frac12)$,     }
\begin{equation*}%\label{eq:dete-re}
	\mathbb{E}\bigg[\Big\|\sum_{i=0}^{n-1}\int_{t_i}^{t_{i+1}}(S(\tau)-I)S(t_n-s) A\tilde{f}^i d s\Big\|^{2p}\bigg]\le C(p,\alpha)\tau^{p(1+\alpha)}.
\end{equation*}
This, together with \eqref{eq:intial} and \eqref{eq:sto-re}, yields that for any $p\ge1$ and $\alpha\in [0,\frac12)$,
\begin{equation}\label{lemma6_eq3_R1}
	\|R_1^n\|_{L^{2p}(\Omega;H)}=\|(S(\tau)-I)X^n \|_{L^{2p}(\Omega;H)}	\le  C(\beta,\alpha)(1+t_n^{\frac{\beta}{4}-\frac12\alpha-\frac12})\tau^{\frac12(1+\alpha)}.
\end{equation}
for all $n\in\{1,2,\cdots,N\}$.
Consequently, the required H\"older continuity estimate \eqref{lemma6_Re} comes from \eqref{lemma6_eq3_R1} with $\alpha=0$, \eqref{lemma6_eq3_R2}, and \eqref{lemma6_eq3_R3}.
% {\r you can put some part of the proof in the appendix}
The proof is completed.
\end{proof}

%We conclude this section by establishing  {\r an a priori bound for the accumulated discrete increment sum in  \( H^1 (\mathcal{O})\)}.
The following lemma  provides a moment estimate for the discrete quadratic variation of the numerical solution in the energy space $\dot{H}^1(\mathcal{O})$.
%{\b do we need explain why the following summation implies temporal H\"older regularity? }
%{\r we need say some words to end this section here?}

\begin{lemma}\label{lemma5}
Let Assumptions \ref{assum2} and \ref{assum1} hold, and let $\phi^0\in \dot{H}^1(\mathcal{O})$. Then for any $p\ge 1$, there exists a constant $C:=C(p)>0$ such that 
\begin{align*}
	\mathbb{E}\bigg[\bigg(\sum_{n=0}^{N-1}
	\|(-A)^{\frac12}(X^{n+1}-X^n)\|^{2}\bigg)^p\bigg]\leq C.
\end{align*}
\begin{proof}
	From \eqref{schme_phi-intro}, we have 
	\begin{align}\label{lemma4_eq1}
		X^{n+1}-X^n
		&=(S(\tau)-I)(-A)^{-1}\tilde{\mu}^n+g(X^n)\delta  W^n,
	\end{align}
	where $\tilde{\mu}^n$ is defined in \eqref{eq:tildemun}.
	By H\"older's inequality and Young's inequality, 
	\begin{align}\label{lemma4_eq2}
		&\bigg(\sum_{n=0}^{N-1}\|(-A)^{\frac12}(X^{n+1}-X^n)\|^{2}\bigg)^p\\
		&\leq C\Big|\sum_{n=0}^{N-1}\left\langle{(I-S(\tau))}\tilde{\mu}^n,-(X^{n+1}-X^n)\right\rangle\Big|^p+C\Big|\sum_{n=0}^{N-1}\langle g(X^n)\delta  W^n,(-A)(X^{n+1}-X^n)\rangle\Big|^p\notag\\
		&\le \frac{1}{4}\bigg(\sum_{n=0}^{N-1}\|{(I-S(\tau))^{\frac12}(-A)^{\frac12}}(X^{n+1}-X^n)\|^{2}\bigg)^p+C\bigg(\sum_{n=0}^{N-1}\|{(I-S(\tau))^{\frac12}(-A)^{-\frac12}}\tilde{\mu}^n\|^{2}\bigg)^p\notag\\
		&\quad+\frac{1}{4}\bigg(\sum_{n=0}^{N-1}\|(-A)^{\frac12}(X^{n+1}-X^n)\|^{2}\bigg)^p+C\bigg(\sum_{n=0}^{N-1}\|(-A)^{\frac 12} (g(X^n)\delta  W^n)\|^{2}\bigg)^p.\notag
	\end{align}
	% \begin{align}\label{lemma4_eq2}
		% 	&\mathbb{E}\bigg[\bigg(\sum_{n=0}^{N-1}\|(-A)^{\frac12}(X^{n+1}-X^n)\|^{2}\bigg)^p\bigg]\\
		% 	&\leq C\mathbb{E}\bigg[\Big|\sum_{n=0}^{N-1}\left\langle{(I-S(\tau))}\tilde{\mu}^n,-(X^{n+1}-X^n)\right\rangle\Big|^p\bigg]\notag\\
		%              &\quad+C\mathbb{E}\bigg[\Big|\sum_{n=0}^{N-1}\langle g(X^n)\delta  W^n,(-A)(X^{n+1}-X^n)\rangle\Big|^p\bigg]\notag\\
		% 	&\le \frac{1}{4}\mathbb{E}\bigg[\bigg(\sum_{n=0}^{N-1}\|{(I-S(\tau))^{\frac12}(-A)^{\frac12}}(X^{n+1}-X^n)\|^{2}\bigg)^p\bigg]\notag\\&\quad+C\mathbb{E}\bigg[\bigg(\sum_{n=0}^{N-1}\|{(I-S(\tau))^{\frac12}(-A)^{-\frac12}}\tilde{\mu}^n\|^{2}\bigg)^p\bigg]\notag\\
		% 	&\quad+\frac{1}{4}\mathbb{E}\bigg[\bigg(\sum_{n=0}^{N-1}\|(-A)^{\frac12}(X^{n+1}-X^n)\|^{2}\bigg)^p\bigg]\notag\\ &\quad+C\mathbb{E}\bigg[\bigg(\sum_{n=0}^{N-1}\|(-A)^{\frac 12} (g(X^n)\delta  W^n)\|^{2}\bigg)^p\bigg].\notag
		% \end{align}
	Using the fact that \( \| (I - S(\tau))^{\frac{1}{2}} \|_{\mathcal{L}(H)} \leq 1 \), it follows that
	\begin{align}\label{lemma4_eq2_1}
		\|({I-S(\tau)})^{\frac12}(-A)^{\frac 12}(X^{n+1}-X^n)\|^{2}
		\leq \|(-A)^{\frac12}(X^{n+1}-X^n)\|^{2}.
	\end{align}
	Applying \eqref{lema1_eq_R6} and {Corollary \ref{lemma_stability}}, we have 
	\begin{align}\label{lemma4_eq2_3}
		\mathbb{E}\bigg[\bigg(\sum_{n=0}^{N-1}\|(-A)^{\frac 12} (g(X^n)\delta  W^n)\|^{2}\bigg)^p\bigg]%= \mathbb{E}\bigg[\bigg(\sum_{n=0}^m\left\|\nabla (g(X^n)\delta  W^n)\right\|^{2}\bigg)^p\bigg]
		\le C.
	\end{align}
	Taking expectations on both sides of \eqref{lemma4_eq2}, and then on account of
	\eqref{lemma4_eq2_1} and \eqref{lemma4_eq2_3}, as well as using {Corollary \ref{lemma_stability}}, we complete the proof.
\end{proof}
\end{lemma}

\section{Proof of Theorem \ref{Thm3_main}}\label{section_converge}
{This section is devoted to proving Theorem \ref{Thm3_main} on the strong convergence rate of the exponential Euler SSAV scheme \eqref{schme_phi-intro}. 
To this end,}
inspired by e.g., \cite{CHS21,  hong2024density}, we introduce an auxiliary process $\Phi=\{\Phi(t)\}_{t \in [0, T]}$, defined by
\begin{align}\label{auxiliary_solution}
d\Phi(t) &= -A^2\Phi(t)dt+AF'(X^{\kappa_N(t)/\tau})dt+g(X^{\kappa_N(t)/\tau})dW(t),\qquad t\in(0,T],
\end{align}
with the initial value $\Phi(0) = \phi^0$.
By the triangle inequality, we have
\begin{equation}\label{eq:phiX}
\left\|\phi(t_n) - X^{n}\right\|_{L^2(\Omega; H)} 
\leq \left\|\phi(t_n) - \Phi(t_n)\right\|_{L^2(\Omega; H)} 
+ \left\|\Phi(t_n) - X^{n}\right\|_{L^2(\Omega; H)}.
\end{equation}
In the following, we provide estimates for the errors between the auxiliary process $\Phi$ and the numerical solution $X$, as well as between $\Phi$ and the exact solution $\phi$. The corresponding analyses are given in subsections~\ref{S:4.1} and~\ref{S:4.2}, respectively.

\subsection{Error estimate between $\Phi$ and $X$}\label{S:4.1}
By applying the H\"older's inequality, it follows from \eqref{Xn_contin} and \eqref{auxiliary_solution} that for any $p\ge1$ and $n\in\{0,1,\cdots,N-1\}$,      
\begin{align}\label{Thm1_6}\notag
\left\|\Phi(t_{n+1})-\b X(t_{n+1})\right\|^p &\leq C\bigg\|\sum_{j=0}^n\int_{t_{j}}^{t_{j+1}}S(t_{n+1}-s)A[F'(X^{j})-\tilde{f}^{j}]ds\bigg\|^p\\ \notag
&\quad+C\bigg\|\sum_{j=0}^{n}\int_{t_{j}}^{t_{j+1}}S(t_{n+1}-s)\left(I-S(s-t_j)\right)g(X^{j})dW(s)\bigg\|^p\\
&=:\mathcal{B}_1^n+\mathcal{B}_2^n.
\end{align}   
Applying the BDG inequality, \eqref{assum1_eq1}, \eqref{assum1_eq2},  \eqref{lema1_eq_R6-new}, and {Corollary \ref{lemma_stability}}, we deduce that
\begin{align}\label{Thm1_6_B}\notag
&\mathbb{E}\left[\mathcal{B}_{2}^n\right] \leq  C\mathbb{E}\bigg[\Big\|\sum_{j=0}^{n}\int_{t_{j}}^{t_{j+1}}(-A)^{\frac{1}{2}}S(t_{n+1}-s)(-A)^{-\frac12}\left(I-S(s-t_{j})\right)g(X^{j})dW(s)\Big\|^p\bigg]\\
& \leq C\mathbb{E}\bigg[\bigg(\sum_{j=0}^{n}\int_{t_{j}}^{t_{j+1}}(t_{n+1}-s)^{-\frac{1}{2}}\|(-A)^{-1}\left(S(s-t_j)-I\right)(-A)^{\frac{1}{2}}g(X^j)\|^2_{\mathcal{L}_2^0}ds\bigg)^{\frac{p}{2}}\bigg]\notag\\
&\leq C\mathbb{E}\bigg[\bigg(\sum_{j=0}^{n}\int_{t_{j}}^{t_{j+1}}(t_{n+1}-s)^{-\frac{1}{2}}|s-t_j|\|(-A)^{\frac{1}{2}}g(X^j)\|^2_{\mathcal{L}_2^0}ds\bigg)^{\frac{p}{2}}\bigg]\notag\\
&\leq C\tau^{\frac{p}{2}}\mathbb{E}\bigg[\bigg(\sum_{j=0}^{n}\int_{t_{j}}^{t_{j+1}}(t_{n+1}-s)^{-\frac{1}{2}}(1 +\|\nabla  X^j\|^{2})ds\bigg)^{\frac{p}{2}}\bigg] \leq C\tau^{\frac{p}{2}}.
\end{align}
According to \eqref{fn}, we further split the term $\mathcal{B}_1^n$ as follows
\begin{align}\label{Thm1_6_A}
\mathcal{B}_1^n
&\leq C\bigg\|\sum_{j=0}^n\int_{t_{j}}^{t_{j+1}}S(t_{n+1}-s)A\bigg(\frac{r^{j+1}-r^j}{\sqrt{E_{\textup{p}}(X^{j})}}F'(X^{j})\bigg)ds\bigg\|^p\\\notag
&\quad+C\bigg\|\sum_{j=0}^n\int_{t_{j}}^{t_{j+1}}S(t_{n+1}-s)A\bigg(\frac{r^{j}-\sqrt{E_{\textup{p}}(X^{j})}}{\sqrt{E_{\textup{p}}(X^{j})}}F'(X^{j})+\chi^j\bigg)ds\bigg\|^p\\\notag
&
=:\mathcal{B}_{11}^n+\mathcal{B}_{12}^n. 
\end{align}
For the first term $\mathcal{B}_{11}^n$, using \eqref{assum1_eq1}, the Minkowski inequality, and Young's inequality, we have 
\begin{align*}
\mathbb{E}\left[\mathcal{B}_{11}^n\right]
&\leq C\mathbb{E}\bigg[\bigg(\sum_{j=0}^n\int_{t_{j}}^{t_{j+1}}\Big\|S(t_{n+1}-s)(-A)^{\frac{1}{2}}\frac{r^{j+1}-r^j}{\sqrt{E_{\textup{p}}(X^{j})}}(-A)^{\frac{1}{2}}F'(X^{j})\Big\|ds\bigg)^p\bigg]\\
&\leq C\mathbb{E}\bigg[\bigg(\sum_{j=0}^n\int_{t_{j}}^{t_{j+1}}(t_{n+1}-s)^{-\frac{1}{4}}
|r^{j+1}-r^j|\|(-A)^{\frac{1}{2}}F'(X^{j})\|ds\bigg)^p\bigg]\\
&\leq C\tau^{\frac{p}{2}}\mathbb{E}\bigg[\bigg(\sum_{j=0}^n|r^{j+1}-r^j|^2\bigg)^p\bigg]\\
&\quad+C\tau^{-\frac{p}{2}}\mathbb{E}
\bigg[\bigg(\sum_{j=0}^n\bigg(\int_{t_{j}}^{t_{j+1}}(t_{n+1}-s)^{-\frac{1}{4}}\|(-A)^{\frac{1}{2}}F'(X^{j})\|ds\bigg)^2
\bigg)^p\bigg].
\end{align*}
Furthermore, by {Corollary \ref{lemma_stability}}, {the Minkowski inequality}, and \eqref{lemma6_beta=2_1_insert}, we obtain 
\begin{align}\label{Thm1_6_A_term1}
\mathbb{E}\left[\mathcal{B}_{11}^n\right]
%&\leq C\tau^{\frac{p}{2}}
%+ C\tau^{\frac{p}{2}}\left(\sum_{j=0}^n\int_{t_{j}}^{t_{j+1}}(t_{n+1}-s)^{-\frac{1}{2}}\|(-A)^{\frac{1}{2}}
%	F'(X^j)\|^2_{L^{2p}(\Omega;H)}ds\right)^{p}
\leq C\tau^{\frac{p}{2}}.
\end{align}

The estimate of the term $\mathcal{B}_{12}^n$ requires the following result,  which implies that the discrete SSAV \( r^{m} \)  approximates \( \sqrt{E_{\textup{p}}(X^{m}) } \) for all \( m \in \{0, 1, \cdots, N\} \), with a convergence rate of order $1/2$.
\begin{lemma}\label{lemma3}
Let Assumptions \ref{assum2} and \ref{assum1} hold, and let $\phi^0\in \dot{H}^2(\mathcal{O})$. Then for any $p \geq 1$, there exists a constant $C:=C(p) > 0$ such that for any $m \in \{0, 1, \cdots, N\}$,
\begin{align*}
	\mathbb{E}\left[\big|r^{m}-\sqrt{E_{\textup{p}}( X^{m})}\big|^p\right]\leq C\tau^{\frac{p}{2}}.
\end{align*}
\end{lemma}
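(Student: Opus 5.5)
Set $e^m:=r^m-\sqrt{E_{\textup{p}}(X^m)}$, so $e^0=0$, and write $\rho^n:=\sqrt{E_{\textup{p}}(X^n)}$, $\delta X^n:=X^{n+1}-X^n$, $\xi^n:=g(X^n)\delta W^n$. The plan is to telescope: $e^m=\sum_{n=0}^{m-1}(e^{n+1}-e^n)$. Each increment is the scheme update \eqref{scheme_r_sto-intro} for $r^{n+1}-r^n$ minus the exact increment $\sqrt{E_{\textup{p}}(X^{n+1})}-\sqrt{E_{\textup{p}}(X^n)}$. I would expand the latter by a second order Taylor formula of $v\mapsto\sqrt{E_{\textup{p}}(v)}$ at $X^n$ with integral remainder:
\[
\sqrt{E_{\textup{p}}(X^{n+1})}-\rho^n=\frac{\langle F'(X^n),\delta X^n\rangle}{2\rho^n}-\frac{\langle F'(X^n),\delta X^n\rangle^2}{8(\rho^n)^3}+\frac{\langle F''(X^n)\delta X^n,\delta X^n\rangle}{4\rho^n}+\mathcal{T}^n .
\]
Here $\mathcal{T}^n$ is a cubic remainder involving $F''$, $F'''$ and $E_{\textup{p}}^{-3/2},E_{\textup{p}}^{-5/2}$ along the segment $[X^n,X^{n+1}]$. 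The linear terms cancel exactly, leaving
\[
e^{n+1}-e^n=-\frac{\langle F'(X^n),\xi^n\rangle\langle F'(X^n),\delta X^n\rangle-\langle F'(X^n),\delta X^n\rangle^2}{8(\rho^n)^3}+\frac{\langle F''(X^n)\delta X^n,\xi^n-\delta X^n\rangle}{4\rho^n}-\mathcal{T}^n .
\]
The key identity is $\xi^n-\delta X^n=(I-S(\tau))(-A)^{-1}\tilde\mu^n$, obtained from \eqref{lemma4_eq1}.

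\textbf{Remainder.} Using \eqref{eq:Linfty}, $E_{\textup{p}}\ge c$, and $\|\delta X^n\|_{L^{2p}(\Omega;H)}\le C\tau^{1/2}$, we get $\|\mathcal{T}^n\|_{L^p(\Omega)}\le C\mathbb{E}[\|\delta X^n\|^{3p}\,\cdot]^{1/p}\le C\tau^{3/2}$. The H\"older bound comes from Lemma \ref{lem:time} with $\beta=2$, which is allowed by Remark \ref{rem:H2-reg} since $\phi^0\in\dot H^2$. Summing over $n\le N$ gives $C\tau^{1/2}$.

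\textbf{Correction terms.} Write each correction term as a product $a^n\cdot\langle b^n,\xi^n-\delta X^n\rangle$ with $a^n,b^n$ of size $O(\tau^{1/2})$ and $O(1)$ in $L^q(\Omega)$:
\begin{itemize}
\item for the $F''$ term, $a^n=\delta X^n/(4\rho^n)$ paired against $F''(X^n)$;
\item for the $(\rho^n)^{-3}$ term, $\langle F'(X^n),\xi^n\rangle-\langle F'(X^n),\delta X^n\rangle=\langle F'(X^n),\xi^n-\delta X^n\rangle$ is multiplied by $\langle F'(X^n),\delta X^n\rangle$.
\end{itemize}
It therefore suffices to bound $\sum_n|\langle h^n,(I-S(\tau))(-A)^{-1}\tilde\mu^n\rangle|\,\|\delta X^n\|$ with $h^n\in\{F'(X^n),F''(X^n)\delta X^n\}$. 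For this I would move half of the operator onto $h^n$:
\[
\langle h^n,(I-S(\tau))(-A)^{-1}\tilde\mu^n\rangle=\langle (I-S(\tau))^{1/2}(-A)^{-1/2}h^n,(I-S(\tau))^{1/2}(-A)^{-1/2}\tilde\mu^n\rangle .
\]
Since $\|(I-S(\tau))^{1/2}(-A)^{-1/2}\|_{\mathcal{L}(H)}\le C\tau^{1/4}$ by \eqref{assum1_eq2}, and $\|(I-S(\tau))^{1/2}(-A)^{-1/2}v\|\le C\tau^{1/2}\|(-A)^{1/2}v\|$, I would use $h^n\in\dot H^1$ with bounded moments (\eqref{eq:fnH1}-type bounds on $(-A)^{1/2}F'(X^n)$, via the $H^2$ regularity). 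Cauchy--Schwarz in $n$ then yields
\[
\sum_n \tau^{1/2}\|\delta X^n\|\,\|(I-S^2(\tau))^{1/2}(-A)^{-1/2}\tilde\mu^n\| \le \tau^{1/2}\Big(\sum_n\|\delta X^n\|^2\Big)^{1/2}\Big(\sum_n\|(I-S^2(\tau))^{1/2}(-A)^{-1/2}\tilde\mu^n\|^2\Big)^{1/2}.
\]
Here I use $(I-S)\le(I-S^2)$. The last factor is bounded in $L^p(\Omega)$ by Corollary \ref{lemma_stability}. The middle factor is bounded because $\sum_n\|\delta X^n\|^2\le C\sum_n\|(-A)^{1/2}\delta X^n\|^2$, which is controlled by Lemma \ref{lemma5}. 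This gives $C\tau^{1/2}$ after H\"older's inequality in $\omega$ for the products.

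\textbf{Main obstacle.} The key step is this last estimate. A naive bound $|\langle\cdot,\xi^n-\delta X^n\rangle|\le C\tau^{1/2}$ per step summed over $N$ steps gives $O(1)$ and fails, so the gain must come from the dissipative sum $\sum\|(I-S^2)^{1/2}(-A)^{-1/2}\tilde\mu^n\|^2$ and the quadratic variation.

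Should the $\tau^{1/2}$ smoothing on $h^n=F''(X^n)\delta X^n$ fail, the fallback is to bound $\|(-A)^{1/2}(F''(X^n)\delta X^n)\|$ by $C(1+\|X^n\|_{H^2}^2)\|(-A)^{1/2}\delta X^n\|$. One then uses $\sum_n\|(-A)^{1/2}\delta X^n\|^2$ from Lemma \ref{lemma5} together with the $L^{2p}(\Omega;H)$ bound on $\delta X^n$ in the other factor.
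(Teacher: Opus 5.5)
Your architecture is the paper's: you telescope $e^m$, the linear terms cancel exactly, you use the identity $\xi^n-\delta X^n=(I-S(\tau))(-A)^{-1}\tilde\mu^n$, and you apply Cauchy--Schwarz against the dissipation sum of Corollary~\ref{lemma_stability}. The genuine gap is in the remainder step.

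Besides terms built from $F'$ and $F''$, your cubic remainder $\mathcal{T}^n$ contains a term coming from the third derivative of $E_{\textup{p}}$. It is a multiple of $E_{\textup{p}}(\psi)^{-1/2}\int_{\mathcal{O}}f'''(\psi)\,(\delta X^n)^3\,dx$, with $\psi$ on the segment $[X^n,X^{n+1}]$. The $F'$, $F''$ terms are indeed $O(\tau^{3/2})$ per step in $L^p(\Omega)$, but this one is not.

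Its size is governed by $\|\delta X^n\|_{L^3}^3$, not $\|\delta X^n\|^3$, and Lemma~\ref{lem:time} controls only the $L^2(\mathcal{O})$-norm. The bound available from \eqref{eq:Linfty} is $\|\delta X^n\|_{L^3}^3\le\|\delta X^n\|_{L^\infty}\|\delta X^n\|^2$. That is $O(\tau)$ per step, hence $O(1)$ after summation, which is exactly the failure mode you flagged for the correction terms.

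The paper handles this term ($\mathcal{P}_1^n$) without seeking a per-step bound. By \eqref{eq:GN} and Poincar\'e, $\|\delta X^n\|_{L^3}^3\le C\|\delta X^n\|^{3/2}\|\nabla\delta X^n\|^{3/2}$. Young's inequality then gives
\[
|\mathcal{P}_1^n|\le C\tau^{-\frac32}\big(1+\|X^n\|_{L^\infty}^4+\|X^{n+1}\|_{L^\infty}^4\big)\|\delta X^n\|^6+C\tau^{\frac12}\|\nabla\delta X^n\|^2 .
\]
Hence $\mathbb{E}|\sum_n\mathcal{P}_1^n|^p\le C\tau^{p/2}$: the first sum is controlled by Lemma~\ref{lem:time}, the second by the $H^1$ quadratic-variation bound of Lemma~\ref{lemma5}. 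So this term must be summed before taking moments. The alternative would be a genuine $L^3(\mathcal{O})$ H\"older estimate for $\delta X^n$, which you would have to prove.

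A smaller point concerns the correction terms. The $F'$-term is handled correctly and matches the paper's estimate of $\mathcal{P}_6^n$. Your displayed Cauchy--Schwarz, however, does not cover $h^n=F''(X^n)\delta X^n$:
\begin{itemize}
\item there is no extra factor $\|\delta X^n\|$ in that term;
\item the $H$-norm route gives only $\|(I-S(\tau))^{1/2}(-A)^{-1/2}h^n\|\le C\tau^{1/4}\|F''(X^n)\|_{L^\infty}\|\delta X^n\|$, i.e.\ $O(\tau^{1/4})$ after summation.
\end{itemize}
What you call the fallback is the required argument: $\tau^{1/2}\|(-A)^{1/2}(F''(X^n)\delta X^n)\|\le C\tau^{1/2}(1+\|X^n\|_{H^\beta}^2)\|\nabla\delta X^n\|$, combined with Lemma~\ref{lemma5}. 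This is exactly the paper's treatment of $\mathcal{P}_5^n$. So the tool missing from your remainder estimate, the $H^1$ quadratic variation, is one you already invoke here.
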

\begin{proof}
To start with, we take the first- and second-order Taylor expansions of the real valued function $b_1(\xi):=E_{\textup{p}}(\xi X^{n+1}+(1-\xi)X^n)$ for $\xi\in[0,1]$, which yields
\begin{align*}
	& E_{\textup{p}}( X^{n+1})-E_{\textup{p}}( X^n)=b_1(1)-b_1(0)
	=\int_0^1\left\langle F'( \psi_\xi^n), X^{n+1}- X^n\right\rangle d\xi,\\
	& E_{\textup{p}}( X^{n+1})-E_{\textup{p}}( X^n) =\left\langle F'( X^n), X^{n+1}- X^n\right\rangle\\
	& +\int_0^1(1-\xi)\left\langle F''( \psi_\xi^n)( X^{n+1}- X^n),X^{n+1}- X^n\right\rangle d\xi,
\end{align*}
where $\psi_\xi^n:=\xi X^{n+1}+(1-\xi)X^n$.
Similarly, by taking the Taylor expansion of $b_2(\theta):=\sqrt{\theta E_{\textup{p}}( X^{n 
		+1})+(1-\theta)E_{\textup{p}}( X^{n 
	})}$ for $\theta\in[0,1]$, we further have
%{\r what is $E_1$?}
%      \begin{align*}
	% &\sqrt{E_{\textup{p}}( X^{n+1})}-\sqrt{E_{\textup{p}}(X^{n})} =b(1)-b(0)
	% \\\notag
	% &=\frac{1}{2\sqrt{E_{\textup{p}}(X^{n})}}\left(E_{\textup{p}}( X^{n+1})-E_{\textup{p}}( X^n)\right)-\frac{1}{8E_{\textup{p}}(X^{n})^{3/2}}\left(E_{\textup{p}}( X^{n+1})-E_{\textup{p}}( X^n)\right)^2 \\
	% &\quad+\frac{3}{16}\int_0^1(1-\theta)^2(\tilde{E}_\theta^n)^{-5/2}\left(E_{\textup{p}}( X^{n+1})-E_{\textup{p}}( X^n)\right)^3 d\theta,\\
	%          &=\frac{1}{2\sqrt{E_{\textup{p}}(X^{n})}}\big(\left\langle F'( X^n), X^{n+1}- X^n\right\rangle+\frac{1}{2}\left\langle F''( X^n)( X^{n+1}- X^n),X^{n+1}- X^n\right\rangle\big)\notag\\
	% &\quad +\frac{1}{2\sqrt{E_{\textup{p}}(X^{n})}}\int_0^1(1-\xi)\left\langle (F''(\psi_\xi^n)-F''( X^n))( X^{n+1}- X^n), X^{n+1}- X^n\right\rangle d\xi\notag\\
	% &\quad -\frac{1}{8E_{\textup{p}}(X^{n})^{3/2}}\left\langle F'( X^n),X^{n+1}- X^n\right\rangle^2\notag\\
	% &\quad-\frac{1}{4E_{\textup{p}}(X^{n})^{3/2}}\langle F'( X^n), X^{n+1}- X^n\rangle\int_0^1(1-\xi)\left\langle F''( \psi_\xi^n)( X^{n+1}- X^n),X^{n+1}- X^n\right\rangle d\xi\notag\\
	% &\quad -\frac{1}{8E_{\textup{p}}(X^{n})^{3/2}}\bigg(\int_0^1(1-\xi)\left\langle F''( \psi_\xi^n)( X^{n+1}- X^n),X^{n+1}- X^n\right\rangle d\xi\bigg)^2 \\
	%         &\quad+\frac{3}{16}\int_0^1(1-\theta)^2(\tilde{E}_\theta^n)^{-5/2}\left(\int_0^1\left\langle F'( \psi_\xi^n), X^{n+1}- X^n\right\rangle d\xi\right)^3 d\theta,
	%          \end{align*}
\begin{align*}
	&\sqrt{E_{\textup{p}}( X^{n+1})}-\sqrt{E_{\textup{p}}(X^{n})} =b_2(1)-b_2(0)
	\\\notag
	&=\frac{1}{2\sqrt{E_{\textup{p}}(X^{n})}}\left(E_{\textup{p}}( X^{n+1})-E_{\textup{p}}( X^n)\right)-\frac{1}{8E_{\textup{p}}(X^{n})^{3/2}}\left(E_{\textup{p}}( X^{n+1})-E_{\textup{p}}( X^n)\right)^2 \\
	&\quad+\frac{3}{16}\int_0^1(1-\theta)^2(\tilde{E}_\theta^n)^{-5/2}\left(E_{\textup{p}}( X^{n+1})-E_{\textup{p}}( X^n)\right)^3 d\theta,\\
	&=\frac{1}{2\sqrt{E_{\textup{p}}(X^{n})}}\Big(\left\langle F'( X^n), X^{n+1}- X^n\right\rangle+\frac{1}{2}\left\langle F''( X^n)( X^{n+1}- X^n),X^{n+1}- X^n\right\rangle\Big)\notag\\
	%&\quad +\frac{1}{2\sqrt{E_{\textup{p}}(X^{n})}}\int_0^1(1-\xi)\left\langle (F''(\psi_\xi^n)-F''( X^n))( X^{n+1}- X^n), X^{n+1}- X^n\right\rangle d\xi\notag\\
	&\quad -\frac{1}{8E_{\textup{p}}(X^{n})^{3/2}}\left\langle F'( X^n),X^{n+1}- X^n\right\rangle^2\notag+\mathcal{P}_1^n+\mathcal{P}_2^n+\mathcal{P}_3^n+\mathcal{P}_4^n,
	% &\quad-\frac{1}{4E_{\textup{p}}(X^{n})^{3/2}}\langle F'( X^n), X^{n+1}- X^n\rangle\int_0^1(1-\xi)\left\langle F''( \psi_\xi^n)( X^{n+1}- X^n),X^{n+1}- X^n\right\rangle d\xi\notag\\
	% &\quad -\frac{1}{8E_{\textup{p}}(X^{n})^{3/2}}\bigg(\int_0^1(1-\xi)\left\langle F''( \psi_\xi^n)( X^{n+1}- X^n),X^{n+1}- X^n\right\rangle d\xi\bigg)^2 \\
	%         &\quad+\frac{3}{16}\int_0^1(1-\theta)^2(\tilde{E}_\theta^n)^{-5/2}\left(\int_0^1\left\langle F'( \psi_\xi^n), X^{n+1}- X^n\right\rangle d\xi\right)^3 d\theta,
\end{align*}
where $\tilde{E}_\theta^n:=\theta E_{\textup{p}}(X^n)+(1 - \theta) E_{\textup{p}}(X^{n+1})$
and the remainder terms are  
\begin{align*}
	\mathcal{P}_1^n&:=\frac{1}{2\sqrt{E_{\textup{p}}(X^{n})}}\int_0^1(1-\xi)\left\langle (F''(\psi_\xi^n)-F''( X^n))( X^{n+1}- X^n), X^{n+1}- X^n\right\rangle d\xi,\\
	\mathcal{P}_2^n&:=-\frac{1}{4E_{\textup{p}}(X^{n})^{3/2}}\langle F'( X^n), X^{n+1}- X^n\rangle\notag\\
	&\quad\times\int_0^1(1-\xi)\left\langle F''( \psi_\xi^n)( X^{n+1}- X^n),X^{n+1}- X^n\right\rangle d\xi,\notag\\
	\mathcal{P}_3^n&:=-\frac{1}{8E_{\textup{p}}(X^{n})^{3/2}}\bigg(\int_0^1(1-\xi)\left\langle F''( \psi_\xi^n)( X^{n+1}- X^n),X^{n+1}- X^n\right\rangle d\xi\bigg)^2, \\
	\mathcal{P}_4^n&:=\frac{3}{16}\int_0^1(1-\theta)^2(\tilde{E}_\theta^n)^{-5/2}\left(\int_0^1\left\langle F'( \psi_\xi^n), X^{n+1}- X^n\right\rangle d\xi\right)^3 d\theta.
\end{align*}
From \eqref{lemma4_eq1} and \eqref{scheme_r_sto-intro}, we can reformulate
\begin{align*}
	\sqrt{E_{\textup{p}}( X^{n+1})}-\sqrt{E_{\textup{p}}(X^{n})}%(\mathcal{P}_1^n+\mathcal{P}_2^n+\mathcal{P}_3^n+\mathcal{P}_4^n)\\&=r^{n+1}-r^n+\frac{1}{4\sqrt{E_{\textup{p}}(X^{n})}}\left\langle F''( X^n)( X^{n+1}- X^n),(S(\tau)-I)(-A)^{-1}\tilde{\mu}^n\right\rangle \notag\\
	% &\quad -\frac{1}{8E_{\textup{p}}(X^{n})^{3/2}}\left\langle F'( X^n),X^{n+1}- X^n\right\rangle \left\langle F'( X^n),(S(\tau)-I)(-A)^{-1}\tilde{\mu}^n\right\rangle\notag\\
	=(r^{n+1}-r^n)+\mathcal{P}_1^n+\mathcal{P}_2^n+\mathcal{P}_3^n+\mathcal{P}_4^n+\mathcal{P}_5^n+\mathcal{P}_6^n,
\end{align*}
where 
\begin{align*}
	\mathcal{P}_5^n&:= \frac{1}{4\sqrt{E_{\textup{p}}(X^{n})}}\left\langle F''( X^n)( X^{n+1}- X^n),(S(\tau)-I)(-A)^{-1}\tilde{\mu}^n\right\rangle,\\
	\mathcal{P}_6^n&: -\frac{1}{8E_{\textup{p}}(X^{n})^{3/2}}\left\langle F'( X^n),X^{n+1}- X^n\right\rangle \left\langle F'( X^n),(S(\tau)-I)(-A)^{-1}\tilde{\mu}^n\right\rangle.
\end{align*}
% {\r check the above computation, you skip some steps      
	% Another suggestion: use integral form for mean value. 
	%}
%        
%	 From \eqref{lemma4_eq1},  the above identity can be reformulated as
%		\begin{align*}
	%			&\sqrt{E_{\textup{p}}( X^{n+1})}-\sqrt{E_{\textup{p}}(X^{n})}\\&=r^{n+1}-r^n+\frac{1}{4\sqrt{E_{\textup{p}}(X^{n})}}\left\langle F''( X^n)( X^{n+1}- X^n),(S(\tau)-I)(-A)^{-1}\tilde{\mu}^n\right\rangle \notag\\
	%			&\quad +\frac{1}{2\sqrt{E_{\textup{p}}(X^{n})}}\int_0^1(1-\xi)\left\langle (F''(\psi_\xi^n)-F''( X^n))( X^{n+1}- X^n), X^{n+1}- X^n\right\rangle d\xi\notag\\
	%			&\quad -\frac{1}{8E_{\textup{p}}(X^{n})^{3/2}}\left\langle F'( X^n),X^{n+1}- X^n\right\rangle \left\langle F'( X^n),(S(\tau)-I)(-A)^{-1}\tilde{\mu}^n\right\rangle\notag\\
	%			&\quad-\frac{1}{8E_{\textup{p}}(X^{n})^{3/2}}\left\langle F'( X^n), X^{n+1}- X^n\right\rangle \left\langle F''(\psi_1^n)( X^{n+1}- X^n),X^{n+1}- X^n\right\rangle \notag\\
	%			&\quad -\frac{1}{32E_{\textup{p}}(X^n)^{3/2}}\left\langle F''(\psi_1^n)( X^{n+1}- X^n), X^{n+1}- X^n\right\rangle ^2\notag\\
	%			&\quad +\frac{1}{16(\tilde{E}^n_{\textup{p}})^{5/2}}\left\langle F'(\psi_2^n), X^{n+1}- X^n\right\rangle^3\notag\\
	%			&=:r^{n+1}-r^n+P_1^n+P_2^n+P_3^n+P_4^n+P_5^n+P_6^n.
	%		\end{align*}
Summing from $n=0$ to $m$, since $r_0 = \sqrt{E_\textup{p}( \phi^0)}$,  we obtain from H\"older's inequality that for any $p\ge1$,
\begin{equation}\label{lemma3_eq4}
	\mathbb{E}\left[\Big|r^{m+1}-\sqrt{E_\textup{p}( X^{m+1})}\Big|^p\right]\leq C\sum_{l=1,5,6}\mathbb{E}\bigg[\Big|\sum_{n=0}^m \mathcal{P}_l^n\Big|^p\bigg]+C(m+1)^{p-1}\sum_{l=2}^4\sum_{n=0}^m\mathbb{E}\left[|\mathcal{P}_l^n|^p\right].
\end{equation}

{	\textbf{Estimate of $\sum_{n=0}^m\mathcal{P}_1^n$.} Invoking the mean value theorem, we can rewrite
	\begin{equation*}%\label{lemma3_eq4_P2}
		\mathcal{P}_1^n\!=\! \frac{1}{2\sqrt{E_{\textup{p}}(X^{n})}}\int_0^1(1-\xi)\int_0^1\int_{\mathcal{O}} F'''(\theta\psi_\xi^n+(1-\theta)X^n)(\psi_\xi^n-X^n) (X^{n+1}- X^n)^2 dxd\theta d\xi.
		%      \\
		% \Big|\sum_{n=0}^m P_2^n\Big|&=\Big|\sum_{n=0}^m\frac{1}{4\sqrt{E_{\textup{p}}(X^{n})}}\left\langle F'''(\theta\psi_1^n+(1-\theta)X^n)(\psi_1^n-X^n)( X^{n+1}- X^n),X^{n+1}- X^n\right\rangle\Big|\\
		% &\le C\Big|\sum_{n=0}^m\|F'''({\psi_3^n})\|_{L^\infty}\| X^{n+1}- X^n\|^3_{L^3}\Big|.
	\end{equation*}
	The Gagliardo--Nirenberg inequality (see, e.g., \cite[Chapter 5]{adams2003sobolev}) reads that for $d\in\{1,2,3\}$ and any $p\in(2,6]$,
	\begin{equation}\label{eq:GN}
		\|X^{n+1}- X^n\|_{L^p}\le C(p,d)\|X^{n+1}- X^n\|^{1-d(\frac12-\frac{1}{p})}\|\nabla (X^{n+1}- X^n)\|^{d(\frac12-\frac{1}{p})}.
	\end{equation}
	Hence, by \eqref{lema1_eq_R1_insert_hey}, Assumption \ref{assum2}, H\"older's inequality, and \eqref{eq:GN} with $p=3$,
	\begin{align*}
		|\mathcal{P}_1^n|
		&\le C(1+\|X^n\|_{L^\infty}+\|X^{n+1}\|_{L^\infty}) \|X^{n+1}- X^n\|_{L^3}^3\\
		&\le C(1+\|X^n\|_{L^\infty}+\|X^{n+1}\|_{L^\infty}) \|X^{n+1}- X^n\|^{3(1-\frac{d}{6})}\|\nabla (X^{n+1}- X^n)\|^{\frac{d}{2}}.
	\end{align*}
	%where the second step is due to the Gagliardo--Nirenberg inequality {\b can we write the equality and cite it later?} .
	Moreover, by  Young's inequality and Poincare's inequality, for $d\in\{1,2,3\}$,
	\begin{align*}
		|\mathcal{P}_1^n|
		&\le C(1+\|X^n\|_{L^\infty}+\|X^{n+1}\|_{L^\infty}) \|X^{n+1}- X^n\|^{\frac32}\|\nabla (X^{n+1}- X^n)\|^{\frac32}\\
		&\le C\tau^{-\frac{3}{2}}(1+\|X^n\|_{L^\infty}^4+\|X^{n+1}\|_{L^\infty}^4) \|X^{n+1}- X^n\|^{6}+C\tau^{\frac12}\|\nabla (X^{n+1}- X^n)\|^{2}
	\end{align*}
	It follows from H\"older's inequality, \eqref{eq:Linfty}, as well as Lemmas \ref{lem:time} and \ref{lemma5} that}
\begin{align*}%\label{lemma3_eq4_P2}
	\mathbb{E}\bigg[\Big|\sum_{n=0}^m \mathcal{P}_1^n\Big|^p\bigg]
	&\leq C\mathbb{E}\bigg[\bigg(\tau^{\frac{1}{2}}\sum_{n=0}^m\| (-A)^{\frac12}(X^{n+1}- X^n)\|^2\bigg)^p\bigg]\\
	&\quad+C\mathbb{E}\bigg[\bigg(\tau^{-\frac{3}{2}}\sum_{n=0}^m\left(1+\|X^n\|^4_{L^\infty}+\|X^{n+1}\|^4_{L^\infty}\right)\| X^{n+1}- X^n\|^6\bigg)^p\bigg]\\
	&\leq C\tau^{\frac{p}{2}}.
\end{align*}
% {\r please check the result in lemma 3.5 will affect the proof or not}

%{\r the subtitle could be better}
\textbf{Estimate of $\sum_{n=0}^m\mathcal{P}_5^n$.}
Young's inequality, \eqref{assum1_eq2}, and {Corollary \ref{lemma_stability}} lead to
\begin{align}\label{lemma3_eq4_P1}
	& \mathbb{E}\bigg[\Big|\sum_{n=0}^m \mathcal{P}_5^n\Big|^p\bigg]  
	%&=\mathbb{E}\bigg[\Big|\sum_{n=0}^m\frac{1}{4\sqrt{E_{\textup{p}}(X^{n})}}\bigg\langle (I-S(\tau))^{\frac{1}{2}}(-A)^{-\frac 12}F''( X^n)( X^{n+1}- X^n),(I-S(\tau))^{\frac{1}{2}}(-A)^{-\frac 12}\tilde{\mu}^n\bigg\rangle\Big|^p\bigg]\notag\\
	\leq C\mathbb{E}\bigg[\Big|\sum_{n=0}^m\tau^{-\frac{1}{2}}\|
	(I-S(\tau))^{\frac{1}{2}}(-A)^{-\frac 12}[F''( X^n)( X^{n+1}- X^n)]\|^2\\
	&\qquad\qquad\qquad\qquad+C\tau^{\frac{1}{2}}\sum_{n=0}^m\|(I-S(\tau))^{\frac{1}{2}}(-A)^{-\frac 12}\tilde{\mu}^n\|^2\Big|^p\bigg]\notag\\
	&\leq C\mathbb{E}\bigg[\Big|\sum_{n=0}^m\tau^{-\frac{1}{2}}\| (I-S(\tau))^{\frac{1}{2}}(-A)^{-1}(-A)^{\frac{1}{2}}[F''( X^n)( X^{n+1}- X^n)]|^2\Big|^p\bigg]+ C\tau^{\frac{p}{2}}\notag\\
	&\leq C\mathbb{E}\bigg[\Big|\sum_{n=0}^m\tau^{\frac{1}{2}}\| (-A)^{\frac{1}{2}}[F''( X^n)( X^{n+1}- X^n)]\|^2\Big|^p\bigg]+ C\tau^{\frac{p}{2}}.\notag
\end{align} 
Using the integration by parts formula, the chain rule, H\"older's inequality, Assumption~\ref{assum2}, {the Sobolev embedding $\dot{H}^1(\mathcal{O})\hookrightarrow L^6(\mathcal{O})$, 
	% {\r this estimate is dimension dependent. however, the below estimate is independent of $d$? please check and modify the presentation}, 
	we have that for any $\beta\in(\frac {3}{2},2)$,		\begin{align*}%\label{lemma3_eq4_P1_term1}
		&\| (-A)^{\frac{1}{2}}[F''( X^n)( X^{n+1}- X^n)]\|^2=\left\| \nabla [F''( X^n)( X^{n+1}- X^n)]\right\|^2\\
		&\leq C\left\| F'''( X^n)\nabla X^n( X^{n+1}- X^n)\right\|^2+C\left\|F''( X^n)\nabla( X^{n+1}- X^n)\right\|^2\\
		&\leq C\| X^{n+1}- X^n\|^2_{L^6}\left\| { (1+X^n)}\nabla X^n\right\|^2_{L^3}
		+C\left\|F''( X^n)\right\|^2_{L^\infty}\|\nabla( X^{n+1}- X^n)\|^2\\
		&\leq C\|\nabla( X^{n+1}- X^n)\|^{2}\left({(1+\|X^n\|^2_{L^\infty})}\|\nabla X^n\|^2_{L^3}+C(1+\| X^n\|^4_{L^\infty})\right)\\
		& \leq C\|\nabla( X^{n+1}- X^n)\|^2{(1+\|X^n\|^{4}_{H^{\beta}})},
	\end{align*}
	where we have used the Sobolev embeddings $ H^{\beta-1}(\mathcal{O})\hookrightarrow L^3(\mathcal{O})$ and $H^{\beta}(\mathcal{O})\hookrightarrow L^\infty(\mathcal{O})$ in the last step.}
Then the combination of Lemma \ref{coro1} and Lemma~\ref{lemma5} gives
\begin{align*}%\label{lemma3_eq4_P1final}
	&\mathbb{E}\bigg[\Big|\sum_{n=0}^m\| (-A)^{\frac{1}{2}}[F''( X^n)( X^{n+1}- X^n)]\|^2\Big|^p\bigg]\\
	&\leq C\left(1+\mathbb{E}\left[\sup_{0\leq n\leq m}\| X^n\|^{8p}_{H^{\beta}}\right]\right)^{\frac{1}{2}}\bigg(\mathbb{E}\bigg[\bigg(\sum_{n=0}^m\| \nabla(X^{n+1}- X^n)\|^2\bigg)^{2p}\bigg]\bigg)^{\frac 12} 
	\le C.
\end{align*}
Inserting the above estimate into \eqref{lemma3_eq4_P1} yields  $\mathbb{E}[|\sum_{n=0}^m \mathcal{P}_5^n|^p]\le C\tau^{\frac{p}{2}}.$

{\textbf{Estimate of $\sum_{n=0}^m\mathcal{P}_6^n$.}}
By  Cauchy--Schwarz inequality {and \eqref{lema1_eq_R1_insert_hey},}%{\r the property of $E_p$ is also used}, 
\begin{align*}%\label{lemma3_eq4_P3}\notag
	\Big|\sum_{n=0}^m \mathcal{P}_6^n\Big|^2%&= \bigg|\sum_{n=0}^m\frac{1}{8E_{\textup{p}}(X^{n})^{\frac{3}{2}}}\left\langle F'( X^n), X^{n+1}- X^n\right\rangle\left\langle F'( X^n),(I-S(\tau))(-A)^{-1}\tilde{\mu}^n\right\rangle
	%\bigg|^2\\
	&\leq C\bigg(\sum_{n=0}^m\left\|F'( X^n)\right\|^2\| X^{n+1}- X^n\|^2\|(I-S(\tau))^{\frac 12}(-A)^{-\frac{1}{2}}F'( X^n)\|^2\bigg)\\
	&\quad\times\bigg(\sum_{n=0}^m\|(I-S(\tau))^{\frac{1}{2}}(-A)^{-\frac 12}
	\tilde{\mu}^n\|^2\bigg).
\end{align*}
Furthermore, utilizing \eqref{assum1_eq2},   {Corollary \ref{lemma_stability}}, and H\"older's inequality, 
\begin{align*}%\label{lemma3_eq4_P3}\notag
	&\mathbb{E}\bigg[\Big|\sum_{n=0}^m \mathcal{P}_6^n\Big|^p\bigg]
	%			&= \mathbb{E}\bigg[\bigg|\sum_{n=0}^m\frac{1}{8E_{\textup{p}}(X^{n})^{\frac{3}{2}}}\left\langle F'( X^n), X^{n+1}- X^n\right\rangle\left\langle F'( X^n),(I-S(\tau))(-A)^{-1}\tilde{\mu}^n\right\rangle\bigg|^p\bigg]\\
	%			&\leq C\mathbb{E}\bigg[\bigg|\sum_{n=0}^m\left\|F'( X^n)\right\|\| X^{n+1}- X^n\|\|(I-S(\tau))^{\frac 12}(-A)^{-\frac{1}{2}}F'( X^n)\|\|(I-S(\tau))^{\frac{1}{2}}(-A)^{-\frac 12}\tilde{\mu}^n\|\bigg|^p\bigg]\\
	%	& \leq C\left(\mathbb{E}\left[\bigg(\sum_{n=0}^m\|F'( X^n)\|^2\| X^{n+1}- X^n\|^2\|(I-S(\tau))^{\frac 12}(-A)\|^2\|(-A)^{\frac{1}{2}}F'( X^n)\|^2\bigg)^p\right]\right)^{\frac12}\\
	\leq C\tau^{\frac{p}{2}}\left(\mathbb{E}\bigg[\bigg(\sum_{n=0}^m\|F'( X^n)\|^2\| X^{n+1}- X^n\|^2\|(-A)^{\frac{1}{2}}F'( X^n)\|^2\bigg)^p\bigg]\right)^{\frac12}\\
	&\leq C\tau^{\frac{p}{2}}(m+1)^{\frac{p-1}{2}}\left(\sum_{n=0}^m\mathbb{E}\left[\|F'( X^n)\|^{2p}\| X^{n+1}- X^n\|^{2p}\|(-A)^{\frac{1}{2}}F'( X^n)\|^{2p}\right] \right)^{\frac12}.
\end{align*}
%The assumption $f'(0)=0$,  the integration by parts formula, and \eqref{lemma6_beta=2_1_insert} yield that for any $q\ge1$,
%\begin{align*}
%  \mathbb{E}\left[\|(-A)^{\frac12} F'(X^n)\|{\color{red}^{2q}}\right]=  \mathbb{E}\left[\|\nabla F'(X^n)\|{\color{red}^{2q}}\right]\le C(q,T).
%\end{align*}
Together with Lemma \ref{lem:time}, \eqref{eq:FXn}, and \eqref{lemma6_beta=2_1_insert},
this proves $\mathbb{E}[|\sum_{n=0}^m \mathcal{P}_6^n|^p]
\le C\tau^{\frac{p}{2}}.$

%{\r why there is no summation on the index $n$ below }{\color{blue}see \eqref{lemma3_eq4} for more details.}
\textbf{Estimates of $\mathcal{P}_2^n$ and $\mathcal{P}_3^n$.}
In virtue of Assumption \ref{assum2} and \eqref{eq:Linfty},  it holds that for any $q\ge1$ and $\xi\in[0,1]$, 
\begin{equation}\label{eq:F2dpsi1}
	\mathbb{E}\left[\|F''(\psi_\xi^n)\|_{L^{\infty}}^q\right]\le  C\mathbb{E}\left[(1+\|X^n\|^{2}_{L^{\infty}}+\|X^{n+1}\|^{2}_{L^{\infty}})^q\right]\le C(q).
\end{equation}
Applying H\"older's inequality, {\color{blue}\eqref{lema1_eq_R1_insert_hey},} Lemma \ref{lem:time}, \eqref{eq:FXn}, and \eqref{eq:F2dpsi1}, we infer that
\begin{align*}%\label{lemma3_eq4_P4}
	\mathbb{E}\left[\left|\mathcal{P}_2^n\right|^p\right]
	% &\le C\int_0^1\mathbb{E}\left[\left|\left\langle F'( X^n), X^{n+1}- X^n\right\rangle \left\langle F''(\psi_\xi^n)( X^{n+1}- X^n),X^{n+1}- X^n\right\rangle\right|^p\right]d\xi\notag\\
	&\leq C\int_0^1\mathbb{E}\left[\left\|F'( X^n)\right\|^p\left\|F''(\psi_\xi^n)\right\|^p_{L^{\infty}}\| X^{n+1}- X^n\|^{3p}\right]d\xi
	\leq C\tau^{\frac{3}{2}p}.
\end{align*}
Analogously, we also have
\begin{align*}%\label{lemma3_eq4_P5}
	\mathbb{E}\left[\left|\mathcal{P}_3^n\right|^p\right]
	\leq C\int_0^1\mathbb{E}\left[\|F''(\psi_\xi^n)\|^{2p}_{L^\infty}\| X^{n+1}- X^n\|^{4p}\right]d\xi
	\le C\tau^{2p}.
\end{align*}

\textbf{Estimate of $\mathcal{P}_4^n$.}
By {\color{blue}\eqref{lema1_eq_R1_insert_hey},} H\"older's inequality,  \eqref{eq:Linfty}, and Lemma \ref{lem:time}, 
\begin{align*}%\label{lemma3_eq4_P6}
	\mathbb{E}\left[\left| \mathcal{P}_4^n\right|^p\right]&\leq C \int_0^1\mathbb{E}\left[\left|\left\langle F'(\psi_\xi^n), X^{n+1}- X^n\right\rangle\right|^{3p}\right]d\xi\\
	%   &\le C\mathbb{E}\left[\left(1+\|X^n\|^{9p}_{L^6}+\|X^{n+1}\|^{9p}_{L^6}\right)\| X^{n+1}- X^n\|^{3p}\right]\\
	&\leq C\mathbb{E}\left[\left(1+\|X^n\|^{9p}_{L^\infty}+\|X^{n+1}\|^{9p}_{L^\infty}\right)\| X^{n+1}- X^n\|^{3p}\right]\leq C\tau^{\frac{3}{2}p}.
\end{align*}
Finally, substituting the above estimates on \( \mathcal{P}_1^n\) through \(\mathcal{P}_6^n \) into \eqref{lemma3_eq4}, we complete the proof.
\end{proof}

%{\r better to reorganized the structure and presentation }

{
With Lemma \ref{lemma3} at hand, we now present the error estimate between the auxiliary process \(\Phi(t_n)\) and the numerical solution \(X^n\).}
\begin{proposition}\label{Thm1}
Let Assumptions \ref{assum2} and \ref{assum1} hold, and let $\phi^0\in \dot{H}^2(\mathcal{O})$.  Then for any $p\geq 1$, there exists a constant $C>0$ such that for $n \in\{ 0,1,\cdots,N-1\}$,
\begin{align*}
	\mathbb{E} \left[\left\|\Phi(t_{n+1})-X^{n+1}\right\|^p\right]\leq C \tau^{\frac{p}{2}}.
\end{align*}
\end{proposition}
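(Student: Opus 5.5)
The plan is to continue from the splitting already set up in \eqref{Thm1_6}: $\|\Phi(t_{n+1})-X^{n+1}\|^p\le \mathcal{B}_1^n+\mathcal{B}_2^n$ and $\mathcal{B}_1^n\le \mathcal{B}_{11}^n+\mathcal{B}_{12}^n$. Two of these terms are already controlled. By \eqref{Thm1_6_B}, $\mathbb{E}[\mathcal{B}_2^n]\le C\tau^{p/2}$, and by \eqref{Thm1_6_A_term1}, $\mathbb{E}[\mathcal{B}_{11}^n]\le C\tau^{p/2}$. So only $\mathcal{B}_{12}^n$ remains. I would split it into a part involving $r^j-\sqrt{E_{\textup{p}}(X^j)}$ and a part involving $\chi^j$, and show that each has $p$th moment of order $\tau^{p/2}$.

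\textbf{The $r^j-\sqrt{E_{\textup{p}}(X^j)}$ part.} I move $(-A)^{1/2}$ onto the semigroup and keep $(-A)^{1/2}$ on $F'(X^j)$. By \eqref{assum1_eq1} the kernel is then $(t_{n+1}-s)^{-1/4}$, which is integrable. Minkowski's inequality in $L^p(\Omega)$ and H\"older's inequality give
\[
\Big\|\sum_j\int_{t_j}^{t_{j+1}}(t_{n+1}-s)^{-1/4}\,\big|r^j-\sqrt{E_{\textup{p}}(X^j)}\big|\,\|(-A)^{1/2}F'(X^j)\|\,ds\Big\|_{L^p(\Omega)}
\le C\max_j \big\|r^j-\sqrt{E_{\textup{p}}(X^j)}\big\|_{L^{2p}(\Omega)}\,\big\|(-A)^{1/2}F'(X^j)\big\|_{L^{2p}(\Omega)},
\]
where I also used $E_{\textup{p}}\ge c$ from \eqref{lema1_eq_R1_insert_hey}. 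Lemma~\ref{lemma3} bounds the first factor by $C\tau^{1/2}$, and \eqref{lemma6_beta=2_1_insert} bounds the second by a constant. This part is therefore $O(\tau^{p/2})$.

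\textbf{The $\chi^j$ part.} The naive bound uses $\mathbb{E}\|\chi^j\|^p\le C\tau^{p/2}$ from \eqref{eq:chinp}. That only works if the full operator $A$ can be absorbed, and $\|AS(t)\|\sim t^{-1/2}$ is integrable, so
\[
\Big\|\sum_j\int_{t_j}^{t_{j+1}}S(t_{n+1}-s)A\chi^j\,ds\Big\|_{L^p(\Omega;H)}\le C\sum_j\int_{t_j}^{t_{j+1}}(t_{n+1}-s)^{-1/2}\|\chi^j\|_{L^p(\Omega;H)}\,ds\le C\tau^{1/2}.
\]
This already suffices, with no need to exploit martingale cancellation in $\chi^j$ (which is linear in $\delta W^j$). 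If the $L^p$ bound on $\chi^j$ is available only in weaker form, I would fall back on the half-derivative route: establish $\mathbb{E}\|(-A)^{1/2}\chi^j\|^q\le C\tau^{q/2}$ with the same method as \eqref{eq:fnH1}, using the $H^\beta$ regularity of Lemma~\ref{coro1} and Assumption~\ref{assum1} for $\nabla(g(X^j)\delta W^j)$.

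\textbf{Conclusion and main obstacle.} Collecting the four bounds gives $\mathbb{E}\|\Phi(t_{n+1})-X^{n+1}\|^p\le C\tau^{p/2}$, as claimed. No Gronwall argument is needed, because $\Phi$ is driven by the frozen $X^j$ rather than by itself. The main obstacle is the $\sqrt{E_{\textup{p}}}$-discrepancy term. Its control rests entirely on Lemma~\ref{lemma3} together with the $H^1$-moment bound on $F'(X^j)$, which is why $\phi^0\in\dot H^2(\mathcal{O})$ is assumed. Without Lemma~\ref{lemma3}, this term would accumulate to $O(1)$.
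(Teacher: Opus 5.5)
Your proposal is correct and follows the paper's proof. In both, $\mathcal{B}_{11}^n$ and $\mathcal{B}_2^n$ are taken from the earlier estimates, and $\mathcal{B}_{12}^n$ is handled by Minkowski's inequality, the smoothing bound \eqref{assum1_eq1}, Lemma~\ref{lemma3}, and \eqref{eq:chinp}.

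The only difference is inessential. The paper bounds the combined integrand $\frac{r^j-\sqrt{E_{\textup{p}}(X^j)}}{\sqrt{E_{\textup{p}}(X^j)}}F'(X^j)+\chi^j$ in one step, using the integrable kernel $(t_{n+1}-s)^{-1/2}$ and only the $L^2$-moment bound \eqref{eq:FXn} on $F'(X^j)$. So your half-derivative split and the $H^1$-bound \eqref{lemma6_beta=2_1_insert} for the discrepancy term are valid but not needed.
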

\begin{proof}

{
	Recalling that by \eqref{Thm1_6} and \eqref{Thm1_6_A},
	\begin{align}\label{eq:Phi-X-new}
		\left\|\Phi(t_{n+1})-X^{n+1}\right\|^p &\leq \mathcal{B}_{11}^n+\mathcal{B}_{12}^n+\mathcal{B}_2^n,
	\end{align}   
	where the  expectations of $\mathcal{B}_{11}^n$ and $\mathcal{B}_{2}^n$ have been estimated in \eqref{Thm1_6_A_term1} and \eqref{Thm1_6_B}, respectively. To estimate $\mathcal{B}_{12}^n$, we utilize \eqref{lema1_eq_R1_insert_hey}, \eqref{eq:chinp}, \eqref{eq:FXn}, and Lemma \ref{lemma3} to obtain that for any $p\ge1$,
	\begin{align*}
		& \bigg\|\frac{r^{j}-\sqrt{E_{\textup{p}}(X^{j})}}{\sqrt{E_{\textup{p}}(X^{j})}}F'(X^{j})+\chi^j\bigg\|_{L^p(\Omega;H)}\\
		&\le C\|r^{j}-\sqrt{E_{\textup{p}}(X^{j})}\|_{L^{2p}(\Omega)}
		\|F'(X^{j})\|_{L^{2p}(\Omega;H)}+\|\chi^{j}\|_{L^{p}(\Omega;H)}\le C(p)\tau^{\frac12}.
	\end{align*}
	Then} using  the Minkowski inequality and \eqref{assum1_eq1}, one has 
\begin{align*}%\label{Thm1_6_A_term2}
	\mathbb{E}\left[\mathcal{B}_{12}^n\right]
	% \leq C\bigg(\sum_{j=0}^n\int_{t_{j}}^{t_{j+1}}(t_{n+1}-s)^{-\frac{1}{2}}
	% \bigg\|\frac{r^{j}-\sqrt{E_{\textup{p}}(X^{j})}}{\sqrt{E_{\textup{p}}(X^{j})}}F'(X^{j})+\chi^j\bigg\|_{L^p(\Omega;H)}ds\bigg)^p
	\leq C \tau^{\frac{p}{2}}.
\end{align*}
This, along with  \eqref{eq:Phi-X-new}, \eqref{Thm1_6_A_term1}, and \eqref{Thm1_6_B}, finishes the proof.
\end{proof}
\subsection{Error estimate between $\phi$ and $\Phi$}\label{S:4.2}
In this subsection, we estimate the  error 
$\tilde{\varepsilon}(t):=\phi(t)-\Phi(t)$ 
between the mild solution $\phi$ and the auxiliary process $\Phi$.  According to \eqref{auxiliary_solution} and \eqref{model}, we have
\begin{align}\label{Thm2_1}
d\tilde{\varepsilon}(t)&=-A^2\tilde{\varepsilon}(t)dt+A\left(F'(\phi(t))-F'(X(\kappa_N(t)))\right)dt\\\notag
&\quad+\left(g(\phi(t))-g(X(\kappa_N(t)))\right)dW(t)
\end{align}
for any $t\in(0,T]$,
with the initial value $\tilde{\varepsilon}(0)=0$. To handle the non-globally Lipschitz nonlinearity $F'$, we decompose
\begin{equation*}
F'(\phi(t))-F'(X(\kappa_N(t))=[F'(\phi(t))-F'(\Phi(t))]+[F'(\Phi(t))-F'(X(\kappa_N(t)))],
\end{equation*}
where the first term on the right-hand side can be handled by the one-sided Lipschitz continuity of $-F'$. As for the second term, taking advantage of the local Lipschitz continuity property \eqref{assum2_eq2} of $F'$, it follows 
\begin{align}\label{eq:F'-F'}\notag
\|F'(\Phi(t))-F'(X(\kappa_N(t)))\|\le C(1+\|\Phi(t)\|_{L^\infty}^2+\|X(\kappa_N(t))\|_{L^\infty}^2)\|\Phi(t)-X(\kappa_N(t))\|\\
\le C(1+\|\Phi(t)\|_{L^\infty}^2+\|X(\kappa_N(t))\|_{L^\infty}^2)
(\|\Phi(t)-\Phi(\kappa_N(t))\|+\|\Phi(\kappa_N(t))-X(\kappa_N(t))\|).
\end{align}
To proceed, 
we will use the spatial regularity and temporal H\"older regularity estimates for the auxiliary process $\Phi$,  
provided in Lemmas~\ref{sec4_lem2} and~\ref{sec4_lem1}, respectively.
\begin{lemma}\label{sec4_lem2}
Let Assumptions \ref{assum2} and \ref{assum1} hold and let $\phi^0\in \dot{H}^{\beta}(\mathcal{O})$ for some $\beta\in(\frac{d}{2},2)\cap[1,2)$. Then for any $p \geq 1$, there exists a constant $C:=C(p) > 0$ such that
\begin{equation}\label{eq:PhiLinf}
	\mathbb{E}\bigg[\sup_{t\in[0,T]}\|\Phi(t)\|_{L^\infty}^p\bigg]\leq C.
\end{equation}
\end{lemma}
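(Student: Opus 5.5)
The plan is to apply Lemma~\ref{lemma_tool} to $\Phi$, viewed as a process of the form \eqref{lemma6_eq3_R1_Zn}, and then use the Sobolev embedding $H^\beta(\mathcal{O})\hookrightarrow L^\infty(\mathcal{O})$ for $\beta>\frac d2$. By \eqref{auxiliary_solution}, $\Phi$ solves \eqref{lemma6_eq3_R1_Zn} with
\[
\tilde F(s)=F'(X^{\kappa_N(s)/\tau}),\qquad \tilde G(s)=g(X^{\kappa_N(s)/\tau}),
\]
which are piecewise constant in time and $\{\mathcal F_t\}$-adapted. It therefore suffices to check the two integrability hypotheses of Lemma~\ref{lemma_tool}.

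For the drift, \eqref{eq:FXn} gives $\mathbb{E}[\|F'(X^n)\|^{4p}]\le C(p)$ uniformly in $n$. This follows from Corollary~\ref{lemma_stability} and the embedding $\dot H^1(\mathcal{O})\hookrightarrow L^6(\mathcal{O})$, since $\|F'(v)\|\le C(1+\|v\|_{L^6}^3)$. Consequently
\[
\mathbb{E}\int_0^T\|\tilde F(s)\|^p\,ds=\tau\sum_{n=0}^{N-1}\mathbb{E}\|F'(X^n)\|^p\le C.
\]
For the diffusion, \eqref{lema1_eq_R1_insert_Q} with $q=2$, which relies only on the boundedness of $\sigma$ and Assumption~\ref{assum1}, gives the deterministic bound $\|g(X^n)\|_{\mathcal{L}_2^0}\le C$. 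Hence $\mathbb{E}\int_0^T\|\tilde G(s)\|_{\mathcal L_2^0}^p\,ds\le C$. Since $\phi^0\in \dot H^\beta(\mathcal{O})$ with $\beta\in(\frac d2,2)\cap[1,2)$, the norm equivalence between $\|\cdot\|_{H^\beta}$ and $\|(-A)^{\beta/2}\cdot\|$ on $\dot H^\beta(\mathcal{O})$ (valid for $\beta\neq\frac32$; if $\beta=\frac32$, lower $\beta$ slightly while keeping $\beta>\frac d2$) shows that $\phi^0\in D((-A)^{\beta/2})$. Lemma~\ref{lemma_tool} then yields
\[
\mathbb{E}\Big[\sup_{t\in[0,T]}\|(-A)^{\frac\beta2}\Phi(t)\|^p\Big]\le C(p,\beta).
\]

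To conclude, note that $\Phi(t)\in D((-A)^{\beta/2})\subset\dot H^\beta(\mathcal{O})$, so again $\|\Phi(t)\|_{H^\beta}\le C\|(-A)^{\beta/2}\Phi(t)\|$, and $\|\Phi(t)\|_{L^\infty}\le C\|\Phi(t)\|_{H^\beta}$ because $\beta>\frac d2$. Taking the supremum and expectations gives \eqref{eq:PhiLinf}.

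I do not expect a genuine obstacle. The only points that need care are confirming that the moment bound \eqref{eq:FXn} holds uniformly in $n$ (it does, through Corollary~\ref{lemma_stability}) and handling the norm-equivalence exception at $\beta=\frac32$, which is resolved by the choice of $\beta$ described above.
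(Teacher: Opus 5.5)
Your proposal is correct and follows the paper's own argument: apply Lemma~\ref{lemma_tool} to \eqref{auxiliary_solution} with $\tilde F(s)=F'(X^{\kappa_N(s)/\tau})$ and $\tilde G(s)=g(X^{\kappa_N(s)/\tau})$, check the moment hypotheses via \eqref{eq:FXn} and \eqref{lema1_eq_R1_insert_Q}, and conclude with the embedding $H^\beta(\mathcal{O})\hookrightarrow L^\infty(\mathcal{O})$ for $\beta>\frac d2$. You also fill in the drift $\tilde F$, which the paper's text leaves blank, and you use the correct power $(-A)^{\beta/2}$; the paper's display \eqref{eq:Aphit} writes $(-A)^{\beta}$, which is evidently meant as $(-A)^{\beta/2}$.
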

\begin{proof}
In view of \eqref{auxiliary_solution}, %, we have  that
%         for any $t \in[0,T] $,  
%        \begin{equation*}%\label{sec4_lem2_eq1}
	%            \Phi(t) = S(t)\phi^0+\int_{0}^{t}S(t-s)AF'(X(\kappa_{N}(s)))ds+\int_{0}^{t}S(t-s)g(X(\kappa_{N}(s)))dW(s).
	%        \end{equation*}
similar to the proof of {Lemma \ref{coro1}}, an application of  Lemma~\ref{lemma_tool} with $\tilde{F}(s)=$ and $\tilde{G}(s)=g(X(\kappa_{N}(s)))$ can produce that for any $\beta\in[1,2)$,
\begin{equation}\label{eq:Aphit}
	\mathbb{E}\bigg[\sup_{t\in[0,T]}\|(-A)^{\beta}\Phi(t)\|^p\bigg]\leq C.
\end{equation}
Finally, the  result \eqref{eq:PhiLinf} comes from the Sobolev embedding $H^{\beta}(\mathcal{O})\hookrightarrow L^\infty(\mathcal{O})$ for any $\beta> d/2$.
\end{proof}

\begin{lemma}\label{sec4_lem1}
Let Assumptions \ref{assum2} and \ref{assum1} hold and let $\phi^0\in \dot{H}^2(\mathcal{O})$. Then for any $p\ge 1$, there exists a constant $C:=C(p) > 0$ such that for any $ 0\leq s<t\leq T$,
\begin{align*}
	%\label{sec4_lem1_Re}
	\mathbb{E}\left[\left\|\Phi(t)-\Phi(s)\right\|^p\right]\leq C(t-s)^{\frac{p}{2}}.
\end{align*}
\end{lemma}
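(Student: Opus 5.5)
We work with the mild form of \eqref{auxiliary_solution},
\begin{align*}
\Phi(t)=S(t)\phi^0+\int_0^t S(t-r)AF'(X^{\kappa_N(r)/\tau})\,dr+\int_0^t S(t-r)g(X^{\kappa_N(r)/\tau})\,dW(r),
\end{align*}
and decompose, for $0\le s<t\le T$,
\begin{align*}
\Phi(t)-\Phi(s)&=(S(t-s)-I)S(s)\phi^0+\int_s^t S(t-r)AF'(X^{\kappa_N(r)/\tau})\,dr+(S(t-s)-I)\int_0^s S(s-r)AF'(X^{\kappa_N(r)/\tau})\,dr\\
&\quad+\int_s^t S(t-r)g(X^{\kappa_N(r)/\tau})\,dW(r)+(S(t-s)-I)\int_0^s S(s-r)g(X^{\kappa_N(r)/\tau})\,dW(r)=:\sum_{i=1}^5 J_i.
\end{align*}
Each $J_i$ will be bounded in $L^p(\Omega;H)$ by $C(t-s)^{1/2}$, using the smoothing estimates \eqref{assum1_eq1}--\eqref{assum1_eq2} together with the moment bounds already available for the numerical solution.

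For $J_1$, $\phi^0\in\dot H^2$ gives $\|(S(t-s)-I)(-A)^{-1}\|_{\mathcal L(H)}\|(-A)\phi^0\|\le C(t-s)^{1/2}$ by \eqref{assum1_eq2} with $\nu=1$. For $J_2$, \eqref{assum1_eq1} with $\gamma=1$ yields $\int_s^t(t-r)^{-1/2}\|F'(X^{\kappa_N(r)/\tau})\|\,dr$; the Minkowski inequality and \eqref{eq:FXn} then give $C(t-s)^{1/2}$. For $J_4$, the BDG inequality, contractivity of $S$, and \eqref{lema1_eq_R1_insert_Q} give $C(t-s)^{1/2}$ directly. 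For $J_5$, write $(S(t-s)-I)=(S(t-s)-I)(-A)^{-1/2}\cdot(-A)^{1/2}$ and apply \eqref{assum1_eq2} with $\nu=\frac12$, which gains $(t-s)^{1/4}$ and leaves $\|(-A)^{1/2}g(X^j)\|_{\mathcal L_2^0}$. The latter is bounded by \eqref{lema1_eq_R6-new} and Corollary~\ref{lemma_stability}, which controls $\sup_n\|\nabla X^n\|$ in every moment. Since $p$-th moments are taken, the gain appears squared inside the BDG bracket, so this yields exactly $(t-s)^{p/2}$. It is better still to use $\nu=1$, splitting as $(-A)^{-1}(S(t-s)-I)\cdot(-A)^{1/2}S(s-r)\cdot(-A)^{1/2}g$: the quadratic variation is then bounded by $(t-s)\int_0^s(s-r)^{-1/2}(1+\|\nabla X\|^2)\,dr$, again giving order $(t-s)^{1/2}$.

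The main obstacle is $J_3$, where the operator $A$ acting on the nonlinearity must be absorbed while still extracting $(t-s)^{1/2}$. We split
\begin{align*}
(S(t-s)-I)S(s-r)A=\big[(-A)^{-1}(S(t-s)-I)\big]\,\big[(-A)^{3/2}S(s-r)\big]\,(-A)^{1/2},
\end{align*}
which produces the bound
\begin{align*}
C(t-s)^{1/2}\int_0^s (s-r)^{-3/4}\big\|(-A)^{1/2}F'(X^{\kappa_N(r)/\tau})\big\|\,dr.
\end{align*}
The singularity $(s-r)^{-3/4}$ is integrable. It therefore suffices to control $\mathbb E\|(-A)^{1/2}F'(X^n)\|^q=\mathbb E\|\nabla F'(X^n)\|^q$ uniformly in $n$, which is the bound invoked as \eqref{lemma6_beta=2_1_insert}. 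Directly, $\|\nabla F'(X^n)\|\le\|f''(X^n)\|_{L^\infty}\|\nabla X^n\|$, and this is controlled by \eqref{eq:Linfty} and Corollary~\ref{lemma_stability}. This requires $F'(X^n)\in\dot H^1$, i.e.\ $f'(0)=0$; that holds for the polynomial in Assumption~\ref{assum2}. If the vanishing boundary trace were an issue, we would instead use \eqref{assum1_eq2} with $\nu=1$ combined with $\|(-A)S(s-r)\|\le C(s-r)^{-1/2}$. That alternative loses a little integrability, so the $H^1$ route is preferred.

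Collecting the five bounds via Hölder's inequality gives $\mathbb E\|\Phi(t)-\Phi(s)\|^p\le C(t-s)^{p/2}$.
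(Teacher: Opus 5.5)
Your argument is correct and is essentially the paper's. The paper omits this proof and points to Lemma~\ref{lem:time}; the written-out version of that argument is the estimate of $R_1^n$, $R_2^n$, $R_3^n$ in the proof of Lemma~\ref{lemma6} with $\alpha=0$. It uses exactly your splittings $(-A)^{-1}(S(t-s)-I)\cdot(-A)^{3/2}S(s-r)\cdot(-A)^{1/2}F'$ and $(-A)^{-1}(S(t-s)-I)\cdot(-A)^{1/2}S(s-r)\cdot(-A)^{1/2}g$, together with \eqref{lemma6_beta=2_1_insert} and \eqref{lema1_eq_R6-new}.

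Two side remarks in your write-up are wrong, though your final argument does not depend on either. First, for $J_5$ the $\nu=\frac12$ splitting puts $(t-s)^{1/2}$ inside the BDG bracket, and after the power $p/2$ this yields only $(t-s)^{p/4}$. So the $\nu=1$ splitting is necessary, not merely ``better still''. Second, the fallback you suggest for $J_3$ fails. After using \eqref{assum1_eq2} with $\nu=1$ you are left with $\|(-A)^{2}S(s-r)\|\le C(s-r)^{-1}$, which is not integrable; taking $\nu<1$ gives only order $\nu/2<\frac12$. That fallback is not needed, however, because $f'(0)=0$ already places $F'(X^n)$ in $\dot H^1(\mathcal{O})$.
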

Since the proof of Lemma \ref{sec4_lem1} follows essentially the same arguments as in Lemma~\ref{lem:time}, it is omitted for brevity.
Now based on \eqref{Thm2_1} and the one-sided Lipschitz continuity of $-F'$, we can establish a strong error bound for $\tilde{\varepsilon}({t})$ in $H^{-1}(\mathcal{O})$.
\begin{lemma}\label{Lemma4Thm2}
Let Assumptions \ref{assum2} and \ref{assum1} hold, and let $\phi^0\in \dot{H}^2(\mathcal{O})$.   Then for any $p\geq 2$, there exists a constant $C := C(p)>0$ such that for any $t\in[0,T]$,
\begin{align*}
	%\label{Thm2_34_2_Re1} 
	&\mathbb{E}\left[\|(-A)^{-\frac{1}{2}}\tilde{\varepsilon}({t})\|^p\right]+\mathbb{E}\left[\int_{0}^{{t}}\|(-A)^{-\frac{1}{2}}\tilde{\varepsilon}(s)\|^{p-2}\|(-A)^{\frac 12}\tilde{\varepsilon}(s)\|^2ds\right]\leq C\tau^{\frac{p}{2}}.
\end{align*}
\end{lemma}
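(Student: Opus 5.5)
Apply It\^o's formula to the functional $\Psi(v):=\|(-A)^{-\frac12}v\|^p$ along the error equation \eqref{Thm2_1}. Since $\tilde\varepsilon$ solves a linear equation with adapted forcing, this is rigorous after the usual finite-dimensional approximation. For $p\ge2$ we have $D\Psi(v)=p\|(-A)^{-\frac12}v\|^{p-2}(-A)^{-1}v$ and $|D^2\Psi(v)|\le C\|(-A)^{-\frac12}v\|^{p-2}\|(-A)^{-\frac12}\cdot\|^2$. The drift produced by $-A^2\tilde\varepsilon$ is $-p\|(-A)^{-\frac12}\tilde\varepsilon\|^{p-2}\|(-A)^{\frac12}\tilde\varepsilon\|^2$; this gives the dissipative integral on the left-hand side. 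The nonlinear drift contributes
\[
p\|(-A)^{-\frac12}\tilde\varepsilon\|^{p-2}\big\langle (-A)^{-1}\tilde\varepsilon, A(F'(\phi)-F'(X(\kappa_N)))\big\rangle=-p\|(-A)^{-\frac12}\tilde\varepsilon\|^{p-2}\langle\tilde\varepsilon,F'(\phi)-F'(X(\kappa_N))\rangle .
\]
This is exactly why the $H^{-1}$ norm is used: the factor $A$ in front of the nonlinearity is absorbed.

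\textbf{The nonlinear drift.} Split $F'(\phi)-F'(X(\kappa_N))$ as in the text. For the first piece, \eqref{assum2_eq1} gives $-\langle\tilde\varepsilon,F'(\phi)-F'(\Phi)\rangle\le L_f\|\tilde\varepsilon\|^2$. We then interpolate
\[
\|\tilde\varepsilon\|^2\le\|(-A)^{-\frac12}\tilde\varepsilon\|\,\|(-A)^{\frac12}\tilde\varepsilon\|\le\epsilon\|(-A)^{\frac12}\tilde\varepsilon\|^2+C_\epsilon\|(-A)^{-\frac12}\tilde\varepsilon\|^2 .
\]
The $\epsilon$-part is absorbed into the dissipation, and the other part is Gronwall-type. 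For the second piece we use \eqref{eq:F'-F'} and $|\langle\tilde\varepsilon,h\rangle|\le\|(-A)^{\frac12}\tilde\varepsilon\|\,\|(-A)^{-\frac12}h\|\le C\|(-A)^{\frac12}\tilde\varepsilon\|\,\|h\|$. By Young's inequality this is at most $\epsilon\|(-A)^{-\frac12}\tilde\varepsilon\|^{p-2}\|(-A)^{\frac12}\tilde\varepsilon\|^2+C\|(-A)^{-\frac12}\tilde\varepsilon\|^{p-2}\|h\|^2$. A second Young step with exponents $\frac p{p-2},\frac p2$ bounds the last term by $\|(-A)^{-\frac12}\tilde\varepsilon\|^p+C\|h\|^p$. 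Here $\|h\|\le C(1+\|\Phi\|_{L^\infty}^2+\|X^{\kappa_N/\tau}\|_{L^\infty}^2)(\|\Phi(t)-\Phi(\kappa_N(t))\|+\|\Phi(\kappa_N(t))-X(\kappa_N(t))\|)$. Combining H\"older's inequality with Lemma \ref{sec4_lem2}, \eqref{eq:Linfty}, Lemma \ref{sec4_lem1} and Proposition \ref{Thm1} gives $\mathbb{E}\|h(t)\|^p\le C\tau^{p/2}$. It is essential that all these moment bounds hold for arbitrary $p$, so that the random $L^\infty$ factors can be split off.

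\textbf{The noise terms.} The It\^o correction is bounded by
\[
C\|(-A)^{-\frac12}\tilde\varepsilon\|^{p-2}\|g(\phi)-g(X(\kappa_N))\|_{\mathcal L_2^0}^2\le C\|(-A)^{-\frac12}\tilde\varepsilon\|^{p-2}\big(\|\tilde\varepsilon\|^2+\|\Phi-X(\kappa_N)\|^2\big),
\]
using \eqref{assume_Lip} and $\phi-X(\kappa_N)=\tilde\varepsilon+(\Phi-X(\kappa_N))$. The $\|\tilde\varepsilon\|^2$ part is treated by the same interpolation/absorption as above. The remaining part is $O(\tau^{p/2})$ after Young's inequality and the same time-regularity and error estimates. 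The stochastic integral term has zero expectation, being a true martingale since $\phi,\Phi$ have moments of all orders.

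\textbf{Closing and main obstacle.} Taking expectations, we arrive at
\[
\mathbb{E}\|(-A)^{-\frac12}\tilde\varepsilon(t)\|^p+\tfrac p2\,\mathbb{E}\!\int_0^t\!\|(-A)^{-\frac12}\tilde\varepsilon\|^{p-2}\|(-A)^{\frac12}\tilde\varepsilon\|^2ds\le C\!\int_0^t\mathbb{E}\|(-A)^{-\frac12}\tilde\varepsilon\|^pds+C\tau^{p/2}.
\]
Gronwall's inequality then yields the claim. The main obstacle is the handling of the mixed weights $\|(-A)^{-\frac12}\tilde\varepsilon\|^{p-2}\|\tilde\varepsilon\|^2$ produced by both the one-sided Lipschitz term and the It\^o correction. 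These have to be controlled by the dissipative quantity rather than by a pure $H^{-1}$ norm, via the interpolation $\|v\|^2\le\|(-A)^{-\frac12}v\|\,\|(-A)^{\frac12}v\|$, with $\epsilon$ chosen small enough that the total absorbed mass stays below $p$ times the dissipation.
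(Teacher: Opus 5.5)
Your proposal is correct and follows essentially the same route as the paper. The shared steps are: It\^o's formula for $\|(-A)^{-\frac12}\tilde\varepsilon\|^p$; the one-sided Lipschitz bound combined with $\|\tilde\varepsilon\|^2=\langle(-A)^{\frac12}\tilde\varepsilon,(-A)^{-\frac12}\tilde\varepsilon\rangle$ and Young's inequality to absorb into the dissipation; the local Lipschitz bound with the moment estimate $\mathbb{E}[|\mathbb{I}(s)|^p]\le C\tau^{\frac p2}$; the Lipschitz bound on $g$ for the It\^o correction; and Gronwall's inequality. The only cosmetic difference is in the cross term $\langle\tilde\varepsilon,F'(\Phi)-F'(X(\kappa_N(t)))\rangle$: you bound it by $\dot H^1$--$H^{-1}$ duality, whereas the paper uses $L^2$ Cauchy--Schwarz and folds the resulting $\tfrac12\|\tilde\varepsilon\|^2$ into the one-sided Lipschitz constant, via $L_f'=L_f+\tfrac12$.
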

\begin{proof}    
Recalling \eqref{Thm2_1} and then applying It\^o formula to $\|(-A)^{-\frac{1}{2}}\tilde{\varepsilon}(t)\|^p$, we obtain
\begin{align}\label{Thm2_2}
	&d\|(-A)^{-\frac{1}{2}}\tilde{\varepsilon}(t)\|^p=
	-p\|(-A)^{-\frac{1}{2}}\tilde{\varepsilon}(t)\|^{p-2}\|(-A)^{\frac 12}\tilde{\varepsilon}(t)\|^2 dt\\
	%			p\|(-A)^{-\frac{1}{2}}\tilde{\varepsilon}(t)\|^{p-2}\left\langle -A^2\tilde{\varepsilon}(t),(-A)^{-1}\tilde{\varepsilon}(t)\right\rangle dt\\
	&\quad+p\|(-A)^{-\frac{1}{2}}\tilde{\varepsilon}(t)\|^{p-2}\left\langle AF'(\phi(t))-AF'(X(\kappa_N(t))),(-A)^{-1}\tilde{\varepsilon}(t)\right\rangle dt\notag\\
	&\quad+p\|(-A)^{-\frac{1}{2}}\tilde{\varepsilon}(t)\|^{p-2}\left\langle\left(g(\phi(t))-g(X(\kappa_N(t)))\right)dW(t),(-A)^{-1}\tilde{\varepsilon}(t)\right\rangle\notag\\
	&\quad+\frac{1}{2}p\|(-A)^{-\frac{1}{2}}\tilde{\varepsilon}(t)\|^{p-2}
	\|(-A)^{-\frac{1}{2}}
	(g(\phi(t))-g(X(\kappa_N(t)))) \|^2_{\mathcal{L}_2^0}dt\notag\\
	&\quad+\frac{1}{2}p(p-2)\|(-A)^{-\frac{1}{2}}\tilde{\varepsilon}(t)\|^{p-4}\notag\\
	&\qquad\qquad\times\sum_{k=1}^{\infty}\left\langle(-A)^{-\frac{1}{2}}\tilde{\varepsilon}(t),(-A)^{-\frac{1}{2}}\left(g(\phi(t))-g(X(\kappa_N(t)))\right)Q^{\frac{1}{2}}e_k\right\rangle ^2dt.\notag\\
	&=:\mathcal{J}_1dt + \mathcal{J}_2dt + \mathcal{J}_3 + \mathcal{J}_4dt + \mathcal{J}_5dt.\notag
\end{align}

%		For the term $\mathcal{J}_1$, we have
%		\begin{align}\label{Thm2_2_J1}
	%			\mathcal{J}_1 = &-p\|(-A)^{-\frac{1}{2}}\tilde{\varepsilon}(t)\|^{p-2}\|(-A)^{\frac 12}\tilde{\varepsilon}(t)\|^2.
	%		\end{align}
Using the one sided Lipschitz continuity \eqref{assum2_eq1} of $-F'$, the Cauchy--Schwarz inequality, and the first inequality of \eqref{eq:F'-F'}, we arrive at
\begin{align}\label{Thm2_2_J2}
	%{\color{blue}\bar{\mathcal{J}}}_{2}&:=
	&-\left\langle F'(\phi(t))-F'(\Phi(t))+F'(\Phi(t))-F'(X(\kappa_N(t))),\tilde{\varepsilon}(t)\right\rangle \\\notag
	& \leq L_f\left\|\tilde{\varepsilon}(t)\right\|^2+ \left\|\tilde{\varepsilon}(t)\right\|\|F'(\Phi(t))-F'(X(\kappa_N(t)))\|\\
	&\le {L_f'}\langle (-A)^{\frac{1}{2}}\tilde{\varepsilon}(t),(-A)^{-\frac{1}{2}}\tilde{\varepsilon}(t)\rangle+
	C|\mathbb{I}(t)|^2,\notag
\end{align}
% {\r why name it as $J_{22}$?}
where { $L_f':=L_f+\frac{1}{2}$} and \begin{equation}\label{eq:I1t}
	\mathbb{I}(t):=(1+\|\Phi(t)\|_{L^\infty}^2+\|X(\kappa_N(t))\|_{L^\infty}^2)\|\Phi(t)-X(\kappa_N(t))\|,\quad t\in[0,T].
\end{equation}
By virtue of \eqref{Thm2_2_J2} and using Young's inequality,  for any $\upsilon>0$ one has  
\begin{align}\label{Thm2_2_J1_update}
	\mathcal{J}_2 &{\le p\|(-A)^{-\frac{1}{2}}\tilde{\varepsilon}(t)\|^{p-2}({L_f'}\langle (-A)^{\frac{1}{2}}\tilde{\varepsilon}(t),(-A)^{-\frac{1}{2}}\tilde{\varepsilon}(t)\rangle+
		C|\mathbb{I}(t)|^2)}\\
	%&=p\|(-A)^{-\frac{1}{2}}\tilde{\varepsilon}(t)\|^{p-2}{\color{blue}\bar{\mathcal{J}}}_{2}\\   
	&\leq p{L_f'}\left(\frac{\upsilon^2}{2}\|(-A)^{-\frac{1}{2}}\tilde{\varepsilon}(t)\|^{p-2}\|(-A)^{\frac{1}{2}}\tilde{\varepsilon}(t)\|^2+\frac{1}{2\upsilon^2}\|(-A)^{-\frac{1}{2}}\tilde{\varepsilon}(t)\|^p\right)\notag\\
	&\quad+
	C\|(-A)^{-\frac{1}{2}}\tilde{\varepsilon}(t)\|^{p}+C|\mathbb{I}(t)|^p.\notag
\end{align}

{To estimate the terms $\mathcal{J}_4$ and $\mathcal{J}_5$ on the right hand side of \eqref{Thm2_2}, we denote
	\begin{equation}\label{eq:I2t}
		{\mathbb{K}_{p}}(t):=\|(-A)^{-\frac{1}{2}}\tilde{\varepsilon}(t)\|^{p-2}
		\|(-A)^{-\frac{1}{2}}(g(\phi(t))-g(X(\kappa_N(t))))\|^2_{\mathcal{L}_2^0}
	\end{equation}
	for $t\in[0,T]$, so that 
	\begin{align}\label{Thm2_2_J5} 
		\mathcal{J}_4=\frac12 p{\mathbb{K}_{p}}(t)\qquad \text{and}\qquad\mathcal{J}_5\le \frac12 p(p-2){\mathbb{K}_{p}}(t).
\end{align}}	
%For $p\ge1$ and $t\in[0,T]$, by denoting
%		{\r strange , why label it. the index is sub, may cause confusing. better to use supindex   }
%		and 
%		
%		Using H\"older's inequality, it follows that 
%		\begin{align}\label{Thm2_2_J5}
	%			\mathcal{J}_5
	%			%\leq\frac{1}{2}p(p-2)\|(-A)^{-\frac{1}{2}}\tilde{\varepsilon}(t)\|^{p-2}\left\|(-A)^{-\frac{1}{2}}\left(g(\phi(t))-g(X(\kappa_N(t)))\right)\right\|_{\mathcal{L}_2^0}^2
	%			&\le\frac{1}{2}p(p-2)\|(-A)^{-\frac{1}{2}}\tilde{\varepsilon}(t)\|^{p-2}
	%			\|(-A)^{-\frac{1}{2}}
	%			(g(\phi(t))-g(X(\kappa_N(t)))) \|^2_{\mathcal{L}_2^0}\\\notag
	%			&=(p-2) \mathcal{J}_4.
	%		\end{align}
Plugging the estimates \eqref{Thm2_2_J1_update} and \eqref{Thm2_2_J5}  into \eqref{Thm2_2} gives
\begin{align}\label{Thm2_32} 
	&d\|(-A)^{-\frac{1}{2}}\tilde{\varepsilon}(t)\|^p+p\|(-A)^{-\frac{1}{2}}\tilde{\varepsilon}(t)\|^{p-2}
	\|(-A)^{\frac 12}\tilde{\varepsilon}(t)\|^2dt\\
	&\leq pL_f'\left(\frac{\upsilon^2}{2}\|(-A)^{-\frac{1}{2}}\tilde{\varepsilon}(t)\|^{p-2}\|(-A)^{\frac{1}{2}}\tilde{\varepsilon}(t)\|^2dt+\frac{1}{2\upsilon^2}\|(-A)^{-\frac{1}{2}}\tilde{\varepsilon}(t)\|^pdt\right)\notag\\
	&\quad+
	C\|(-A)^{-\frac{1}{2}}\tilde{\varepsilon}(t)\|^{p}dt+C|\mathbb{I}(t)|^p dt+\frac{1}{2}p(p-1)    \mathbb{K}_{p}(t)dt\notag\\
	&\quad+p\|(-A)^{-\frac{1}{2}}\tilde{\varepsilon}(t)\|^{p-2}\left\langle\left(g(\phi(t))-g(X(\kappa_N(t)))\right)dW(t),(-A)^{-1}\tilde{\varepsilon}(t)\right\rangle.\notag
\end{align}
% {\r why not use $J_5$ here?}
In this proof, we set $\upsilon>0$ with {$L_f'\upsilon^2 = 1$}. Integrating \eqref{Thm2_32} over time and then taking expectations on both sides, we derive that
for any $t\in[0,T]$,
\begin{align}\label{Thm2_32_2} 
	&\mathbb{E}\left[\|(-A)^{-\frac{1}{2}}\tilde{\varepsilon}( t )\|^p\right]+\frac{p}{2}\mathbb{E}\int_{0}^{ t }\|(-A)^{-\frac{1}{2}}\tilde{\varepsilon}(s)\|^{p-2}\|(-A)^{\frac 12}\tilde{\varepsilon}(s)\|^2ds\\\notag
	&\leq C\mathbb{E}\int_{0}^{ t }\|(-A)^{-\frac{1}{2}}\tilde{\varepsilon}(s)\|^pds+C\int_{0}^{{t}}\mathbb{E}[|\mathbb{I}(s)|^p]ds
	+\frac{1}{2}p(p-1)\int_0^t\mathbb{E}[ {\mathbb{K}_{p}}(s)]ds,
\end{align} 
since the expectation of the stochastic integral vanishes. {To proceed, we next estimate $\mathbb{I}(\cdot)$ and $\mathbb{K}_{p}(\cdot)$	individually.

	For the term $\mathbb{I}(\cdot)$ defined in  \eqref{eq:I1t}, by H\"older's inequality, it holds that for any $p\ge1$ and $s\in[0,T]$,
	\begin{align*}
		\mathbb{E}[|\mathbb{I}(s)|^p]&\le C\left(\mathbb{E}\left[1+\|\Phi(s)\|_{L^\infty}^{4p}+\|X(\kappa_N(s))\|_{L^\infty}^{4p}\right]\right)^{\frac{1}{2}}	\left(\mathbb{E}\left[\|\Phi(s)-X(\kappa_N(s))\|^{2p}\right]\right)^{\frac{1}{2}}.
\end{align*}}	
Lemma~\ref{sec4_lem1} and Proposition~\ref{Thm1} ensure that for any $q\ge1$ and $s\in[0,T]$,
\begin{align}\label{eq:Phi-X}
	\mathbb{E}\left[\|\Phi(s)-X(\kappa_N(s))\|^q\right]&\le   C\mathbb{E}\left[\left\|\Phi(s)-\Phi(\kappa_N(s))\right\|^q\right]\\\notag
	&\quad
	+C\mathbb{E}\left[ \left\|\Phi(\kappa_N(s))-X(\kappa_N(s))\right\|^q\right]\le C(q)\tau^{\frac{q}{2}}.
\end{align}
Further taking 
\eqref{eq:Linfty} and Lemma \ref{sec4_lem2} into account, we conclude that for any $p\ge1$, 
\begin{equation}\label{Thm2_34_1} 
	\mathbb{E}[|\mathbb{I}(s)|^p]
	%&\le C\left(\mathbb{E}\left[1+\|\Phi(s)\|_{L^\infty}^{4p}+\|X(\kappa_N(s))\|_{L^\infty}^{4p}\right]\right)^{\frac{1}{2}}	\left(\mathbb{E}\left[\|\Phi(s)-X(\kappa_N(s))\|^{2p}\right]\right)^{\frac{1}{2}}\\\notag
	\leq C(p)\tau^{\frac{p}{2}},\qquad s\in[0,T].
\end{equation}
%  {\r comment: the current proof is jumpy, you may make it better}

%		Taking the expectation on both sides of \eqref{Thm2_32_1}, and using \eqref{Thm2_34_1},
%		one can infer that for any $t\in[0,T]$,
%		\begin{align}\label{Thm2_32_2} 
	%			&\mathbb{E}\left[\|(-A)^{-\frac{1}{2}}\tilde{\varepsilon}( t )\|^p\right]+\frac{p}{2}\mathbb{E}\int_{0}^{ t }\|(-A)^{-\frac{1}{2}}\tilde{\varepsilon}(s)\|^{p-2}\|(-A)^{\frac 12}\tilde{\varepsilon}(s)\|^2ds\\\notag
	%			&\leq C\mathbb{E}\int_{0}^{ t }\|(-A)^{-\frac{1}{2}}\tilde{\varepsilon}(s)\|^pds+C\tau^{\frac{p}{2}}
	%			+\frac{1}{2}p(p-1)\int_0^t\mathbb{E}[ {\color{blue}\mathbb{K}_{p}}(s)]ds,
	%		\end{align} 

To estimate $ {\mathbb{K}_{p}}(\cdot)$ defined in \eqref{eq:I2t}, we notice that for $p=2$,  
by  \eqref{assume_Lip} and Young's inequality, for any $\upsilon_1 >0$,
\begin{align}\label{Thm2_34_3_insert} \notag
	{\mathbb{K}_2}(s)& = \|(-A)^{-\frac{1}{2}}(g(\phi(s))-g(X(\kappa_N(s))))\|^2_{\mathcal{L}_2^0}	\le  C\left\|g(\phi(s))-g(X(\kappa_N(s)))\right\|^2_{\mathcal{L}_2^0}\\
	& \le C\|\Phi(s)-X(\kappa_N(s))\|^2
	\leq C\left\|\tilde{\varepsilon}(s)\right\|^2+C\|\Phi(s)-X(\kappa_N(s))\|^2
	\notag\\
	&\leq \frac{1}{2}\upsilon_1^2\|(-A)^{\frac{1}{2}}\tilde{\varepsilon}(s)\|^2+C(\upsilon_1)\|(-A)^{-\frac{1}{2}}\tilde{\varepsilon}(s)\|^2 +C\|\Phi(s)-X(\kappa_N(s))\|^2.
\end{align}  
Now selecting $\upsilon_1=1/\sqrt{p-1}$ in \eqref{Thm2_34_3_insert}, and then utilizing  
Young's inequality, we have that for any $p\ge2$,
\begin{align*}
	{\mathbb{K}_{p}}(s)&= \|(-A)^{-\frac{1}{2}}\tilde{\varepsilon}(t)\|^{p-2} {\mathbb{K}}_2(s)
	\leq  C(\upsilon_1)\|(-A)^{-\frac{1}{2}}\tilde{\varepsilon}(s)\|^{p}\\
	&+\frac{1}{2(p-1)}
	\|(-A)^{-\frac{1}{2}}\tilde{\varepsilon}(s)\|^{p-2}
	\|(-A)^{\frac{1}{2}}\tilde{\varepsilon}(s)\|^{2}+\|\Phi(s)-X(\kappa_N(s))\|^{p}. 
\end{align*}
Taking expectations and then integrating over time on both sides of the above inequality, we infer  from  \eqref{eq:Phi-X} that 
\begin{align}\label{eq:Jps}
	& \int_0^t\mathbb{E}[ {\mathbb{K}_{p}}(s)]ds%=\int_0^t\mathbb{E}\left[ \|(-A)^{-\frac{1}{2}}\tilde{\varepsilon}(t)\|^{p-2} {\color{blue}\mathbb{K}}_2(s)\right]ds\\\notag
	%				&\leq  C(\upsilon_1)\int_{0}^{{t}}\mathbb{E}\left[\|(-A)^{-\frac{1}{2}}\tilde{\varepsilon}(s)\|^{p}\right]ds{+\frac12{\upsilon_1^{2}}\int_{0}^{{t}}\mathbb{E}\left[\|(-A)^{-\frac{1}{2}}\tilde{\varepsilon}(s)\|^{p-2}\|(-A)^{\frac{1}{2}}\tilde{\varepsilon}(s)\|^{2}\right]ds} \\
	%				&\quad+C\int_{0}^{{t}}\mathbb{E}\left[\|\Phi(s)-X(\kappa_N(s))\|^{p} \right]ds\notag\\\notag
	\leq  C\int_{0}^{{t}}\mathbb{E}\left[\|(-A)^{-\frac{1}{2}}\tilde{\varepsilon}(s)\|^{p}\right]ds\\
	&\quad + \frac{1}{2(p-1)} \int_{0}^{t}\mathbb{E}\left[\|(-A)^{-\frac{1}{2}}\tilde{\varepsilon}(s)\|^{p-2}\|(-A)^{\frac{1}{2}}\tilde{\varepsilon}(s)\|^{2}\right]ds
	+C\tau^{\frac{p}{2}}.\notag
\end{align}
Plugging the estimates \eqref{Thm2_34_1} and \eqref{eq:Jps} into \eqref{Thm2_32_2}, we can
apply Gronwall's inequality to finish the proof.
\end{proof}

We are in a position to measure the error between $\phi(t)$ and $\Phi(t)$ in $L^2(\Omega;H)$.
\begin{proposition}\label{Thm2_p}
Let Assumptions \ref{assum2} and \ref{assum1} hold, and let $\phi^0\in \dot{H}^2(\mathcal{O})$. %{\r what is $\dot H^2$ and $\dot H^{s}$?}. 
Then  there exists a constant $C>0$ such that for  any $t\in[0,T]$,
\begin{equation}
	\label{Thm2_Re}
	\mathbb{E} \left[\|\phi(t)-\Phi(t)\|^2\right]\leq C \tau.
\end{equation}
\end{proposition}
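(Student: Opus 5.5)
The plan is to upgrade the $H^{-1}$ estimate of Lemma~\ref{Lemma4Thm2} to an $L^2$ estimate by interpolation. We work with $\tilde\varepsilon=\phi-\Phi$, which satisfies \eqref{Thm2_1} with $\tilde\varepsilon(0)=0$. The starting point is the interpolation inequality
\[
\|\tilde\varepsilon(t)\|^2\le \|(-A)^{-\frac12}\tilde\varepsilon(t)\|\,\|(-A)^{\frac12}\tilde\varepsilon(t)\|.
\]
The difficulty is that Lemma~\ref{Lemma4Thm2} only controls $\|(-A)^{\frac12}\tilde\varepsilon\|$ in a time-integrated sense. We therefore want a pointwise-in-time bound on $\|(-A)^{\frac12}\tilde\varepsilon(t)\|$ in $L^q(\Omega)$; the bound need only be $O(1)$, not small.

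First, by Proposition~\ref{App_prop} we have $\sup_t\mathbb E\|\phi(t)\|_{H^1}^q\le C$. Applying Lemma~\ref{lemma_tool} with $\beta=1$, as in the proof of Lemma~\ref{sec4_lem2}, together with \eqref{eq:FXn} and \eqref{lema1_eq_R1_insert_Q}, gives the same bound for $\Phi$. Hence
\[
\sup_t\mathbb E\|(-A)^{\frac12}\tilde\varepsilon(t)\|^q\le C.
\]
Combining this with Cauchy--Schwarz and Lemma~\ref{Lemma4Thm2} for $p=2$ yields
\[
\mathbb E\|\tilde\varepsilon(t)\|^2\le (\mathbb E\|(-A)^{-\frac12}\tilde\varepsilon\|^2)^{1/2}(\mathbb E\|(-A)^{\frac12}\tilde\varepsilon\|^2)^{1/2}\le C\tau^{1/2},
\]
which is only order $1/4$. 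To reach the full rate $C\tau$, we need $\|(-A)^{\frac12}\tilde\varepsilon(t)\|$ to be itself $O(\tau^{1/2})$ in $L^2(\Omega)$, or at least a direct $L^2$ energy argument.

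The strategy is to do the $L^2$ estimate directly via the mild formulation:
\[
\tilde\varepsilon(t)=\int_0^tS(t-s)A\big(F'(\phi(s))-F'(X(\kappa_N(s)))\big)ds+\int_0^tS(t-s)\big(g(\phi(s))-g(X(\kappa_N(s)))\big)dW(s).
\]
We split $F'(\phi)-F'(X(\kappa_N))$ as in \eqref{eq:F'-F'}, using \eqref{assum2_eq2}. For the first piece we need $L^\infty$ bounds on $\phi$, which come from Proposition~\ref{App_prop} with $\beta>d/2$, and on $\Phi$ from Lemma~\ref{sec4_lem2}. This gives
\[
\|F'(\phi(s))-F'(\Phi(s))\|\le C(1+\|\phi\|_{L^\infty}^2+\|\Phi\|_{L^\infty}^2)\|\tilde\varepsilon(s)\|.
\]
The second piece, $F'(\Phi(s))-F'(X(\kappa_N(s)))$, is bounded by $\mathbb I(s)$, which is $O(\tau^{1/2})$ in every $L^p$ by \eqref{Thm2_34_1}. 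Using \eqref{assum1_eq1}, $\|S(t-s)A\|\le C(t-s)^{-1/2}$ is integrable. The stochastic term is handled by the BDG inequality and \eqref{assume_Lip}: its second moment is at most $C\int_0^t\mathbb E\|\tilde\varepsilon\|^2ds+C\tau$ by \eqref{eq:Phi-X}.

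The main obstacle is the random, non-Lipschitz weight in the deterministic term. The product $(1+\|\phi\|_{L^\infty}^2+\|\Phi\|_{L^\infty}^2)\|\tilde\varepsilon\|$ is not closed for a Gronwall argument in $L^2(\Omega)$. To handle it, I would first obtain a crude high-moment bound for $\tilde\varepsilon$, namely $\mathbb E\|\tilde\varepsilon(s)\|^{2q}\le C\tau^{q/2}$, from the interpolation step above with Lemma~\ref{Lemma4Thm2} at large $p$. Then I would write
\[
\|F'(\phi)-F'(\Phi)\|\le C\|\tilde\varepsilon\|\,(1+\|\phi\|_{L^\infty}^2+\|\Phi\|_{L^\infty}^2)
\]
and apply H\"older, bounding
\[
\mathbb E\Big[\Big(\int_0^t(t-s)^{-1/2}\|\cdots\|ds\Big)^2\Big]
\]
by $C\int_0^t(t-s)^{-1/2}(\mathbb E\|\tilde\varepsilon(s)\|^{4})^{1/2}ds$. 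The crude bound gives only $\tau^{1/2}$ here, which is insufficient.

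So the key step is a genuine $L^2$ energy estimate. I would apply It\^o's formula to $\|\tilde\varepsilon\|^2$ and use the one-sided Lipschitz bound \eqref{assum2_eq1} in the form
\[
-\langle F'(\phi)-F'(\Phi),-A\tilde\varepsilon\rangle=\langle \nabla(F'(\phi)-F'(\Phi)),\nabla\tilde\varepsilon\rangle.
\]
This is bounded through $f''\ge -L_f$ plus a term $f'''\,\nabla\Phi\,\tilde\varepsilon$, which is controlled with $L^\infty$ and $H^1$ bounds on $\phi$ and $\Phi$ and absorbed by the dissipation $\|A\tilde\varepsilon\|^2$ via Gagliardo--Nirenberg. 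The cross term with $\mathbb I$ is handled by Young's inequality against $\|A\tilde\varepsilon\|^2$. The remaining random coefficient $\|\nabla\Phi\|_{L^\infty}$-type factors will be treated by a stopping-time/exponential weight, $e^{-\int_0^t c(s)ds}\|\tilde\varepsilon(t)\|^2$, with $c(s)$ a polynomial in the $H^\beta$ norms, and then undone using the $L^1$ Gronwall form with H\"older. This weighted-Gronwall step is where I expect the real difficulty to lie.
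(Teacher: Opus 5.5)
Your proposal has a genuine gap, at exactly the step you flag. After applying It\^o's formula to $\|\tilde\varepsilon\|^2$ and absorbing the term coming from $(f''(\phi)-f''(\Phi))\nabla\Phi$ into $\|A\tilde\varepsilon\|^2$, you are left with a random coefficient $c(s)$. This is a polynomial in the $H^\beta$-norms of $\phi$ and $\Phi$, and the weighted estimate only gives $\mathbb{E}[e^{-\int_0^t c(s)ds}\|\tilde\varepsilon(t)\|^2]\le C\tau$.

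Removing the weight, whether by H\"older or by a Scheutzow-type stochastic Gronwall lemma, requires exponential moments such as $\mathbb{E}\exp(C\int_0^T c(s)\,ds)<\infty$. The paper only provides polynomial moments (Proposition~\ref{App_prop}, Lemma~\ref{coro1}, \eqref{eq:Aphit}), and nothing in your plan supplies more. A stopping-time localization does not rescue the rate either. The localized constant grows like $e^{CR^k}$, while the tails $\mathbb{P}(\sup_t(\|\phi(t)\|_{H^\beta}+\|\Phi(t)\|_{H^\beta})>R)$ decay only polynomially in $R$, so balancing the two gains at most logarithmic factors over your crude bound. Your earlier routes (interpolation, and the mild formulation with all of $A$ on the semigroup and $L^\infty$ weights) stop at $\tau^{1/2}$, as you noticed. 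So as written the argument does not reach $C\tau$.

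The missing idea is that Lemma~\ref{Lemma4Thm2} already contains the $H^1$ information you need, in time-integrated form. With it, no Gronwall argument with random coefficients is required.

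\emph{Step 1 of the paper.} The dissipation term is upgraded to the second-moment bound \eqref{Thm2_32_1p2_re2}, namely $\mathbb{E}[(\int_0^t\|(-A)^{1/2}\tilde\varepsilon(s)\|^2ds)^2]\le C\tau^2$. This is done by squaring the integrated form of \eqref{Thm2_32} with $p=2$. The martingale is controlled by BDG together with $\int_0^t\mathbb{E}[\mathbb{K}_4(s)]ds\le C\tau^2$, which follows from \eqref{eq:Jps} and Lemma~\ref{Lemma4Thm2} with $p=4$. Part of $(\int_0^t\mathbb{K}_2)^2$ is then absorbed into the left-hand side.

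\emph{Step 2 of the paper.} In the mild formulation the operator is split rather than put entirely on the semigroup: $\|S(t-s)Av\|=\|(-A)^{1/2}S(t-s)(-A)^{1/2}v\|\le C(t-s)^{-1/4}\|(-A)^{1/2}v\|$. This is combined with $\|(-A)^{1/2}(F'(\phi)-F'(\Phi))\|\le C(1+\|\phi\|_{H^{\beta_1}}^2+\|\Phi\|_{H^{\beta_1}}^2)\|(-A)^{1/2}\tilde\varepsilon\|$ for $\beta_1\in(\frac32,2)$. Cauchy--Schwarz in time and H\"older in $\Omega$ then put the random weight into a polynomial moment and the error into the $\tau^2$ bound, giving $C\tau$. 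The $F'(\Phi)-F'(X(\kappa_N))$ part is handled through $\mathbb{I}$, as you propose. The noise part is bounded by $C\int_0^t\mathbb{E}\|\tilde\varepsilon\|^2ds+C\tau$, which is again controlled by the integrated $H^1$ bound.

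Once \eqref{Thm2_32_1p2_re2} is available, your $L^2$ energy route also closes. Write $|\langle A\tilde\varepsilon,F'(\phi)-F'(\Phi)\rangle|\le a(s)\|(-A)^{1/2}\tilde\varepsilon\|^2$ with $a(s):=C(1+\|\phi(s)\|_{H^{\beta_1}}^2+\|\Phi(s)\|_{H^{\beta_1}}^2)$. Then estimate $\mathbb{E}[\sup_s a(s)\int_0^t\|(-A)^{1/2}\tilde\varepsilon\|^2ds]$ by H\"older, instead of absorbing into $\|A\tilde\varepsilon\|^2$ and invoking a weighted Gronwall.
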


\begin{proof}
We shall split the proof into two steps. 	{In the first step, we show
	that there exists a constant \( C >0 \) such that 
	\begin{align}\label{Thm2_32_1p2_re2} 
		\mathbb{E}\left[\Big|\int_{0}^{{t}}
		\|(-A)^{\frac 12}\tilde{\varepsilon}(s)\|^2{ds}\Big|^2\right]\leq C\tau^2,\quad t\in[0,T].
	\end{align}
	Then based on the estimate on the  $L^4(\Omega;L^2(0,T; H^1(\mathcal{O})))$-norm of $\tilde{\varepsilon}=\phi-\Phi$ in \eqref{Thm2_32_1p2_re2}, we further prove the required estimate \eqref{Thm2_Re} on the $L^\infty(0,T;L^2(\Omega;H))$-norm of $\tilde{\varepsilon}$.}

%{\r logic needs one sentence }
\emph{Step 1.}
%		 In this step, we show that there exists a constant \( C >0 \) such that 
%			\begin{align}\label{Thm2_32_1p2_re2} 
	%				&\mathbb{E}\left[\|(-A)^{-\frac{1}{2}}\tilde{\varepsilon}({t})\|^4\right]+\mathbb{E}\left[\Big|\int_{0}^{{t}}
	%				\|(-A)^{\frac 12}\tilde{\varepsilon}(s)\|^2{ds}\Big|^2\right]\leq C\tau^2,\quad t\in[0,T].
	%		\end{align}
{The integral form of  \eqref{Thm2_32} with $p=2$ reads
	\begin{align*}
		&\|(-A)^{-\frac{1}{2}}\tilde{\varepsilon}({t})\|^2+\int_{0}^{{t}}\|(-A)^{\frac 12}\tilde{\varepsilon}(s)\|^2ds\\
		&\le C\int_{0}^{{t}}\|(-A)^{-\frac{1}{2}}\tilde{\varepsilon}(s)\|^2ds
		+C\int_{0}^{{t}}|\mathbb{I}(s)|^2ds+\int_0^t\mathbb{K}_{2}(s) ds\\
		&\quad+2\int_{0}^t\left\langle\left(g(\phi(s))-g(X(\kappa_N(s)))\right)dW(s),(-A)^{-1}\tilde{\varepsilon}(s)\right\rangle.  
	\end{align*}
	Squaring both sides on the above inequality,} we deduce from H\"older's inequality  that for any $t\in[0,T]$,
\begin{align}\label{Thm2_32_1p2_2} 
	&\left|\int_{0}^{ t }\|(-A)^{\frac 12}\tilde{\varepsilon}(s)\|^2ds\right|^2\leq C\int_{0}^{ t }\|(-A)^{-\frac{1}{2}}\tilde{\varepsilon}(s)\|^4ds+C\int_{0}^{ t }|\mathbb{I}(t)|^4ds\\
	&\quad+4\left|\int_{0}^{ t }{\mathbb{K}}_2(s)ds\right|^2+C\left|\int_{0}^{ t }\left\langle\left(g(\phi(s))-g(X(\kappa_N(s)))\right)dW(s),(-A)^{-1}\tilde{\varepsilon}(s)\right\rangle\right|^2.\notag
\end{align} 
From the BDG inequality, \eqref{eq:Jps}, and Lemma \ref{Lemma4Thm2} with $p=4$, it follows that
\begin{align}\label{Thm2_32_1p2_3} 
	& \mathbb{E}\bigg[\bigg|\int_{0}^{ t }\left\langle\left(g(\phi(s))-g(X(\kappa_N(s)))\right)dW(s),(-A)^{-1}\tilde{\varepsilon}(s)\right\rangle\bigg|^2\bigg]\\
	&\leq  C\mathbb{E}\left[\int_{0}^{ t }\|(-A)^{-\frac{1}{2}}(g(\phi(s))-g(X(\kappa_N(s))))\|^2_{{\mathcal{L}_2^0}}\|(-A)^{-\frac{1}{2}}\tilde{\varepsilon}(s)\|^2 ds\right]\notag\\
	&=C\mathbb{E}\left[\int_{0}^{ t } {\mathbb{K}}_4(s)ds\right]\notag
	\leq C \tau^2.
\end{align}
In view of \eqref{Thm2_34_3_insert} with $\upsilon_1=1/2$, the Cauchy--Schwarz inequality, and  \eqref{eq:Phi-X}, we obtain  that 
\begin{align}\label{Thm2_32_1p2_4}
	\mathbb{E}\left[\bigg|\int_{0}^{ t }{\mathbb{K}}_2(s)ds\bigg|^2\right]&\leq 2 \mathbb{E}\left[\left|\int_{0}^{ t } \frac{1}{8}\|(-A)^{\frac{1}{2}}\tilde{\varepsilon}(t)\|^2
	+C\|(-A)^{-\frac{1}{2}}\tilde{\varepsilon}(t)\|^2ds\right|^2\right]
	\\
	&\quad+{C} \mathbb{E}\left[\left|\int_{0}^{ t }\|\Phi(s)-X(\kappa_N(s))\|^2
	ds\right|^2\right]\notag\\
	&\leq \frac{1}{16} \mathbb{E}\left[\left|\int_{0}^{ t } \|(-A)^{\frac{1}{2}}\tilde{\varepsilon}(t)\|^2ds\right|^2\right]\notag\\&\quad+C\mathbb{E}\left[\int_{0}^{ t }\|(-A)^{-\frac{1}{2}}\tilde{\varepsilon}(t)\|^4ds\right] +C\tau^2.\notag
\end{align}
Taking expectations on both sides of  \eqref{Thm2_32_1p2_2}, and then using
\eqref{Thm2_34_1} with $p=4$,  \eqref{Thm2_32_1p2_3} and \eqref{Thm2_32_1p2_4}, it follows that for any $t\in[0,T]$,
\begin{align}
	\frac34\mathbb{E}\left[\left|\int_{0}^{ t }\|(-A)^{\frac 12}\tilde{\varepsilon}(s)\|^2{ds}\right|^2\right]\leq C\mathbb{E}\int_{0}^{ t }\|(-A)^{-\frac{1}{2}}\tilde{\varepsilon}(s)\|^4ds+C\tau^2\notag,
\end{align}
which, along with Lemma \ref{Lemma4Thm2}, results in
\eqref{Thm2_32_1p2_re2}.

\emph{Step 2.} 
It follows from \eqref{Thm2_1} and the Cauchy--Schwarz inequality that 
\begin{align}\label{L2perror_start}
	\left\|\tilde{\varepsilon}( t )\right\|^2_{L^2(\Omega;H)}
	&\leq 3\left\|\int_{0}^{ t }S(t-s)A\left(F'(\phi(s))-F'(\Phi(s))\right)ds\right\|^2_{L^2(\Omega;H)}\\
	&\quad+3\left\|\int_{0}^{ t }S(t-s)A\left(F'(\Phi(s))-F'(X(\kappa_N(s)))\right)ds\right\|^2_{L^2(\Omega;H)}\notag\\
	&\quad+3\left\|\int_{0}^{ t }S(t-s)\left(g(\phi(s))-g(X(\kappa_N(s))\right)dW(s)\right\|^2_{L^2(\Omega;H)}\notag\\
	&=: \mathcal{Q}_{1,t}+\mathcal{Q}_{2,t}+\mathcal{Q}_{3,t}.\notag
\end{align}
By Assumption \ref{assum2}, for any $\beta_1\in(\frac32,2)$, there exists a constant $C>0$ such that for any $v,w\in \dot{H}^1(\mathcal{O})$,
\begin{align*}
	&\|(-A)^{\frac{1}{2}}(F'(v)-F'(w))\|^2=\|\nabla(F'(v)-F'(w))\|^2\\	&=\int_{\mathcal{O}}|f''(v(x))\nabla v(x)-f''(w(x))\nabla w(x)|^2d x\\
	& \le 2\int_{\mathcal{O}}|f^{\prime\prime}(v(x))-f^{\prime\prime}(w(x))|^2|\nabla v(x)|^2 dx+2\int_{\mathcal{O}}|f^{\prime\prime}(w(x))|^2|\nabla( v-w)(x)|^2 dx\\
	%&\le C\int_{\mathcal{O}}(1+|v(x)|^2+|w(x)|^2)|v(x)-w(x)|^2|\nabla v(x)|^2 dx+C\int_{\mathcal{O}}(1+|v(x)|^4)|\nabla( v-w)(x)|^2 dx\\
	&\le C(1+\|v\|_{L^\infty}^2+\|w\|_{L^\infty}^2)\|v-w\|_{L^6}^2\| \nabla v\|^2_{L^3} +C(1+\|v(x)\|_{L^\infty}^4)\|\nabla( v-w)\|^2 \\
	&\le C(1+\|v\|_{H^{\beta_1}}^4+\|w\|_{H^{\beta_1}}^4)\|(-A)^{\frac12}( v-w)\|^2,
\end{align*}
where the last step is due to the Sobolev embeddings $ H^1(\mathcal{O})\hookrightarrow L^6(\mathcal{O}) $, $H^{{\beta_1}-1}(\mathcal{O})\hookrightarrow H^{1/2}(\mathcal{O})\hookrightarrow L^3(\mathcal{O})$ and $H^{\beta_1}(\mathcal{O})\hookrightarrow L^\infty(\mathcal{O})$ for any $\beta_1\in(\frac32,2)$.
Hence we can apply \eqref{assum1_eq2}, H\"older's inequality, \eqref{eq:Aphit}, Proposition \ref{App_prop}, and \eqref{Thm2_32_1p2_re2} to obtain
\begin{align*}%\label{L2p_error_start_K1}
	\mathcal{Q}_{1,t}&\leq C\mathbb{E}\left[\left(\int_{0}^{ t }\left\|S(t-s)(-A)^{\frac{1}{2}}(-A)^{\frac{1}{2}}\left(F'(\phi(s))-F'(\Phi(s))\right)\right\|ds\right)^2\right]\\
	&\leq C\mathbb{E}\left[\left(\int_{0}^{ t }(t-s)^{-\frac{1}{4}}\|(-A)^{\frac{1}{2}}(F'(\phi(s))-F'(\Phi(s)))\|ds\right)^2\right]\\
	&\leq C\mathbb{E}\left[\left(\int_{0}^{ t }(t-s)^{-\frac{1}{4}}\|(-A)^{\frac{1}{2}}\tilde{\varepsilon}(s)\|\left(1+{\|\phi(s)\|^2_{H^{\beta_1}}}+\|\Phi(s)\|^2_{H^{\beta_1}}\right)ds\right)^2\right]\notag\\
	% &\leq C\mathbb{E}\left[\left(\int_{0}^{ t }\|(-A)^{\frac{1}{2}}\tilde{\varepsilon}(s)\|^2ds\right)\left(\int_{0}^{ t }(t-s)^{-\frac{1}{2}}\left(1+\|\phi(s)\|^4_{H^{\beta_1}}
	%          +\|\Phi(s)\|^4_{H^{\beta_1}}\right)ds\right)\right]\\
	&\leq C\left(\mathbb{E}\left[\left(\int_{0}^{ t }\|(-A)^{\frac{1}{2}}\tilde{\varepsilon}(s)\|^2ds\right)^2\right]\right)^{\frac 12}\\
	&\quad\times\left(\mathbb{E}\left[\left(\int_{0}^{ t }(t-s)^{-\frac{1}{2}}\left(1+\|\phi(s)\|^4_{H^{\beta_1}}
	+\|\Phi(s)\|^4_{H^{\beta_1}}\right)ds\right)^2\right]\right)^{\frac 12}\\
	&\leq C\tau.
\end{align*}
The first inequality of \eqref{eq:F'-F'} and the Minkowski inequality give
\begin{align}\label{L2_perror_start_K2}
	\mathcal{Q}_{2,t}&\leq C\mathbb{E}\bigg[\bigg(\int_{0}^{ t }(t-s)^{-\frac{1}{2}}\left\|F'(\Phi(s)-F'(X(\kappa_N(s)))\right\|ds\bigg)^2\bigg]\\\notag
	&\leq C\mathbb{E}\bigg[\bigg(\int_{0}^{ t }(t-s)^{-\frac{1}{2}}\mathbb{I}(s)ds\bigg)^2\bigg]\le C\left(\int_{0}^{ t }(t-s)^{-\frac{1}{2}}\|\mathbb{I}(s)\|_{L^2(\Omega)}ds\right)^2,
\end{align}
where $\mathbb{I}(\cdot)$ is defined in \eqref{eq:I1t}. Furthermore, plugging   \eqref{Thm2_34_1} with $p=2$ into \eqref{L2_perror_start_K2} yields $\mathcal{Q}_{2,t}\le C\tau$ for any $t\in[0,T]$.
Applying the BDG inequality, the contractivity of $S(\cdot)$, \eqref{assume_Lip}, \eqref{eq:Phi-X}, and \eqref{Thm2_32_1p2_re2}, we arrive at 
\begin{align*}%\label{L2p_error_start_K3}
	\mathcal{Q}_{3,t}
	&\leq 2\mathbb{E}\left[\int_{0}^{ t }\left\|S(t-s)\left(g(\phi(s))-g(X(\kappa_N(s))\right)\right\|_{\mathcal{L}_2^0}^2ds\right]\\
	&\leq C \int_{0}^{ t }\mathbb{E}\left[\left\|\tilde{\varepsilon}(s)\right\|^2\right]+\mathbb{E}\left[\|\Phi(s)-X(\kappa_N(s))\|^2\right]ds \\
	&\leq C \int_{0}^{ t }\mathbb{E}\left[\|(-A)^{\frac{1}{2}}\tilde{\varepsilon}(s)\|^2\right]ds
	+C \int_{0}^{ t }\mathbb{E}\left[\|(-A)^{-\frac{1}{2}}\tilde{\varepsilon}(s)\|^2\right]ds
	+C\tau\leq C\tau.
\end{align*}
Finally, plugging the estimates
of $\mathcal{Q}_{i,t}$, $i=1,2,3$, into \eqref{L2perror_start}  
yields the desired result.
\end{proof}
%{\r {do you want to modify your proof?} We remark that the proof of Proposition \ref{Thm2_p} can be adapted, with only minor modifications, to derive the 
%$p$th-moment error estimate between $\phi(t)$ and $\Phi(t)$ for general $p\ge2$.} {\color{blue}I don't want to modify this proof now. If the reviewer ask this question, then I will try to do this. May be we can remove this remark now.} 

Finally, We are now ready to complete the proof of Theorem \ref{Thm3_main}.

\textbf{Proof of Theorem \ref{Thm3_main}}. Combining Propositions \ref{Thm1} and \ref{Thm2_p}, we can conclude the proof from \eqref{eq:phiX}. \hfill$\square$

\section{Proof of Theorem \ref{Sec:energy:pro}}\label{Sec:energy}

This section is devoted to proving Theorem \ref{Sec:energy:pro} on the averaged evolution law of the modified SAV energy \eqref{eq:Emodify}.

\textbf{Proof of Theorem \ref{Sec:energy:pro}.}
Recall that by \eqref{dis_energy_diss}, for any $n\in\{0,1,\cdots,N-1\}$,
\begin{align}\label{dis_energy_diss-new}
&	E_{\mathrm{mod}}^{n+1}-E_{\mathrm{mod}}^{n}=\frac{1}{2}\left[\|\nabla   X^{n+1}\|^{2}-\|\nabla X^{n}\|^{2}\right]
+|r^{n+1}|^{2}-|r^{n}|^{2}\\\notag
&= -\frac{1}{2}\|(I-S^2(\tau))^{\frac{1}{2}}(-A)^{-\frac 12}\tilde{\mu}^n\|^2
+\frac{1}{2}\|(-A)^{\frac12}\left(g( X^{n})\delta  W^{n}\right)\|^2\\\notag
&\quad-|r^{n+1}-r^n|^{2}+\langle -AX^{n}+\tilde{f}^{n},g( X^{n})\delta  W^{n}\rangle.
\end{align}
On account of \eqref{fn} and \eqref{chi}, it holds that
\begin{align}\label{eq:J2-5}
&  \langle -AX^{n}+\tilde{f}^{n},g( X^{n})\delta  W^{n}\rangle
=\Big\langle-A X^{n}+\frac{r^{n}}{\sqrt{E_{\textup{p}}(X^{n})}}F'( X^{n}),
g( X^{n})\delta  W^{n}\Big\rangle\\&\quad+\frac12\left\langle F''( X^{n})g( X^{n})\delta  W^{n}
,g( X^{n})\delta  W^{n}\right\rangle\notag\\
&\quad+\frac{r^{n+1}-r^n}{\sqrt{E_{\textup{p}}(X^{n})}}\left\langle F'( X^{n})
,g( X^{n})\delta  W^{n}\right\rangle
-\frac{r^{n+1}}{4E_{\textup{p}}(X^{n})^{3/2}}\left\langle F'( X^{n}), g( X^{n})\delta  W^{n}\right\rangle^2\notag\\
%	&\quad-\frac{r^{n+1}-r^n}{4E_{\textup{p}}(X^{n})^{3/2}}\left\langle F'( X^{n}), g( X^{n})\delta  W^{n}\right\rangle^2
&\quad+\frac12\bigg(\frac{r^{n+1}}{\sqrt{E_{\textup{p}}(X^{n})}}-1\bigg)\left\langle F''( X^{n})g( X^{n})\delta  W^{n},
g( X^{n})\delta  W^{n}\right\rangle.\notag
\end{align}
Substituting \eqref{eq:J2-5} into \eqref{dis_energy_diss-new} yields
\begin{align}\label{dis_energy_diss-new-en}
%		&\frac{1}{2}\left[\left\|\nabla   X^{n+1}\right\|^{2}-\left\|\nabla X^{n}\right\|^{2}\right]
%		+|r^{n+1}|^{2}-|r^{n}|^{2}\\\notag
E_{\mathrm{mod}}^{n+1}-E_{\mathrm{mod}}^{n}&=-\frac{1}{2}\|(I-S^2(\tau))^{\frac{1}{2}}(-A)^{-\frac 12}\tilde{\mu}^n\|^2\\\notag
&\quad+\Big\langle-A X^{n}+\frac{r^{n}}{\sqrt{E_{\textup{p}}(X^{n})}}F'( X^{n}),
g( X^{n})\delta  W^{n}\Big\rangle\\\notag
&\quad+\frac12\left\langle (-A+F''( X^{n}))g( X^{n})\delta  W^{n}
,g( X^{n})\delta  W^{n}\right\rangle+\mathcal{J}^n,		
\end{align}
%   {\r why there is $A$ in the third term?}
where the remainder term 
\begin{align}\label{eq:Rn}
\mathcal{J}^n&=	-|r^{n+1} - r^n|^2+\frac{r^{n+1}-r^n}{\sqrt{E_{\textup{p}}(X^{n})}}\left\langle F'( X^{n})
,g( X^{n})\delta  W^{n}\right\rangle\\\notag
&\quad
-\frac{r^{n+1}}{4E_{\textup{p}}(X^{n})^{3/2}}\left\langle F'( X^{n}), g( X^{n})\delta  W^{n}\right\rangle^2,\\\notag
&\quad +\frac12\bigg(\frac{r^{n+1}}{\sqrt{E_{\textup{p}}(X^{n})}}-1\bigg)\left\langle F''( X^{n})g( X^{n})\delta  W^{n},
g( X^{n})\delta  W^{n}\right\rangle.
\end{align}	
Summing over $n=0$ through $n=m-1$ on both sides of \eqref{dis_energy_diss-new-en} and then taking expectations on both sides of the resulting equation, we obtain the required equation \eqref{Sec:energy:pro} with $\mathcal{R}_m^\tau=\sum_{n=0}^{m-1}\mathbb{E}[\mathcal{J}^n]$.

It remains to prove that $\lim_{\tau\to\infty}\mathcal{R}_m^\tau=0$, which needs to be estimate carefully. 
At first glance, each of the first three terms on the right-hand side of \eqref{eq:Rn} appears to have only strong convergence order $1$, which does not vanish as $\tau \to 0$  after summing over $n$.
Hence, a more refined decomposition of the remainder term $\mathcal{J}^n$ is necessary. In fact, the first three terms on the right-hand side of \eqref{eq:Rn} contain mutually canceling contributions. To illustrate this, we notice that 
$-|r^{n+1} - r^n|^2=J_{1}^n+J_{2}^n+J_{3}^n,$ where
\begin{align}\notag
J_{1}^n&: =-\frac{1}{4E_{\textup{p}}(X^n) }\left| \left\langle F'(X^n),
g(X^n)\delta W^n\right\rangle\right|^2, \\\notag
J_{2}^n	&:=\frac{1}{4E_{\textup{p}}(X^n) }\left(\left|  \left\langle F'(X^n),g(X^n)\delta W^n\right\rangle\right|^2 -\left| \left\langle F'(X^n),X^{n+1} - X^n\right\rangle\right|^2\right),\\\label{eq:termJ3}
J_{3}^n&:=-|r^{n+1} - r^n|^2+\frac{1}{4E_{\textup{p}}(X^n) }\left| \left\langle F'(X^n),X^{n+1} - X^n\right\rangle\right|^2.
\end{align}
In addition, using \eqref{scheme_r_sto-intro} leads to 
\begin{align*}%\label{dis_energy_diss2_J2}
&	\frac{r^{n+1} - r^n}{\sqrt{E_{\textup{p}}(X^n)}} \left\langle F'(X^n), g(X^n) \, \delta W^n \right\rangle =  \frac{1}{2E_{\textup{p}}(X^n)}  \left\langle F'(X^n), g(X^n) \, \delta W^n \right\rangle^2\\
&\quad+\frac{1}{2E_{\textup{p}}(X^n) } \left\langle F'(X^n), g(X^n) \, \delta W^n \right\rangle \left\langle F'(X^n), X^{n+1} - X^n- g(X^n)\delta W^n\right\rangle\\
&\quad -\frac{1}{8E_{\textup{p}}(X^n) ^2} \left\langle F'(X^n), g(X^n) \, \delta W^n \right\rangle^2 \left\langle F'(X^n),X^{n+1} - X^n\right\rangle\\
&\quad +   \frac{1}{4E_{\textup{p}}(X^n) } \left\langle F'(X^n), g(X^n) \, \delta W^n \right\rangle\left\langle F''(X^{n})(X^{n+1}-X^{n}),g(X^{n})\delta W^{n}\right\rangle\notag\\
&=:J_{4}^n+J_{5}^n+J_{6}^n+J_{7}^n,
\end{align*}
and 
\begin{align*}
%	\label{dis_energy_diss2_J3}
&-\frac{r^{n+1}}{4E_{\textup{p}}(X^{n})^{3/2}}\left\langle F'( X^{n}), g( X^{n})\delta  W^{n}\right\rangle^2=-\frac{1}{4E_{\textup{p}}(X^{n})}\left\langle F'( X^{n}), g( X^{n})\delta  W^{n}\right\rangle^2\\
&\quad-\frac14\bigg(\frac{r^{n+1}}{\sqrt{E_{\textup{p}}(X^{n})}}-1\bigg)\frac{1}{E_{\textup{p}}(X^{n})} \left\langle F'( X^{n}), g( X^{n})\delta  W^{n}\right\rangle^2=:J_{8}^n+J_{9}^n.
\end{align*}
Since $J_{1}^n+J_4^n+J_8^n=0$, one has 
\begin{equation}\label{eq:Jndecom}
\mathcal{J}^n=\sum_{i\in\mathcal{I}}J_i^n
\end{equation}
where $\mathcal{I}:=\{2,3,5,6,7,9,10\}$, and
\begin{align*}
J_{10}^n:=\frac12\bigg(\frac{r^{n+1}}{\sqrt{E_{\textup{p}}(X^{n})}}-1\bigg)\left\langle F''( X^{n})g( X^{n})\delta  W^{n},
g( X^{n})\delta  W^{n}\right\rangle
\end{align*}
is the last term on the right hand side of \eqref{eq:Rn}.

%
%	
%	\begin{lemma}\label{lem:Remainder}
%		Let Assumptions \ref{assum2} and \ref{assum1} hold, and let $\phi^0\in \dot{H}^{2}(\mathcal{O})$.  Then $\lim_{\tau\to 0}\sum_{n=0}^m\mathbb{E}[\mathcal{J}^n]=0$ for all $m\in\{0,1,\cdots,N-1\}$.
%	\end{lemma}
%	\begin{proof}

%	\textbf{Estimates of $J_2^n$ and $J_5^n$.}
Using H\"older's inequality, \eqref{eq:FXn}, \eqref{eq:BDG}, and Lemma~\ref{lem:time}, we deduce that 
\begin{align}\label{Term_J12}
	\mathbb{E}[|J_{2}^n|]
	% &\le C\mathbb{E}\left[\|F'(X^n)\|^2\|X^{n+1} - X^n-g(X^n)\delta W^n\|\|X^{n+1} - X^n+g(X^n)\delta W^n\|\right]\\
	&\le C
	\left(\|X^{n+1} - X^n\|_{L^4(\Omega;H)}+\|g(X^n)\delta W^n\|_{L^4(\Omega;H)}\right)\\
	&\quad \times \|X^{n+1} - X^n-g(X^n)\delta W^n\|_{L^2(\Omega;H)}\notag\\
	&\le C\tau^{\frac12}\|X^{n+1} - X^n-g(X^n)\delta W^n\|_{L^2(\Omega;H)}\notag
\end{align}
for any $n\in\{0,1,\cdots,N-1\}$.
Similarly, we also have 
\begin{align}\label{Term_J13}
	\mathbb{E}[|J_{5}^n|]
	\le C\tau^{\frac12}\|X^{n+1} - X^n-g(X^n)\delta W^n\|_{L^2(\Omega;H)}.
\end{align}

Referring to \eqref{schme_phi-intro}, one infers that
\begin{align*}
	X^{n+1}-X^n-g(X^n)\delta W^n&=(S(\tau)-I)X^n
	+ \left(I - S(\tau)\right)A^{-1}\tilde{f}^n\\
	&\quad+ (S(\tau)-I)g(X^n)\delta W^n.
\end{align*}
Recall that by \eqref{lemma6_eq3_R1}, for any $\alpha\in(0,\frac12)$,
\begin{equation}\label{eq:initial}
	\|(S(\tau)-I)X^n \|_{L^2(\Omega;H)}
	\le  C(\alpha)(1+t_n^{-\frac12\alpha})\tau^{\frac12(1+\alpha)},\qquad n=1,2,\cdots,N-1
\end{equation}
As for $n=0$, it follows from \eqref{assum1_eq2} and $\phi^0\in \dot{H}^2(\mathcal{O})$ that 		$
\|(S(\tau)-I)X^0 \|_{L^2(\Omega;H)}
\le  C\tau^{\frac12}.
$	
%		 {\r check it what upper bounds we needed here}.                                                                                                                              
Due to  \eqref{assum1_eq2} and \eqref{eq:fnH1},
\begin{equation}\label{Term_J12_T2}
	\|(I - S(\tau))A^{-1}\tilde{f}^n\|_{L^2(\Omega;H)} =\|(I - S(\tau))(-A)^{-\frac32}(-A)^{\frac 12}\tilde{f}^n\|_{L^2(\Omega;H)}
	%\le C\tau^{\frac34}\|(-A)^{\frac 12}\tilde{f}^n\|_{L^2(\Omega;H)} 
	\le C\tau^{\frac34}.
\end{equation}
The BDG inequality and \eqref{lema1_eq_R6-new}, together with \eqref{assum1_eq2} and Corollary \ref{lemma_stability}, imply that
\begin{align}\label{Term_J12_T3}
	\|(S(\tau)-I)g(X^n)\delta W^n\|_{L^2(\Omega;H)}&=\|(S(\tau)-I)(-A)^{-\frac 12}(-A)^{\frac 12}g(X^n)\delta W^n\|_{L^2(\Omega;H)}\notag\\
	&\le C \tau^{\frac 34}\|(-A)^{\frac 12}g(X^n)\|_{ L^2(\Omega;\mathcal{L}_2^0)}
	\le C \tau^{\frac 34}.
\end{align}
% {\r how you get the following, where is $\tau^{\frac 34}$}
Combining \eqref{eq:initial}, \eqref{Term_J12_T2}, and \eqref{Term_J12_T3},  it follows that {for any $\alpha\in(0,\frac12)$},
{
	\begin{align*}%\label{Term_J12_key}
		&\sum_{n=0}^{m-1}\|X^{n+1} - X^n-g(X^n)\delta W^n\|_{L^2(\Omega;H)}\\
		&\le C\tau^{-\frac14}+ C\tau^{\frac12}+C(\alpha)\sum_{n=1}^{m-1} (1+t_n^{-\frac12\alpha})\tau^{\frac12(1+\alpha)}
		\le C(\alpha)\tau^{-\frac12+\frac 12\alpha}. 
	\end{align*}
	%   {\r how $\frac 14$ comes from? Need a simple explanation.}
	Hence, it follows from \eqref{Term_J12} and \eqref{Term_J13} that for any $\alpha\in(0,\frac 12)$,
}
\begin{align*}%\label{Term_J12_final}
	\sum_{n=0}^{m-1}	\mathbb{E}\left[|J_2^n|+|J_{5}^n|\right]
	\le C\tau^{\frac12}\sum_{n=0}^{m-1}\|X^{n+1} - X^n-g(X^n)\delta W^n\|_{L^2(\Omega;H)}
	\le C(\alpha)\tau^{\frac12\alpha}.
\end{align*}
For the other terms, we claim that for any $i\in\{3,6,7,9,10\}$ (see Appendix \ref{App-J3-10} for its proof),
%{\r \color{red} where is the proof for this claim?}
\begin{equation}\label{eq:J3-10}
	\mathbb{E}[|J_{i}^n|]\le C\tau^{\frac32},\qquad n\in\{0,1,\cdots,N-1\}.
\end{equation}
Finally, we can conclude from \eqref{eq:Jndecom} that $\alpha\in(0,\frac 12)$,
\begin{align*}
	|\mathcal{R}_m^\tau|=|\sum_{n=0}^{m-1}\mathbb{E}[\mathcal{J}^n]|
	\le \sum_{i\in \mathcal{I}}\sum_{n=0}^{m-1}	\mathbb{E}\left[|J_i^n|\right]\le C(\alpha)\tau^{\frac12\alpha}.
\end{align*}
The proof is completed.
\hfill$\square$

\section{Numerical experiments}\label{section5}
In this section, we present a series of numerical experiments to illustrate the performance of the exponential Euler SSAV scheme \eqref{schme_phi-intro}.  In the implementation, we take $\mathcal{O}=(0,1)^2$ to be the unit square and approximate the driving process $W$ by its finite-rank Karhunen--Lo\`{e}ve truncation $W^J$, namely,
\[
W^J(t,x)
=\sum_{j_1=1}^{J}\sum_{j_2=1}^{J}
\sqrt{q_{j_1,j_2}}\; e_{j_1,j_2}(x)\,\beta_{j_1,j_2}(t),
\qquad x=(x_1,x_2)\in\mathcal O
\]
with $J=4$.
Here, $\{\beta_{j_1,j_2}(t)\}$ are independent real-valued Brownian motions, the spatial
modes are
\[
e_{j_1,j_2}(x)=2\sin\!\big(j_1\pi x_1\big)\,\sin\!\big(j_2\pi x_2\big),
\]
and $q_{j_1,j_2}=(j_1^2+j_2^2)^{-1}$. Note that for the regular domain $(0,1)^d$, the eigenvalues and eigenfunctions of the Dirichlet Laplacian are explicitly known.
This enables efficient spatial discretization using the spectral Galerkin method. In the following, spatial discretization is performed in the finite-dimensional subspace (see, e.g., \cite{STW11})
\[
H_M := \operatorname{span}\{ e_{j_1,j_2} : 1 \le j_1, j_2 \le M \}.
\]
For more general, possibly irregular, spatial domains, the spectral Galerkin method becomes less practical. In such cases, alternative spatial discretization methods, such as the finite difference or finite element method, may be employed. 
Without otherwise specified, we set $\Theta= 1$ in \eqref{def:ssav} and the spatial discretization parameter $M=128$. 
% The associated spatial semi-discretization reads
% \begin{align}
%     d X_M=A_M (-\epsilon A_M X_M+\frac{1}{\epsilon} P_M F'(X_M))dt+\epsilon^\gamma P_M(g(X_M) dW^J(t)),
% \end{align}
% where $P_M$ is the projection from $H$ to the finite-dimensional subspace 
% \[
% H_M := \operatorname{span}\{ e_{j_1,j_2} : 1 \le j_1, j_2 \le M \}.
% \]
% The numerical solutions are generated by the proposed scheme \eqref{schme_phi-intro} for the time discretization, combined with a spectral Galerkin discretization in space (see, e.g., \cite{}).
%{\r where is the constant in the model} 
%We employ a spectral Galerkin discretization with spatial mesh size \(h = 1/128\). {\r why spectral Galerkin has mesh size?} 
%Unless otherwise specified, we set $M=128$.
The expectation of a random variable $Y$ is always approximated by the Monte Carlo
ensemble mean, i.e., 
$\mathbb{E}[Y]\approx\frac{1}{P}\sum_{i=1}^{P}Y(\omega_i),$
where \(\{Y(\omega_i)\}_{i=1}^P\) are $P$ samples of $Y$. % We set the constant . 
%{\r better to use function instead of $\cdot$.}

\subsection{Averaged energy evolution}\label{sub:freeepsilon}
%\subsection{Numerical validation for the stochastic Cahn--Hilliard}
%We take the initial condition for Examples~\ref{Ex1} %and~\ref{Ex2} as follows:
% \[
% \phi^0(x)
% = 0.6\,\sin(\pi x_1)\sin(\pi x_2)
%   + 0.4\,\sin(2\pi x_1)\sin(3\pi x_2), \qquad x=(x_1,x_2)\in\mathcal O.
% \]
%\begin{example}\label{Ex1}
%begin{example}\label{Ex2}

In this subsection, we verify the averaged energy evolution law of the proposed scheme \eqref{schme_phi-intro}, by studying the following stochastic Cahn--Hilliard equation 
\begin{equation}\label{ex:eps=1}
\begin{split}
	d\phi(t) 
	&= - A^2 \phi(t)  +A\left(\phi(t)^3-\phi(t)\right) dt 
	+  2.5{(1 + \phi(t)^2)^{-\frac12}}\,dW(t),\\
	\phi^0(x)
	&= 0.6\,\sin(\pi x_1)\sin(\pi x_2)
	+ 0.4\,\sin(2\pi x_1)\sin(3\pi x_2).
\end{split}
\end{equation}
in $(0,1)^2\times(0,T]$. 
For equation \eqref{ex:eps=1},
Figure~\ref{Energy2} compares the averaged energy evolution produced by the exponential Euler SSAV scheme \eqref{schme_phi-intro} with that of the standard SAV scheme (i.e., \eqref{schme_phi-intro} with $\tilde{f}^n$ replaced by $\tilde{f}^n_{\mathrm{d}}$ in \eqref{fnd}). 
The expectation is approximated using $P=1000$ Monte Carlo samples, and the time step size is chosen as $\tau=5\times10^{-4}$. 
Since the exact solution of the stochastic Cahn--Hilliard equation \eqref{model} is not available, the reference averaged energy is computed using the fully implicit Euler method in time (see, e.g., \cite{QW20}) with the reference step size $\tau=5\times10^{-5}$.
\begin{figure}%[h]
\centering
% Requires \usepackage{graphicx}
% \includegraphics[width=10cm]{fig/DB/energy0103.eps}
\includegraphics[width=10cm]{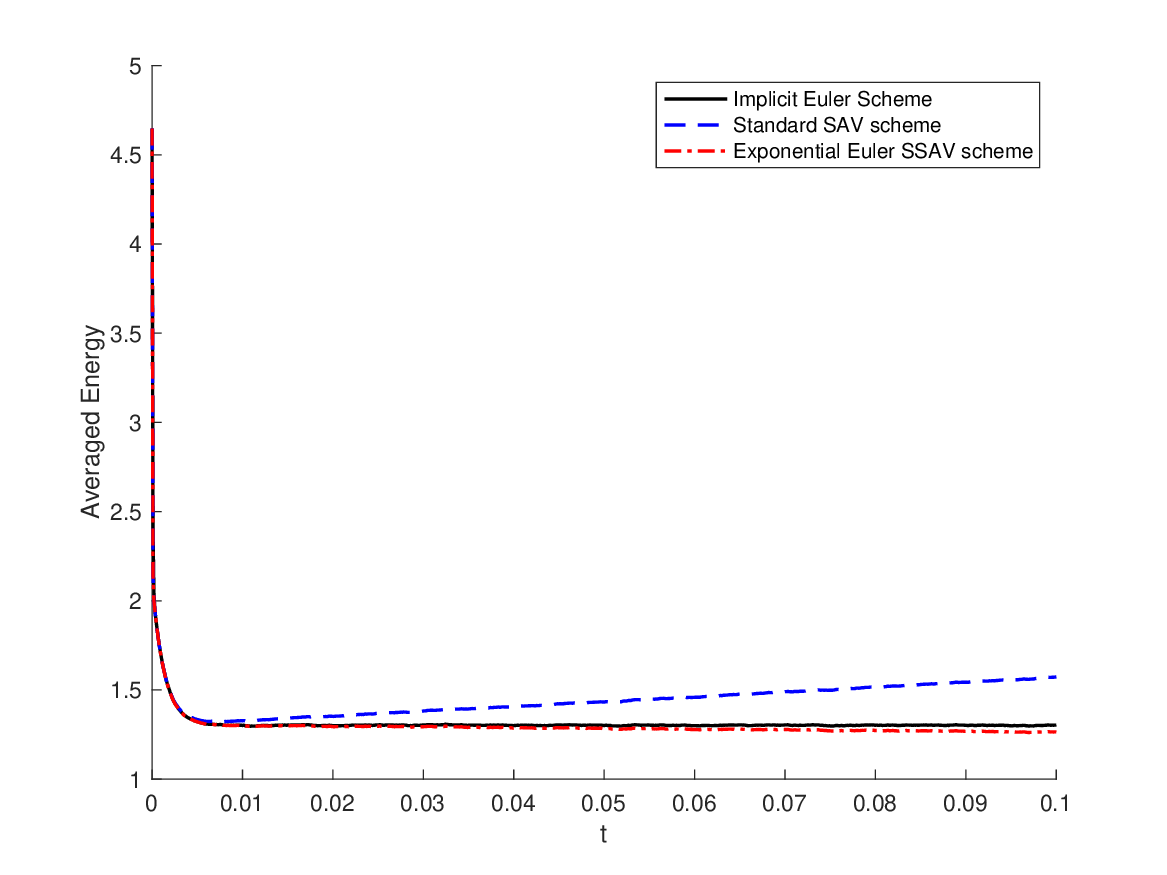}
\vspace{-0.3cm}
\caption{Comparison of averaged energies of the exponential Euler SSAV scheme and standard SAV scheme for equation \eqref{ex:eps=1}.}\label{Energy2} 
%Implicit Euler scheme {\r replace the implcit euler by referenced solu? I think that the implicit euler is using finer stepsize compared with others?}, standard SAV scheme and modified SAV scheme.  }
\end{figure}

\begin{figure}%[htbp]
\centering
%?????? ??? ??????%
\includegraphics[width=0.49\textwidth]{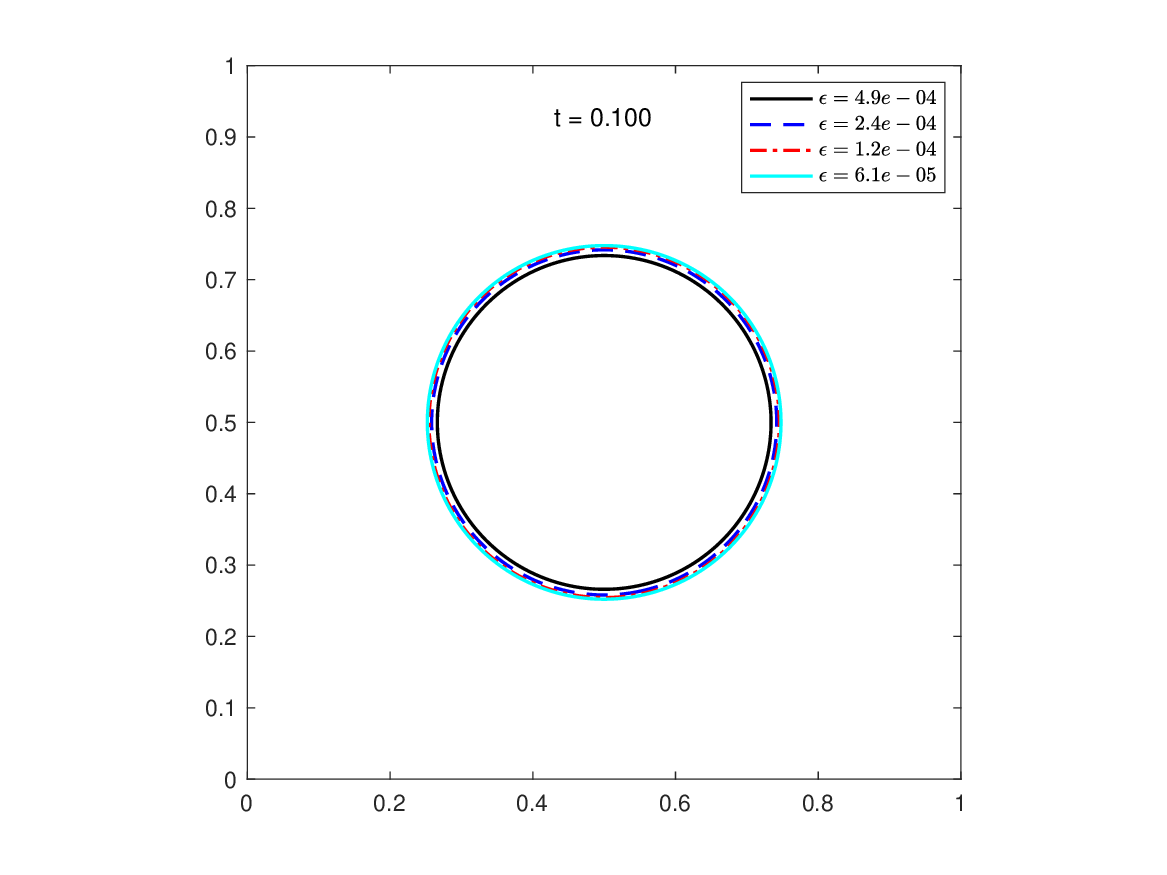}%
% \hspace{0.5pt}%
\includegraphics[width=0.49\textwidth]
{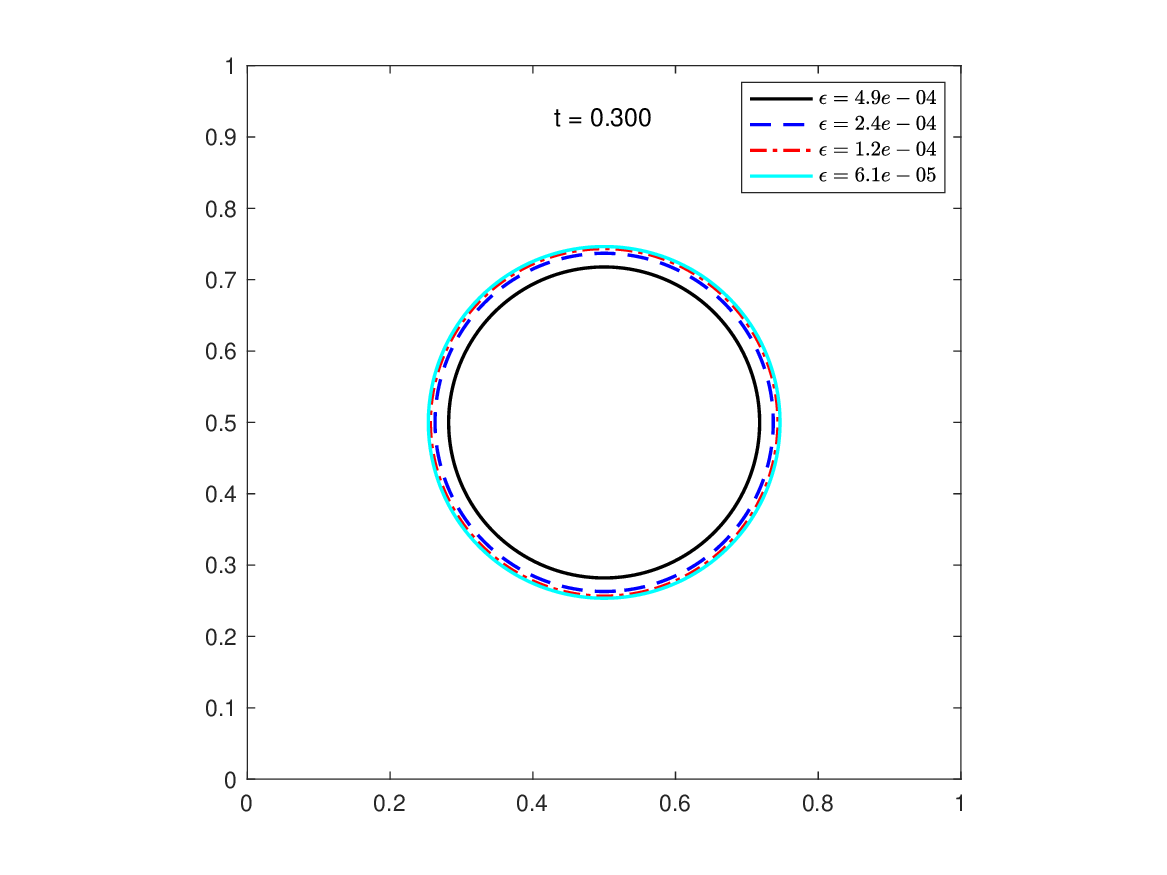}

% \vspace{1ex}  % ??????????

%?????? ??? ??????%
\includegraphics[width=0.49\textwidth]{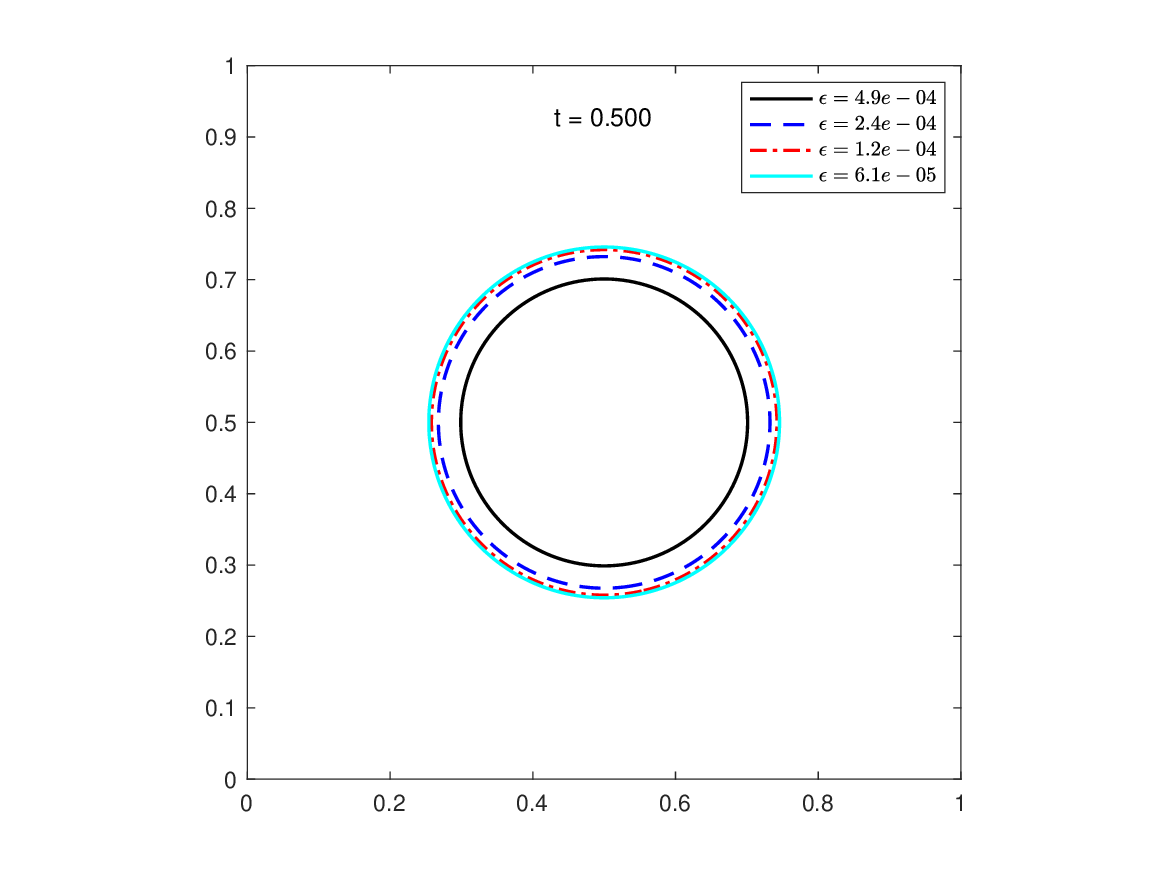}%
% \hspace{1pt}%
\includegraphics[width=0.49\textwidth]
{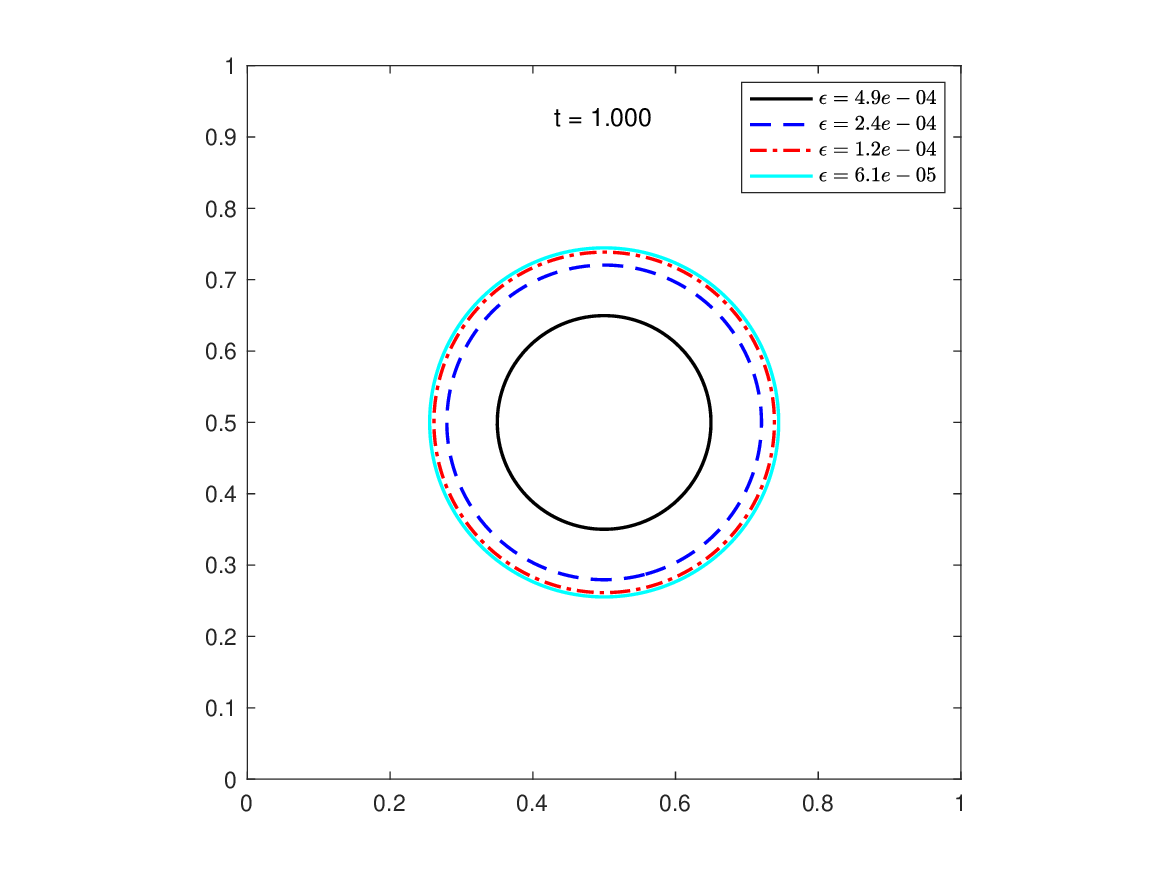}

\caption{Individual snapshots of the zero--level set of the solution at several time points for equation \eqref{model_epsilon_gamma} with $\gamma=1$.}
\label{fig:EX4_gamma1}
\end{figure}
As shown in Figure~\ref{Energy2}, the averaged modified SAV energy associated with the proposed scheme \eqref{schme_phi-intro} remains in close agreement with the reference averaged energy over the entire time interval, in consistent with Theorem~\ref{Sec:energy:pro}. 
In contrast, the averaged energy produced by the standard SAV scheme exhibits a persistent upward drift and appears to converge to a different energy level. From the perspective of the energy evolution law, Figure~\ref{Energy2} indicates that the standard SAV method is no longer suitable for the stochastic Cahn--Hilliard equation, and therefore it is necessary to consider its stochastic modifications, such as the exponential Euler SSAV scheme \eqref{schme_phi-intro}; see also Remark~\ref{rem-rt}.

\subsection{Sharp-interface dynamics}

%\begin{example}\label{ex:model_epsilon_gamma}
In this subsection, we consider the stochastic Cahn--Hilliard equation near the sharp interface limit
\begin{equation}
\begin{split}\label{model_epsilon_gamma}
	d\phi(t) 
	&= A\left(-\epsilon A \phi(t)  +\frac{1}{\epsilon}(\phi(t)^3-\phi(t))\right) dt 
	+ \epsilon^{\gamma} \frac{10}{\sqrt{1 + \phi(t)^2}}\,dW(t),\\
	\phi^0(x)& = \tanh\!\left(\frac{R - 0.25}{\sqrt{2}\,\epsilon}\right),~\text{with}~ R = \sqrt{(x_1 - 0.5)^2 + (x_2 - 0.5)^2}
\end{split}
\end{equation}
in $(0,1)^2\times(0,T]$. Introducing the interfacial width $\epsilon$ and the noise-scaling exponent $\gamma$ in \eqref{model_epsilon_gamma} is motivated by the asymptotic study in \cite{Antonopoulou2021numath} and the numerical analysis in \cite{CW24}.
%\end{example}
%\begin{example}\label{Ex3}

%In the present theoretical analysis, the parameters describing have not been explicitly involved. 

%{\r Cui, J.; Wang, F.-Y. Improving Numerical Error Bounds Near Sharp Interface Limit for Stochastic Reaction-Diffusion Equations arXiv:2412.12604} 

% {\r need reorganize this subsection}
% In contrast to the space--time white noise {\r are you sure that they consider the space-time white noise? they need the existence of strong solution} considered in \cite{Antonopoulou2021numath}, the present work focuses on a trace-class noise, which allows us to explore the effect of a small parameter $\epsilon$ of the following equation
% \begin{align}\label{model_epsilon_gamma}
%  d\phi 
%    &= A\left(-\epsilon A \phi  +\frac{1}{\epsilon}{F'}(\phi)\right) dt 
%    + \epsilon^{\gamma} g(\phi)\,dW(t),
%    && \text{in } \mathcal{O}\times(0,T].
% \end{align}
% Here, $\varepsilon>0$ controls the interfacial thickness, while the parameter $\gamma\ge0$ determines the scaling of the stochastic perturbation with respect to $\varepsilon$. 

% {\r We observe from cite figures that } 
% Different choices of $\gamma$ lead to distinct asymptotic regimes: for $\gamma=1$, the stochastic forcing vanishes in the sharp-interface limit, whereas for $\gamma=0$, the noise remains of order one and may give rise to stochastic effects at the interface.

\subsubsection{Evolution of interface}
In this part, we investigate the influence of a small interfacial
parameter~$\epsilon>0$ and the noise intensity on the evolution of the interface in the stochastic
Cahn--Hilliard equation. 
In the numerical test, we choose different values of the interfacial width parameter 
$\epsilon = 2^{-i}/64$, $i = 5,6,7,8$, and run the simulation up to the final time $T = 1$. 
We first investigate the effect of the noise intensity on the geometric evolution in the sharp-interface limit. Figure~\ref{fig:EX4_gamma1} (resp.~Figure~\ref{fig:EX4_gamma0})
plots an individual realization of the zero-level set of the numerical solution for equation \eqref{model_epsilon_gamma} at different times $t=0.1,0.3,0.5,1$ with $\gamma=1$ (resp.~$\gamma=0$). 

%Figures \ref{fig:EX4_gamma1}--\ref{fig:EX4_gamma0_mean} illustrate the evolution of the zero-level set at different times for various values of $\epsilon$ with $\gamma = 0$ and $\gamma = 1$, respectively.

\begin{figure}%[htbp]
\centering
%?????? ??? ??????%
\includegraphics[width=0.49\textwidth]{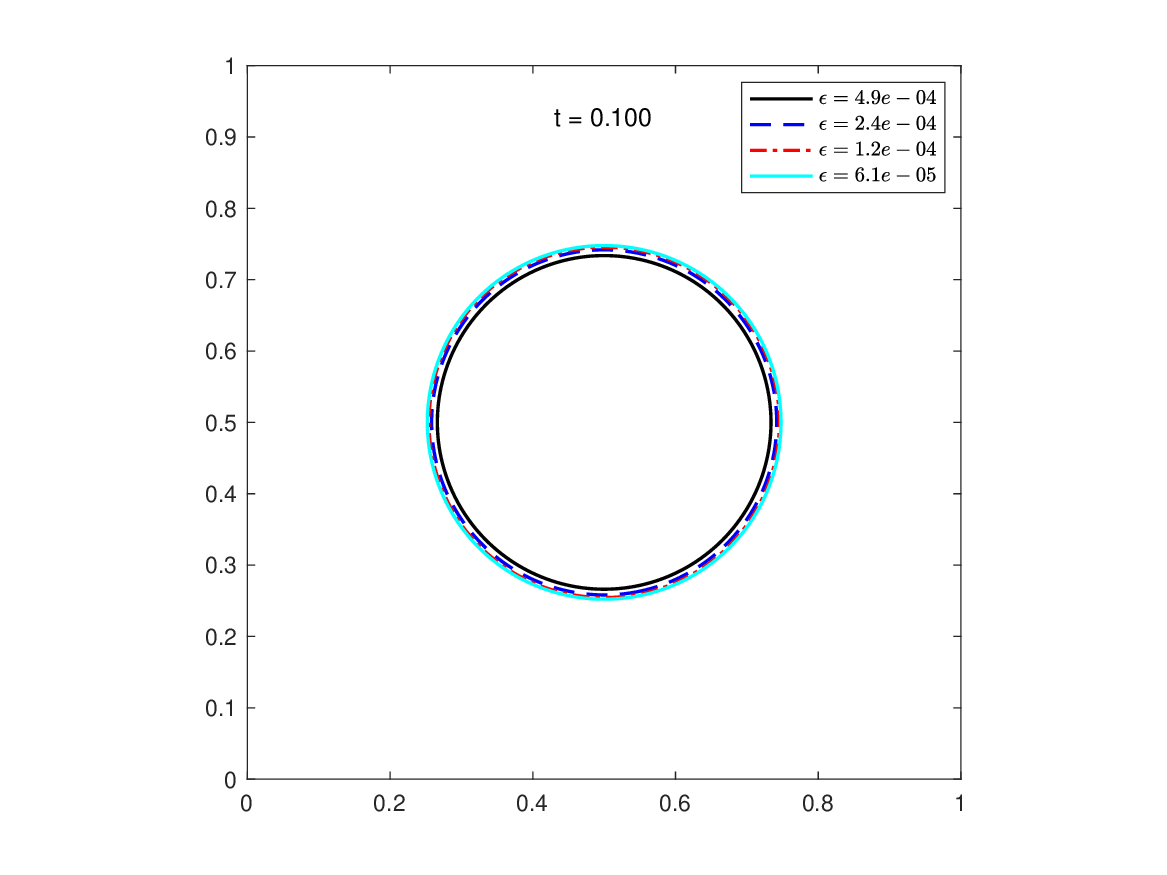}%
%\hspace{0.5pt}%
\includegraphics[width=0.49\textwidth]
{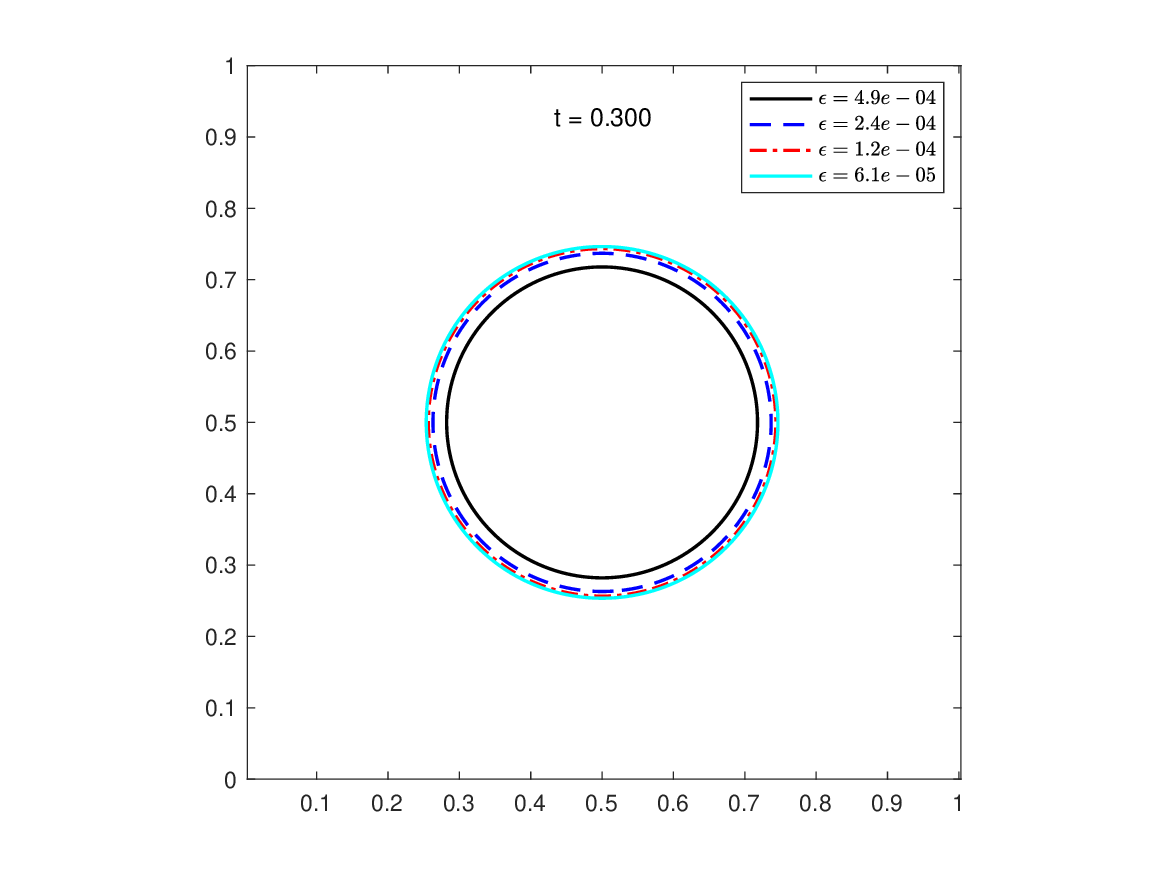}

%  \vspace{1ex}  % ??????????

%?????? ??? ??????%
\includegraphics[width=0.49\textwidth]{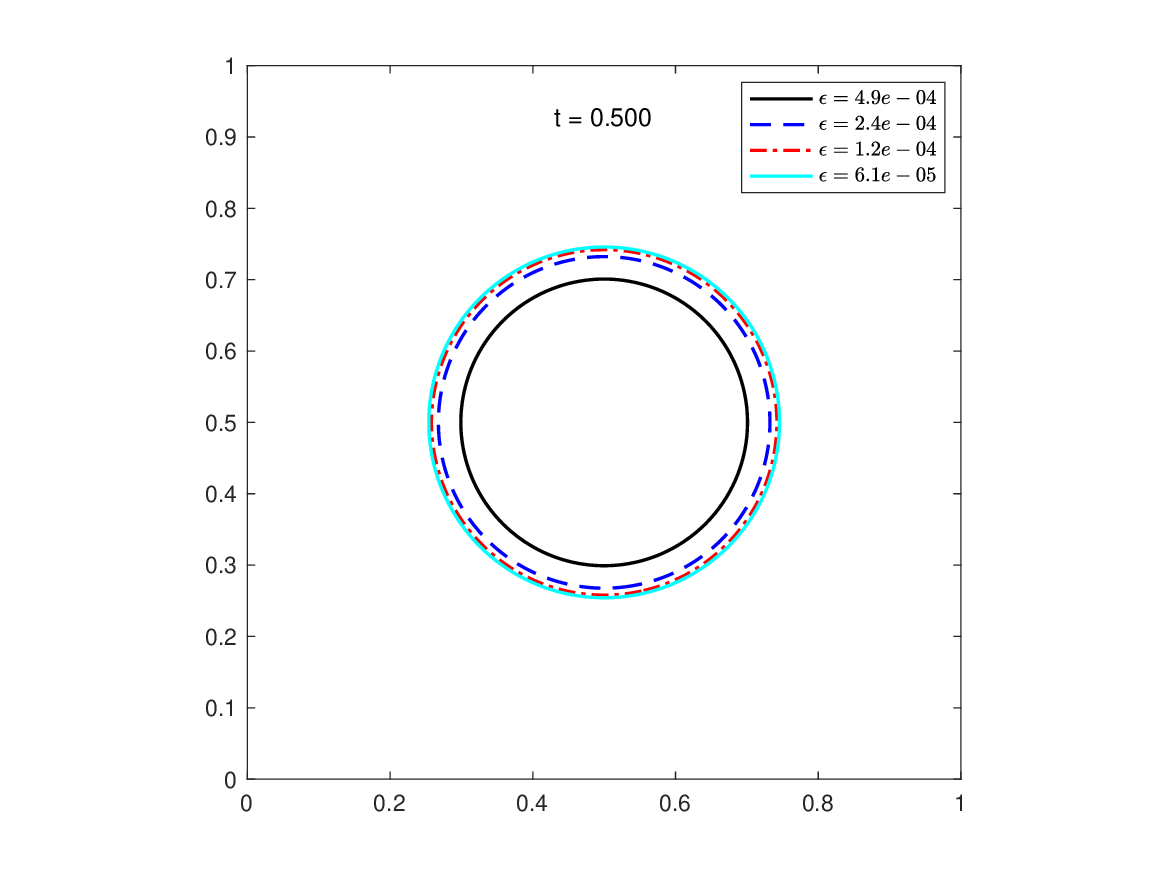}%
% \hspace{1pt}%
\includegraphics[width=0.49\textwidth]
{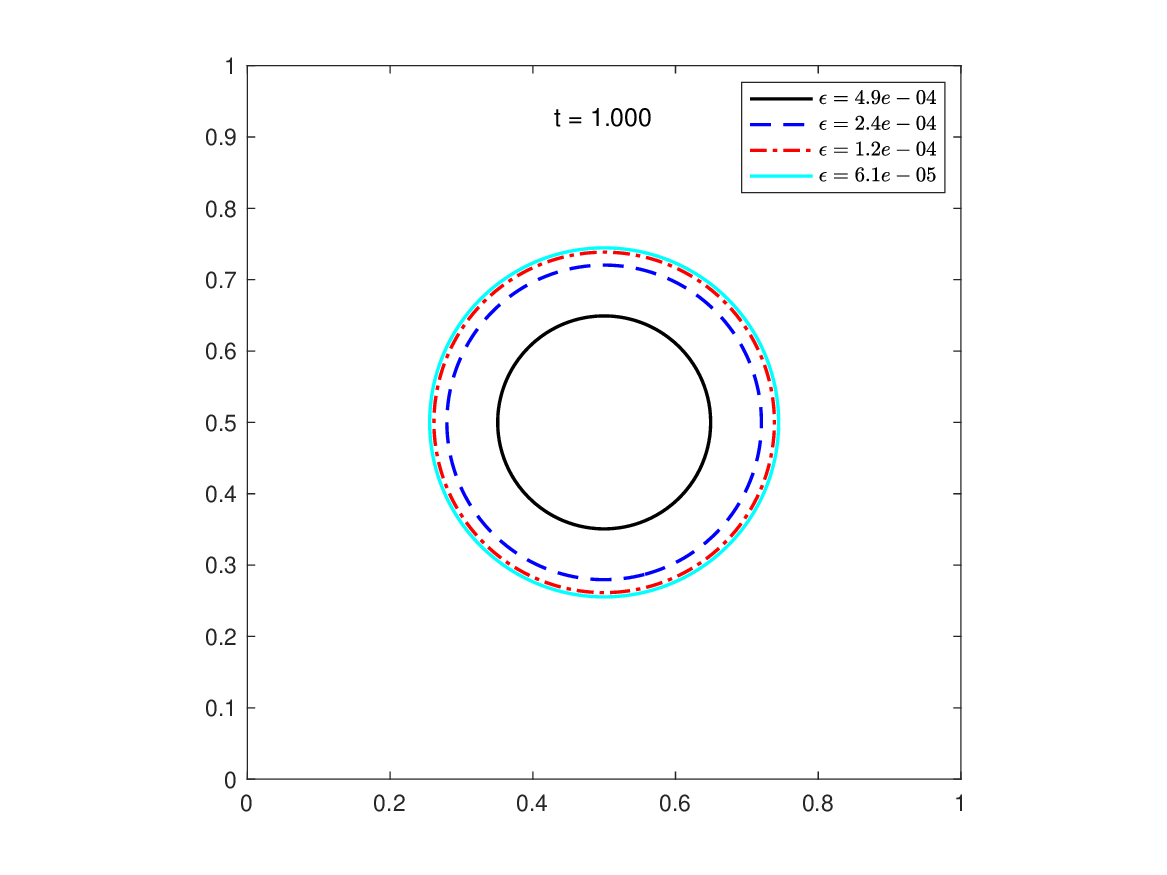}

\caption{Snapshots of the zero--level set of the solution at several time points for deterministic Cahn--Hilliard equation.}
\label{fig:De_gamma1}
\end{figure}
For $\gamma = 1$, the intensity of the space--time noise decreases as $\epsilon$ decreases, and the resulting evolution of the zero-level set (Figure~\ref{fig:EX4_gamma1}) is almost indistinguishable from that of the deterministic Cahn--Hilliard equation (Figure~\ref{fig:De_gamma1}) under the same initial configuration.
In addition, the interface remains nearly stationary, and the small random perturbations only cause mild fluctuations without altering the overall morphology.
This numerical observation is consistent with the theoretical prediction that the stochastic Cahn--Hilliard equation dynamics \eqref{model_epsilon_gamma} %with $\gamma \ge 1$ 
converges to the deterministic problem as $\epsilon \to 0$ (see \cite{Antonopoulou2021numath}).
For $\gamma = 0$, the noise amplitude remains of order $\mathcal{O}(1)$, and the zero-level set in Figure~\ref{fig:EX4_gamma0}
exhibits visible random oscillations around its deterministic profile.
The persistent randomness observed in the zero-level set suggests that the
limiting interface dynamics retain stochastic characteristics, consistent with
the conjectured stochastic sharp-interface limit %{\r in the additive noise case}
(see \cite{Antonopoulou2021numath}).
%Figure~\ref{fig:EX4_gamma0} shows zero-level-set snapshots for a single
%realization, where the influence of stochastic fluctuations is clearly visible.

\begin{figure}[htbp]
\centering
%?????? ??? ??????%
\includegraphics[width=0.49\textwidth]
{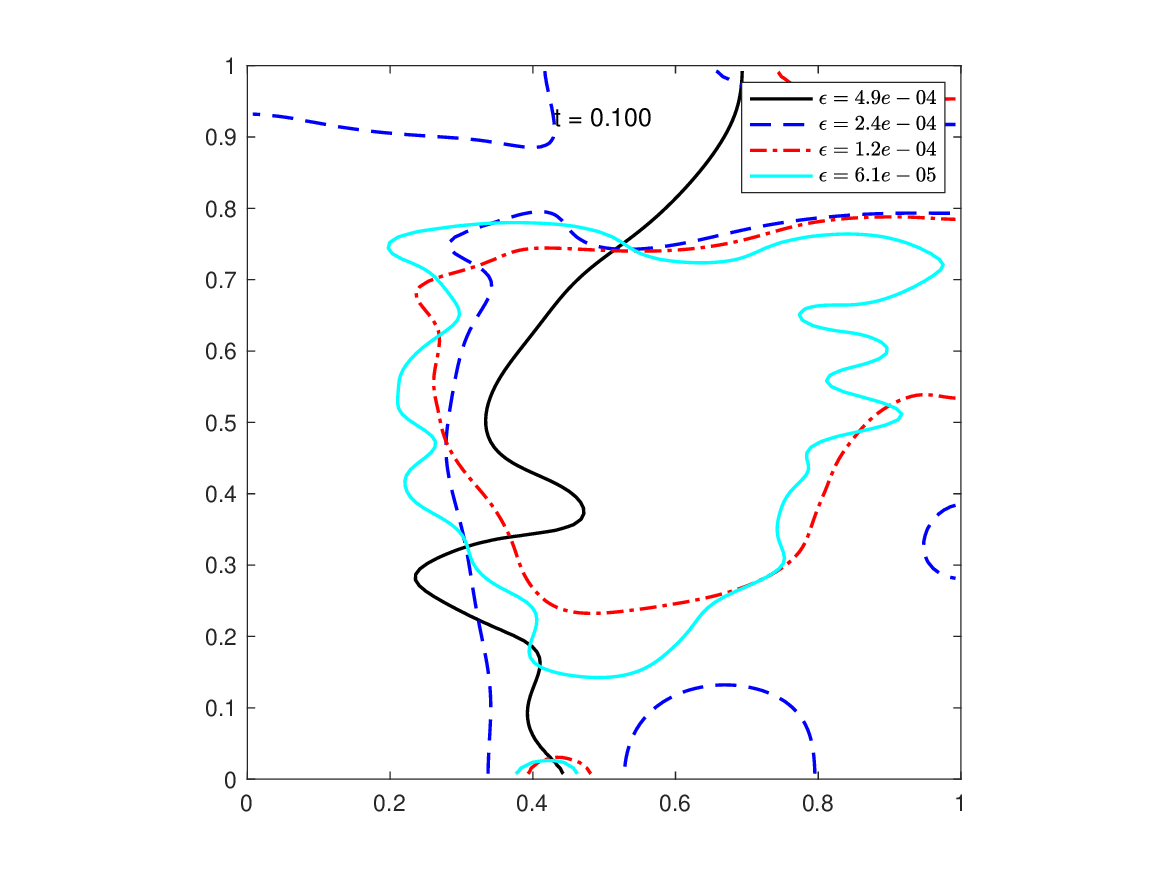}%
% \hspace{0.5pt}%
\includegraphics[width=0.49\textwidth]
{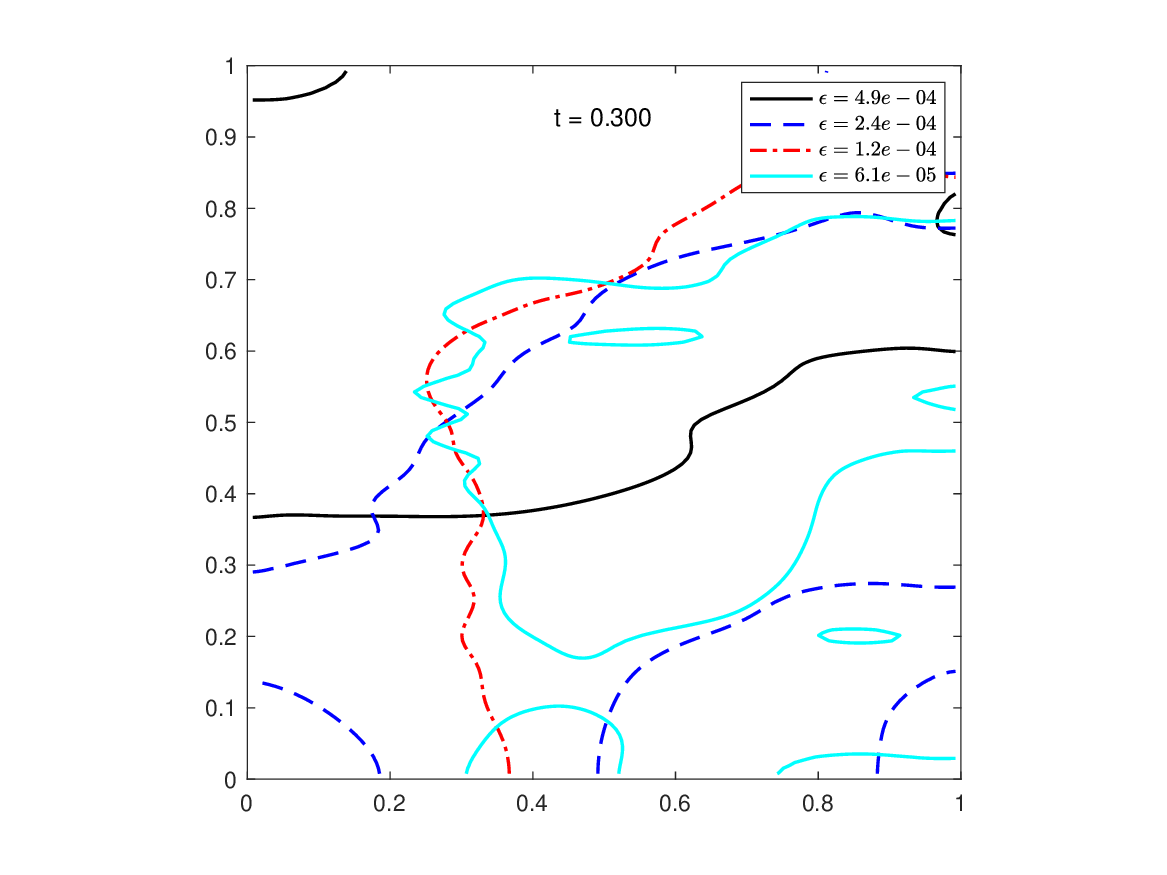}

% \vspace{1ex}  % ??????????

%?????? ??? ??????%
\includegraphics[width=0.49\textwidth]{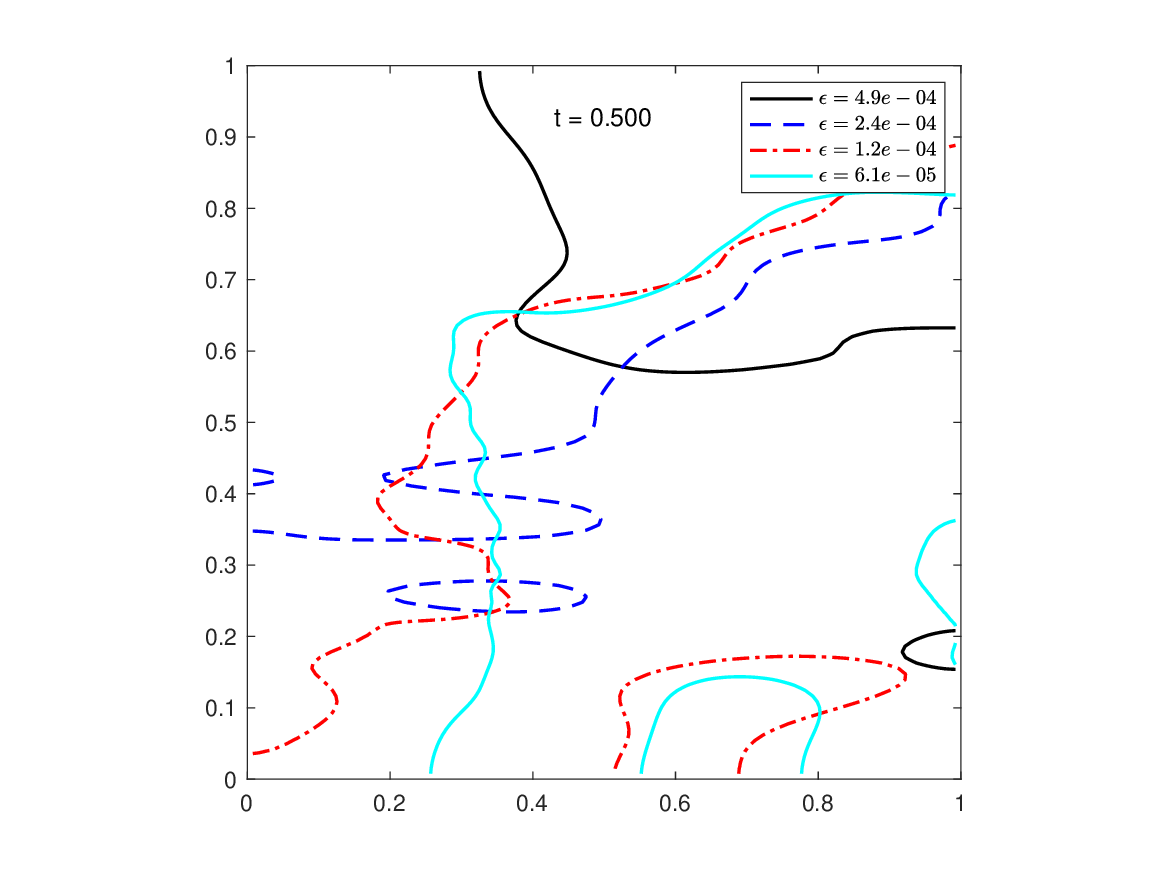}%
%  \hspace{1pt}%
\includegraphics[width=0.49\textwidth]
{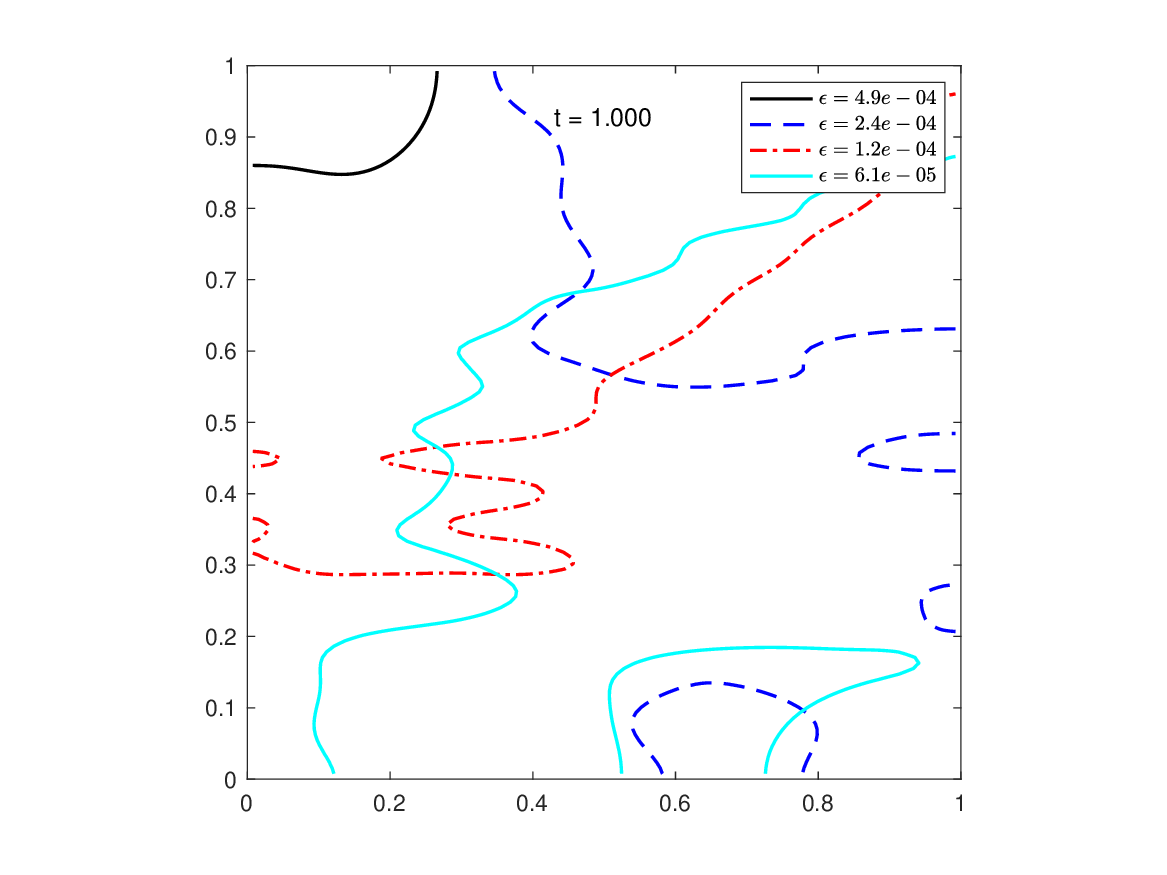}

\caption{Individual snapshots of the zero--level set of the solution at several time points for equation \eqref{model_epsilon_gamma} with $\gamma=0$.}
\label{fig:EX4_gamma0}
\end{figure}

\begin{figure}[htbp]
\centering
%?????? ??? ??????%
\includegraphics[width=0.49\textwidth]{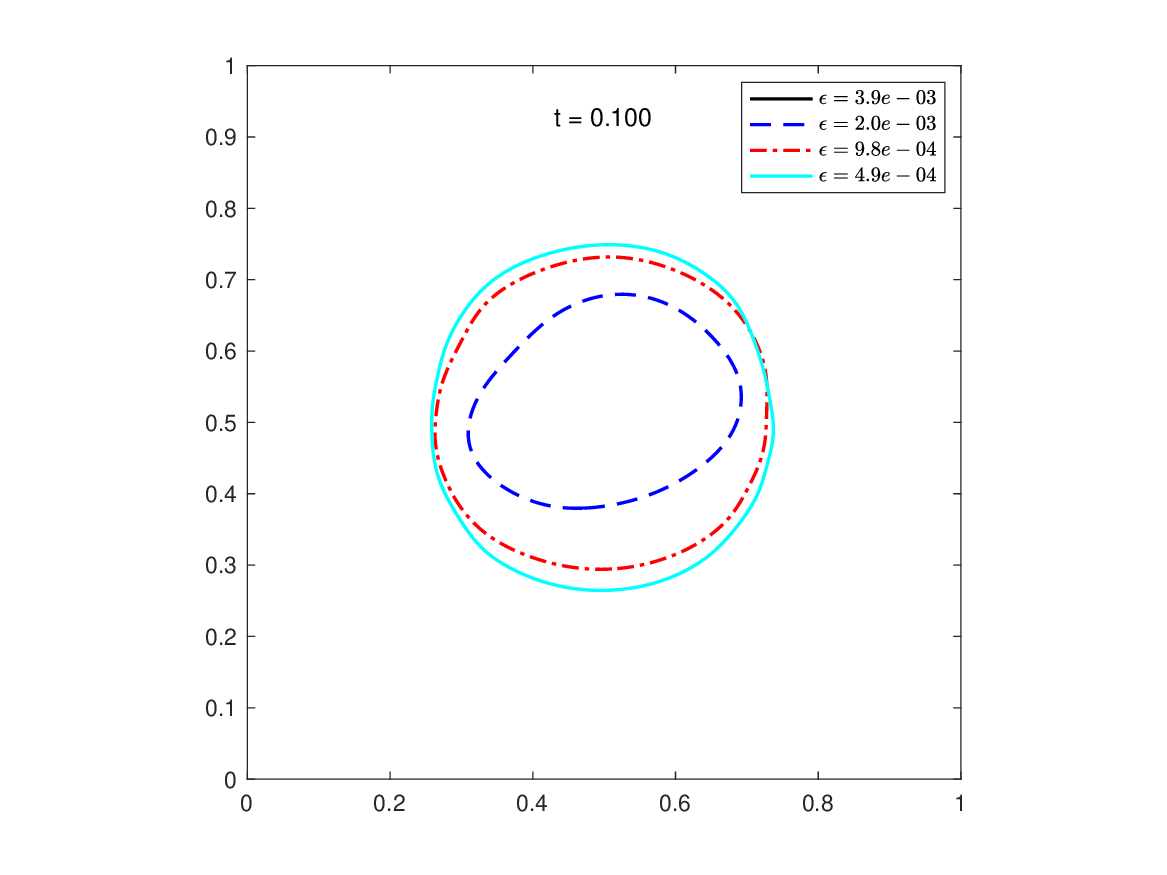}%
% \hspace{0.5pt}%
\includegraphics[width=0.49\textwidth]{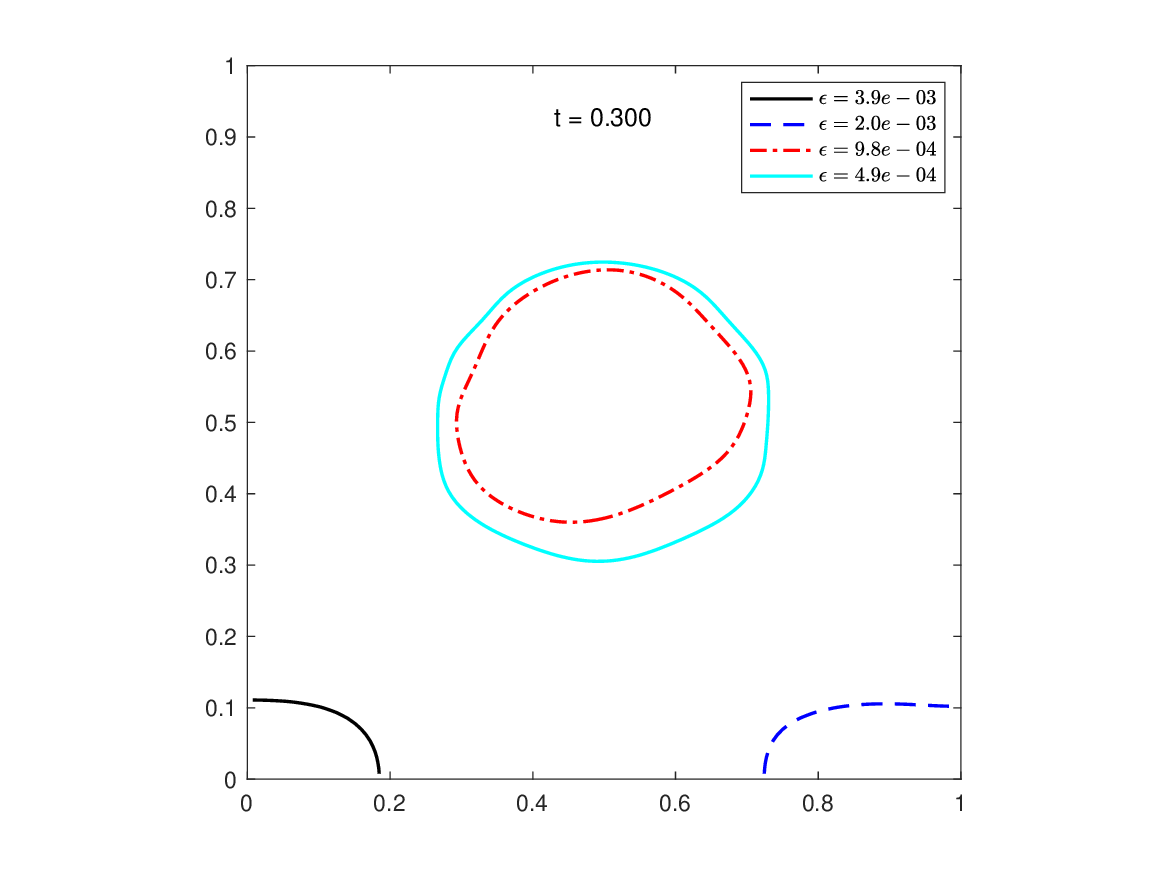}

%  \vspace{1ex}  % ??????????

%?????? ??? ??????%
\includegraphics[width=0.49\textwidth]{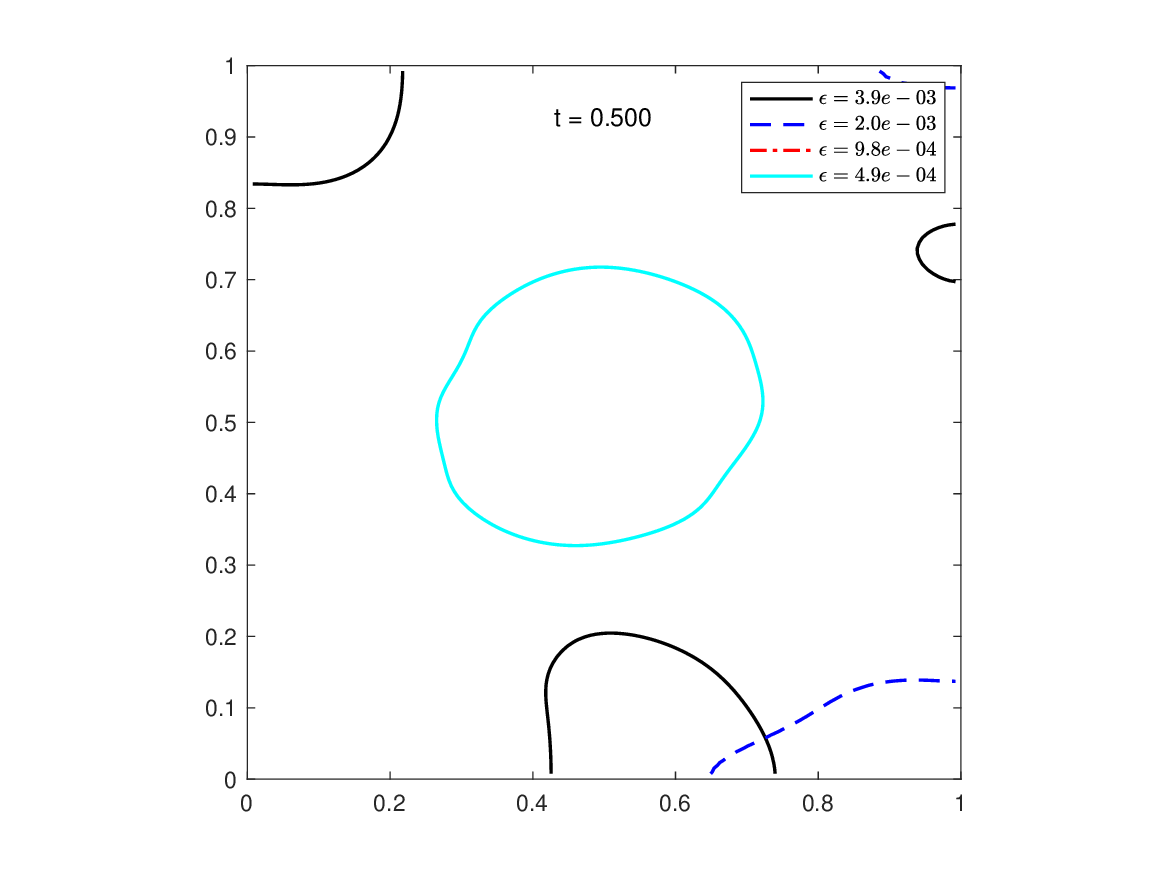}%
% \hspace{1pt}%
\includegraphics[width=0.49\textwidth]{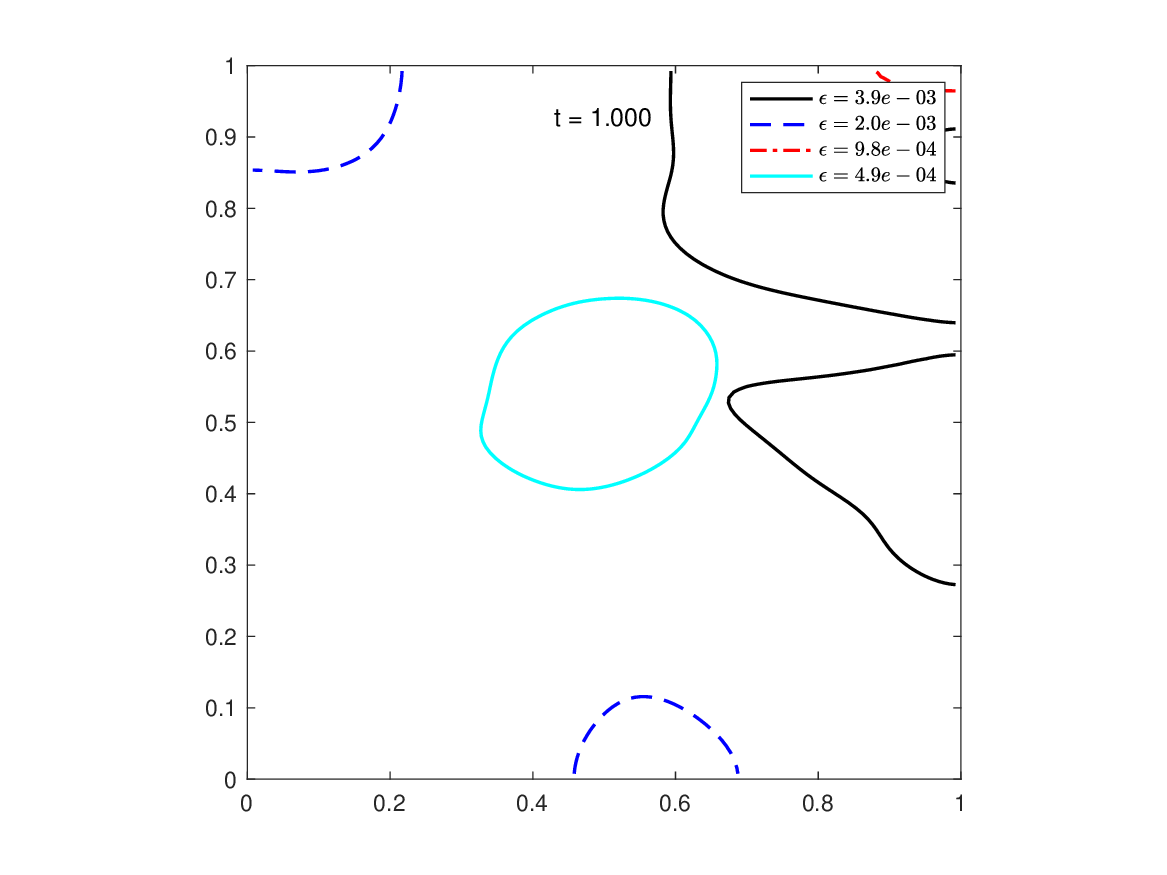}

\caption{Averaged snapshots of the zero--level set of the solution at several time points for equation \eqref{model_epsilon_gamma} with $\gamma=0$.}
\label{fig:EX4_gamma0_mean}
\end{figure}
Furthermore, we present in Figure~\ref{fig:EX4_gamma0_mean} the
averaged zero-level set of the numerical solution for equation \eqref{model_epsilon_gamma} with $\gamma=0$ over $P=100$ sample realizations. Although the averaged
interface does not fully coincide with the deterministic profile, one can still
observe the expected shrinking behavior as the parameter $\epsilon$ tends to
zero. This qualitative difference between the cases $\gamma=1$ and $\gamma=0$ highlights the distinct asymptotic regimes induced by different noise scalings in the stochastic Cahn--Hilliard equation near the sharp interface limit.

%\end{example}

%\begin{example}

% Specifically, we solve the following stochastic Cahn--Hilliard equation 
% \begin{align}\label{model_epsilon_gamma}
%  d\phi 
%    &= A\left(-\epsilon A \phi  +\frac{1}{\epsilon}{F'}(\phi)\right) dt 
%    + \epsilon^{\gamma} g(\phi)\,dW(t),
%    && \text{in } \mathcal{O}\times(0,T]
% \end{align}
% Here, $\epsilon>0$ controls the interfacial thickness, while the parameter $\gamma\ge0$ determines the scaling of the stochastic perturbation with respect to $\epsilon$ \cite{Antonopoulou2021numath}. 

\subsubsection{Discretization error and convergence order}

% First, we demonstrate the temporal convergence order of the exponential Euler SSAV scheme \eqref{schme_phi-intro} in Theorem \ref{Thm3_main}. The simulation is performed up to the final time \(T=0.01\) {\r the reviewer will criticize this short time. How about $T=0.1$ or $0.5$?}.
% % Let
% % \[
% % \textup{err}^n \;=\; \phi(t_n) - X_\tau^n,
% % % \quad
% % % \mathbb{E}\bigl[\|\textup{err}^n\|^2\bigr]
% % % =\mathbb{E}\bigl[\|\phi(t_n)-X_h^n\|^2\bigr].
% % \]
% Since the exact solution \(\phi(t_n)\) is unavailable, we approximate the mean?square error by 
% \[
% %\mathbb{E}\bigl[\|\textup{err}^n\|^2\bigr]
% %\approx
% \left(\frac{1}{P} \sum_{i=1}^P \bigl\|X_{\tau}^n(\omega_i) - X_{\text{ref}}^n(\omega_i)\bigr\|^2\right)^{\frac12},
% \]
% where $P=1000$, and $X^n_\tau$ denotes the numerical solution at time $t_n$ generated by \eqref{schme_phi-intro} with the time step size $\tau$.
% For each time step \(\tau\), the reference solution \(X_{\text{ref}}^n\) is computed using a time step size of \(\tau/2\). 
% For equation \eqref{ex:eps=1} with $g(\phi(t)) = (1+\phi(t)^2)^{-\frac12}$,
% Table~\ref{table1} lists the mean-square errors and the corresponding convergence orders of the numerical solution $X_n^\tau$. 
% The result indicates that, the proposed scheme \eqref{schme_phi-intro} achieves a temporal convergence rate of order $1/2$, which confirms the theoretical result stated in Theorem~\ref{Thm3_main}.

Continuing with equation \eqref{model_epsilon_gamma} for $\gamma=0$, we next test the convergence order of the exponential Euler SSAV scheme \eqref{schme_phi-intro}. Moreover, we also numerically investigate  the dependence of the mean square error on the interfacial parameter $\epsilon$. 
The simulations are performed up to the final time $T=0.1$, and the Monte--Carlo average is taken over $P=100$ samples.

We first fix the spatial discretization parameter at $M=128$ and use the numerical solution computed with the finest time step size $\tau_{\mathrm{ref}} = T/3200$ as the reference solution. 
A sequence of coarser time step sizes
$
\tau = 2\times 10^{-3}, 1\times 10^{-3}, 5\times 10^{-4}, 2.5\times 10^{-4}, 1.25\times 10^{-4}
$
is tested under different interfacial parameters
$
\epsilon = 0.16, 0.08, 0.04, 0.02, 0.01, 0.005.
$
For each pair $(\tau,\epsilon)$, the mean-square error at the final time $T$ is computed relative to the reference solution. The results, reported in Table~\ref{tab:tau_eps_error}, suggest that the temporal discretization error depends polynomially on $1/\epsilon$, with no indication of exponential growth.
Moreover, Table~\ref{tab:tau_eps_error} shows that, for each fixed $\epsilon$ in the tested range,
the temporal strong error exhibits an empirical rate close to $O(\tau^{1/2})$, which confirms the theoretical result stated in Theorem~\ref{Thm3_main}.

% In addition, when $\tau$ is fixed, the estimated orders become less sensitive to the refinement level as $\epsilon$ decreases, i.e., the rate appears more robust for smaller interfacial thickness. 
% This observation also clarifies why, in subsection~\ref{sub:freeepsilon}, we only present results up to $T=0.01$, which corresponds to the case $\epsilon=1$ and $\gamma=0$. In this regime, the asymptotic strong rate $O(\tau^{1/2})$ is observed only when the time step is sufficiently small; consequently, extending the simulation to larger final times would require substantially finer temporal resolutions in order to recover a comparably clean convergence behavior.

Next, we fix the time step at $\tau = T/800$ and take the numerical solution corresponding to the finest spatial resolution $M_{\mathrm{ref}} = 256$ as the reference solution. 
We consider spatial resolutions
$
M = 16, 32, 64, 128
$
for various interfacial parameters
$
\epsilon = 0.16, 0.08, 0.04, 0.02, 0.01.
$
For each pair $(M,\epsilon)$, the mean-square error at the final time $T$ is presented in Table~\ref{tab:space_eps_error}. 
This experiment similarly indicates that the spatial discretization error depends polynomially on $1/\epsilon$.

% \begin{table}[h]
% \centering
% \caption{Temporal strong $L^2$-errors for different pairs of time step $\tau$ and interface parameter $\epsilon$.}
% \label{tab:tau_eps_error}
% \setlength{\tabcolsep}{4mm}
% \begin{tabular}{c|cccccc}
	% \hline 
	% $\tau \backslash \epsilon$
	%  & 0.16 & 0.08 & 0.04 & 0.02 & 0.01 & 0.005 \\
	% \hline
	% 2e{-3}
	%  & 1.4065 & 2.1755 & 3.7003 & 4.4482 & 5.3107 & 7.4930 \\
	% 1e{-3} 
	%  & 0.8753 & 1.3335 & 2.0928 & 2.4716 & 3.0424 & 4.2077 \\
	% 5e{-4}  
	%  & 0.4988 & 0.8316 & 1.2276 & 1.3392 & 1.5817 & 2.1025 \\
	% 2.5e{-4}  
	%  & 0.2496 & 0.4640 & 0.6869 & 0.6702 & 0.8345 & 1.0379 \\
	% 1.25e{-4} 
	%  & 0.0993 & 0.2196 & 0.3491 & 0.3399 & 0.4410 & 0.5203 \\
	% \hline
	% \end{tabular}
% \end{table}
\begin{table}[h]
\centering
\caption{Temporal mean-square errors and orders for different time step sizes $\tau$ and interfacial parameters $\epsilon$.}
\label{tab:tau_eps_error}
\setlength{\tabcolsep}{3.6mm}
\begin{tabular}{c|cc|cc|cc}
	\hline
	\multirow{2}{*}{$\tau$}
	& \multicolumn{2}{c|}{$\epsilon=0.16$}
	& \multicolumn{2}{c|}{$\epsilon=0.08$}
	& \multicolumn{2}{c}{$\epsilon=0.04$}\\
	%\cline{2-7}
	& {Error} & {Order} & {Error} & {Order} & {Error} & {Order}\\
	\hline
	2e-3     & 1.1860 & {--}      & 1.4749 & {--}      & 1.9236 & {--}      \\
	1e-3     & 0.9356 & 0.3427  & 1.1546 & 0.3520  & 1.4467 & 0.4118  \\
	5e-4     & 0.7063 & 0.4053  & 0.9119 & 0.3400  & 1.1079 & 0.3847  \\
	2.5e-4   & 0.4996 & 0.4989  & 0.6812 & 0.4210  & 0.8287 & 0.4183  \\
	1.25e-4  & 0.3151 & 0.6657  & 0.4686 & 0.5397  & 0.5908 & 0.4871  \\
	\hline
	\hline
	\multirow{2}{*}{$\tau$}
	& \multicolumn{2}{c|}{$\epsilon=0.02$}
	& \multicolumn{2}{c|}{$\epsilon=0.01$}
	& \multicolumn{2}{c}{$\epsilon=0.005$}\\
%\cline{2-7}
	& {Error} & {Order} & {Error} & {Order} & {Error} & {Order}\\
	\hline
	2e-3     & 2.1091 & {--}      & 2.3045 & {--}      & 2.7373 & {--}      \\
	1e-3     & 1.5721 & 0.4249  & 1.7443 & 0.4022  & 2.0518 & 0.4158  \\
	5e-4     & 1.1574 & 0.4420  & 1.2580 & 0.4716  & 1.4500 & 0.5012  \\
	2.5e-4   & 0.8187 & 0.4990  & 0.9135 & 0.4619  & 1.0188 & 0.5099  \\
	1.25e-4  & 0.5830 & 0.4895  & 0.6641 & 0.4600  & 0.7213 & 0.4970  \\
	\hline
\end{tabular}
\end{table}

% \begin{table}[h]
% \centering
% \caption{Spatial mean-square errors for different spatial parameters $M$ and interfacial parameter $\epsilon$.}
% \label{tab:space_eps_error}
% \setlength{\tabcolsep}{4mm}
% \begin{tabular}{c|ccccc}
	% \hline
	% $M  $ 
	%  & $\epsilon = 0.16$ & $\epsilon =0.08$ & $\epsilon =0.04$ & $\epsilon =0.02$ & $\epsilon =0.01$ \\
	% \hline
	% 16  & 0.0284   & 0.1172   & 0.3203   & 0.5277   & 0.5619 \\
	% 32  & 0.0115   & 0.0231   & 0.0538   & 0.1255   & 0.2670 \\
	% 64  & 0.0034   & 0.0092   & 0.0176   & 0.0239   & 0.0596 \\
	% 128 & 5.9937e-04 & 0.0013 & 0.0031 & 0.0053 & 0.0121 \\
	% \hline
	% \end{tabular}
% \end{table}

\begin{table}[h]
\centering
\caption{Spatial strong $L^2$-errors for different spatial parameter $M$ and interface parameters $\epsilon$.}
\label{tab:space_eps_error}
\setlength{\tabcolsep}{4mm}
\begin{tabular}{c|ccccc}
	\hline
	\diagbox{$M$}{Error}{$\epsilon$}
	& $0.16$ & $0.08$ & $0.04$ & $0.02$ & $0.01$ \\
	\hline
	16  & 0.1685 & 0.3423 & 0.5660 & 0.7264 & 0.7496 \\
	32  & 0.1072 & 0.1520 & 0.2319 & 0.3543 & 0.5167 \\
	64  & 0.0583 & 0.0959 & 0.1327 & 0.1546 & 0.2441 \\
	128 & 0.0245 & 0.0361 & 0.0557 & 0.0728 & 0.1100 \\
	\hline
\end{tabular}
\end{table}

\section{Concluding remarks}	
In this work, we proposed and analyzed a semi-implicit numerical scheme for the stochastic Cahn--Hilliard equation driven by multiplicative noise, combining the SSAV approach with the exponential Euler method. The resulting exponential Euler SSAV scheme is shown to achieve the optimal strong convergence rate of order $\frac 12$
in the case of trace-class noise. Furthermore, we proved that the scheme asymptotically preserves the averaged energy evolution law of the underlying continuous system. We also conducted a series of numerical experiments to investigate the effects of the interfacial parameter and noise intensity on interface evolution, as well as the dependence of discretization error on the interfacial parameter. A theoretical investigation of these numerical phenomena will be carried out in future work. Meanwhile, we also plan to extend the proposed framework to more general noise structures and more complex phase-field models, as well as to develop higher-order structure-preserving numerical schemes.

\appendix
\section{Proof of auxiliary results}\label{Appendix}
\subsection{Proof of Corollary \ref{lemma_stability}}\label{App_Cor1}
Starting from \eqref{lema1_eq6_before}, we first take the supremum over \( m \in \{0,1, \cdots, N-1\} \) before evaluating the expectations, which gives
\begin{align}\label{eq:Lemma_Stability_Step2}\notag
&\mathbb{E}\left[\sup_{0\leq m\leq N-1}\left\|\nabla  X^{m+1}\right\|^{2p}\right]
+\mathbb{E}\left[\sup_{0\leq m\leq N-1}|r^{m+1}|^{2p}\right]
+\mathbb{E}\bigg[\bigg(\sum_{n=0}^{N-1}|r^{n+1}-r^n|^{2}\bigg)^p\bigg]  
\\
&\quad +\mathbb{E}\bigg[\bigg(\sum_{n=0}^{N-1}\|({I-S^2(\tau)})^{\frac{1}{2}}(-A)^{-\frac 12}\tilde{\mu}^n\|^2\bigg)^p\bigg]\notag \\
&\leq C + C\mathbb{E}\left[\left\|\nabla  \phi^{0}\right\|^{2p}\right] + C\mathbb{E}\left[|r^{0}|^{2p}\right] +C \tau \sum_{n=0}^{N-1}\mathbb{E}\left[|r^{n}|^{2p}\right]+\sum_{i=1}^5\mathbb{E}\left[\sup_{0\le m\le N-1}R_i^m\right].
\end{align}
Since the sequences $\{R_1^m\}_{m=0}^{N-1},\cdots,\{R_4^m\}_{m=0}^{N-1}$ 
are non-decreasing in \(m\),  they reach their maximums at \( m = N-1 \),  which enables us to  utilize the estimates derived previously for \( R_1^{N-1}, \cdots, R_4^{N-1} \) and obtain  
\begin{align*}
\mathbb{E}\left[\sup_{0\le m\le N-1}R_i^m\right]= \mathbb{E}\left[R_i^{N-1}\right]\le C,\qquad i=1,2,3,4.
\end{align*}
By Doob's martingale inequality,  
\eqref{lema1_eq_R71}, \eqref{lema1_eq_R72} together with \eqref{Lemma1_step1}, 
we have  
\begin{align*}
&\mathbb{E}\left[\sup_{0\le m\le N-1}R_5^m\right]\\
&=C\mathbb{E}\Bigg[\sup_{0\le m\le N-1}\bigg|\sum_{n=0}^m\int_{t_n}^{t_{n+1}}\bigg\langle-A X^{n}+\frac{r^{n}}{\sqrt{E_{\textup{p}}(X^{n})}}F'( X^{n}),g( X^{n})d  W(s)\bigg\rangle\bigg|^p\Bigg]\notag\\
&\le  C\mathbb{E}\Bigg[\bigg|\sum_{n=0}^{N-1}\int_{t_n}^{t_{n+1}}\sum_{k=1}^{\infty}\bigg\langle-A X^{n}+\frac{r^{n}}{\sqrt{E_{\textup{p}}(X^{n})}}F'( X^{n}),g( X^{n})Q^{\frac 12}e_k\bigg\rangle^2d  s\bigg|^{\frac p2}\Bigg]\notag\\
%&\le C\mathbb{E}\Bigg[\bigg|\sum_{n=0}^{N-1}\tau\sum_{k=1}^{\infty}\bigg\langle-A X^n+\frac{r^n}{\sqrt{E_{\textup{p}}(X^{n})}}F'( X^n),g( X^n)Q^{\frac 12}e_k\bigg\rangle^2\bigg|^{\frac p2}\Bigg]\\
&\le C(R_{51}^{N-1}+R_{52}^{N-1})\le C+C\tau\sum_{n=0}^{N-1}\mathbb{E}\left[\left|r^{n}\right|^{2p}\right]
+C\tau\sum_{n=0}^{N-1}\mathbb{E}\left[\|\nabla X^n\|^{2p}\right]\le C.\notag
\end{align*}
Finally, taking \eqref{Lemma1_step1}, \( \phi^0 \in \dot{H}^1(\mathcal{O}) \), and \eqref{lema1_eq_R1_insert_hey} into account, we conclude the proof of \eqref{lemma_re} from \eqref{eq:Lemma_Stability_Step2}. 			\hfill$\square$
\subsection{Proof of Lemma \ref{lemma_tool}}\label{App-lemma-tool}
%  \begin{proof} 
In view of H\"older's inequality, it suffices to consider $p>1$ sufficiently large.
For $\beta\in (0,2)$, 
by the mild formulation of \eqref{lemma6_eq3_R1_Zn},  the contractivity of $S(\cdot)$, and H\"older's inequality, we obtain 
\begin{align}\label{lemma6_eq3_R1_1}\notag
\mathbb{E}\bigg[\sup_{t\in[0,T]}\|(-A)^{\frac{\beta}{2}}Z(t)\|^p\bigg]&\leq 
%             C\|(-A)^{\frac{\beta}{2}}\phi^0\|^p+C\mathbb{E}\bigg[\sup_{t\in[0,T]}\Big\|\int_0^{t}S(t-s)(-A)^{\frac{\beta}{2}+1}\tilde{F}(s)ds\Big\|^p\bigg]\\\notag
% % &\quad+C\mathbb{E}\bigg[\sup_{t\in[0,T]}\Big\|{\int_0^{t}(-A)^{\frac{\beta}{2}}S(t - s) \tilde{G}(s)}dW(s)\Big\|^p\bigg]\\
%&=:
C\|(-A)^{\frac{\beta}{2}}\phi^0\|^p+R_{1,\beta}+R_{2,\beta},
\end{align}
with
\begin{align*}
R_{1,\beta}&:=\mathbb{E}\bigg[\sup_{t\in[0,T]}\Big\|\int_0^{t}S(t-s)(-A)^{\frac{\beta}{2}+1}\tilde{F}(s)ds\Big\|^p\bigg],\\\quad
R_{2,\beta}&:=\mathbb{E}\bigg[\sup_{t\in[0,T]}\Big\|{\int_0^{t}(-A)^{\frac{\beta}{2}}S(t - s) \tilde{G}(s)}dW(s)\Big\|^p\bigg].
\end{align*}
By the Minkowski inequality, \eqref{assum1_eq1}, and H\"older's inequality, we choose \( q_1 = \frac{1}{2}(1 + \frac{4}{\beta + 2}) >1\) with $(\frac{\beta}{4}+\frac12 )q_1\in (0,1)$ such that for any  $p\ge \frac{q_1}{q_1-1}$, 
\begin{align}%\label{lemma6_eq3_R12}
R_{1,\beta}&\leq C\mathbb{E}\bigg[\sup_{t\in[0,T]} \bigg(\left(\int_0^{t} (t - s)^{\left(-\frac{\beta}{4} - \frac{1}{2}\right) q_1} ds \right)^{\frac{1}{q_1}} 
\left( \int_0^{t} \|\tilde{F}(s)\|^{\frac{q_1}{q_1 - 1}} ds \right)^{\frac{q_1 - 1}{q_1}} \bigg)^p \bigg] \notag \\
&\leq C\, \mathbb{E}\Bigg[\bigg( \int_0^{T} \|\tilde{F}(s)\|^{\frac{q_1}{q_1 - 1}} ds \bigg)^{\frac{(q_1 - 1)p}{q_1}} \Bigg]\le C\mathbb{E}\int_0^{T} \|\tilde{F}(s)\|^{p} ds.
\end{align}
The factorization formula \cite[Proposition 5.9 \& Theorem 5.10]{DaPratoZabczyk2014} implies that for $p>1$ and $\rho>1/p$, 
\begin{equation}\label{eq:R3beta}
R_{2,\beta}=C\mathbb{E}\bigg[\sup_{t\in[0,T]}\bigg\|{\int_0^{t}(-A)^{\frac{\beta}{2}}S(t - s) \tilde{G}(s)}dW(s)\bigg\|^p\bigg]\le C(T,p)\mathbb{E}\int_{0}^{T}\|\tilde{Y}(u)\|^pdu,
\end{equation}
where
$
\tilde{Y}(u):=\int_{0}^{u}(u-s)^{-\varrho}S(u-s)(-A)^{\frac{\beta}{2}}\tilde {G}(s)dW(s)$ for any $u\in(0,T)$.
Next, using the BDG inequality, 
Minkowski inequality, and \eqref{assum1_eq1}, and by
choosing sufficiently small $\rho \in(0,1)$ and $q_2>1$ 
satisfying $2(\frac{\beta}{4}+\rho)q_2\in(0,1)$, 
one has that for any $p>\max\{\frac{2q_2}{q_2-1},\frac{1}{\rho}\}$ and $u\in (0,T)$,
\begin{align}\label{lemma6_eq3_R13_2-old}
\mathbb{E}\left[\|\tilde{Y}(u)\|^p\right]
&\leq C\mathbb{E}\left[\bigg(\int_{0}^{u}(u-s)^{-2(\frac{\beta}{4}+\varrho)}\|\tilde {G}(s)\|_{\mathcal{L}_2^0}^2ds\bigg)^{\frac{p}{2}}\right]\notag\\
&\le C\mathbb{E}\Bigg[\bigg(\left(\int_{0}^{u}(u-s)^{-2(\frac{\beta}{4}+\varrho)q_2}ds\right)^{\frac {1} {q_2}}\left(\int_{0}^u\|\tilde {G}(s)\|_{\mathcal{L}_2^0}^{2\frac{q_2}{q_2-1}}ds\right)^{\frac{q_2-1}{q_2}}\bigg)^{\frac{p}{2}}
\Bigg]\notag\\
&\le C\mathbb{E}\int_{0}^T\|\tilde {G}(s)\|_{\mathcal{L}_2^0}^{p}ds.
\end{align}
Gathering the inequalities \eqref{lemma6_eq3_R13_2-old} and \eqref{eq:R3beta} together gives the estimate of $R_{2,\beta}$. Then
substituting the estimates of $R_{1,\beta}$ and $R_{2,\beta}$ into \eqref{lemma6_eq3_R1_1} completes the proof of Lemma \ref{lemma_tool}.
\hfill$\square$
\subsection{Proof of \eqref{eq:chinp}}\label{App_4.24}

From the definition of $\chi^n$ in \eqref{chi} and utilizing Young's inequality, we have
$\|\chi^{n}\|^p
\le C\tau^{\frac{p}{2}}|r^{n+1}|^{2p}+C\tau^{-\frac{p}{2}}(J_1^n+J_2^n)$,
where 
\begin{align*}
J_1^n&:=
\frac{1}{E_{\textup{p}}(X^{n})^{3p}}\|F'( X^{n})\|^{2p}\left|\left\langle F'( X^n),g( X^{n})\delta  W^{n}\right\rangle\right|^{2p},\\
J_2^n&:=\frac{1}{E_{\textup{p}}(X^{n})^p}\|F''( X^{n})g( X^{n})\delta  W^{n}\|^{2p}.
\end{align*}
Therefore,
applying H\"older's inequality and using \eqref{lema1_eq_R3_insert1} as well as  \eqref{eq:FXn}, we obtain that for any $p\ge 1$,
\begin{align*}%\label{lemma2_J1_final}
\mathbb{E}[ J_1^n]&\leq C(\mathbb{E}[\|F'( X^{n})\|^{4p}] )^{\frac12}\bigg(\mathbb{E}\bigg[ \frac{1}{E_{\textup{p}}( X^{n})^{3p}}\left|\left\langle F'( X^n),g( X^{n})\delta  W^{n}\right\rangle\right|^{4p}\bigg]\bigg)^{\frac12}\le C\tau^{p}.
\end{align*}
%{\r why there is square outside?}
As for $J_2^n$, it follows from the BDG inequality, 
\eqref{lema1_eq_R2_insert}, and \eqref{lema1_eq_R1_insert_Q} that for any $p\ge 1$,
\begin{align*}%\notag\label{lemma2_J2}
\mathbb{E}\left[ J_2^n\right]&
\le 
C\mathbb{E}\bigg[\frac{1}{E_{\textup{p}}(X^{n})^p}\bigg(\tau\sum_{k=1}^{\infty}\|F''( X^{n})g( X^{n})Q^{\frac{1}{2}}e_k\|^2\bigg)^p\bigg]\\
&\leq 
C\tau^{p}\mathbb{E}\bigg[\frac{\|F''( X^{n})\|^{2p}}{E_{\textup{p}}(X^{n})^p}\bigg(\sum_{k=1}^{\infty}\|g( X^{n})Q^{\frac{1}{2}}e_k\|^2_{L^\infty}\bigg)^p\bigg]\leq C\tau^{p}.
\end{align*}
Furthermore, taking Corollary \ref{lemma_stability} into account, we conclude that for any $p\ge1$,
\begin{equation*}
\mathbb{E}\left[\|\chi^{n}\|^p\right]\le  C\tau^{\frac{p}{2}}\mathbb{E}\left[|r^{n+1}|^{2p}\right]+C\tau^{-\frac{p}{2}}(\mathbb{E}\left[J_1^n\right]+\mathbb{E}\left[J_2^n\right])\le C\tau^{\frac{p}{2}}.
\end{equation*}
\hfill$\square$

\subsection{Proof of \eqref{eq:fnH1}}\label{App-fnH1}
Under Assumptions \ref{assum2} and \ref{assum1}, the Dirichlet boundary conditions for the numerical solution and the integration by parts formula ensure
$\|\nabla\tilde{f}^n\|=\|(-A)^{\frac12}\tilde{f}^n\|$. Invoking Assumption \ref{assum2}, the chain rule, the integration by parts formula, Lemma~\ref{lemma_stability}, and \eqref{eq:Linfty}, for any $q\ge1$,
% {\r how do you apply it}{\r why the first equality hold?}
\begin{align}\label{lemma6_beta=2_1_insert}
&\mathbb{E}\left[\|(-A)^{\frac12}F'(X^n)\|^{2q}\right]= \mathbb{E}\left[\|\nabla F'(X^n)\|^{2q}\right]=
\mathbb{E}\left[\left\|F''(X^{n})\nabla X^n\right\|^{2q}\right]\\\notag
&
\leq C\mathbb{E}\left[\left\|F''(X^{n})\right\|_{L^{\infty}}^{4q}\right]
+C\mathbb{E}\left[\|\nabla X^n\|^{4q}\right]\\\notag
&\leq C+C\mathbb{E}\left[\left\|X^{n}\right\|_{L^{\infty}}^{8q}\right]
+C\mathbb{E}\left[\|\nabla X^n\|^{4q}\right]
\leq C.
\end{align}
Hence,
by  \eqref{fn}, Corollary \ref{lemma_stability}, and H\"older's inequality, for any $q\ge1$,
\begin{align}\label{eq:fnH1-old}\notag
\mathbb{E}[\|(-A)^{\frac12}\tilde{f}^{\,n}\|^{2q}]
&\le 
C(\mathbb{E}[|r^{n+1}|^{4q}])^{\frac12}	(\mathbb{E}[\|(-A)^{\frac12}F'(X^n)\|^{4q}])^{\frac12}
+C\mathbb{E}[\|(-A)^{\frac12}\chi^n\|^{2q}]\\
&\le C+C\mathbb{E}[\|(-A)^{\frac12}\chi^n\|^{2q}].
\end{align}
By the definition of $\chi^n$ in \eqref{chi}, we have 
\begin{align*}
\mathbb{E}[\|(-A)^{\frac12}\chi^n\|^{2q}]&\le C\mathbb{E}\left[\Big\|\frac{1}{4E_{\textup{p}}(X^{n})^{3/2}}\langle F'(X^n),g(X^{n})\delta  W^{n}\rangle(-A)^{\frac12} F'(X^{n})\Big\|^{2q}\right]\\
&\quad+C\mathbb{E}\bigg[\Big\|\frac{1}{2\sqrt{E_{\textup{p}}(X^{n})}}(-A)^{\frac12}\left(F''(X^{n})g(X^{n})\delta  W^{n}\right)\Big\|^{2q}\bigg]\\
&=:\mathrm{I}_q^n+\mathrm{II}_q^n.
\end{align*}

\textbf{Estimate of $\mathrm{I}_q^n$.}
By the BDG inequality and \eqref{lema1_eq_R1_insert_Q}, it holds that for any $q_1\ge1$,
\begin{equation}\label{eq:BDG}
\mathbb{E}[  \|g(X^n)\delta W^n\|^{2q_1} ]\le C\mathbb{E}[ (\tau \|g(X^n)\|^2_{\mathcal{L}_2^0})^{q_1} ]\le C\tau^{q_1}.
\end{equation} 
Then applying H\"older inequality, \eqref{eq:BDG}, \eqref{lemma6_beta=2_1_insert}, and \eqref{eq:FXn} gives that $\mathrm{I}_q^n\le C(q)\tau^{q}$ for any $q\ge1$.

\textbf{Estimate of $\mathrm{II}_q^n$.} The BDG inequality results in
\begin{align*}%\label{lemma6_beta=2_3}
\mathrm{II}_q^n&\le C\mathbb{E}\bigg[\Big\|(-A)^{\frac{1}{2}}\int_{t_n}^{t_{n+1}}F''(X^{n})g(X^{n})d W(s)\Big\|^{2q}\bigg]\\
&\le C\mathbb{E}\bigg[\bigg(\tau\sum_{k=1}^\infty\|(-A)^{\frac{1}{2}}(F''(X^{n})g(X^{n})Q^{\frac{1}{2}}{e}_k)\|^2\bigg)^{q}\bigg].
\end{align*}
Integrating by parts and by the chain rule, we obtain 
\begin{align*}
&\mathbb{E}\bigg[\bigg(\sum_{k=1}^\infty\|(-A)^{\frac{1}{2}}(F''(X^{n})g(X^{n})Q^{\frac{1}{2}}{e}_k)\|^2\bigg)^{q}\bigg]\\
&=\mathbb{E}\bigg[\bigg(\sum_{k=1}^\infty\|\nabla(F''(X^{n})g(X^{n})Q^{\frac{1}{2}}{e}_k)\|^2\bigg)^{q}\bigg]\\
&\le C\mathbb{E}\bigg[\bigg(\sum_{k=1}^\infty\|F'''(X^{n}) \nabla X^n g(X^{n})Q^{\frac{1}{2}}{e}_k\|^2\bigg)^{q}\bigg]\\
&\quad+C\mathbb{E}\bigg[\bigg(\sum_{k=1}^\infty\|F''(X^{n})\nabla(g(X^{n})Q^{\frac{1}{2}}{e}_k)\|^2\bigg)^{q}\bigg].
\end{align*}
It follows from Young's inequality,
\eqref{lema1_eq_R1_insert_Q}, \eqref{eq:Linfty}, and Corollary \ref{lemma_stability} that
\begin{align*}%\label{lemma6_beta=2_31}
&	\mathbb{E}\bigg[\bigg(\sum_{k=1}^\infty\|F'''(X^{n}) \nabla X^n g(X^{n})Q^{\frac{1}{2}}{e}_k\|^2\bigg)^{q}\bigg]\\
&\leq C\mathbb{E}\left[\left\|F'''(X^{n})\nabla X^n\right\|^{4q}\right] 
+C\mathbb{E}\bigg[\bigg(\sum_{k=1}^{\infty}\|g(X^n)Q^{\frac{1}{2}}e_{k}\|_{L^\infty}^2\bigg)^{2q}\bigg]\\
&\leq C\left(1+\mathbb{E}\left[\|X^n\|_{L^\infty}^{8q}\right]+\mathbb{E}\left[\|\nabla X^n\|^{8q}\right]\right)\leq C.
\end{align*}
Similarly, by Young's inequality, \eqref{lema1_eq_R6-new}, Lemma~\ref{lemma_stability}, and \eqref{eq:Linfty}, we also have
\begin{align*}%\label{lemma6_beta=2_32}
&\mathbb{E}\bigg[\bigg(\sum_{k=1}^\infty\|F''(X^{n})\nabla(g(X^{n})Q^{\frac{1}{2}}{e}_k)\|^2\bigg)^{q}\bigg]\\
&\leq C\mathbb{E}\left[\left\|F''(X^{n})\right\|_{L^{\infty}}^{4q}\right]
+C\mathbb{E}\bigg[\bigg(\sum_{k=1}^\infty\|\nabla (g(X^{n})Q^{\frac{1}{2}}e_{k})\|^{2}\bigg)^{2q}\bigg]\\
&\leq C\left(1+\mathbb{E}\left[\| X^n\|_{L^\infty}^{8q}\right]+\mathbb{E}\left[\|\nabla X^n\|^{4q}\right]\right)
\leq C.
\end{align*}
Consequently, we can infer that $\mathrm{II}_q^n\le C(q)\tau^q$ for any $q\ge1$.

Combining the estimates of $\mathrm{I}_q^n$ and $\mathrm{II}_q^n$, it follows that for any $q\ge1$, 
\begin{equation}\label{eq:chiH1norm}
\mathbb{E}[\|(-A)^{\frac12}\chi^n\|^{2q}]\le C(q)\tau^q.
\end{equation}
Finally, inserting \eqref{eq:chiH1norm} into \eqref{eq:fnH1-old} results in \eqref{eq:fnH1}.
\hfill$\square$
%\end{proof}

%    Text of article.

\subsection{Proof of \eqref{eq:J3-10}}\label{App-J3-10}
We begin with the estimate of $J_3^n$. By \eqref{eq:termJ3},
%{\r the estimate does not match the def of $J_3^n$?}
\begin{equation*}
\mathbb{E}[|J_{3}^n|]
=\mathbb{E}\left[\Big|-|r^{n+1} - r^n|^2+\frac{1}{4E_{\textup{p}}(X^n)}\left| \left\langle F'(X^n),X^{n+1} - X^n\right\rangle\right|^2\Big|\right].
%&=\mathbb{E}\left[\bigg|\bigg(r^{n+1} - r^n+\frac{ \left\langle F'(X^n),X^{n+1} - X^n\right\rangle}{2\sqrt{E_{\textup{p}}(X^n)}}\bigg)\bigg(r^{n+1} - r^n-\frac{\left\langle F'(X^n),X^{n+1} - X^n\right\rangle}{2\sqrt{E_{\textup{p}}(X^n)}}\bigg)\bigg|\right]
\end{equation*}
Using H\"older's inequality, \eqref{eq:FXn} and Lemma \ref{lem:time}, we have
\begin{align}\label{eq:Low}
\mathbb{E}[|\langle F'(X^n),X^{n+1} - X^n\rangle|^2]\le C\mathbb{E}[\|F'(X^n)\|^2\|X^{n+1} - X^n\|^2]\le C\tau.
\end{align}
Applying H\"older's inequality, Assumption~\ref{assum2}, the Sobolev embedding \(L^\infty(\mathcal{O}) \hookrightarrow L^6(\mathcal{O})\), Lemma~\ref{lem:time},   \eqref{eq:Linfty}, and \eqref{eq:BDG}, we obtain that for any $q\ge1$, %{\r why estimate the following}
\begin{align}\label{eq:High}
&\mathbb{E}\left[ \left|
\langle F'(X^n),g(X^n)\delta W^n\rangle 
\langle F'(X^n),X^{n+1} - X^n\rangle\right|^{2q} \right] \\\notag
&\le 
C \mathbb{E}\left[\|F'(X^n)\|^{4q}\|g(X^n)\delta W^n\|^{2q}
\|X^{n+1} - X^n\|^{2q}\right]\notag\\\notag
&\le  C\left(\mathbb{E}\left[(1+\| X^n\|_{L^\infty}^{24q})
\|X^{n+1} - X^n\|^{4q} \right]\right)^{\frac 12} \left(\mathbb{E}\left[  \|g(X^n)\delta W^n\|^{4q}\right]\right)^{\frac 12}\\\notag
&\le C\tau^{2q}.
\end{align}
In the similar manner, it can be verified that for any $q\ge1$,
\begin{align}\label{eq:High1}
\mathbb{E}\left[ \left| \langle F''(X^n)(X^{n+1} - X^n),g(X^n)\delta W^n\rangle\right|^{2q}\right]	\le C\tau^{2q}.
\end{align}
As a result of \eqref{scheme_r_sto-intro}, \eqref{eq:Low}, \eqref{eq:High}, and \eqref{eq:High1}, it holds that for any $q\ge1$,
\begin{gather}\label{eq:r-r}
\mathbb{E}\left[|r^{n+1} - r^n|^{2q}\right]+\mathbb{E}\left[\bigg|r^{n+1} - r^n+\frac{\langle F'(X^n),X^{n+1} - X^n\rangle}{2\sqrt{E_{\textup{p}}(X^n)}}\bigg|^{2q}\right]\le C\tau^q,\\\label{eq:r-r-diff}
\mathbb{E}\left[\bigg|r^{n+1} - r^n-\frac{ \langle F'(X^n),X^{n+1} - X^n\rangle}{2\sqrt{E_{\textup{p}}(X^n)}}\bigg|^{2q}\right]
\le  C\tau^{2q}.
\end{gather}
Applying H\"older's inequality and combining \eqref{eq:r-r} and \eqref{eq:r-r-diff}, we obtain
\begin{equation*}
\mathbb{E}[|J_{3}^n|]
=\mathbb{E}\left[\Big|-|r^{n+1} - r^n|^2+\frac{1}{4E_{\textup{p}}(X^n)}\left| \left\langle F'(X^n),X^{n+1} - X^n\right\rangle\right|^2\Big|\right]
%&=\mathbb{E}\left[\bigg|\bigg(r^{n+1} - r^n+\frac{ \left\langle F'(X^n),X^{n+1} - X^n\right\rangle}{2\sqrt{E_{\textup{p}}(X^n)}}\bigg)\bigg(r^{n+1} - r^n-\frac{\left\langle F'(X^n),X^{n+1} - X^n\right\rangle}{2\sqrt{E_{\textup{p}}(X^n)}}\bigg)\bigg|\right]
\le C\tau^{\frac32}.
\end{equation*}

To estimate $J_6^n$, $J_7^n$, $J_9^n$, and $J_{10}^n$, we notice that by \eqref{eq:r-r} and Lemma \ref{lemma3}, the factor $\frac{r^{n+1}}{\sqrt{E_{\textup{p}}(X^{n})}}-1$ is also of magnitude $\mathscr{O}(\tau^\frac12)$ in $L^q(\Omega;H)$ for any $q\ge1$. That is,
\begin{equation*}%\label{eq:rn+1-E}
\Big\|\frac{r^{n+1}}{\sqrt{E_{\textup{p}}(X^{n})}}-1\Big\|_{L^q(\Omega)}\le C\|r^{n+1}-r^n\|_{L^q(\Omega)}+C\|r^n-\sqrt{E_{\textup{p}}(X^{n})}\|_{L^q(\Omega)}\le C(q)\tau^{\frac12}
\end{equation*}
for any $q\ge1$.
On the other hand, due to \eqref{eq:BDG} and Lemma \ref{coro1}, the increments $g(X^n)\delta W^n$ and $X^{n+1} - X^n$ are both of magnitude $\mathscr{O}(\tau^\frac12)$ in $L^q(\Omega;H)$ for any $q\ge1$. 
Since each of the remainder terms $J_6^n$, $J_7^n$, $J_9^n$, and $J_{10}^n$ contains three factors of magnitude  $\mathcal{O}(\tau^{1/2})$ in $L^q(\Omega;H)$, it can be shown that
\begin{equation*}
\mathbb{E}[|J_6^n|+|J_7^n|+|J_9^n|+|J_{10}^n|]\le C\tau^{\frac32},\qquad n\in\{0,1,\cdots,N-1\}.
\end{equation*}
\hfill$\square$

\section{Proof of Proposition \ref{App_prop}}\label{proof-App_prop}
\begin{proof}
The proof is carried out in three steps. In the first two steps, we establish the $L^2(\mathcal{O})$- and $H^1(\mathcal{O})$-spatial regularity of the mild solution, respectively. In the final step, we apply Lemma~\ref{lemma_tool} to derive the $H^\beta(\mathcal{O})$-spatial regularity estimate of the mild solution for $\beta\in(1,2)$.
%{\r better to add logic}

\emph{Step 1.}
By  Assumption~\ref{assum2} and integrating by parts, we obtain
\begin{align*}
	- \langle\nabla \phi(t), \nabla F'(\phi(t))\rangle 
	&= -\int_{\mathcal{O}}|\nabla \phi(t,x)|^2f''(\phi(t,x))d x\le L_f\|\nabla \phi(t)\|^2\\
	&=L_f\|(-A)^{\frac12} \phi(t)\|^2\le \frac12\|A\phi(t)\|^2+\frac12L_f^2\|\phi(t)\|^2.
\end{align*}
Hence, from \eqref{model}, we can apply It\^o's formula to conclude
\begin{align*}
	d\|\phi(t)\|^2 &= 2\langle\phi(t), -A^2\phi(t)\rangle dt -2\langle\nabla \phi(t), \nabla F'(\phi(t))\rangle dt \\
	& \quad +2\langle\phi(t), g(\phi(t))dW(t)\rangle 
	+ \|g(\phi(t))\|_{\mathcal{L}_2^0}^2dt\\
	&\le -\|A\phi(t)\|^2dt 
	+ C\|\phi(t)\|^2 dt+2\langle\phi(t), g(\phi(t))dW(t)\rangle 
	+ \|g(\phi(t))\|_{\mathcal{L}_2^0}^2dt.            
\end{align*}
By the BDG inequality, H\"older's inequality, and the linear growth of $g$ (see \eqref{assume_Lip}),  we deduce that for any $t\in[0,T]$,
\begin{align*}
	&\mathbb{E}\left[\|\phi(t)\|^{2p}\right] +\mathbb{E}\bigg[\bigg(\int_{0}^t\left\|A\phi(s)\right\|^2ds\bigg)^p \bigg]
	\le C\|\phi^0\|^{2p} 
	+C\mathbb{E}\bigg[\bigg(\int_{0}^t\| \phi(s)\|^{2} ds\bigg)^p\bigg]
	\\
	&\quad +C\mathbb{E}\left[\Big|\int_{0}^t\langle\phi(s), g(\phi(s))dW(s)\rangle\Big|^p \right]
	+C\mathbb{E}\left[\bigg(\int_{0}^t\|g(\phi(t))\|_{\mathcal{L}_2^0}^2\bigg)^p\right]\notag\\     
	&\le C\|\phi^0\|^{2p}
	+C\mathbb{E}\int_{0}^t\| \phi(s)\|^{2p} ds\notag
	+C.
\end{align*}	
Using Gronwall's inequality, we have
\begin{align}\label{App_re1}
	\mathbb{E}\left[\|\phi(t)\|^{2p}\right]  +\mathbb{E}\bigg[\bigg(\int_{0}^t\|A\phi(s)\|^2ds\bigg)^p \bigg]\le C\left(\|\phi^0\|^{2p} +1\right).
\end{align}

\emph{Step 2.}
It follows from Assumption \ref{assum2} and Assumption \ref{assum1} that
\begin{align*}
	\sum_{k=1}^{\infty}\left\langle F''(\phi(s)) g(\phi(s))Q^{\frac{1}{2}}e_k,g(\phi(s))Q^{\frac{1}{2}}e_k\right\rangle&\le  \|F''(\phi(s))\|_{L^1}\sum_{k=1}^\infty
	\| g(\phi(s))Q^{\frac{1}{2}}e_k\|_{L^\infty}^2\\
	&\le C(1+\|\phi(s)\|^2).
\end{align*}
Hence, using the energy evolution law \eqref{dissi_conti} and H\"older's inequality results in 
\begin{align}\label{dissi_conti_energy}
	&E(\phi(t))  +\int_0^t \left\|\nabla\mu(s)\right\|^{2}ds
	\le E(\phi^0)+\int_0^t \left\langle \mu(s),g(\phi(s))dW(s)\right\rangle\\
	&\quad+C\int_0^t
	\|g(\phi(s))\|^2_{\mathcal{L}_2(Q^{\frac12}H,\dot{H}^1(\mathcal{O}))} ds+C\int_0^t (\| \phi(s)\|^2+1) ds.\notag
\end{align}
In view of \eqref{lema1_eq_R6-new},  for any $s\in[0,T]$,
\begin{equation}\label{eq:dissi_conti_energy_Q}
	\|g(\phi(s))\|^2_{\mathcal{L}(Q^{\frac12}H,\dot{H}^1(\mathcal{O}))}\le C(1+\|\nabla \phi(s)\|^2)\le C(1+\|A\phi(s)\|^2+\|\phi(s)\|^2).
\end{equation}
Then, inserting \eqref{pro:eq:phi0} and \eqref{eq:dissi_conti_energy_Q} into \eqref{dissi_conti_energy}, we deduce that for any \(t\in[0,T]\),
\begin{align}\label{dissi_conti_energy1}
	& \mathbb{E}\left[E(\phi(t))^p\right]  + \mathbb{E}\bigg[\bigg(\int_0^t \|\nabla\mu(s)\|^{2}ds\bigg)^p \bigg]\\\notag
	&\le C  
	+C\mathbb{E}\left[\Big|\int_0^t \left\langle \mu(s),g(\phi(s))dW(s)\right\rangle\Big|^p\right]+C\mathbb{E}\bigg[\bigg(\int_0^t \|A\phi(s) \|^2+\|\phi(s) \|^2 ds\bigg)^p\bigg].
\end{align}
Applying the BDG inequality, H\"older's inequality, Assumption \ref{assum1}, and Young's inequality, and recalling the definition $\mu := \frac{\delta E}{\delta \phi} = -\Delta \phi + F'(\phi)$ in \eqref{model}, we obtain
\begin{align}\label{dissi_conti_energy1_T1}
	&\mathbb{E}\left[\Big|\int_0^t \left\langle \mu(s),g(\phi(s))dW(s)\right\rangle\Big|^p\right]\le C\mathbb{E}\bigg[\bigg(\int_0^t\sum_{k=1}^{\infty}\left| \langle \mu(s),g(\phi(s))Q^{\frac{1}{2}}e_k\rangle\right|^2ds\bigg)^{\frac{p}{2}}\bigg]\\
	&\le C\mathbb{E}\bigg[\bigg(\int_0^t\left\|  \mu(s)\right\|^2_{L^1}\sum_{k=1}^{\infty}\|g(\phi(s))Q^{\frac{1}{2}}e_k\|_{L^{\infty}}^2ds\bigg)
	^{\frac{p}{2}}\bigg]\notag\\
	&\le C\mathbb{E}\bigg[\left(\int_0^t\left\|  \Delta\phi(s)\right\|_{L^1}^2ds\right)^{\frac{p}{2}}\bigg]+C\mathbb{E}\bigg[\left(\int_0^t\left\|  {F'(\phi(s))}\right\|_{L^1}^2ds\right)^{\frac{p}{2}}\bigg].\notag
\end{align}
Combining Assumption \ref{assum2}, the Gagliardo--Nirenberg inequality, and Young's inequality yields
\begin{align*}
	\|  F'(\phi(s))\|_{L^1}^2&\le C(1+\|  \phi(s)\|_{L^3}^6)\le C(1+\|  \phi(s)\|_{H^2}^{\frac 32}\|  \phi(s)\|^{\frac 92})\\
	&\le C(1+\|  \phi(s)\|_{H^2}^{2}+\|  \phi(s)\|^{18}),
\end{align*}
which implies that for any $t\in[0,T]$,
\begin{align}\label{dissi_conti_energy2}
	\mathbb{E}\bigg[\bigg(\int_0^t\|  F'(\phi(s))\|_{L^1}^2ds\bigg)^{\frac{p}{2}}\bigg]
	%&\le C + C\mathbb{E}\left[\left(\int_0^t\left\|  \phi(s)\right\|_{L^3}^6ds\right)^{\frac{p}{2}}\right]\notag\\
	%&\le C +C\mathbb{E}\left[\left(\int_0^t\left\|  \phi(s)\right\|_{H^2}^{\frac 32}\left\|  \phi(s)\right\|^{\frac 92}ds\right)^{\frac{p}{2}}\right]\notag\\
	&\le C +C \mathbb{E}\bigg[\bigg(\int_0^t\|  \phi(s)\|_{H^2}^{2}ds\bigg)^{\frac p2}\bigg]\\\notag
	&\quad+C\mathbb{E}\bigg[\bigg(\int_0^t\|  \phi(s)\|^{18}ds\bigg)^{\frac p2}\bigg].
\end{align}
Substituting \eqref{dissi_conti_energy2} into \eqref{dissi_conti_energy1_T1} and
applying \eqref{App_re1}, we arrive at $$\mathbb{E}\bigg[\Big|\int_0^t \langle \mu(s),g(\phi(s))dW(s)\rangle\Big|^p\bigg]\le C.$$
%\begin{align}\label{dissi_conti_energy1_T1_final}
%\mathbb{E}\left[\Big|\int_0^t \left\langle \mu(s),g(\phi(s))dW(s)\right\rangle\Big|^p\right]\le C.
%\end{align}
Plugging this into \eqref{dissi_conti_energy1} and taking \eqref{App_re1} into account, we have that for any $p\ge1$,
\begin{align}\label{dissi_conti_energy3}
	\mathbb{E}\left[E(\phi(t))^p\right]  + \mathbb{E}\bigg[\bigg(\int_0^t \|\nabla\mu(s)\|^{2}ds\bigg)^p 
	\bigg]
	\le C .
\end{align}

\emph{Step 3.}
From the energy definition \eqref{energy_def} and \eqref{dissi_conti_energy3}, we obtain
\(
\mathbb{E}\bigl[\|\nabla\phi(t)\|^{2p}\bigr]\le C(p)
\) for all $p\ge1$.
It follows from Young's inequality and the Sobolev embedding $H^1(\mathcal{O})\hookrightarrow L^6(\mathcal{O})$ that for any $t\in[0,T]$,
\begin{align}\label{eq:Fphi}
	\mathbb{E}\left[\left\|F'(\phi(t))\right\|^p\right]\leq C\mathbb{E}\left[\left\|\phi(t)\right\|_{L^6}^{3p}\right]+C\le C\mathbb{E}\left[\left\|\nabla \phi(t)\right\|^{3p}\right]+C\le C.
\end{align}
Moreover, by Assumption~\ref{assum1}, it holds that $\mathbb{E}[\|g(\phi(t))\|_{\mathcal{L}_2^0}^{p}]\le C$
for any $t\in[0,T]$, which, in combination with  \eqref{eq:Fphi}, allows us to apply Lemma~\ref{lemma_tool} with $\tilde F(s)=F'(\phi(s))$ and $\tilde G(s)=g(\phi(s))$ to conclude \eqref{eq:phi(t)beta}.
\end{proof}

\backmatter

%\bmhead{Supplementary information}
%
%If your article has accompanying supplementary file/s please state so here. 
%
%Authors reporting data from electrophoretic gels and blots should supply the full unprocessed scans for key as part of their Supplementary information. This may be requested by the editorial team/s if it is missing.
%
%Please refer to Journal-level guidance for any specific requirements.

\section*{Declarations}

\noindent \textbf{Funding} This work is supported by the Hong Kong Research Grant Council GRF grants 15302823 and 15301025, NSFC/RGC Joint Research Scheme N$\_$PolyU5141/24, NSFC grant 12522119, NSFC grant 12301526, internal funds (P0041274,P0045336) from Hong Kong Polytechnic University,  and the
CAS AMSS-PolyU Joint Laboratory of Applied Mathematics.

\vskip 1em

\noindent \textbf{Data Availability} Data sharing not applicable to this article as no datasets were generated or analyzed during the current study.

\vskip 1em

\noindent \textbf{Conflict of interest} The authors declare that they have no conflict of interest.
\bibliography{sn-bibliography}% common bib file
%% if required, the content of .bbl file can be included here once bbl is generated
%%\input sn-article.bbl

\end{document}